\documentclass[10pt, reqno]{amsart}

\usepackage{amsmath,amssymb,amsthm,amsfonts,verbatim}
\usepackage{microtype}
\usepackage[all,2cell]{xy}
\usepackage{mathtools}
\usepackage{graphicx}
\usepackage{pinlabel}
\usepackage{hyperref}
\usepackage{mathrsfs}
\usepackage{color}
\usepackage{enumerate}
\usepackage{bm}

\CompileMatrices

\usepackage[top=1.3in,bottom=1.9in,left=1.4in,right=1.4in]{geometry}

\theoremstyle{plain}
\newtheorem{theorem}{Theorem}[section]
\newtheorem{maintheorem}{Theorem}

\newtheorem{question}[theorem]{Question}

\newtheorem{proposition}[theorem]{Proposition}
\newtheorem{lemma}[theorem]{Lemma}
\newtheorem{conjecture}[theorem]{Conjecture}

\newtheorem{corollary}[theorem]{Corollary}

\theoremstyle{definition}
\newtheorem{definition}[theorem]{Definition}

\newtheorem{remark}[theorem]{Remark}

\newtheorem{construction}[theorem]{Construction}
\newtheorem{convention}[theorem]{Convention}

\newcommand{\nc}{\newcommand}
\nc{\dmo}{\DeclareMathOperator}

\nc{\Q}{\mathbb{Q}}
\nc{\F}{\mathbb{F}}
\nc{\R}{\mathbb{R}}
\nc{\Z}{\mathbb{Z}}
\nc{\C}{\mathbb{C}}
\nc{\Ell}{\mathcal{L}}
\nc{\M}{\mathcal{M}}
\nc{\K}{\mathcal{K}}
\nc{\I}{\mathcal{I}}
\nc{\U}{\mathcal U}
\nc{\disk}{\mathbb{D}}
\nc{\hyp}{\mathbb{H}}

\nc{\CP}{\mathbb{CP}}
\nc{\cS}{\mathcal{S}}
\dmo{\Mod}{Mod}
\dmo{\PMod}{PMod}
\dmo{\Diff}{Diff}
\dmo{\Homeo}{Homeo}
\dmo{\dist}{dist}
\dmo\BDiff{BDiff}
\dmo\SO{SO}
\dmo\Hom{Hom}
\dmo\SL{SL}
\dmo\Sp{Sp}
\dmo\rank{rank}
\dmo\sig{sig}
\dmo\Out{Out}
\dmo\Aut{Aut}
\dmo\Inn{Inn}
\dmo\GL{GL}
\dmo\PSL{PSL}
\dmo\BHomeo{BHomeo}
\dmo\EHomeo{EHomeo}
\dmo\EDiff{EDiff}
\nc\Sig{\Sigma}
\dmo\Teich{Teich}
\dmo\Fix{Fix}
\nc{\pair}[1]{\langle #1 \rangle}
\nc{\abs}[1]{\left| #1 \right|}
\nc{\action}{\circlearrowright}
\nc{\norm}[1]{\left | \left | #1 \right | \right |}
\nc{\abcd}[4]{\left(\begin{array}{cc} #1 & #2 \\ #3 & #4 \end{array}\right)}
\nc{\into}\hookrightarrow
\dmo{\Isom}{Isom}
\nc{\normal}{\vartriangleleft}
\dmo{\Vol}{Vol}
\dmo{\im}{Im}
\dmo{\Push}{Push}
\dmo{\Conf}{Conf}
\dmo{\PConf}{PConf}
\dmo{\id}{id}
\dmo{\Jac}{Jac}
\dmo{\Pic}{Pic}
\dmo{\Stab}{Stab}
\dmo{\Arf}{Arf}

\renewcommand{\epsilon}{\varepsilon}
\nc{\coloneq}{\mathrel{\mathop:}\mkern-1.2mu=}
\nc{\margin}[1]{\marginpar{\scriptsize #1}}
\nc{\para}[1]{\medskip\noindent\textbf{#1.}}
\nc{\red}[1]{\textcolor{red}{#1}}

\title{Monodromy and vanishing cycles in toric surfaces}

\author{Nick Salter}
\thanks{This material is based upon work supported by the National Science Foundation under Award No. DMS-1703181.}
\email{nks@math.columbia.edu}
\date{December 5, 2018}

\address{Department of Mathematics\\ Columbia University\\ 2990 Broadway, New York, NY 10027}

\begin{document}
\maketitle
\begin{abstract}
Given an ample line bundle on a toric surface, a question of Donaldson asks which simple closed curves can be vanishing cycles for nodal degenerations of smooth curves in the complete linear system. This paper provides a complete answer. This is accomplished by reformulating the problem in terms of the mapping class group-valued monodromy of the linear system, and giving a precise determination of this monodromy group.
\end{abstract}

\section{Introduction}
Let $X$ be a smooth toric surface and $\mathcal L$ an ample line bundle on $X$. In the complete linear system $\abs{\mathcal L}$, there is a hypersurface $\mathcal D$ known as the {\em discriminant locus} consisting of the singular curves $C \in \abs{\mathcal L}$. The complement 
\[
\mathcal M(\mathcal L) : = \abs{\mathcal L} \setminus \mathcal D
\]
therefore supports a tautological family of closed Riemann surfaces of some genus $g(\mathcal L)$. Topologically, this is a fiber bundle $\pi: \mathcal E(\mathcal L) \to \mathcal M(\mathcal L)$ with fiber $\Sigma_{g(\mathcal L)}$. Consequently, there is a {\em monodromy representation}
\[
\mu_{\mathcal L}: \pi_1(\mathcal M(\mathcal L),C_0) \to \Mod(C_0).
\]
Here, $C_0 \in \mathcal M(\mathcal L)$ is a fixed curve, and $\Mod(C_0):= \pi_0(\Diff^+(C_0))$ denotes the {\em mapping class group} of $C_0$ (see Section \ref{section:basics}). Under $\mu_\mathcal L$, a based loop $\gamma \in \pi_1(\mathcal M(\mathcal L),C_0)$ is mapped to (the isotopy class of) the diffeomorphism $\mu_\mathcal L(\gamma) \in \Diff(C_0)$ obtained by ``parallel transport'' of $C_0$ along $\gamma$. For details, see, e.g., \cite[Section 5.6.1]{FM}.

In this paper, we give a nearly complete answer to the following fundamental question. Define
\[
\Gamma_{\mathcal L} := \im(\mu_{\mathcal L}) \leqslant \Mod(\Sigma_{g(\mathcal L)}).
\]
\begin{question}\label{question:mod}
What is $\Gamma_{\mathcal L}$? When is it a finite-index subgroup of $\Mod(\Sigma_{g(\mathcal L)})$? Can one give a precise characterization of $\Gamma_{\mathcal L}$?
\end{question}

Question \ref{question:mod} is closely related to a question posed by Donaldson \cite{donaldson}. Fix a curve $C_0 \in \mathcal M(\mathcal L)$ and an identification $C_0 \cong \Sigma_{g(\mathcal L)}$. Define a {\em vanishing cycle} for $\mathcal L$ as a simple closed curve $\gamma$ on $C_0$ for which there is a degeneration of $C_0$ to a curve $C'$ with a single node, such that $\gamma$ becomes null-homotopic on $C'$. If $c$ is a vanishing cycle, then necessarily the Dehn twist $T_c$ lies in $\Gamma_{\mathcal L}$; it arises from a loop in $\mathcal M(\mathcal L)$ encircling the nodal curve in $\abs{\mathcal L}$. 

\begin{question}[Donaldson]\label{question:donaldson}
For $\mathcal L$ an ample line bundle on a smooth toric surface $X$, which curves (on a fixed $C_0$) are vanishing cycles?
\end{question}

A first insight into Questions \ref{question:mod} and \ref{question:donaldson} is to observe the presence of an invariant ``higher spin structure''. Let $K_X$ denote the canonical bundle of $X$. The {\em adjoint line bundle} of $\mathcal L$ is the line bundle $\mathcal L \otimes K_X$. Define $r \in \mathbb N$ to be the highest root of $\mathcal L \otimes K_X$ in $\Pic(X)$. As explained in Proposition \ref{proposition:invtspin}, associated to $\mathcal L \otimes K_X$ is a {\em $\Z/r\Z$-valued spin structure} $\phi_{\mathcal L}$, and the associated stabilizer subgroup $\Mod(\Sigma_{g(\mathcal L)})[\phi_{\mathcal L}]$ (see Definition \ref{definition:stabilizer}). Proposition \ref{proposition:invtspin} asserts that necessarily $\Gamma_\mathcal L \leqslant \Mod(\Sigma_{g(\mathcal L)})[\phi_{\mathcal L}]$. The function $\phi_\mathcal L$ gives rise to a notion of {\em admissible curve} and the associated subgroup $\mathcal T_{\phi_{\mathcal L}} \leqslant \Mod(\Sigma_{g(\mathcal L)})[\phi_{\mathcal L}]$ of {\em admissible twists} (see Definition \ref{definition:admissible}). If a curve $c$ is a vanishing cycle, it is necessarily admissible; see Lemma \ref{lemma:twists}. Our main theorem asserts that these necessary conditions are also sufficient (at least ``virtually'' so, in the case $r$ is even).

\begin{maintheorem}\label{theorem:toric}
Let $\mathcal L$ be an ample line bundle on a smooth toric surface $X$ for which the generic fiber is not hyperelliptic. Assume $r > 1$ or else $g(\mathcal L) \ge 5$.
\begin{itemize}
\item If $r$ is odd, then $\Gamma_{\mathcal L} = \Mod(\Sigma_{g(\mathcal L)})[\phi_{\mathcal L}]$. 
\item If $r$ is even, then $\Gamma_{\mathcal L} \leqslant \Mod(\Sigma_{g(\mathcal L)})$ is a finite-index subgroup that contains $\mathcal T_{\phi_{\mathcal L}}$.
\end{itemize}
In either case, $[\Mod(\Sigma_{g(\mathcal L)}):\Gamma_{\mathcal L}]$ is finite. Moreover, Question \ref{question:donaldson} admits the following complete answer: {\em a curve $\gamma$ is a vanishing cycle if and only if $\gamma$ is an admissible curve.}
\end{maintheorem}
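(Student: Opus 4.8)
The plan is to reduce the computation of $\Gamma_{\mathcal L}$ to a statement about subgroups of $\Mod(\Sigma_g)$ generated by twists, and then to invoke a recognition principle for such subgroups. First I would establish the two necessary inclusions already flagged in the introduction: by Proposition \ref{proposition:invtspin}, $\Gamma_{\mathcal L} \leqslant \Mod(\Sigma_{g(\mathcal L)})[\phi_{\mathcal L}]$, and by Lemma \ref{lemma:twists}, every vanishing cycle is admissible, so $\mathcal T_{\phi_{\mathcal L}} \leqslant \Gamma_{\mathcal L}$ once we know enough vanishing cycles exist. The crux is therefore the reverse inclusion $\Mod(\Sigma_{g(\mathcal L)})[\phi_{\mathcal L}] \leqslant \Gamma_{\mathcal L}$ (when $r$ is odd), or the finite-index statement (when $r$ is even). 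The natural strategy is: (1) show that $\Gamma_{\mathcal L}$ contains a large supply of Dehn twists about admissible curves, coming from the Lefschetz-type degenerations available in the toric linear system; (2) show that these admissible curves, together with their twists, generate (a finite-index subgroup of) the spin mapping class group $\Mod(\Sigma_{g(\mathcal L)})[\phi_{\mathcal L}]$; and (3) deduce the answer to Donaldson's question from the identification $\Gamma_{\mathcal L} = \mathcal T_{\phi_{\mathcal L}}$ together with a connectivity/change-of-coordinates principle showing every admissible curve is in the orbit of a known vanishing cycle.

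For step (1), I would work with the toric structure directly: degenerations of curves in $\abs{\mathcal L}$ to nodal curves are abundant, and each gives a Dehn twist in $\Gamma_{\mathcal L}$ about the corresponding vanishing cycle. One wants to produce, from the combinatorics of the Newton polygon of $\mathcal L$, a configuration of vanishing cycles that fills the surface and realizes the $\Z/r\Z$-spin structure $\phi_{\mathcal L}$ in a controlled way — e.g. a chain or a more elaborate configuration whose Dehn twists are known to generate a finite-index subgroup of $\Mod$. For step (2), the key input should be a theorem (established elsewhere in the paper, or cited) characterizing subgroups of $\Mod(\Sigma_g)$ generated by admissible twists for a $\Z/r\Z$-spin structure: when $g \geq 5$, or $r > 1$, the admissible twists generate $\Mod(\Sigma_g)[\phi]$ on the nose if $r$ is odd, and a finite-index subgroup if $r$ is even (the obstruction in the even case being a $\Z/2$ Arf-type invariant refining the spin structure). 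The hypothesis that the generic fiber is not hyperelliptic ensures $\Mod(\Sigma_g)[\phi]$ is not unexpectedly small, and the hypothesis $r>1$ or $g(\mathcal L) \geq 5$ is exactly what is needed for such a generation theorem to apply. One then matches the abstract recognition criterion against the explicit configuration of toric vanishing cycles from step (1).

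The main obstacle I anticipate is step (1) in its quantitative form: one must verify that the vanishing cycles arising from toric degenerations are \emph{rich enough} — that they include at least one representative of every topological type of admissible curve (or enough to generate), rather than merely some admissible curves. Equivalently, the hard part is proving that the mapping class group orbit of the "obvious" vanishing cycles exhausts all admissible curves; this is where one needs a change-of-coordinates principle for admissible curves relative to $\phi_{\mathcal L}$ and a careful analysis of which curves are actually killed by toric degenerations (the key geometric input being that certain toric divisors degenerate in ways that contract precisely the desired curve). Once this is in place, the final sentence of the theorem — that a curve is a vanishing cycle if and only if it is admissible — follows: the "only if" is Lemma \ref{lemma:twists}, and the "if" follows because any admissible curve lies in the $\Gamma_{\mathcal L}$-orbit of a vanishing cycle, and applying an element of $\Gamma_{\mathcal L}$ (realized by a loop in $\mathcal M(\mathcal L)$) carries a vanishing cycle to a vanishing cycle.
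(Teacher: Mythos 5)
Your proposal follows essentially the same route as the paper: extract admissible Dehn twists from the Cr\'etois--Lang vanishing cycles attached to the lattice polygon, feed them into a generation criterion for $\mathcal T_{\phi}$ and $\Mod(\Sigma_g)[\phi]$ (with the Arf invariant as the even-$r$ obstruction), and settle Donaldson's question by showing every admissible curve is connected to a known vanishing cycle (the paper uses connectivity of $\mathcal C^1_\phi$ plus the braid-relation conjugation, exactly as you sketch). The only notable divergence is in emphasis: the paper's hard step is the explicit combinatorial construction of a filling network of $A$- and $B$-curves verifying the hypotheses of Theorem \ref{theorem:networkgenset} (and the non-hyperelliptic hypothesis is used there to rule out $r \ge g-1$), rather than an orbit-exhaustion argument as you anticipated.
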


We remark that many familiar algebraic surfaces such as $\CP^2$ and $\CP^1 \times \CP^1$ are smooth toric surfaces. For instance, as a special case of Theorem \ref{theorem:toric} we obtain the following theorem concerning smooth plane curves. The case $d = 5$ was addressed in \cite{saltermonodromy}, while the cases $d \le 4$ are either classical or trivial.

\begin{theorem}
Set $g = {d-1 \choose 2}$, and define
\[
\Gamma_d \leqslant \Mod(\Sigma_g)
\]
to be the monodromy group of the family of smooth curves in $\CP^2$ of degree $d$, i.e. the group $\Gamma_\mathcal L$ for the line bundle $\mathcal L = \mathcal O(d)$ on $\CP^2$. Then there exists a $\Z/(d-3)\Z$-valued spin structure $\phi_d$ such that the following hold.
\begin{itemize}
\item If $d$ is even, then $\Gamma_d = \Mod(\Sigma_g)[\phi_d]$.
\item If $d$ is odd, then $\Gamma_d$ is of finite index in $\Mod(\Sigma_g)[\phi_d]$, where $\Gamma_d$ contains the subgroup $\mathcal T_{\phi_d}$ of admissible twists.
\end{itemize}

\end{theorem}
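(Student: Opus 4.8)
The plan is to derive this as the special case $X = \CP^2$, $\mathcal L = \mathcal O(d)$ of Theorem~\ref{theorem:toric}. Since $\CP^2$ is a smooth toric surface, only the hypotheses on the line bundle require verification, and I would begin by assembling the relevant numerics. The Picard group $\Pic(\CP^2)$ is infinite cyclic, generated by $\mathcal O(1)$, and $K_{\CP^2} = \mathcal O(-3)$; hence the adjoint bundle is $\mathcal L \otimes K_{\CP^2} = \mathcal O(d-3)$, and, $\Pic(\CP^2) \cong \Z$ being torsion-free of rank one, its highest root in $\Pic(\CP^2)$ is $r = d-3$ (for $d \ge 4$). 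Therefore the invariant spin structure $\phi_{\mathcal L}$ furnished by Proposition~\ref{proposition:invtspin} is $\Z/(d-3)\Z$-valued; this is the structure $\phi_d$ of the statement, and $\Gamma_d \leqslant \Mod(\Sigma_g)[\phi_d]$ by that proposition. The degree--genus formula gives $g = {d-1 \choose 2}$ for a smooth plane curve of degree $d$, as in the statement, and a smooth plane curve of degree $d \ge 4$ has gonality $d-1 \ge 3$, hence is not hyperelliptic, so the generic-fiber hypothesis of Theorem~\ref{theorem:toric} holds.

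Next I would check the remaining hypothesis ``$r > 1$ or $g(\mathcal L) \ge 5$'' and dispose of the excluded degrees. For $d \ge 5$ one has $r = d - 3 \ge 2 > 1$, so Theorem~\ref{theorem:toric} applies verbatim (in particular recovering the case $d=5$ of \cite{saltermonodromy}). For $d \le 4$ the assertion is classical or trivial, as remarked above: when $d = 4$, $r = 1$, so $\phi_4$ is the trivial $\Z/1\Z$-valued spin structure, $\Mod(\Sigma_g)[\phi_4] = \Mod(\Sigma_g)$, and surjectivity of the monodromy is classical; when $d \le 3$ the fibers have genus at most $1$. With these cases in hand, it remains to translate the dichotomy of Theorem~\ref{theorem:toric}: there $r = d - 3$ is odd precisely when $d$ is even and even precisely when $d$ is odd. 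Thus for $d$ even, Theorem~\ref{theorem:toric} yields $\Gamma_d = \Mod(\Sigma_g)[\phi_d]$; for $d$ odd, it yields that $\Gamma_d$ is a finite-index subgroup of $\Mod(\Sigma_g)$ containing $\mathcal{T}_{\phi_d}$, and since $\Gamma_d \leqslant \Mod(\Sigma_g)[\phi_d]$ this gives $\Gamma_d$ of finite index in $\Mod(\Sigma_g)[\phi_d]$ as claimed.

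As this is a corollary of Theorem~\ref{theorem:toric}, there is no substantive obstacle. The only points requiring care are the identification of the highest root of $\mathcal O(d-3)$ with $d-3$ — which rests on $\Pic(\CP^2)$ being torsion-free of rank one — and the bookkeeping for the degrees $d \le 4$ that lie outside the hypotheses of Theorem~\ref{theorem:toric}, which is covered by the classical theory of curves of genus at most three.
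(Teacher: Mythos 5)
Your proposal is correct and follows essentially the same route as the paper, which obtains this statement directly as the special case $X = \CP^2$, $\mathcal L = \mathcal O(d)$ of Theorem \ref{theorem:toric}, with $r = d-3$ the highest root of the adjoint bundle $\mathcal O(d-3)$, $g = \binom{d-1}{2}$, the parity swap ($r$ odd iff $d$ even), and the degrees $d \le 4$ deferred to classical/trivial results exactly as you do. The numerical verifications you supply (torsion-freeness of $\Pic(\CP^2)$, non-hyperellipticity of smooth plane curves of degree $d \ge 4$, the hypothesis $r>1$ or $g \ge 5$) are the right ones and are handled correctly.
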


Theorem \ref{theorem:toric} also addresses a conjecture that was independently formulated by the author in \cite{saltermonodromy} in the case of $X = \CP^2$, and in full generality by  Cr\'etois--Lang \cite{CL}.

\begin{conjecture}\label{conjecture:image}
For any pair $(X, \mathcal L)$ as above, there is an equality
\[
\Gamma_{\mathcal L} = \Mod(\Sigma_{g(\mathcal L)})[\phi_{\mathcal L}].
\]
\end{conjecture}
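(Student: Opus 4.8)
The plan is to deduce Conjecture \ref{conjecture:image} directly from the already-established portion of Theorem \ref{theorem:toric}, by reducing the even-$r$ case to the odd-$r$ case via a suitable covering or modification of the bundle $\mathcal L$. The key point is that Theorem \ref{theorem:toric} gives the equality $\Gamma_{\mathcal L} = \Mod(\Sigma_{g(\mathcal L)})[\phi_{\mathcal L}]$ on the nose whenever $r$ is odd, and only a finite-index statement when $r$ is even; thus, to prove the conjecture in general, the task is precisely to promote the even-$r$ conclusion to an equality. First I would examine the structure of the group $\Mod(\Sigma_g)[\phi]$ for $\phi$ a $\Z/r\Z$-valued spin structure with $r$ even: it is generated by the admissible twists $\mathcal T_\phi$ together with possibly finitely many extra mapping classes coming from the failure of the change-of-coordinates principle in the even case (these detect the Arf-type invariant refining $\phi$). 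The containment $\mathcal T_{\phi_{\mathcal L}} \leqslant \Gamma_{\mathcal L}$ is already known, so it suffices to show that these finitely many generators also lie in $\Gamma_{\mathcal L}$.

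Second, I would realize those extra generators geometrically inside the linear system. The idea is that a loop in $\mathcal M(\mathcal L)$ encircling a more degenerate stratum of the discriminant $\mathcal D$ — for instance a curve acquiring two nodes, or a tacnode — contributes a product of commuting twists or a ``bounding pair''-type map, and by choosing the stratum appropriately one obtains exactly the mapping classes needed to enlarge $\mathcal T_{\phi_{\mathcal L}}$ up to all of $\Mod(\Sigma_{g(\mathcal L)})[\phi_{\mathcal L}]$. Concretely, one can use the explicit toric description: degenerating $\mathcal L$ along an edge of the moment polytope produces controlled local models of such singular fibers, and the associated monodromy can be computed by a Picard--Lefschetz-type formula. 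The admissibility constraint ensures these elements land in the correct stabilizer; the content is that they generate \emph{everything} in it modulo $\mathcal T_{\phi_{\mathcal L}}$.

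Alternatively — and this is the cleaner route — I would pass to an auxiliary line bundle. Given $(X,\mathcal L)$ with $r$ even, consider $\mathcal L' = \mathcal L \otimes N$ for a suitable $N \in \Pic(X)$ (or a twist by a power), chosen so that the highest root of $\mathcal L' \otimes K_X$ is odd, while the generic fiber remains non-hyperelliptic and $g(\mathcal L') \ge 5$. Then Theorem \ref{theorem:toric} applies to $\mathcal L'$ and gives $\Gamma_{\mathcal L'} = \Mod(\Sigma_{g(\mathcal L')})[\phi_{\mathcal L'}]$ exactly. The linear systems $\abs{\mathcal L}$ and $\abs{\mathcal L'}$ are related — for example $\abs{\mathcal L}$ embeds (up to the discriminant) as a sub-family of a degeneration of $\abs{\mathcal L'}$, or one can interpolate via a pencil — so that $\Gamma_{\mathcal L}$ is commensurable with, or contains a controlled piece of, $\Gamma_{\mathcal L'}$. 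Tracking the spin structures through this comparison (the adjoint bundle and its highest root change in a predictable way) should force $\Gamma_{\mathcal L}$ to fill out all of $\Mod(\Sigma_{g(\mathcal L)})[\phi_{\mathcal L}]$.

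\textbf{Main obstacle.} The hard part will be the even-$r$ enlargement step: showing that the finitely many ``exotic'' generators of $\Mod(\Sigma_g)[\phi]$ beyond the admissible twists are actually realized by monodromy. This requires either (i) a fine enough understanding of the degenerate strata of the discriminant and an explicit computation of the local monodromy there, matching it against a generating set for the stabilizer subgroup in the even case; or (ii) a robust comparison between linear systems for $\mathcal L$ and a companion bundle $\mathcal L'$ with odd highest root, which demands care because the two families live over genuinely different bases and the relevant spin structures must be identified compatibly. In either approach the combinatorics of the moment polytope and the behavior of the Arf-type refinement of $\phi_{\mathcal L}$ under these operations is where the real work lies; the group theory, once the right elements are in hand, reduces to the known description of $\Mod(\Sigma_g)[\phi]$.
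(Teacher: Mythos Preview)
The statement you are attempting to prove is labeled in the paper as a \emph{Conjecture}, not a theorem, and the paper does not prove it in full. The paper resolves it only for $r$ odd (via Theorem \ref{theorem:toric} and Proposition \ref{proposition:twistgen}); for $r$ even it establishes only that $\mathcal T_{\phi_{\mathcal L}} \leqslant \Gamma_{\mathcal L}$ has finite index in $\Mod(\Sigma_{g(\mathcal L)})[\phi_{\mathcal L}]$, and explicitly states that the even case remains open (see the remark at the start of Section \ref{section:deven}: ``At present, we do not know how to establish the analogue of Lemma \ref{lemma:converse}, owing to the fact that the Arf invariant provides an obstruction\ldots''). So there is no ``paper's own proof'' to compare against for the part of the conjecture that your proposal is actually addressing.

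As for the proposal itself, each route has a genuine gap. Your first approach presumes you can name the finitely many coset representatives of $\mathcal T_\phi$ in $\Mod(\Sigma_g)[\phi]$ and then realize them geometrically; but the paper does not determine this quotient, and indeed does not even decide whether $\mathcal T_\phi = \Mod(\Sigma_g)[\phi]$ for $r$ even --- that is precisely the obstruction it cites. Your second approach (monodromy around deeper strata of $\mathcal D$) is plausible in spirit but is not a computation: Picard--Lefschetz around tacnodes or multi-nodal fibers yields specific products of Dehn twists, and you would need to identify exactly which elements of $\Mod(\Sigma_g)[\phi]/\mathcal T_\phi$ these hit, which you have not done. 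Your third approach (passing to an auxiliary $\mathcal L'$ with odd highest root) has the most serious problem: changing $\mathcal L$ changes the genus $g(\mathcal L)$, so $\Gamma_{\mathcal L}$ and $\Gamma_{\mathcal L'}$ live in mapping class groups of \emph{different} surfaces, and there is no map between them in general; the suggested ``interpolation via a pencil'' or ``sub-family of a degeneration'' does not produce a comparison of monodromy groups without substantial further argument. In short, the proposal is an outline of possible attacks on an open problem, not a proof, and the paper offers no argument against which to check it.
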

Theorem \ref{theorem:toric} resolves Conjecture \ref{conjecture:image} in the affirmative whenever $r$ is odd, and shows that in the case $r$ even, $\Gamma_{\mathcal L}$ is at least of finite index in $\Mod(\Sigma_{g(\mathcal L)})[\phi_{\mathcal L}]$. 

Theorem \ref{theorem:toric} is proved using a combination of methods from toric geometry and the theory of the mapping class group. On the toric end of the spectrum, we make essential use of the powerful results developed by  Cr\'etois--Lang in \cite{CL}. The centerpiece of their theory is a combinatorial model for a curve $C_0 \in \mathcal M(\mathcal L)$ based around a convex lattice polygon. Their results give a description of vanishing cycles in terms of lattice points and line segments, and allow one to produce many elements of $\Gamma_{\mathcal L}$.  Cr\'etois--Lang developed their methods in order to address Question \ref{question:donaldson} and Conjecture \ref{conjecture:image} in the case $r \le 2$, and obtained complete answers in these cases. See \cite{CL} for the case $r = 1$, and \cite{CL2} for the case $r = 2$, as well as the case where the general fiber is hyperelliptic.

On the mapping class group side, we carry out an extensive investigation of the groups $\Mod(\Sigma_g)[\phi]$ and $\mathcal T_\phi$ mentioned above. We remark here that the theory of higher spin structures does not require the presence of a specific ample line bundle $\mathcal L$, and so we adjust notation accordingly and refer to Riemann surfaces $\Sigma_g$, spin structures $\phi$, etc. Our main result here is a general criterion for a collection of Dehn twists to generate (a finite-index subgroup of) $\Mod(\Sigma_g)[\phi]$, given in Theorem \ref{theorem:networkgenset}.

\para{Outline of the paper} The bulk of the paper (Sections \ref{section:mcg} - \ref{section:network}) is devoted to developing the mapping class group technology necessary to show that the vanishing cycles investigated by Cr\'etois--Lang generate a finite-index subgroup of the mapping class group. This culminates in Theorem \ref{theorem:networkgenset}. Portions of Theorem \ref{theorem:networkgenset} are established earlier in Proposition \ref{proposition:twistgen} and Proposition \ref{proposition:twistgeneven}. 

Sections \ref{section:mcg} - \ref{section:action} contain preliminary results that are used throughout the paper. Section \ref{section:mcg} collects the necessary background on mapping class groups; these results are standard and are included so as to fix notation and terminology, and to serve as a guide to the reader approaching the paper from a background in toric geometry. Section \ref{section:spin} presents the basic theory of higher spin structures, building off the foundational work of Humphries--Johnson \cite{HJ}. Section \ref{section:action} describes the action of the mapping class group on the set of higher spin structures. This yields several crucial corollaries (Corollaries \ref{corollary:allvalues}, \ref{corollary:allvalueseven}, and \ref{corollary:dneven}) concerning the existence of configurations of curves with prescribed properties which are used extensively in subsequent sections.

Theorem \ref{theorem:networkgenset} gives a criterion for a collection of Dehn twists to generate the so-called {\em admissible subgroup} $\mathcal T_\phi$ associated to a higher spin structure $\phi$. A study of the admissible subgroup is sufficient to answer Question \ref{question:donaldson}. The reader interested only in this portion of Theorem \ref{theorem:toric} can skip Sections \ref{section:dodd} and \ref{section:deven} and jump directly from Section \ref{section:action} to Section \ref{section:trick}. 

The proof of Theorem \ref{theorem:networkgenset} is carried out in Sections \ref{section:trick} - \ref{section:network}. Section \ref{section:trick} establishes the connectivity of certain simplicial complexes acted on by the stabilizer subgroup of a higher spin structure. These results are used in the argument of Section \ref{section:push}, and also underlie the method by which the admissible subgroup is used to study the set of vanishing cycles. Section \ref{section:push} is devoted to a study of certain subgroups of the admissible subgroup; the main result Proposition \ref{proposition:submakesT} furnishes a generating set for $\mathcal T_\phi$ in terms of these subgroups. Section \ref{section:network} introduces the notion of a {\em network}; ultimately a network is a technical device used to factor the generators given in Proposition \ref{proposition:submakesT} into products of Dehn twists. Theorem \ref{theorem:networkgenset} gives a sufficient condition, formulated in the language of networks, for a collection of Dehn twists to generate a subgroup containing the admissible subgroup.

The portion of Theorem \ref{theorem:toric} that goes beyond Question \ref{question:donaldson} concerns establishing that the admissible subgroup is finite-index in the mapping class group. This is the content of Sections \ref{section:dodd} and \ref{section:deven}, which treat the case where the $\Z/r\Z$-valued spin structure under study has $r$ odd or even, respectively. The arguments for these two cases are substantially different, owing to the fact that in the case of $r$ even, the higher spin structure has an Arf invariant which must be accounted for in various guises. 

The net result of Sections \ref{section:mcg} - \ref{section:network} is a criterion for a finite collection of Dehn twists to generate a finite-index subgroup of the mapping class group. In the final two sections, these results are applied in the setting of monodromy groups of linear systems on toric surfaces. Section \ref{section:CL} contains the necessary background material on toric surfaces, concentrating on the work of Cr\'etois--Lang describing a particular finite collection of vanishing cycles. Section \ref{section:proof} exhibits a network amongst the set of vanishing cycles discussed in Section \ref{section:CL} and verifies that this network satisfies the hypotheses of Theorem \ref{theorem:networkgenset} in order to obtain Theorem \ref{theorem:toric}.

\para{Acknowledgements} The author would like to extend his warmest thanks to R. Cr\'etois and L. Lang for helpful discussions of their work. He would also like to acknowledge C. McMullen for some insightful comments on a preliminary draft, and M. Nichols for a productive conversation. A special thanks is due to an anonymous referee for a very careful reading of the preprint and for many useful suggestions, both mathematical and expository.

\section{Mapping class groups}\label{section:mcg}
This section collects background material on mapping class groups that will be used throughout the arguments in Sections \ref{section:spin} through \ref{section:network}. Most of the material can be found in \cite{FM} and so will only be touched on briefly. The exception to this is the $D_n$ relation of Section \ref{subsection:relations}, which will consequently be dealt with in greater detail.

\subsection{Basics}\label{section:basics} The material in this section is almost certainly well-known to a reader conversant in mapping class groups, but is included so as to fix notation and terminology.

\para{Genus, boundary, punctures} All surfaces under consideration are oriented and of finite type. A surface of genus $g$ with $n$ punctures and $b$ boundary components is denoted by $\Sigma_{g,b}^n$. When one or more of $b, n = 0$, the corresponding decoration will be omitted. 

\para{Intersection numbers} Let $a,b$ be simple closed curves on a surface $S$. Often we will confuse the distinction between a simple closed curve and its isotopy class. The {\em geometric intersection number} between $a,b$ will be notated $i(a,b)$ (see \cite[Section 1.2.3]{FM}). For {\em oriented} simple closed curves $a,b$, the algebraic intersection number is denoted $\pair{a,b}$. Of course, algebraic intersection depends only on the homology classes $[a],[b] \in H_1(S; \Z)$.

\para{Mapping class groups} Let $\Sigma_{g,b}^n$ be a surface. The {\em mapping class group} of $\Sigma_{g,b}^n$, written $\Mod(\Sigma_{g,b}^n)$, is defined as
\[
\Mod(\Sigma_{g,b}^n) := \pi_0(\Diff^+(\Sigma_{g,b}^n, \partial \Sigma_{g,b}^n)),
\]
where $\Diff^+(\Sigma_{g,b}^n, \partial \Sigma_{g,b}^n)$ denotes the group of orientation-preserving diffeomorphisms of $\Sigma_{g,b}^n$ that restrict to the identity on the boundary of $\Sigma_{g,b}^n$ and fix the punctures {\em pointwise} (not merely setwise, as some authors adopt).

\begin{figure}
\labellist
\small
\pinlabel $a_1$ [l] at 53.6 80.8
\pinlabel $a_2$ [l] at 96 80.8
\pinlabel $b_1$ [t] at 44 40
\pinlabel $b_2$ [t] at 87.2 40
\pinlabel $b_3$ [t] at 133.6 40
\pinlabel $b_g$ [tr] at 262.4 40
\pinlabel $c_1$ [b] at 66.4 62.4
\pinlabel $c_2$ [b] at 110.4 62.4
\pinlabel $d_1$ [bl] at 341.6 92.8
\pinlabel $d_2$ [bl] at 354.4 71.2
\pinlabel $d_3$ [tl] at 357.6 35.6
\pinlabel $d_4$ [tl] at 347.2 17.6
\pinlabel $d_5$ [tl] at 329.6 0.8
\endlabellist
\includegraphics{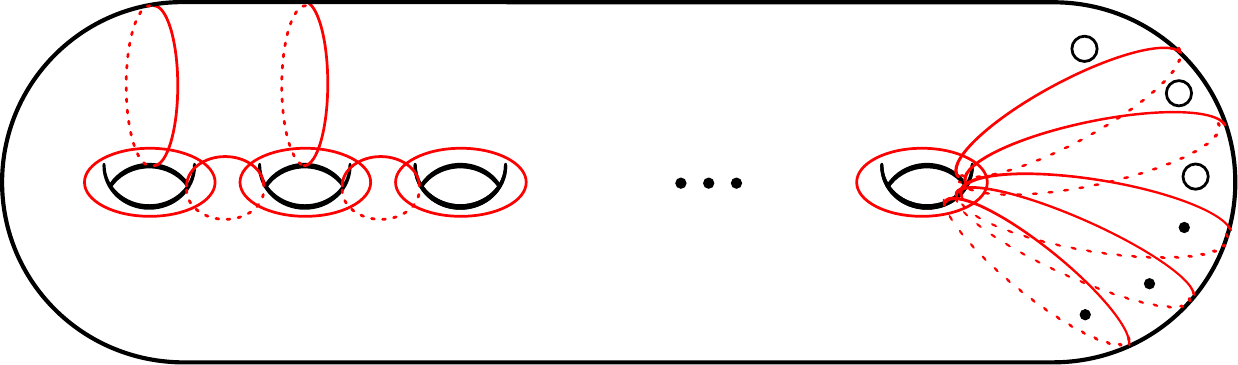}
\caption{The standard generators for $\Sigma_{g,3}^3$.}
\label{figure:humph}
\end{figure}

\para{The standard generators} For a simple closed curve $a$ on $\Sigma_{g,b}^n$, the {\em left-handed} Dehn twist about $a$ is written $T_a$. For $g \ge 2$, the {\em standard generators} form a generating set for $\Mod(\Sigma_{g,b}^n)$ consisting of the Dehn twists about the curves $a_1, a_2, b_1, \dots, b_g, c_1, \dots, c_{g-1}, d_1, \dots, d_{b+n-1}$ shown in Figure \ref{figure:humph}. 

\para{The change-of-coordinates principle} The classification of surfaces theorem asserts that if $S, S'$ are two (connected and orientable) surfaces of finite type with the same genus, number of punctures, and number of boundary components, then there is a diffeomorphism $f: S \to S'$. This is often exploited in the study of mapping class groups in the guise of the ``change-of-coordinates principle''. It is difficult to write down a single, all-encompassing statement of the change-of-coordinates principle, but informally, it states that {\em any configuration of curves, arcs, and/or subsurfaces of a surface $S$ is determined up to diffeomorphism by combinatorial information alone.} In the present paper, the change-of-coordinates principle will often be invoked tacitly. The reader interested in a more thorough discussion of the change-of-coordinates principle is referred to \cite[Section 1.3]{FM}. 

One consequence of the change-of-coordinates principle is that it becomes easy to understand the $\Mod(S)$ orbits of many different kinds of configurations. As an example, we discuss here the action on {\em geometric symplectic bases} for $S$.
\begin{definition}\label{definition:gsb}
Let $S$ be a surface of genus $g \ge 0$ with $n\ge 0$ boundary components and $b \ge 0$ punctures. A {\em geometric symplectic basis} for $S$ is a collection of oriented simple closed curves $\mathcal B = \{\alpha_1, \beta_1, \dots, \alpha_g, \beta_g\}$ satisfying the following properties:
\begin{enumerate}
\item $i(a_i, b_i) = 1$ for each $i = 1, \dots, g$, and all other pairs of elements of $\mathcal B$ are disjoint,
\item $\pair{[a_i], [b_i]} = 1$ for each $i = 1, \dots, g$.
\end{enumerate}
\end{definition}

\begin{remark}
The (homology classes of the) curves in a geometric symplectic basis form a basis for $H_1(S;\Z)$ in the sense of linear algebra only when $n + b \le 1$. In this paper, geometric symplectic bases are used to study $\Z/r\Z$-valued spin structures. Proposition \ref{proposition:HCC} and Theorem \ref{theorem:gsb} together imply that a $\Z/r\Z$-valued spin structure is determined by its ``signature'' (Definition \ref{definition:signature}) in combination with its values on a geometric symplectic basis. 
\end{remark}

The following is a typical statement that is proved using the change-of-coordinates principle.

\begin{lemma}\label{lemma:gsb}
Let $\mathcal B$ and $\mathcal B'$ be two geometric symplectic bases for $S$. Then there is a diffeomorphism $f: S \to S$ such that $f(\mathcal B) = \mathcal B'$. 
\end{lemma}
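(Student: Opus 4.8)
The plan is to reduce the statement to the classical change-of-coordinates principle for cutting surfaces along systems of curves. First I would pass from the geometric data of a symplectic basis $\mathcal B = \{\alpha_1, \beta_1, \dots, \alpha_g, \beta_g\}$ to the topological type of a regular neighborhood together with its complement. Since $i(\alpha_i, \beta_i) = 1$ and all other pairs of curves in $\mathcal B$ are disjoint, a regular neighborhood $N(\mathcal B)$ of the union $\alpha_1 \cup \beta_1 \cup \dots \cup \alpha_g \cup \beta_g$ is a disjoint union of $g$ subsurfaces, each a one-holed torus $\Sigma_{1,1}$ containing a single pair $(\alpha_i, \beta_i)$. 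I would first treat the case of a single pair $(\alpha, \beta)$ on a one-holed torus: any two such pairs are carried to one another by a diffeomorphism of $\Sigma_{1,1}$ — this is the standard fact that $\Mod(\Sigma_{1,1})$ acts transitively (indeed as $\SL_2(\Z)$ acts) on pairs of oriented simple closed curves meeting once, and the condition $\pair{[\alpha],[\beta]} = 1$ in (2) pins down the orientations so that the diffeomorphism can be taken to respect them. This handles the local model.

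Next I would assemble the local models. Both $N(\mathcal B)$ and $N(\mathcal B')$ are disjoint unions of $g$ copies of $\Sigma_{1,1}$, and the complements $S \setminus N(\mathcal B)$ and $S \setminus N(\mathcal B')$ are subsurfaces of $S$ with the same genus ($0$, in fact, once the handles are removed), the same number of punctures $b$, and the same number of boundary components (namely $n + g$: the $n$ original ones plus one from each excised handle). By the classification of surfaces, there is a diffeomorphism between these two complementary subsurfaces. The point requiring care is that the diffeomorphism of $N(\mathcal B)$ onto $N(\mathcal B')$ and the diffeomorphism of the complements must be chosen to agree on the boundary circles where they are glued; this is where I would invoke the change-of-coordinates principle in the form given in \cite[Section 1.3]{FM}, which guarantees that a diffeomorphism of a subsurface extends over the complement provided the complementary pieces match combinatorially. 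Concretely, one first fixes the diffeomorphism on $N(\mathcal B)$, then extends it over a collar of the gluing circles (using that any orientation-preserving diffeomorphism of a circle is isotopic to the identity or to a reflection, and orientations are controlled), and finally extends over the rest of the complement using the classification of surfaces.

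The main obstacle is the bookkeeping at the gluing interface: one must be sure that the orientations induced on the $g$ new boundary circles of $S \setminus N(\mathcal B)$ are matched correctly, and that the complementary surface really does have the advertised topological type independent of $\mathcal B$. Both of these are consequences of the fact that each handle neighborhood is a standard $\Sigma_{1,1}$ and that $S$ itself is a fixed surface, so the Euler characteristic forces the complement to have the stated invariants; but it is the kind of point that the change-of-coordinates principle is designed to absorb, and I would cite it rather than belabor it. Finally, I would note that since $\Diff^+(S)$ is what is being used here (orientations on curves are prescribed, not a pointwise condition on punctures beyond what Lemma \ref{lemma:gsb} as stated requires), there is no subtlety about the punctures: the diffeomorphism need only match the two configurations of curves, and the punctures can be permuted freely within the complement.
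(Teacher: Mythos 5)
Your proposal is correct and takes essentially the same route as the paper, which offers no argument beyond invoking the change-of-coordinates principle of Farb--Margalit (Section 1.3); your cut-along-the-handle-neighborhoods argument, matching the $g$ one-holed tori and then the complements via the classification of surfaces and adjusting on collars, is precisely the standard proof of that principle for this configuration. The only point to tighten is that the Euler characteristic computation alone does not give connectivity of the complement; connectivity does hold here because each $N(\alpha_i \cup \beta_i)$ has a single boundary circle and lies entirely on one side of it, so no sub-collection of these boundary circles can separate the complement into more than one piece.
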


\subsection{The Birman exact sequence}\label{subsection:birman}
A reference for this subsection is \cite[Section 4.2]{FM}. Consider a surface $\Sigma_{g,b}^n$ with $n \ge 1$ and $2g+b+n \ge 4$. There is an inclusion $\Sigma_{g,b}^n \into \Sigma_{g,b}^{n-1}$ obtained by filling $p$ in. This induces the {\em Birman exact sequence}
\begin{equation}\label{equation:birman1}
1 \to \pi_1(\Sigma_{g,b}^{n-1}, p) \to \Mod(\Sigma_{g,b}^n) \to \Mod(\Sigma_{g,b}^{n-1}) \to 1. 
\end{equation}

There is a slight variation on the Birman exact sequence where one fills in a boundary component with a closed disk, originally due to Johnson. In order to formulate this, we recall that the unit tangent bundle to a surface $S$ is written $UTS$. Then the inclusion $\Sigma_{g,b}^n \to \Sigma_{g,b-1}^n$ induces the short exact sequence
\begin{equation}\label{equation:birman2}
1 \to \pi_1(UT\Sigma_{g,b-1}^n, \tilde p) \to \Mod(\Sigma_{g,b}^n) \to \Mod(\Sigma_{g,b-1}^n) \to 1,
\end{equation}
where $\tilde p$ is a unit tangent vector based at $p$. In both situations, the kernels admit descriptions in terms of Dehn twists. Consider first the case of (\ref{equation:birman1}). Let $\alpha$ be an {\em embedded, oriented} simple closed curve based at $p$, corresponding to an element $\alpha \in \pi_1(\Sigma_{g,b}^{n-1}, p)$. Let $\alpha_L$ (resp. $\alpha_R$) denote the left (resp. right) side of a neighborhood of $\alpha$. Both $\alpha_L, \alpha_R$ are simple closed curves on $\Sigma_{g,b}^n$. Then $\alpha \in \pi_1(\Sigma_{g,b}^{n-1}, p)$ corresponds to $T_{\alpha_L} T_{\alpha_R}^{-1} \in \Mod(\Sigma_{g,b}^n)$. The embedding $P: \pi_1(\Sigma_{g,b}^{n-1}, p) \to \Mod(\Sigma_{g,b}^n)$ is known as the {\em point-pushing map}, and $\pi_1(\Sigma_{g,b}^{n-1})$ is often referred to as the {\em point-pushing subgroup} of $\Mod(\Sigma_{g,b}^n)$.

It is a basic topological fact that for any surface $\Sigma_{g,b}^{n-1}$, there exists a collection of simple closed curves $\alpha_1, \dots, \alpha_k$ based at $p$, such that $\{\alpha_1, \dots, \alpha_k\}$ generates $\pi_1(\Sigma_{g,b}^{n-1},p)$. In practice, this means that to exhibit $\pi_1(\Sigma_{g,b}^n, p)$ as a subgroup of some group $H \leqslant \Mod(\Sigma_{g,b}^n)$, it suffices to exhibit this finite collection of multitwists.

In the case of (\ref{equation:birman2}), everything is much the same. Let $\Sigma_{g,b}^n \into \Sigma_{g,b-1}^n$ be an inclusion corresponding to capping off a boundary component $\Delta$ of $\Sigma_{g,b}^n$. Let $p \in \Sigma_{g,b-1}^n$ be a point on the interior of this new disk, and $\tilde p$ a tangent vector at $p$. Suppose that $\tilde \alpha \in \pi_1(UT\Sigma_{g,b-1}^n,\tilde p)$ corresponds to a {\em framed} simple closed curve $\alpha$ based at $\tilde p$. We define $\alpha_L$ and $\alpha_R$ as before. Then 
\[
P(\tilde \alpha) = T_{\alpha_L} T_{\alpha_R}^{-1} T_\Delta^k
\]
where $k \in \Z$ is the winding number of the tangent vector field specified by $\tilde \alpha$, relative to the tangential framing of the underlying curve $\alpha$. The subgroup $\pi_1(UT\Sigma_{g,b-1}^n, \tilde p)$ is known as the {\em disk-pushing subgroup} of $\Mod(\Sigma_{g,b}^n)$. 

There is an analogous set of ``geometric'' generators for $\pi_1(UT\Sigma_{g,b-1}^n, \tilde p)$. Let $\alpha_1, \dots, \alpha_k$ be a collection of $C^1$-embedded simple closed curves on $\Sigma_{g,b-1}^n$ based at $p$ such that $\pi_1(\Sigma_{g,b-1}^n,p) = \pair{\alpha_1, \dots, \alpha_k}$ as above. Each $\alpha_i$ determines an element $\tilde \alpha_i \in \pi_1(\Sigma_{g,b-1}^n, \tilde p)$ via the so-called {\em Johnson lift}, whereby $\alpha_i$ is framed using the forward-pointing tangent vector. Suppose that each $\tilde \alpha_i$ is based at some common tangent vector $\tilde p$. Then $\pi_1(UT\Sigma_{g,b-1}^n, \tilde p)$ has a generating set of the following form:
\[
\pi_1(UT\Sigma_{g,b-1}^n, \tilde p) = \pair{\tilde \alpha_1, \dots, \tilde \alpha_k, \zeta},
\]
where $\zeta$ is the loop around the $S^1$ fiber in the fibration $S^1 \to UT\Sigma_{g,b-1}^n \to \Sigma_{g,b-1}^n$. In terms of Dehn twists, the Johnson lifts $\tilde \alpha_i$ correspond to mapping classes $T_{\alpha_{i,L}} T_{\alpha_{i,R}}^{-1}$ as before, while $\zeta$ corresponds to $T_\Delta$. 

\subsection{Relations}\label{subsection:relations}
In this subsection we collect various relations in the mapping class group that will be used throughout the paper.

\para{The braid relation} Suppose $a,b$ are simple closed curves satisfying $i(a,b) = 1$. Then the corresponding Dehn twists satisfy the {\em braid relation}:
\[
T_a T_b T_a = T_b T_a T_b.
\] 
We will also employ the following alternative form, formulated in terms of the curves $a,b$ themselves:
\[
T_a T_b (a) = b.
\]

\para{The chain relation} A {\em chain} of simple closed curves is a sequence $(a_1, \dots, a_n)$ of simple closed curves such that $i(a_i, a_{i+1}) = 1$ and $i(a_i, a_j) = 0$ otherwise. Let $\nu$ denote a regular neighborhood of a chain of length $n$, where the representative curves $a_1, \dots, a_n$ are in minimal position. When $n$ is odd, $\partial \nu$ has two components $\Delta_1$ and $\Delta_2$; for $n$ even, $\partial \nu = \Delta$ is a single (necessarily separating) curve. Abusing terminology, we will speak of the boundary of a chain itself, by which we mean the boundary of $\nu$. Given a subsurface $S$ with $1$ or $2$ boundary components, a chain $a_1, \dots, a_n$ of curves on $S$ is {\em maximal} if there is a deformation retraction of $S$ onto $a_1 \cup \dots \cup a_n$. The following appears as  \cite[Proposition 4.12]{FM}.

\begin{proposition}[Chain relation]\label{proposition:chain}
For $n$ odd, 
\[
(T_{a_1} \dots T_{a_n})^{n+1} = T_{\Delta_1} T_{\Delta_2},
\]
and for $n$ even,
\[
(T_{a_1} \dots T_{a_n})^{2n+2} = T_\Delta.
\]
\end{proposition}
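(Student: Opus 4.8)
The plan is to reduce to a maximal chain and then to transport the relation from the braid group via a branched double cover. First, for an arbitrary chain $a_1,\dots,a_n$ with regular neighborhood $\nu$, the inclusion $\nu\hookrightarrow S$ induces a homomorphism $\Mod(\nu,\partial\nu)\to\Mod(S)$ carrying the twist about $a_i$ (in $\nu$) to $T_{a_i}$ and the twist about the $j$th boundary component of $\nu$ to $T_{\Delta_j}$; since a homomorphism preserves relations, it suffices to prove both identities in $\Mod(\nu,\partial\nu)$, where $\nu\cong\Sigma_{(n-1)/2,2}$ when $n$ is odd and $\nu\cong\Sigma_{n/2,1}$ when $n$ is even. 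Now realize $\nu$ as the double cover $q\colon\nu\to D$ of a disk $D$ branched over $n+1$ collinear points $p_0,\dots,p_n$; a monodromy and Euler-characteristic count recovers the stated genus and number of boundary components, with $q^{-1}(\partial D)=\Delta_1\sqcup\Delta_2$, each mapped homeomorphically, when $n$ is odd, and $q^{-1}(\partial D)=\Delta$ a connected double cover of $\partial D$ when $n$ is even. The arc $\gamma_i$ joining $p_{i-1}$ to $p_i$ lifts to a simple closed curve isotopic to $a_i$, and these curves form our chain.

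Next I would invoke the Birman--Hilden correspondence (see, e.g., \cite[Section~9.4]{FM}) to obtain a homomorphism $B_{n+1}=\Mod(D,\{p_0,\dots,p_n\})\to\Mod(\nu,\partial\nu)$ sending the standard half-twist $\sigma_i$ about $\gamma_i$ to $T_{a_i}$, orienting $\nu$ so that a left-handed half-twist lifts to a left-handed Dehn twist. In $B_{n+1}$ one has the classical identity $(\sigma_1\cdots\sigma_n)^{n+1}=\Delta_{n+1}^2$, the full twist, which as a mapping class is $T_{\partial D}$, the twist about a curve enclosing all $n+1$ points. When $n$ is odd, the preimage of a collar of $\partial D$ is a disjoint union of collars of $\Delta_1$ and $\Delta_2$, so $T_{\partial D}$ lifts to $T_{\Delta_1}T_{\Delta_2}$; applying the homomorphism to the braid identity yields $(T_{a_1}\cdots T_{a_n})^{n+1}=T_{\Delta_1}T_{\Delta_2}$.

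When $n$ is even, the preimage of a collar of $\partial D$ is a single annulus mapping down by a connected degree-two cover, and a direct computation in this annulus shows that $T_{\partial D}$ admits no lift fixing $\partial\nu$ (its best lift is a ``half twist'' composed with the deck involution), whereas $T_{\partial D}^2$ lifts to $T_\Delta$. Applying the homomorphism to $(\sigma_1\cdots\sigma_n)^{2(n+1)}=\Delta_{n+1}^4=T_{\partial D}^2$ then gives $(T_{a_1}\cdots T_{a_n})^{2n+2}=T_\Delta$. The main obstacle is the careful bookkeeping in the Birman--Hilden step: establishing the lifting map as an honest homomorphism with the correct handedness and boundary behavior, and correctly diagnosing the failure of $T_{\partial D}$ to lift when $n$ is even, since this is precisely what produces the factor of two in the exponent $2n+2$ as opposed to $n+1$. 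Everything else is a covering-space computation together with the standard presentation of the braid group and the identification of its center.
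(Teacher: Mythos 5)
This proposition is quoted background in the paper: no proof is given, only a citation to \cite[Proposition 4.12]{FM} together with a remark interpreting the relation via the center of the Artin group $A(A_n)$ and Matsumoto's work. Your argument is, in essence, the standard proof that the cited source gives (the Birman--Hilden derivation in \cite[Section 9.4]{FM}): reduce to the regular neighborhood $\nu$, realize $\nu$ as the double cover of a disk branched at $n+1$ points so that the half-twists $\sigma_i$ lift to the $T_{a_i}$, and push the braid-group identity $(\sigma_1\cdots\sigma_n)^{n+1}=T_{\partial D}$ through the lifting homomorphism, with the parity of $n+1$ governing whether $\partial D$ has two homeomorphic lifts or one connected double-covering lift. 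Your Euler characteristic and boundary-monodromy bookkeeping is correct, and the mechanism producing the exponent $2n+2$ in the even case is correctly located.

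One sentence in the even case is imprecise, though the argument survives: it is not true that $T_{\partial D}$ ``admits no lift fixing $\partial\nu$.'' The lift supported in the preimage collar rotates $\Delta$ by half a turn, but composing it with the deck involution --- exactly the element you name parenthetically --- does fix $\partial\nu$ pointwise; the correct statement is that this boundary-fixing lift (the image of the full twist under the Birman--Hilden correspondence, a lift of the hyperelliptic involution) is not a power of $T_\Delta$, while its square is $T_\Delta$, which is what converts $(\sigma_1\cdots\sigma_n)^{n+1}=T_{\partial D}$ into $(T_{a_1}\cdots T_{a_n})^{2n+2}=T_\Delta$. You should also make explicit the change-of-coordinates step identifying the lifted chain with the given chain $a_1,\dots,a_n$ inside $\nu$ (both are maximal chains in $\nu$, so a homeomorphism of $\nu$ carries one to the other), but these are matters of wording rather than gaps in the approach.
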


\begin{remark}The intersection pattern of a chain of $n$ simple closed curves is recorded by the Dynkin diagram of type $A_n$, where vertices in the graph are adjacent if the corresponding curves intersect, and are nonadjacent if the curves are disjoint. Such a chain of curves determines a homomorphism from the Artin group $A(A_n)$ of type $A_n$ into the mapping class group $\Mod(\nu)$, where generators of $A(A_n)$ are sent to Dehn twists about the corresponding curves. 

Under this homomorphism, the chain relation is a consequence of the fact that $A(A_n)$ has nontrivial center. The twist(s) about the boundary component(s) appearing on the right-hand side of the expressions in Proposition \ref{proposition:chain} are elements of the center of $\Mod(\nu)$, while the left-hand side merely gives the expression for a generator of $Z(A(A_n))$ as a word in the standard generators of $A(A_n)$. In \cite[Section 2.4]{matsumoto}, Matsumoto explains how to determine the precise expression for this central element as a Dehn multitwist; this is the principle underlying the ``$D_n$ relation'' given in Proposition \ref{proposition:dnrel} below.
\end{remark}

\para{The $\bm{D_n}$ relation} There is an analogous (though less ubiquitous) relation that arises from a configuration of curves whose intersection pattern is modeled on the Dynkin diagram of type $D_n$. Proposition \ref{proposition:dnrel} below is the specialization of \cite[Proposition 2.4]{matsumoto} to the case of an Artin group of type $D_n$. The case of $n$ odd is treated explicitly in \cite[Theorem 1.5]{matsumoto}, while the case of $n$ even is given an alternate proof in \cite[Proposition 4.5]{saltermonodromy}.

\begin{figure}
\labellist
\small
\pinlabel $\Delta_0$ [tl] at 22.4 13.6
\pinlabel $\Delta_1$ [l] at 332 96
\pinlabel $\Delta_1'$ [l] at 332 24
\pinlabel $\Delta_2$ [bl] at 292 100
\pinlabel $a$ [l] at 68.8 96
\pinlabel $a'$ [l] at 68.8 24
\pinlabel $c_1$ [tr] at 40.8 48
\pinlabel $c_2$ [bl] at 93 76.8
\pinlabel $c_3$ [tr] at 150 46
\pinlabel $c_4$ [bl] at 168 76.8
\pinlabel $c_{2g-1}$ [tr] at 260 48
\pinlabel $c_{2g}$ [br] at 288 76
\endlabellist
\includegraphics{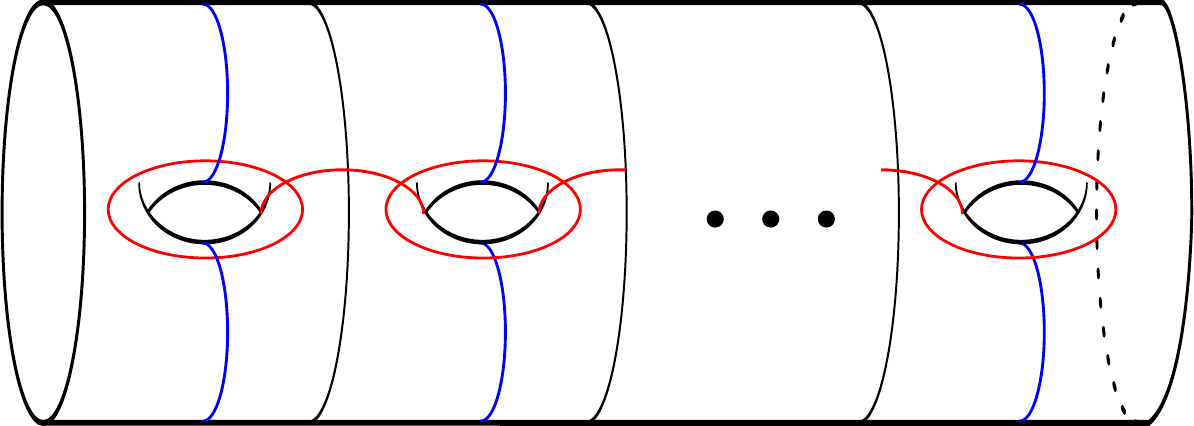}
\caption{The configuration of curves used in the $D_n$ relation. Note the presence of the unlabeled curve $c_{2g+1}$ on the far right side.}
\label{figure:dnrel}
\end{figure}

\begin{proposition}[$D_n$ relation]\label{proposition:dnrel}
Let $n \ge 3$ be given, and express $n = 2g+1$ or $n = 2g+2$ according to whether $n$ is odd or even. With reference to Figure \ref{figure:dnrel}, let $H_n$ be the group generated by elements of the form $T_x$, with $x \in \mathscr D_n$ one of the curves below:
\begin{align*}
\mathscr D_{n} &= \{a, a', c_1, \dots, c_{n-2}\}.
\end{align*}
Then for $n = 2g+1$ odd,
\[
T_{\Delta_0}^{2g-1} T_{\Delta_2} \in H_n,
\]
and for $n = 2g+2$ even,
\[
T_{\Delta_0}^{g} T_{\Delta_1} T_{\Delta_1'}\in H_n.
\]
\end{proposition}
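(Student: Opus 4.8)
The plan is to realize the stated identities as specializations of the general principle, described in the preceding remark, that a configuration of curves whose intersection graph is a tree $T$ induces a homomorphism $A(T) \to \Mod(\nu)$ from the associated Artin group, under which the generator of the center $Z(A(T))$ (when it exists) maps to a multitwist about the boundary of the regular neighborhood $\nu$. For $T = D_n$ with $n \geq 3$, the Artin group $A(D_n)$ is of finite type, hence has nontrivial center, so there is a distinguished central element $z_n \in A(D_n)$; following Matsumoto \cite[Proposition 2.4]{matsumoto}, one writes $z_n$ explicitly as a positive word in the standard generators, and the image of $z_n$ in $\Mod(\nu)$ is the sought-after product of boundary twists. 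The group $H_n$ in the statement is precisely the image of this homomorphism (the Dehn twists $T_x$, $x \in \mathscr D_n$, being the images of the Artin generators), so showing $z_n \mapsto T_{\Delta_0}^{2g-1} T_{\Delta_2}$ (resp. $T_{\Delta_0}^g T_{\Delta_1} T_{\Delta_1'}$) gives the claim.

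First I would set up the configuration of Figure \ref{figure:dnrel} carefully: the chain $c_1, \dots, c_{n-2}$ together with the two curves $a, a'$ forming the fork at one end realizes the $D_n$ Dynkin diagram, and I would identify the regular neighborhood $\nu$ of $a \cup a' \cup c_1 \cup \dots \cup c_{n-2}$ as a subsurface whose boundary components are exactly $\Delta_0, \Delta_1, \Delta_1', \Delta_2$ in the appropriate cases (a Euler-characteristic count pins down the genus and number of boundary components, and then the change-of-coordinates principle identifies the surface). Next, for $n$ odd I would invoke \cite[Theorem 1.5]{matsumoto} directly, which treats this case explicitly and yields $T_{\Delta_0}^{2g-1} T_{\Delta_2} \in H_n$ after matching up the boundary labels; for $n$ even I would instead cite the alternate derivation in \cite[Proposition 4.5]{saltermonodromy}, or re-derive it by combining the $D_n$-Artin center computation with the chain relation (Proposition \ref{proposition:chain}) applied to the sub-chain $a, c_1, \dots, c_{n-2}$ and to $a', c_1, \dots, c_{n-2}$. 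In either case the content is bookkeeping: determine the exponents of the boundary twists from the word length of $z_n$ and the structure of the fork.

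The main obstacle is the precise identification of which boundary twists appear, and with what multiplicities — that is, translating Matsumoto's abstract statement about $Z(A(D_n))$ into the concrete multitwist $T_{\Delta_0}^{2g-1}T_{\Delta_2}$ or $T_{\Delta_0}^g T_{\Delta_1}T_{\Delta_1'}$. This requires knowing how the center of $A(D_n)$ is expressed (it is $\delta^2$ or $\delta$ for a suitable Garside-type element $\delta$, depending on the parity of $n$, reflecting whether the longest element of the Weyl group $W(D_n)$ is central), and then computing the image multitwist via the normalization in \cite[Section 2.4]{matsumoto}; the parity split in the statement is exactly the shadow of this dichotomy in $W(D_n)$. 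Since both parity cases are already in the literature with proofs (\cite{matsumoto} for $n$ odd, \cite{saltermonodromy} for $n$ even), the cleanest route is to quote those results after fixing notation to match Figure \ref{figure:dnrel}; I would include a short verification that the curve configuration here is the one used there, so that the boundary-component labels correspond correctly.
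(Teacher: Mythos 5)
Your proposal follows the same route the paper itself takes: the paper offers no independent proof of Proposition \ref{proposition:dnrel}, instead presenting it as the specialization of \cite[Proposition 2.4]{matsumoto} to the $D_n$ Artin group, with the odd case cited to \cite[Theorem 1.5]{matsumoto} and the even case to \cite[Proposition 4.5]{saltermonodromy}, exactly as you propose. Your added remarks about identifying the regular neighborhood and its boundary components and matching labels with Figure \ref{figure:dnrel} are the right bookkeeping and are consistent with the paper's discussion surrounding the chain relation and Matsumoto's computation of the central element as a multitwist.
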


The $D_n$ relation has some useful consequences which we record in Corollary \ref{corollary:dn} below. It is necessary to first describe the curves $C_k$ that will appear in the statement. For $1 \le k \le g+1$, let $\nu_k$ be a regular neighborhood of the subconfiguration $\mathscr D_{2k+1} \subset \mathscr D_n$. Each such $\nu_k$ is a surface of genus $k$ with two boundary components. One of these is $\Delta_0$; the other is defined to be the curve $C_k$. Note in particular that $C_g = \Delta_2$ and that $C_{g+1}$ is the unlabeled boundary component of the ambient surface on the far right side of Figure \ref{figure:dnrel}.
\begin{corollary}\label{corollary:dn} Fix notation as in Proposition \ref{proposition:dnrel}, and for $1 \le \ell \le 2g+3$, consider the configurations
\[
\mathscr D_{\ell} = \{a, a', c_1, \dots, c_{\ell-2}\}
\]
as in Figure \ref{figure:dnrel}. Let $H_{2g+3}^+$ be the group generated by $H_{2g+3}$ and the Dehn twist $T_{\Delta_1}$. Then the following assertions hold:
\begin{enumerate}
\item $T_{\Delta_1'} \in H_{2g+3}^+$,
\item $T_{C_k}^m \in H_{2g+3}^+$ for any $1 \le k \le g+1$ and any $m$ such that $(2k - 1)m$ divides $g$.
\end{enumerate}
\end{corollary}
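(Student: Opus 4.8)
The plan is to bootstrap from the $D_n$ relation (Proposition \ref{proposition:dnrel}) applied to the subconfigurations $\mathscr D_{2k+1}$ for varying $k$, together with the already-available twist $T_{\Delta_1}$, to extract twists about the curves $C_k$. First I would dispose of part (1): apply Proposition \ref{proposition:dnrel} with $n = 2g+3$, which is even (so $g+1$ plays the role of ``$g$'' in the even case of the proposition). The relation gives $T_{\Delta_0}^{g+1} T_{\Delta_1} T_{\Delta_1'} \in H_{2g+3}$. Here $\Delta_0$ is a curve disjoint from the configuration $\mathscr D_{2g+3}$ — indeed it bounds a subsurface containing all of it — and I expect $T_{\Delta_0}$ itself to lie in $H_{2g+3}$ (it is one of the boundary twists visible from the chain/$D_n$ structure; more precisely, $\Delta_0 = C_0$ in the notation preceding the corollary, and the chain relation inside $\nu_k$ expresses a power of $T_{\Delta_0}T_{C_k}$ as a product of the generating twists). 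Granting $T_{\Delta_0} \in H_{2g+3} \leqslant H_{2g+3}^+$ and $T_{\Delta_1} \in H_{2g+3}^+$ by hypothesis, the relation $T_{\Delta_0}^{g+1} T_{\Delta_1} T_{\Delta_1'} \in H_{2g+3}$ immediately yields $T_{\Delta_1'} \in H_{2g+3}^+$.

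For part (2), fix $1 \le k \le g+1$ and consider the sub-configuration $\mathscr D_{2k+1} = \{a, a', c_1, \dots, c_{2k-1}\}$, which has a regular neighborhood $\nu_k$ of genus $k$ with boundary components $\Delta_0 = C_0$ and $C_k$. Since $2k+1$ is odd, the odd case of Proposition \ref{proposition:dnrel} applies with the roles of $\Delta_0, \Delta_2$ played by $C_0$ and $C_k$ respectively, giving
\[
T_{C_0}^{2k-1} T_{C_k} \in H_{2k+1} \leqslant H_{2g+3}^+.
\]
Now I would also invoke the chain relation (Proposition \ref{proposition:chain}) on a maximal chain inside $\nu_k$: since $\nu_k$ has genus $k$ with two boundary components, a maximal chain has odd length $2k+1$, and its boundary twist is $T_{C_0} T_{C_k}$; thus a suitable power of a product of twists about chain curves (all lying in $H_{2g+3}$) equals $T_{C_0} T_{C_k}$. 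Combining the two relations — one giving $T_{C_0}^{2k-1} T_{C_k}$ and the other giving $T_{C_0} T_{C_k}$, both as elements of $H_{2g+3}^+$ — and using that $T_{C_0}$ and $T_{C_k}$ commute (disjoint curves) and both are central in $\Mod(\nu_k)$, I can solve for $T_{C_0}^{2k-2} = (T_{C_0}^{2k-1}T_{C_k})(T_{C_0}T_{C_k})^{-1}$, hence $T_{C_0}^{2(k-1)} \in H_{2g+3}^+$, and also for $T_{C_k}$ in terms of a power of $T_{C_0}$.

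The final step is an elementary arithmetic manipulation. From $T_{C_0}^{2k-1}T_{C_k} \in H_{2g+3}^+$ we get $T_{C_k}^m \cdot T_{C_0}^{(2k-1)m} \in H_{2g+3}^+$ for every $m$. If $(2k-1)m$ divides $g$ — wait, I should double-check against what's actually needed: we want $T_{C_0}^{(2k-1)m} \in H_{2g+3}^+$. Here is where $T_{\Delta_1} \in H_{2g+3}^+$ enters crucially: via part (1) and the even $D_n$ relation for the full configuration, $T_{\Delta_0}^{g+1}$ (equivalently $T_{C_0}^{g+1}$) lies in $H_{2g+3}^+$; combined with $T_{C_0}^{2(k-1)} \in H_{2g+3}^+$ from the previous paragraph, $T_{C_0}^d \in H_{2g+3}^+$ for $d = \gcd(g+1, 2k-2, \dots)$, and one checks this forces $T_{C_0}^g \in H_{2g+3}^+$ as well, so $T_{C_0}^{(2k-1)m} \in H_{2g+3}^+$ whenever $(2k-1)m \mid g$. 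Then $T_{C_k}^m = (T_{C_k}^m T_{C_0}^{(2k-1)m})(T_{C_0}^{(2k-1)m})^{-1} \in H_{2g+3}^+$.

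The main obstacle I anticipate is bookkeeping the boundary twists correctly: making sure that $T_{C_0}$ (and the relevant powers of it) genuinely lies in $H_{2g+3}^+$ rather than being an extra unknown, and getting the exponents in the chain and $D_n$ relations to match up so that the combination isolates exactly $T_{C_k}^m$ under the stated divisibility hypothesis $(2k-1)m \mid g$. This requires carefully tracking which subsurface each relation lives in and verifying that the generators of each $H_{2\ell+1}$ are among the generators of $H_{2g+3}$, which they are by the nesting $\mathscr D_{2\ell+1} \subset \mathscr D_{2g+3}$.
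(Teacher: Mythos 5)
Your skeleton (even $D_{2g+2}$ relation plus the hypothesis $T_{\Delta_1}\in H_{2g+3}^+$, odd $D$ relations on subconfigurations to reach the $C_k$) points in the right direction, but the argument has genuine gaps. The most serious one is in part (1): everything rests on $T_{\Delta_0}\in H_{2g+3}$, and your justification fails. For $k\ge 2$ the configuration $\mathscr D_{2k+1}$ contains no maximal chain in $\nu_k$: its intersection graph is the $D_{2k+1}$ tree, whose longest path has only $2k$ vertices, so the chain relation (Proposition \ref{proposition:chain}) applied to any chain among these curves produces the twist about the single boundary curve of a $\Sigma_{k,1}$-neighborhood, not $T_{\Delta_0}T_{C_k}$; for $k=1$ it merely reproduces the $D_3$ relation. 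And even if you had $T_{\Delta_0}T_{C_k}$, no manipulation of these products isolates $T_{\Delta_0}$ itself (your ``solve for $T_{C_0}^{2k-2}$'' step needs the unavailable second relation). Nothing in the hypotheses puts $T_{\Delta_0}$ in $H_{2g+3}^+$, and the paper's proof is built precisely to avoid needing it: $2g+3$ is odd (so your reading of Proposition \ref{proposition:dnrel} with $n=2g+3$ ``even'' and exponent $g+1$ is also wrong — $\Delta_1,\Delta_1'$ bound the neighborhood of the subconfiguration $\mathscr D_{2g+2}$, and the correct element is $T_{\Delta_0}^{g}T_{\Delta_1}T_{\Delta_1'}$), one cancels $T_{\Delta_1}$ to get $F:=T_{\Delta_0}^{g}T_{\Delta_1'}\in H_{2g+3}^+$, and then, since $\Delta_0$ is disjoint from $c_{2g+1}$ and $\Delta_1'$ while $i(c_{2g+1},\Delta_1')=1$, the element $T_{c_{2g+1}}F\in H_{2g+3}^+$ carries $c_{2g+1}$ to $\Delta_1'$, whence $T_{\Delta_1'}=(T_{c_{2g+1}}F)\,T_{c_{2g+1}}\,(T_{c_{2g+1}}F)^{-1}\in H_{2g+3}^+$. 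This conjugation/braid-relation trick is the missing idea.

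For part (2), once (1) is known one does get $T_{\Delta_0}^{g}\in H_{2g+3}^+$ (exponent $g$, not $g+1$), and the odd relation on the subconfiguration gives $T_{\Delta_0}^{2k-1}T_{C_k}\in H_{2g+3}$; this much agrees with the paper. But your endgame runs the divisibility backwards: from $T_{\Delta_0}^{g}\in H_{2g+3}^+$ you can only produce exponents that are \emph{multiples} of $g$, never a proper divisor of $g$, so ``$T_{\Delta_0}^{(2k-1)m}\in H_{2g+3}^+$ whenever $(2k-1)m\mid g$'' does not follow, and the gcd bootstrap (which also relies on the unestablished $T_{\Delta_0}^{2(k-1)}$) is unsupported. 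The cancellation that actually works — and the one the paper needs later, e.g. in Lemma \ref{item:allcurves} where $m=r$ — is to raise $T_{\Delta_0}^{2k-1}T_{C_k}$ to the $m$th power and cancel $T_{\Delta_0}^{(2k-1)m}$ against a power of $T_{\Delta_0}^{g}$, which requires $g\mid(2k-1)m$; the printed hypothesis appears to state this divisibility in the reversed direction, but your argument as written does not establish the conclusion under either reading.
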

\begin{proof}
The proof of (1) follows from an important simple principle. Given a mapping class $f$ and a simple closed curve $d$, there is a relation
\[
f T_d f^{-1} = T_{f(d)}.
\]
It follows that if $f, T_d \in H_{2g+3}^+$, then also $T_{f(d)} \in H_{2g+3}^+$. To establish (1), we will find $f \in H_{2g+3}^+$ such that $f(c_{2g+1}) = \Delta_1'$. This will be accomplished by means of the braid relation.

The curves $a, a', c_1, \dots, c_{2g}$ are arranged in the configuration of the $D_{2g+2}$ relation; the boundary components correspond to $\Delta_0, \Delta_1, \Delta_1'$. By the $D_{2g+2}$-relation (Proposition \ref{proposition:dnrel}),
\[
T_{\Delta_0}^{g}T_{\Delta_1} T_{\Delta_1'} \in H_{2g+3}^+,
\]
and since $T_{\Delta_1} \in H_{2g+3}^+$ by assumption, also $T_{\Delta_0}^{g} T_{\Delta_1'} \in H_{2g+3}^+$. Since $\Delta_0$ is disjoint from both $c_{2g+1}$ and $\Delta_1'$, the braid relation implies that 
\[
T_{c_{2g+1}} T_{\Delta_0}^{g} T_{\Delta_1'}(c_{2g+1}) = T_{c_{2g+1}} T_{\Delta_1'} (c_{2g+1}) = \Delta_1'.
\]
Since $(T_{\Delta_0}^{g} T_{\Delta_1'})\in H_{2g+3}^+$, this shows $T_{\Delta_1'} \in H_{2g+3}^+$ as required. 

We observe that (2) follows from the $D_{2k-1}$ relation (as applied to the subconfiguration $\mathscr D_{2k-1}$) and the claim that $T_{\Delta_0}^{g} \in H_{2g+3}^+$; this latter assertion follows from the $D_{2g+2}$ relation (applied to $\mathscr D_{2g+2}$) and (1).
\end{proof}

\subsection{The Torelli group}
Most of the material in this subsection can be found in \cite[Chapter 6]{FM}, but see also \cite{johnsonsurvey}. We begin by observing that the action of $\Mod(\Sigma_g)$ on $H_1(\Sigma_g; \Z)$ preserves the algebraic intersection pairing $\pair{\cdot, \cdot}$, leading to the {\em symplectic representation}
\begin{equation}\label{equation:sprep}
\Psi: \Mod(\Sigma_g) \to \Sp(2g, \Z).
\end{equation}
This is classically known to be a surjection. The {\em Torelli group}, notated $\mathcal I_g$, is the kernel of this representation:
\[
\mathcal I_g := \ker(\Psi). 
\]

\para{Bounding pairs and separating twists} There are two types of elements in $\mathcal I_g$ that will be of particular importance. Suppose that $c,d$ are simple closed curves such that $c \cup d$ bounds a subsurface $S \cong \Sigma_{h,2}$. Then $T_c T_d^{-1} \in \mathcal I_g$ is known as a {\em bounding pair map}. The {\em genus} of a bounding pair map is slightly ambiguous: if $c \cup d$ bounds a surface $\Sigma_{h,2}$, then also $c \cup d$ bounds a surface $\Sigma_{g-h-1,2}$ on the other side. One defines the genus of $T_c T_d^{-1}$ as $\min\{h, g-h-1\}$. The second important class of elements is the class of {\em separating twists} - these are Dehn twists $T_c$ for $c$ a separating curve. The {\em genus} of a separating twist $T_c$ that bounds a subsurface of genus $h$ is defined as $g(c) = \min\{h, g-h\}$.\

\para{The Johnson homomorphism} A fundamental tool in the study of $\mathcal I_g$ is the {\em Johnson homomorphism}, due to D. Johnson in \cite{johnsonhom}. This is a surjective homomorphism
\begin{equation}\label{equation:johnsonhom}
\tau: \mathcal I_g \to \wedge^3 H_\Z / H_\Z,
\end{equation}
where for convenience we define $H_A:= H_1(\Sigma_g; A)$ for some abelian group $A$. The embedding $H_\Z \into \wedge^3 H_\Z$ is defined via
\[
z \mapsto z \wedge (x_1 \wedge y_1 + \dots + x_g \wedge y_g),
\]
where $\{x_1, \dots, y_g\}$ is a symplectic basis for $H_\Z$. Recall that a symplectic basis must satisfy $\pair{x_i, y_i} = 1$ and $\pair{x_i,x_j} = \pair{x_i,y_j} = 0$ for $i \ne j$. 

We will not need to know a precise definition of $\tau$, but it will be useful to know some basic properties of $\tau$, including how to compute $\tau$ on bounding pair maps and separating twists. 

\begin{lemma}[Johnson, \cite{johnsonhom}]\label{lemma:jhom}\ 
\begin{enumerate}
\item $\tau$ is $\Sp(2g; \Z)$-equivariant, with respect to the conjugation action on $\mathcal I_g$ and the evident action on $\wedge^3 H_\Z/H_\Z$.
\item $\tau(T_c) = 0$ for any separating twist $T_c$.
\item Let $c\cup d$ bound a subsurface $\Sigma_{h,2}$. Choose any further subsurface $\Sigma_{h,1} \subset \Sigma_{h,2}$, and let $\{x_1, y_1, \dots, x_h, y_h\}$ be a symplectic basis for $H_1(\Sigma_{h,1};\Z)$. Then 
\[
\tau(T_c T_d^{-1}) = (x_1 \wedge y_1 + \dots + x_h \wedge y_h)\wedge [c],
\]
where $c$ is oriented with $\Sigma_{h,2}$ to the left. In the case $h = 1$, if $\alpha,\beta,\gamma$ is a maximal chain on $\Sigma_{1,2}$, then 
\[
\tau(T_cT_d^{-1}) = [\alpha] \wedge [\beta] \wedge [\gamma].
\]
\end{enumerate}
\end{lemma}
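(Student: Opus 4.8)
This is a classical result of Johnson, and the proof I would give follows his original argument, obtained by unwinding the definition of $\tau$. Recall that $\tau$ records the action of an element $\phi \in \mathcal{I}_g$ on the second nilpotent truncation of the surface group: write $\pi = \pi_1(\Sigma_g, \ast)$ with lower central series $\gamma_k\pi$, so $\gamma_1\pi/\gamma_2\pi = H_\Z$ and $\gamma_2\pi/\gamma_3\pi$ is a quotient of $\wedge^2 H_\Z$. Since $\phi \in \mathcal{I}_g$ acts trivially on $H_\Z$, for each $x \in \pi$ the element $\phi(x)x^{-1}$ lies in $\gamma_2\pi$, and $x \mapsto \phi(x)x^{-1} \bmod \gamma_3\pi$ descends to a homomorphism $H_\Z \to \gamma_2\pi/\gamma_3\pi$. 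Using that $\phi$ preserves the surface relation $\prod[a_i,b_i]$ one checks this homomorphism is, after identifying $\Hom(H_\Z, \wedge^2 H_\Z)$ with $H_\Z \otimes \wedge^2 H_\Z$ via the symplectic form, carried by the subspace $\wedge^3 H_\Z$, and that the indeterminacy in the choice of basepoint and of lifts is exactly the copy of $H_\Z$ appearing in \eqref{equation:johnsonhom}. This produces the well-defined map $\tau$, and all three assertions of the lemma are then either formal or short computations in $\gamma_2\pi/\gamma_3\pi$.

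\emph{Assertion (1).} The mapping class group acts on $\pi$ (well defined up to inner automorphism once a path between basepoints is fixed), hence on $\gamma_2\pi/\gamma_3\pi$ and on $\wedge^3 H_\Z/H_\Z$, and the defining formula for $\tau$ is manifestly natural with respect to automorphisms of $\pi$. Thus for $\psi \in \Mod(\Sigma_g)$ and $\phi \in \mathcal{I}_g$ one obtains $\tau(\psi\phi\psi^{-1}) = \psi_\ast(\tau(\phi))$, where $\psi_\ast$ is the induced map on $\wedge^3 H_\Z/H_\Z$; since this induced map depends only on the action of $\psi$ on $H_\Z$, i.e. only on $\Psi(\psi) \in \Sp(2g,\Z)$, this is exactly $\Sp(2g,\Z)$-equivariance.

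\emph{Assertion (2).} Place the basepoint $\ast$ on the separating curve $c$. If $x \in \pi$ is represented by a loop meeting $c$ transversally at points $p_1,\dots,p_k$ with signs $\epsilon_1,\dots,\epsilon_k$, then $T_c(x)x^{-1}$ is a product of conjugates of $c^{\epsilon_1},\dots,c^{\epsilon_k}$, with $\sum\epsilon_i = \pm\pair{[x],[c]} = 0$ because $c$ is separating. Moreover $[c] = 0$ in $H_\Z$ forces $c \in \gamma_2\pi$, so each conjugate $w c^{\epsilon} w^{-1}$ is congruent to $c^{\epsilon}$ modulo $\gamma_3\pi$ (the commutator $[w,c^{\epsilon}]$ lies in $\gamma_3\pi$). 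Hence $T_c(x)x^{-1} \equiv c^{\sum\epsilon_i} = 1 \bmod \gamma_3\pi$ for every $x$, so $\tau(T_c) = 0$.

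\emph{Assertion (3).} By (1) we may replace $c \cup d$ by any convenient model of a separating configuration $c \cup d = \partial\Sigma_{h,2}$ in $\Sigma_g$. Then $T_c T_d^{-1}$ is supported near $c \cup d$, and one evaluates $\tau(T_cT_d^{-1})$ on a symplectic basis adapted to the splitting by the same bookkeeping as in (2): the basis classes supported in $\Sigma_{h,1}$ and those supported in the complement are moved only by crossings of $c$ or of $d$, and tracking the inserted copies of $c^{\pm1}$ and $d^{\mp1}$ modulo $\gamma_3\pi$ (using that $[c]$ and $[d]$ agree up to sign in $H_\Z$) produces exactly $(x_1\wedge y_1 + \dots + x_h \wedge y_h)\wedge[c]$ in $\wedge^3 H_\Z/H_\Z$; the additivity of this expression over the $h$ handles reflects the fact that a genus-$h$ bounding pair map has the same $\tau$-value as a product of $h$ genus-one ones. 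The special case $h=1$ is then the substitution $x_1 = [\alpha]$, $y_1 = [\beta]$ together with the observation that $[c] \equiv [\gamma]$ modulo the span of $[\alpha]$ and $[\beta]$, so the wedge with $[\alpha]\wedge[\beta]$ is unchanged. The one genuinely laborious step is this computation in (3): keeping precise track of the conjugating words and of the passage to $\wedge^3 H_\Z/H_\Z$ with the correct signs and orientations. Everything else is formal naturality or the elementary manipulation of (2).
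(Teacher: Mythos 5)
The paper does not prove this lemma at all: it is quoted verbatim from Johnson's paper \cite{johnsonhom} and used as a black box, so there is no in-paper argument to compare against. Your sketch is a faithful outline of Johnson's original construction and proof: defining $\tau$ via $x \mapsto \phi(x)x^{-1} \bmod \gamma_3\pi$, checking that the basepoint/lift ambiguity is exactly the $H_\Z$ one quotients by in the closed case, deducing (1) from naturality of the construction under automorphisms of $\pi$, and getting (2) from the fact that a separating $c$ lies in $\gamma_2\pi$ so the inserted conjugates of $c^{\pm1}$ become central mod $\gamma_3\pi$ and cancel because $\pair{[x],[c]}=0$. These parts are correct as written. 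The only thin spot is (3): the formula $\tau(T_cT_d^{-1}) = (x_1\wedge y_1 + \dots + x_h\wedge y_h)\wedge[c]$ is asserted to follow from ``the same bookkeeping as in (2)'' rather than actually computed; a complete argument needs either the explicit evaluation on a basis adapted to the splitting (with the identification $\Hom(H_\Z,\wedge^2H_\Z/\Z\omega)\supset$ image of $\wedge^3H_\Z$ made precise, including signs) or the reduction to the genus-one case plus additivity, which you mention but do not carry out. Your treatment of the $h=1$ chain case is fine, since $[c]=\pm([\alpha]+[\gamma])$ makes $[\alpha]\wedge[\beta]\wedge[c]=[\alpha]\wedge[\beta]\wedge[\gamma]$ up to the orientation conventions you flag. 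So: correct approach, matching the cited source rather than anything in this paper, with part (3) left as a sketch whose completion is routine but genuinely laborious, exactly as you say.
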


\para{The Johnson kernel} The {\em Johnson kernel}, written $\mathcal K_g$, is the kernel of the Johnson homomorphism:
\[
\mathcal K_g := \ker(\tau).
\]
A fundamental theorem of Johnson gives an alternate characterization of $\mathcal K_g$ in terms of separating twists. 

\begin{theorem}[Johnson, \cite{johnson2}]\label{theorem:johnson2}
Let $\mathcal T_g$ be the subgroup of $\mathcal K_g$ generated by all separating twists of genus at most two. Then for all $g \ge 3$,
\[
\mathcal T_g = \mathcal K_g.
\]
\end{theorem}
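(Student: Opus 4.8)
The plan is to reduce the theorem to an injectivity statement for the map induced by the Johnson homomorphism $\tau$, and then to establish that injectivity using Johnson's generating set for $\mathcal I_g$ together with repeated applications of the lantern relation. Since separating twists lie in $\ker\tau = \mathcal K_g$ by Lemma~\ref{lemma:jhom}(2), the inclusion $\mathcal T_g \leqslant \mathcal K_g$ is immediate, so the content is the reverse inclusion. First I would observe that $\mathcal T_g$ is normal in $\Mod(\Sigma_g)$: by the change-of-coordinates principle, a conjugate of a separating twist of genus at most two is again such a twist. Consequently $\tau$ descends to a surjection $\bar\tau \colon \mathcal I_g/\mathcal T_g \to \wedge^3 H_\Z/H_\Z$, and the theorem is equivalent to the assertion that $\bar\tau$ is injective, i.e.\ that it is an isomorphism.

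Because $g \ge 3$, Johnson's finite-generation theorem (see \cite{FM}) supplies a generating set of $\mathcal I_g$ consisting of genus-$1$ bounding pair maps $T_cT_d^{-1}$ with $c\cup d$ bounding a copy of $\Sigma_{1,2}$, so $\mathcal I_g/\mathcal T_g$ is generated by the classes $\bar\beta$ of such maps. The heart of the argument is to produce enough relations among these classes. Two ingredients are needed. \emph{Commutativity}: the commutator of two genus-$1$ bounding pair maps lies in $\mathcal T_g$; this is clear when the defining curves are disjoint, and otherwise one exhibits a lantern relation on a suitably embedded $\Sigma_{0,4}$ that rewrites the commutator as a product of separating twists --- and it is precisely the innermost, genus-$2$ curve of such a lantern that forces the genus-$2$ separating twists into the generating set. \emph{Rigidity}: if $\beta_1,\beta_2$ are genus-$1$ bounding pair maps with $\tau(\beta_1)=\tau(\beta_2)$ in $\wedge^3 H_\Z/H_\Z$, then $\beta_1\beta_2^{-1}\in\mathcal T_g$. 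Granting these, together with lantern-derived analogues of the defining relations of $\wedge^3 H_\Z/H_\Z$, one checks that $\mathcal I_g/\mathcal T_g$ admits a presentation matching that of $\wedge^3 H_\Z/H_\Z$, so $\bar\tau$ is an isomorphism. An arbitrary $f \in \mathcal K_g$ written in these generators as $f=\prod\beta_i^{\pm1}$ then has $\sum\pm\tau(\beta_i)=0$, whence its image in $\mathcal I_g/\mathcal T_g$ vanishes and $f\in\mathcal T_g$.

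The main obstacle is the rigidity statement, and more broadly the claim that the lantern relation generates all the relations that are needed. I would prove rigidity by a connectivity argument: the set of genus-$1$ bounding pairs $(c,d)$ realizing a prescribed homological configuration is connected under a short list of elementary moves --- sliding $c$ or $d$ across a handle, interchanging the two complementary subsurfaces, absorbing or releasing a separating curve --- and each such move alters the bounding pair map only by an element of $\mathcal T_g$, as witnessed by an explicit lantern relation. The bookkeeping is delicate because $\tau$ records the relevant wedge class only modulo the subgroup $H_\Z$ (the relations $z\wedge\sum_i x_i\wedge y_i\equiv 0$), so one must also permit moves that change the bounding pair within a fixed $H_\Z$-coset of its wedge class. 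Systematically organizing these subsurface configurations and tracking the resulting products of separating twists of genus at most two is the technical core of the proof, and is the content of Johnson's \cite{johnson2}.
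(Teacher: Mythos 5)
The paper does not prove this statement at all: it is Johnson's theorem, imported with the citation \cite{johnson2} and used later (in Lemmas \ref{lemma:containsjk}, \ref{lemma:containsjkeven2} and \ref{lemma:containsjkeven0}) purely as a black box. So there is no in-paper argument to measure your proposal against; the only meaningful comparison is with Johnson's original proof, whose overall architecture your sketch does reproduce: reduce the theorem to injectivity of the map induced by $\tau$ on $\mathcal I_g/\mathcal T_g$, generate $\mathcal I_g$ by genus-$1$ bounding pair maps (Johnson's finite-generation theorem, valid for $g \ge 3$), kill commutators using lantern-type relations, and show that bounding pair maps with equal $\tau$-image agree modulo separating twists.

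As a proof, however, your proposal has genuine gaps, and you concede them yourself. The \emph{rigidity} step --- that two genus-$1$ bounding pair maps with the same image in $\wedge^3 H_\Z/H_\Z$ differ by an element of $\mathcal T_g$ --- together with the claims that your short list of elementary moves connects any two bounding pairs realizing a given class and that each move is witnessed by separating twists of genus at most two, are precisely the technical heart of Johnson's paper; asserting them and noting that the ``bookkeeping is delicate'' does not establish them. Two specific points are left hanging: since $\tau$ records the wedge class only modulo $H_\Z$, your moves must also account for the relations $z \wedge (x_1\wedge y_1 + \dots + x_g \wedge y_g)$, and you must verify that the separating twists produced along the way never have genus $\ge 3$ (or that such twists already lie in the subgroup generated by those of genus at most two) --- neither verification is carried out, and the genus bound is exactly what the theorem's statement turns on. As written, the proposal is a faithful outline of Johnson's strategy rather than a proof, so for the purposes of this paper the statement should remain what it is here: a citation to \cite{johnson2}.
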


\section{Spin structures}\label{section:spin}
In this section we introduce and study higher spin structures and their stabilizer subgroups. Section \ref{subsection:spinbasics} defines higher spin structures and presents the work of Humphries--Johnson that gives a cohomological formulation of a higher spin structure. Section \ref{subsection:operations} discusses some cut-and-paste operations on simple closed curves and how these operations interact with higher spin structures. Section \ref{subsection:firstproperties} defines spin structure stabilizer groups and some important elements of these groups. Finally Section \ref{subsection:classical} explains the connection between higher spin structures and the classical theory of spin structures as quadratic forms on vector spaces over $\Z/2\Z$.

\subsection{Spin structures}\label{subsection:spinbasics}
Let $S$ be a surface of genus $g \ge 0$. For simplicity, we assume in this section that $S$ can have boundary components but not punctures; for surfaces with puncture, one can simply remove an open neighborhood of the puncture to produce a surface with boundary. Let $\mathcal S$ denote the set of isotopy classes of oriented simple closed curves on $S$. In keeping with standard practice, the term ``curve'' will often be used to refer to an isotopy class of curves. Crucially, curves are not required to be essential (see property (2) of Definition \ref{definition:wnf}). The following definition is due to Humphries--Johnson \cite{HJ}; see Remark \ref{remark:reconcile} for a discussion of how to reconcile their definition with the one given here.

\begin{definition}[spin structure]\label{definition:wnf} A {\em $\Z/r \Z$-valued spin structure} on $S$ is a function $\phi: \mathcal S \to \Z/r\Z$ satisfying the following two properties. 
\begin{enumerate}
\item (Twist-linearity) Let $c,d \in \mathcal S$ be arbitrary. Then 
\[
\phi(T_c(d)) = \phi(d) + \pair{d,c}\phi(c) \pmod{r}.
\]
\item (Normalization) For $\zeta$ the boundary of an embedded disk $D \subset S$, oriented with $D$ to the left, $\phi(\zeta) = 1$.
\end{enumerate}
\end{definition}

\begin{remark}\label{remark:reconcile}
The definition of a $\Z/r\Z$-valued spin structure presented in Definition \ref{definition:wnf} is superficially different from that given by Humphries--Johnson \cite{HJ} in several respects. First, it should be noted that Humphries--Johnson study a more general notion of ``twist-linear function''; only spin structures are needed in the present paper. Secondly, in Definition \ref{definition:wnf}, simple closed curves are considered up to the equivalence relation of isotopy. This is an {\em a priori} different equivalence relation than the notion of ``$L$-direct homotopy'' defined in \cite[p. 366]{HJ}. The precise definition of $L$-directness is cumbersome, but if two simple closed curves $c$ and $d$ are $L$-directly homotopic, then they are in particular homotopic in the ordinary sense. It is well-known that homotopy and isotopy determine the same equivalence relation on simple closed curves, see e.g. \cite[Proposition 1.10]{FM}. Moreover, an isotopy is an instance of an $L$-direct homotopy, so that these notions coincide in our setting.
\end{remark}

\begin{remark}
In the literature, higher spin structures go by various names and have various definitions; the term ``$r$-spin structure'' is especially common. It is not {\em a priori} clear how to reconcile the definition given here with others. See Remark \ref{remark:roots} for a brief discussion, or \cite[Sections 2-3]{saltermonodromy} for a fuller treatment.
\end{remark}

\begin{convention}\label{convention:orientation}
Often we will speak of the value $\phi(c)$ where $\phi$ is some $\Z/r\Z$-valued spin structure and $c$ is a curve {\em without a specified orientation}. Such a statement should be understood to mean that there is some unspecified orientation on $c$ for which $\phi(c)$ has the stated value.
\end{convention}

\para{The Johnson lift} Recall from the discussion in Section \ref{subsection:birman} the notion of the {\em Johnson lift}. In \cite{HJ}, Johnson-Humphries use the Johnson lift to give a homological formulation of a $\Z/r\Z$-valued spin structure. The following is an amalgamation of the Remark following Theorem 2.1 and Theorem 2.5 of \cite{HJ}.

\begin{theorem}[Humphries--Johnson]\label{theorem:HJhomological}

Let $S$ be a surface. An element $\psi \in H^1(UTS; \Z/r\Z)$ determines a $\Z/r\Z$-valued spin structure via
\[
\alpha \mapsto \psi(\tilde \alpha),
\]
where $\alpha$ is a simple closed curve on $S$ and $\tilde \alpha$ is the Johnson lift. This determines a $1-1$ correspondence
\[
\{\phi \mbox{ a $\Z/r\Z$-valued spin structure on $S$}\} \leftrightarrow \{ \phi \in H^1(UTS; \Z/r\Z) \mid \phi(\zeta) = 1\}.
\]
\end{theorem}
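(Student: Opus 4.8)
The plan is to verify that the map $\alpha \mapsto \psi(\tilde\alpha)$ sending a class $\psi \in H^1(UTS;\Z/r\Z)$ to the function $\phi_\psi \colon \mathcal S \to \Z/r\Z$, $\phi_\psi(\alpha) = \psi(\tilde\alpha)$, lands in the set of $\Z/r\Z$-valued spin structures, and then to exhibit an inverse to this correspondence when restricted to those $\psi$ with $\psi(\zeta) = 1$. Here $\tilde\alpha$ is the Johnson lift of $\alpha$ to $UTS$, and $\zeta$ is the fiber class of the circle bundle $S^1 \to UTS \to S$. The two verifications — well-definedness/spin-axioms, and bijectivity — are the two halves of the argument, and I would organize them accordingly.

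For the first half, I would check the two axioms of Definition \ref{definition:wnf}. The normalization axiom is immediate by construction: $\psi(\zeta) = 1$ is precisely the constraint we impose, and for $\zeta$ the boundary of an embedded disk the Johnson lift is homotopic (in $UTS$) to the $S^1$-fiber class (the forward-pointing framing of a small nullhomotopic loop winds once around the fiber), so $\phi_\psi(\zeta) = \psi(\zeta) = 1$. The twist-linearity axiom is the substantive point: I would compute the Johnson lift $\widetilde{T_c(d)}$ in $H_1(UTS;\Z)$ in terms of $\tilde c$, $\tilde d$, and $\zeta$. The key geometric input is that $UTS \to S$ induces a central extension $0 \to \Z\langle\zeta\rangle \to H_1(UTS;\Z) \to H_1(S;\Z) \to 0$ (after suitable care with torsion, or working with the relevant quotient), under which the Johnson lift of a simple closed curve $a$ projects to $[a]$ and the ``defect'' $\tilde a$ versus an arbitrary lift is governed by winding numbers. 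Applying $T_c$ to $d$ changes $[d]$ by $\langle d,c\rangle [c]$, and a standard computation (essentially the one relating Dehn twists to the self-intersection/winding-number cocycle on $UTS$) shows $\widetilde{T_c(d)} = \tilde d + \langle d,c\rangle\,\tilde c + \big(\langle d,c\rangle\phi_\psi(c) - \text{(correction)}\big)\zeta$ where the correction term vanishes modulo $r$ after applying $\psi$, using $\psi(\zeta)=1$; evaluating $\psi$ then yields exactly $\phi_\psi(T_c(d)) = \phi_\psi(d) + \langle d,c\rangle\phi_\psi(c) \bmod r$. Cleanest would be to cite the Humphries--Johnson computation directly, since the theorem is attributed to them, and simply indicate the shape of the argument.

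For the second half — that this is a bijection onto spin structures — I would first note injectivity: since $H^1(UTS;\Z/r\Z) = \Hom(H_1(UTS;\Z),\Z/r\Z)$ and $H_1(UTS;\Z)$ is generated by $\zeta$ together with Johnson lifts $\tilde\alpha_i$ of a generating set $\alpha_1,\dots,\alpha_k$ of $\pi_1(S)$ which can be taken to be simple closed curves (the ``geometric generators'' of Section \ref{subsection:birman}), a class $\psi$ is determined by its values on these, i.e. by $\psi(\zeta)=1$ and the values $\phi_\psi(\alpha_i)$; hence $\psi \mapsto \phi_\psi$ is injective. For surjectivity, given a spin structure $\phi$, I would define $\psi$ on generators by $\psi(\zeta) = 1$ and $\psi(\tilde\alpha_i) = \phi(\alpha_i)$ and check this respects the relations in $H_1(UTS;\Z)$; twist-linearity of $\phi$ is exactly what guarantees consistency, since any relation among the $\tilde\alpha_i$ and $\zeta$ can be realized through a sequence of twists and the change-of-coordinates principle. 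The main obstacle I anticipate is bookkeeping the $\zeta$-coefficients (winding numbers) correctly throughout — getting the sign and the modular reduction in the twist-linearity computation exactly right, and confirming that the relations in $H_1(UTS;\Z)$ are generated by the ones visibly respected by $\phi$. Given that the statement is explicitly an amalgamation of results in \cite{HJ}, I would expect the write-up to be short, deferring the winding-number computation to \cite{HJ} and emphasizing only the translation between their formulation and Definition \ref{definition:wnf}.
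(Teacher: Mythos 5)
The paper contains no proof of this statement: it is quoted as Humphries--Johnson's result (an amalgamation of the Remark following Theorem 2.1 and Theorem 2.5 of \cite{HJ}), so there is no argument of the paper's to compare yours against. Your outline --- verify the two axioms of Definition \ref{definition:wnf} for $\alpha \mapsto \psi(\tilde\alpha)$, then deduce injectivity from the fact that $H_1(UTS;\Z)$ is generated by the fiber class $\zeta$ together with Johnson lifts of simple closed curve generators of $\pi_1(S)$, and surjectivity by prescribing $\psi$ on these generators --- is a reasonable reconstruction of what happens in \cite{HJ}, and your decision to defer the winding-number computations to that paper is exactly what the paper itself does.

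Two caveats if your sketch were meant to stand on its own. First, the twist-linearity step is garbled: the Humphries--Johnson computation gives $[\widetilde{T_c(d)}] = [\tilde d] + \pair{d,c}[\tilde c]$ in $H_1(UTS;\Z)$ with no $\zeta$-correction at all, which is why \emph{every} class $\psi \in H^1(UTS;\Z/r\Z)$, normalized or not, yields a twist-linear function; the condition $\psi(\zeta)=1$ enters only in the disk-boundary axiom, via the fact that the Johnson lift of the boundary of an embedded disk (disk to the left) is homotopic in $UTS$ to the fiber. Your formula featuring a correction term that ``vanishes modulo $r$ after applying $\psi$, using $\psi(\zeta)=1$'' does not parse: a nonzero $\zeta$-coefficient would \emph{contribute} under $\psi$ precisely because $\psi(\zeta)=1$. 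Second, your surjectivity argument rests on the assertion that any relation among $\zeta$ and the lifts $\tilde\alpha_i$ ``can be realized through a sequence of twists and the change-of-coordinates principle''; that is not an argument, and it is exactly the substantive content of Theorem 2.5 of \cite{HJ} --- one must check that the prescription $\psi(\tilde\alpha_i)=\phi(\alpha_i)$, $\psi(\zeta)=1$ is compatible with the relation $\prod_i[a_i,b_i]=\zeta^{2-2g}$ in $\pi_1(UT\Sigma_g)$ (and its analogues with boundary), and then that the resulting class recovers $\phi$ on every simple closed curve, not merely on the chosen generators. Since both you and the paper ultimately cite \cite{HJ} for these points, this is a presentational gap rather than an error, but as written the sketch would not constitute an independent proof.
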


\begin{remark}\label{remark:admits}
From the standard presentation 
\[
\pi_1(UT\Sigma_g) = \pair{a_1, b_1,\dots, a_g, b_g, \zeta \mid \prod_{i =1}^g [a_i, b_i] = \zeta^{2-2g}}
\]
 and the Universal Coefficient Theorem, one sees that
\[
H^1(UT\Sigma_g; A) \cong \Hom(\pi_1(UT\Sigma_g), A) \cong \Hom(\Z^{2g}\oplus \Z/(2g-2)\Z, A).
\]
The factor $\Z/(2g-2)\Z$ in $H_1(UT\Sigma_g;\Z) = \Z^{2g} \oplus \Z/(2g-2)\Z$ is generated by the class of $\tilde \zeta$, the Johnson lift of the non-essential curve $\zeta$. In the case $A = \Z/r \Z$, it follows that there exists a spin structure if and only if $r \mid (2g-2)$.
\end{remark}

\begin{remark}\label{remark:roots}
Via covering space theory, $\Z/r\Z$-valued spin structures on $\Sigma_g$ are in correspondence with cyclic $r$-fold coverings $\widetilde {UT\Sigma_g} \to UT \Sigma_g$ that restrict to connected coverings of the fiber $S^1$. In the setting of linear systems on toric surfaces, such coverings arise from the presence of roots of the canonical line bundle of the generic fiber. See Proposition \ref{proposition:invtspin} and the references mentioned therein for more details.
\end{remark}

An important consequence of Theorem \ref{theorem:HJhomological} is the fact that  $\Z/r\Z$-valued spin structures satisfy a property known as the {\em homological coherence criterion}. This follows by combining Theorem \ref{theorem:HJhomological} with \cite[Lemma 2.4]{HJ}.

\begin{proposition}[Homological coherence criterion]\label{proposition:HCC}
Let $\phi$ be a $\Z/r\Z$-valued spin structure on a surface $S$, and let $S' \subset S$ be a subsurface with Euler characteristic $\chi(S') = m$. Suppose $\partial(S') = c_1 \cup \dots \cup c_k$, and all $c_i$ are oriented so that $S'$ is to the left. Then $\sum \phi(c_i) = m$.  
\end{proposition}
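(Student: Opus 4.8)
The statement is the homological coherence criterion (Proposition~\ref{proposition:HCC}), and the plan is to deduce it from the cohomological formulation of a $\Z/r\Z$-valued spin structure given in Theorem~\ref{theorem:HJhomological}. First I would reduce to the case where $S$ is closed: capping off each boundary component of $S$ with a disk changes neither the restriction of $\phi$ to curves in the interior nor the quantities appearing in the statement (the boundary curves $c_i$ of $S'$ remain meaningful curves in the capped surface, and $\chi(S')$ is unchanged since we only cap boundary components of $S$, not of $S'$). So assume $S = \Sigma_g$ and write $\phi \in H^1(UT\Sigma_g;\Z/r\Z)$ under the correspondence of Theorem~\ref{theorem:HJhomological}.

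\textbf{Key steps.} The core of the argument is to compute $\sum_i \phi(c_i)$ by evaluating the cohomology class $\phi$ on an appropriate $1$-cycle in $UT\Sigma_g$. Orient each $c_i$ with $S'$ to the left and take the Johnson lifts $\tilde c_i$, so that by definition $\phi(c_i) = \phi(\tilde c_i)$ and hence $\sum_i \phi(c_i) = \phi\bigl(\sum_i [\tilde c_i]\bigr)$ in $H_1(UT\Sigma_g;\Z/r\Z)$. The claim is then that the class $\sum_i [\tilde c_i]$ equals $m$ times the class $[\tilde\zeta]$ of the fiber (equivalently, of the Johnson lift of a small nullhomotopic loop); granting this, $\phi\bigl(\sum_i[\tilde c_i]\bigr) = m\,\phi(\tilde\zeta) = m\cdot 1 = m$ by the normalization property, finishing the proof. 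To establish the homological identity $\sum_i[\tilde c_i] = m[\tilde\zeta]$, I would invoke \cite[Lemma 2.4]{HJ} directly, as the excerpt indicates; the content of that lemma is precisely a computation of the total winding/index of the boundary curves of a subsurface, i.e.\ a Poincar\'e--Hopf–type count. Concretely, the outward (or forward-tangent) framing along $\partial S'$ differs from a globally defined framing by a total amount measured by $\chi(S')$, and tracking this discrepancy through the Johnson lift expresses $\sum_i[\tilde c_i]$ in terms of the torsion fiber class $[\tilde\zeta]$ with coefficient $\chi(S') = m$.

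\textbf{Main obstacle.} The one genuinely substantive point is the homological identity $\sum_i [\tilde c_i] = m [\tilde\zeta]$ in $H_1(UT\Sigma_g;\Z/r\Z)$, which is where the Euler characteristic enters; everything else is bookkeeping about orientations, Johnson lifts, and the compatibility of capping off with the constructions. Since this identity is exactly the content of \cite[Lemma 2.4]{HJ} (together with the identification of the Johnson lift of a boundary curve with its forward-tangent framing), I would not reprove it but would carefully state how it combines with Theorem~\ref{theorem:HJhomological}: the subtlety is purely in getting the sign/orientation conventions to line up so that the normalization $\phi(\zeta)=1$ is applied with the correct sign, and in confirming that a subsurface with several boundary components contributes its full Euler characteristic rather than, say, an Euler characteristic of a capped-off version.
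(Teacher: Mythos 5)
Your core argument is exactly the paper's: the paper proves this proposition in one line, by combining Theorem~\ref{theorem:HJhomological} with \cite[Lemma 2.4]{HJ}, and your key steps (evaluate the cohomology class $\phi$ on the cycle $\sum_i[\tilde c_i]$, use the identity $\sum_i[\tilde c_i]=\chi(S')[\tilde\zeta]$ from \cite[Lemma 2.4]{HJ}, then apply the normalization $\phi(\zeta)=1$) are precisely that combination spelled out.

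However, your preliminary reduction to the closed case is flawed as stated. A $\Z/r\Z$-valued spin structure on a surface with boundary need not extend over the capped-off closed surface: extending over a capping disk forces the value of $\phi$ on that boundary curve to be $1$ (suitably oriented), and globally a closed surface of genus $g$ carries a $\Z/r\Z$-valued spin structure only when $r \mid (2g-2)$ (Remark~\ref{remark:admits}). For instance, $\Sigma_{2,1}$ admits $\Z/5\Z$-valued spin structures, but none of them is the restriction of a spin structure on the closed genus-$2$ surface, since $5 \nmid 2$. So the sentence ``capping off changes nothing'' is not justified: there may be no spin structure on the capped surface restricting to $\phi$. The good news is that this step is unnecessary: Theorem~\ref{theorem:HJhomological} is stated for an arbitrary surface $S$ (with boundary allowed), and \cite[Lemma 2.4]{HJ} likewise applies to subsurfaces of a surface with boundary, so your second paragraph already constitutes a complete proof if you simply delete the reduction and work in $UTS$ directly.
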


Theorem \ref{theorem:HJhomological} shows that $\Z/r\Z$-valued spin structures are determined by a finite amount of data. In the sequel it will be useful to have an explicit criterion for the equality of two $\Z/r\Z$-valued spin structures. The following appears as \cite[Corollary 2.6]{HJ}.

\begin{theorem} [Humphries--Johnson] \label{theorem:gsb}
Let $S$ be a surface of genus $g \ge 0$. Let $\mathcal B = \{\gamma_1, \dots, \gamma_{k}\}$ be a set of oriented simple closed curves such that the set $\{[\gamma_1], \dots, [\gamma_k]\}$ forms a basis for $H_1(\Sigma_g;\Z)$. Suppose $\phi$ and $\psi$ are $\Z/r\Z$-valued spin structures on $S$. Then $\phi = \psi$ if and only if $\phi(\gamma_i) = \psi(\gamma_i)$ for each $\gamma_i \in \mathcal B$.
\end{theorem}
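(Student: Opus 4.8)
The plan is to reduce everything to the cohomological description of spin structures in Theorem~\ref{theorem:HJhomological} together with the homology computation recorded in Remark~\ref{remark:admits}. Via Theorem~\ref{theorem:HJhomological} I regard $\phi$ and $\psi$ as cohomology classes $\phi,\psi\in H^1(UTS;\Z/r\Z)$, each sending the class $\zeta$ of the circle fiber of the bundle $p\colon UTS\to S$ to $1$. The theorem then becomes the assertion that the difference $\delta:=\phi-\psi\in H^1(UTS;\Z/r\Z)$ vanishes identically as soon as $\delta(\tilde\gamma_i)=0$ for every $\gamma_i\in\mathcal B$, where $\tilde\gamma_i$ is the Johnson lift of $\gamma_i$.

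First I would observe $\delta(\zeta)=\phi(\zeta)-\psi(\zeta)=0$. Identifying $H^1(UTS;\Z/r\Z)$ with $\Hom(H_1(UTS;\Z),\Z/r\Z)$ and invoking Remark~\ref{remark:admits} (for $S$ closed, $H_1(UTS;\Z)\cong\Z^{2g}\oplus\Z/(2g-2)\Z$ with the torsion generated by $\zeta$; for $S$ with boundary the bundle is trivial and $H_1(UTS;\Z)\cong H_1(S;\Z)\oplus\Z\langle\zeta\rangle$), one sees that in every case the kernel of the surjection $p_*\colon H_1(UTS;\Z)\to H_1(S;\Z)$ is exactly the cyclic subgroup generated by $\zeta$. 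Since $\delta$ is a homomorphism killing $\zeta$, it kills $\ker p_*$, and therefore factors as $\delta=\bar\delta\circ p_*$ for a unique homomorphism $\bar\delta\colon H_1(S;\Z)\to\Z/r\Z$.

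Next I would use the defining property of the Johnson lift: $\tilde\gamma$ is a lift of $\gamma$ through $p$ (with orientation inherited from $\gamma$), so $p_*[\tilde\gamma]=[\gamma]\in H_1(S;\Z)$ for every oriented simple closed curve $\gamma$. Consequently
\[
\phi(\gamma)-\psi(\gamma)=\delta(\tilde\gamma)=\bar\delta([\gamma])
\]
for all $\gamma\in\mathcal S$. The hypothesis now reads $\bar\delta([\gamma_i])=0$ for each $i$; since $\{[\gamma_1],\dots,[\gamma_k]\}$ is a $\Z$-basis of $H_1(\Sigma_g;\Z)$ — which for $S$ closed is $H_1(S;\Z)$, and in the bounded case one simply reads $H_1(S;\Z)$ throughout — it follows that $\bar\delta=0$, hence $\phi(\gamma)=\psi(\gamma)$ for every $\gamma\in\mathcal S$, i.e.\ $\phi=\psi$. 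The reverse implication is immediate.

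I do not anticipate a genuinely hard step: the substance has been front-loaded into Theorem~\ref{theorem:HJhomological} and the computation of Remark~\ref{remark:admits}. The only points requiring care are bookkeeping ones — that the Johnson lift interacts with orientations so that $p_*[\tilde\gamma]=[\gamma]$ holds on the nose, and that the argument be phrased uniformly across closed surfaces and surfaces with boundary (reading $H_1(S;\Z)$ for $H_1(\Sigma_g;\Z)$ in the latter, with boundary-parallel classes accounted for via homological coherence, Proposition~\ref{proposition:HCC}).
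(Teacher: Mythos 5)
The paper offers no proof of this statement at all—it is quoted verbatim from \cite[Corollary 2.6]{HJ}—so there is nothing internal to compare against; what you have written is a correct, self-contained derivation from Theorem \ref{theorem:HJhomological}, and it is essentially the standard argument underlying the cited result: the difference $\delta=\phi-\psi\in H^1(UTS;\Z/r\Z)$ kills the fiber class $\zeta$, the kernel of $p_*\colon H_1(UTS;\Z)\to H_1(S;\Z)$ is the cyclic subgroup generated by $\zeta$ (by Remark \ref{remark:admits} in the closed case, triviality of the bundle otherwise), so $\delta$ factors through $H_1(S;\Z)$ and is detected on Johnson lifts, whence vanishing on a basis forces $\delta=0$. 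Your caveat about the bounded case is also the right one and worth keeping explicit: for $S$ with boundary the hypothesis must supply the values of $\bar\delta$ on all of $H_1(S;\Z)$, not just on classes surviving to the capped-off surface, either by enlarging $\mathcal B$ to a basis of $H_1(S;\Z)$ or by adding the signature and invoking Proposition \ref{proposition:HCC}—which is precisely how the theorem is used in the proof of Proposition \ref{proposition:one}.
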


\begin{figure}
\labellist
\Huge
\pinlabel $\rightsquigarrow$ at 180 45
\endlabellist
\includegraphics{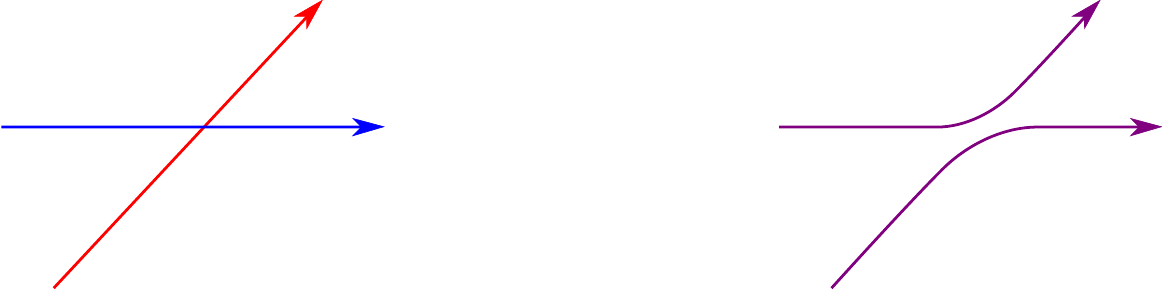}
\caption{The smoothing operation.}
\label{figure:smoothing}
\end{figure}

\begin{figure}
\labellist
\Huge
\pinlabel $\rightsquigarrow$ at 180 45
\endlabellist
\includegraphics{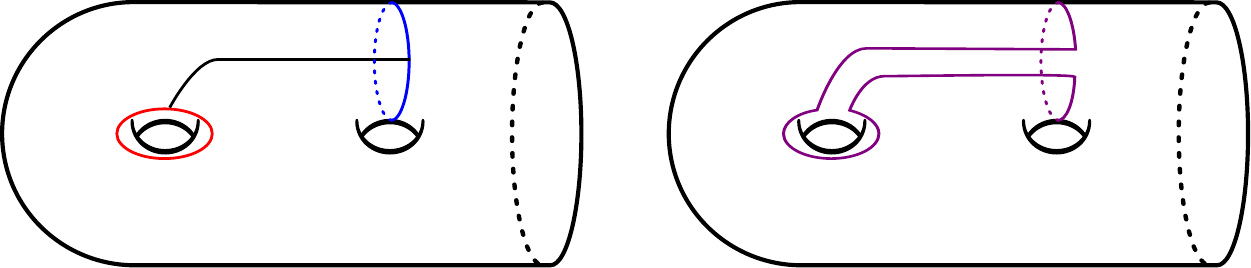}
\caption{The curve-arc sum operation.}
\label{figure:casum}
\end{figure}

\subsection{Operations on curves}\label{subsection:operations} In what follows, we will make use of two procedures for constructing new simple closed curves from old. Here, we define these operations and collect some facts about how they interact with spin structures.

\begin{definition} [Smoothing, curve sum]\label{definition:curvesum}
Let $\mathcal{C} = \{c_1, \dots, c_n\}$ be a collection of oriented embedded simple closed curves on a surface $S$. Suppose that all intersections between elements of $\mathcal C$ are transverse. The {\em smoothing} of $\mathcal{C}$ is the embedded multicurve obtained from $\mathcal C$ by smoothly resolving all intersections in the unique orientation-preserving way. See Figure \ref{figure:smoothing}.

Now suppose $\alpha$ and $\beta$ are oriented simple closed curves. For natural numbers $m,n$, define the {\em curve sum} $m \alpha + n \beta$ as the smoothing of $m$ parallel copies of $\alpha$ with $n$ copies of $\beta$. In case $m < 0$ or $n < 0$, the curve sum $m \alpha + n \beta$ can be defined as before, with the orientation on $\alpha$ (resp. $\beta$) reversed if $m < 0$ (resp. $n < 0$). See Figure \ref{figure:casum}.

 By choosing arbitrary representatives in minimal position, both of these operations are well-defined on the level of isotopy classes.
\end{definition}

\begin{lemma}\label{lemma:smoothing}
Let $\alpha, \beta$ be oriented simple closed curves in minimal position, and let $\phi$ be a $\Z/r\Z$-valued spin structure. Then for any integers $m,n$,
\[
\phi(m\alpha + n \beta) = m \phi(\alpha) + n \phi(\beta).
\]
If in addition, $i(\alpha, \beta) = 1$ and $\gcd(m,n) =1$, then $m \alpha + n \beta$ has a single component.
\end{lemma}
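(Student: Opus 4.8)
\textbf{Proof plan for Lemma \ref{lemma:smoothing}.}

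The plan is to reduce everything to the twist-linearity axiom of Definition \ref{definition:wnf}, by exhibiting the curve sum $m\alpha + n\beta$ as the image of one of $\alpha, \beta$ under an explicit product of Dehn twists about $\alpha$ and $\beta$. The key observation is that when $i(\alpha,\beta) = 1$, the braid relation in the form $T_\alpha T_\beta(\alpha) = \beta$ (and its powers) moves curves around predictably, and more generally $T_\alpha^k(\beta)$ is the curve sum $\beta + k\alpha$ up to a sign convention. So the first step is to pin down the precise identity $T_\alpha^k(\beta) = k\alpha + \beta$ (for oriented curves with $\pair{\beta,\alpha} = 1$, say) by a direct picture in a neighborhood of $\alpha \cup \beta$, which is a once-punctured torus or a twice-bounded torus; this is the ``change-of-coordinates'' reduction to a model surface. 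Applying $\phi$ and using twist-linearity gives $\phi(k\alpha+\beta) = \phi(\beta) + k\pair{\beta,\alpha}\phi(\alpha) = \phi(\beta) + k\phi(\alpha)$, which is the additivity formula in the special case $m = k$, $n = 1$.

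Next I would bootstrap from this special case to general $(m,n)$ with $\gcd(m,n) = 1$ and $i(\alpha,\beta) = 1$. Here the point is that the mapping class group of the model surface acts on the set of (oriented isotopy classes of) essential simple closed curves exactly as $\SL(2,\Z)$ acts on primitive vectors in $\Z^2$: the pair $([\alpha],[\beta])$ is a basis of $H_1$ of the model torus, and $[m\alpha + n\beta]$ is the primitive homology class $m[\alpha] + n[\beta]$, realized by a unique simple closed curve (this also proves the last sentence of the lemma — primitivity of $(m,n)$ is exactly $\gcd(m,n)=1$, and a primitive class on the torus has a connected representative). Since $\SL(2,\Z)$ acts transitively on primitive vectors, there is $g$ in the mapping class group of the model with $g(\alpha) = m\alpha + n\beta$; writing $g$ as a word in $T_\alpha^{\pm 1}, T_\beta^{\pm 1}$ and applying twist-linearity repeatedly shows $\phi(m\alpha+n\beta)$ is an integer combination of $\phi(\alpha),\phi(\beta)$, and tracking the coefficients — which are governed entirely by the linear-algebra data $\pair{\cdot,\cdot}$ and the matrix of $g$ — forces the combination to be exactly $m\phi(\alpha) + n\phi(\beta)$. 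One clean way to see the coefficients come out right without bookkeeping is to note that $c \mapsto \phi(c) - (\text{the value predicted by the formula})$ is well-defined on the relevant primitive classes, vanishes on $\alpha$ and $\beta$, and is carried to itself under $T_\alpha, T_\beta$ by twist-linearity; since $T_\alpha, T_\beta$ generate the relevant action, it vanishes everywhere.

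For the general case where $i(\alpha,\beta) \ne 1$ — including $i(\alpha,\beta) = 0$ or the curves not in a torus — the smoothing is no longer a single curve and one cannot use the torus model directly. Instead I would argue by induction on $i(\alpha,\beta) + |m| + |n|$ (or on the number of intersection points in the relevant minimal configuration of the $m+n$ parallel copies), peeling off one innermost bigon-free resolution at a time: locally near a single resolved intersection, the smoothing replaces a crossing by the orientation-preserving resolution, and one checks by homological coherence (Proposition \ref{proposition:HCC}) applied to the small disk-with-corners created, together with twist-linearity, that the value of $\phi$ transforms additively. Concretely, $\phi$ is determined by an element of $H^1(UTS;\Z/r\Z)$ (Theorem \ref{theorem:HJhomological}), and the Johnson lift of a smoothing of oriented curves is homologous in $UTS$ to the sum of the Johnson lifts minus a correction counting the resolved crossings, which is precisely accounted for by the normalization $\phi(\zeta) = 1$; evaluating $\psi \in H^1(UTS;\Z/r\Z)$ on this homology identity gives the formula. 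I expect \emph{this last reduction — getting the additivity of $\phi$ under a single orientation-preserving smoothing from the homological/Johnson-lift picture, with the correction term correctly bookkept} — to be the main obstacle, since it requires care with framings and with the $\Z/(2g-2)\Z$ ``fiber'' class $\tilde\zeta$; everything else is the standard $\SL(2,\Z)$-on-primitive-vectors dictionary.
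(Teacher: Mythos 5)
Your first two paragraphs (the torus-model argument via twist-linearity, the braid relation, and transitivity of $\langle T_\alpha,T_\beta\rangle$ on primitive classes) are a sound alternative derivation, but only of the special case $i(\alpha,\beta)=1$, $\gcd(m,n)=1$; the lemma as stated is for arbitrary curves in minimal position and arbitrary $m,n$, where $m\alpha+n\beta$ is in general a multicurve, and your own plan concedes that this general case must be handled by a different mechanism. The paper's proof of the first assertion is exactly the cohomological one you reach for in your third paragraph: by Theorem \ref{theorem:HJhomological}, $\phi$ is evaluation of a class in $H^1(UTS;\Z/r\Z)$ on Johnson lifts, and the orientation-preserving smoothing does not change the total class of the tangential lifts in $H_1(UTS;\Z)$ -- the resolved strands follow one another with no extra rotation, so there is \emph{no} correction term and additivity is immediate (the second assertion is the standard count of components of an $(m,n)$-curve in a torus neighborhood).

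This is where your proposal has a genuine gap: you posit that the lift of the smoothing equals the sum of the lifts ``minus a correction counting the resolved crossings, which is precisely accounted for by the normalization $\phi(\zeta)=1$,'' and you flag the bookkeeping of this correction as the main obstacle. Taken literally, a crossing-counting correction would make the formula come out wrong: already for one crossing, $T_\alpha(\beta)$ is the smoothing $\alpha+\beta$ (suitable orientations), and twist-linearity in Definition \ref{definition:wnf} forces $\phi(\alpha+\beta)=\phi(\alpha)+\phi(\beta)$ with no extra $\pm 1$; the genuine $+1$ correction belongs to the curve-arc sum of Lemma \ref{lemma:curvearcsum}, which is a different operation (a band sum along an arc, bounding a pair of pants), not the crossing resolution. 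So the key step of your general argument is both unproven and, as described, would be carried out with the wrong homological identity; the fix is simply to prove (or quote from Humphries--Johnson) that the orientation-coherent resolution preserves the class of the tangential lift in $H_1(UTS;\Z)$, after which the formula follows in one line, as in the paper. The proposed induction ``peeling off one resolution at a time'' with homological coherence applied to a disk-with-corners is not needed and, as sketched, does not produce a well-posed inductive statement.
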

\begin{proof}
The first assertion follows directly from the identification of $\phi$ with an element of $H^1(UTS; \Z/r\Z)$ given in Theorem \ref{theorem:HJhomological}, while the second is straightforward to verify.
\end{proof}

\begin{definition}[Curve-arc sum]
Let $\alpha$ and $\beta$ be disjoint oriented simple closed curves on $S$, and let $\epsilon$ be an arc connecting $\alpha$ to $\beta$ whose interior is disjoint from $\alpha \cup \beta$. A regular neighborhood $\nu$ of $\alpha \cup \epsilon \cup \beta$ is homeomorphic to $\Sigma_{0,3}$. Two of the boundary components of $\nu$ are homotopic to $\alpha$ and $\beta$, respectively. The {\em curve-arc-sum} $\alpha +_{\epsilon} \beta$ is the third boundary component of $\nu$. Again, the curve-arc sum descends to the level of isotopy classes. 
\end{definition}

\begin{lemma}\label{lemma:curvearcsum}
Let $\alpha, \beta, \epsilon, \nu$ be as above. Orient $\alpha, \beta$ so that $\epsilon$ connects the left sides of $\alpha, \beta$, and orient $\alpha +_{\epsilon} \beta$ so that the subsurface $\nu$ is to the right. Then for $\phi$ a $\Z/r\Z$-valued spin structure,
\[
\phi(\alpha+_{\epsilon}\beta) = \phi(\alpha) + \phi(\beta) + 1.
\]
In addition, on the level of homology, $[\alpha+_{\epsilon} \beta] = [\alpha] + [\beta]$.
\end{lemma}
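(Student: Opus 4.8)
The plan is to reduce both assertions to the already-established homological formalism for spin structures (Theorem \ref{theorem:HJhomological}) together with a careful bookkeeping of the $S^1$-fiber class $\zeta$. First I would fix a concrete model: let $\nu \cong \Sigma_{0,3}$ be a regular neighborhood of $\alpha \cup \epsilon \cup \beta$, with its three boundary components $\alpha, \beta$, and $\delta := \alpha +_\epsilon \beta$, oriented as in the statement. The key computational tool is that $\phi$ is a homomorphism $H_1(UTS;\Z/r\Z) \to \Z/r\Z$ (dually, a cohomology class) applied to Johnson lifts. So I would work out the relation in $H_1(UTS;\Z/r\Z)$ among the Johnson lifts $\tilde\alpha$, $\tilde\beta$, $\tilde\delta$, and $\zeta$, and then apply $\phi$.

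The cleanest route is via the homological coherence criterion, Proposition \ref{proposition:HCC}: the subsurface $\nu$ has Euler characteristic $\chi(\Sigma_{0,3}) = -1$, and its oriented boundary (with $\nu$ to the left) consists of $\alpha$, $\beta$, and $\bar\delta$ (the orientation on $\delta$ that puts $\nu$ to the left is opposite to the one specified in the lemma, which puts $\nu$ to the right). Hence $\phi(\alpha) + \phi(\beta) + \phi(\bar\delta) = -1$. Now I need the relation between $\phi(\delta)$ and $\phi(\bar\delta)$: for any oriented simple closed curve $c$, reversing orientation sends the Johnson lift $\tilde c$ to $\widetilde{\bar c}$, and these differ by the fiber class $\zeta$ — more precisely, $[\widetilde{\bar c}] = -[\tilde c] + (\text{winding-number correction})\cdot\zeta$. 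The forward-pointing framings of $c$ and $\bar c$ differ in winding number by $\pm 2$ relative to a fixed reference (this is the standard fact that $\phi(c) + \phi(\bar c) = 2 \cdot \phi(\zeta) = 2$ in the $\Z/r\Z$-valued setting, since $\phi(\zeta)=1$; concretely one computes on an embedded disk that $\phi(c) + \phi(\bar c) = \chi(\text{disk}) + \text{fiber contribution}$). Putting $\phi(\bar\delta) = 2 - \phi(\delta)$ into the coherence relation gives $\phi(\alpha) + \phi(\beta) + 2 - \phi(\delta) = -1$, i.e. $\phi(\delta) = \phi(\alpha) + \phi(\beta) + 3$ — so I should double-check signs and the orientation convention here, because the target identity is $\phi(\alpha)+\phi(\beta)+1$. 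The discrepancy signals that the correct orientation-reversal relation in this normalization is $\phi(\bar c) = -\phi(c)$ up to the fiber term, and one must track whether the $\epsilon$-connects-left-sides hypothesis forces a specific sign; I expect that with the stated conventions one gets exactly $\phi(\alpha)+_\epsilon\beta) = \phi(\alpha)+\phi(\beta)+1$, and the homological statement $[\alpha +_\epsilon \beta] = [\alpha]+[\beta]$ is immediate from the pair-of-pants relation $[\alpha] + [\beta] + [\bar\delta] = 0$ in $H_1(S;\Z)$ combined with $[\bar\delta] = -[\delta]$.

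The main obstacle will be getting the orientation and framing conventions exactly right: the lemma makes three separate orientation choices ($\alpha,\beta$ so $\epsilon$ joins left sides; $\delta$ so $\nu$ is to the right), and the $+1$ on the right-hand side is precisely the term that is sensitive to these. I would handle this by drawing the standard picture of a pair of pants embedded in a disk (or in $\Sigma_1$), choosing explicit Johnson lifts, and computing $\phi$ directly on that model via Theorem \ref{theorem:HJhomological}, where the $\pi_1(UT\Sigma_g)$-presentation in Remark \ref{remark:admits} lets me read off the fiber contributions unambiguously; then invoke the change-of-coordinates principle (Lemma \ref{lemma:gsb}-style reasoning) to conclude the general case, since the triple $(\alpha,\beta,\epsilon)$ up to diffeomorphism depends only on the combinatorial data already fixed. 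The homological identity, by contrast, requires no care and follows in one line.
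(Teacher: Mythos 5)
Your overall strategy is the paper's: apply the homological coherence criterion (Proposition \ref{proposition:HCC}) to the pair of pants $\nu$, which has $\chi(\nu)=-1$, obtaining $-1 = \phi(\alpha)+\phi(\beta)+\phi(-(\alpha+_\epsilon\beta))$, and then convert $\phi(-(\alpha+_\epsilon\beta))$ into $\phi(\alpha+_\epsilon\beta)$. But the step where you do this conversion is wrong, and it is the crux. You assert as a ``standard fact'' that $\phi(c)+\phi(\bar c) = 2\phi(\zeta)=2$ for the orientation-reversed curve $\bar c$, arrive at $\phi(\alpha+_\epsilon\beta)=\phi(\alpha)+\phi(\beta)+3$, notice the mismatch, and defer the resolution to an unspecified model computation. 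In this formalism the correct rule is $\phi(\bar c) = -\phi(c)$, with no fiber correction: the backward-pointing tangent framing along $c$ is obtained from the forward-pointing one by rotating each vector by $\pi$, which is a homotopy of sections of $UTS$ over $c$, so the Johnson lift of $\bar c$ is (up to homotopy) the lift of $c$ traversed backwards, i.e.\ $[\widetilde{\bar c}] = -[\tilde c]$ in $H_1(UTS;\Z)$ and hence $\phi(\bar c)=-\phi(c)$ by Theorem \ref{theorem:HJhomological}. This is exactly the $(m,n)=(-1,0)$ case of Lemma \ref{lemma:smoothing}, which is what the paper cites; with it, the coherence relation immediately gives $\phi(\alpha+_\epsilon\beta)=\phi(\alpha)+\phi(\beta)+1$, and no further case analysis or appeal to the ``left sides'' hypothesis is needed beyond its role in making $\alpha,\beta$ oriented with $\nu$ to the left so that Proposition \ref{proposition:HCC} applies as stated.

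A quick sanity check shows your proposed identity cannot hold: take $c=\zeta$ the boundary of an embedded disk $D\subset\Sigma_g$, so $\phi(\zeta)=1$. Applying Proposition \ref{proposition:HCC} to $\Sigma_g\setminus D$ (whose single boundary curve is $\bar\zeta$, oriented with the complement to the left) gives $\phi(\bar\zeta)=\chi(\Sigma_g\setminus D)=1-2g\equiv -1 \pmod r$ since $r\mid 2g-2$; thus $\phi(\zeta)+\phi(\bar\zeta)=0$, not $2$. So the gap is genuine: the sign rule you rely on is false, and the fallback plan (explicit pair-of-pants model plus change of coordinates) is only sketched, whereas the correct one-line input is already available in Lemma \ref{lemma:smoothing}. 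The homological assertion $[\alpha+_\epsilon\beta]=[\alpha]+[\beta]$ you handle correctly.
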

\begin{proof}
Observe that $\chi(\nu) = -1$. By the homological coherence criterion (Proposition \ref{proposition:HCC}),
\[
-1 = \phi(\alpha) + \phi(\beta) + \phi(-(\alpha+_\epsilon \beta)),
\]
where $-(\alpha+_\epsilon \beta)$ denotes the curve $\alpha +_\epsilon \beta$ with orientation opposite to that specified above. By the case $(m,n) = (-1,0)$ of Lemma \ref{lemma:smoothing}, it follows that $\phi(-(\alpha+_\epsilon \beta)) = - \phi(\alpha+_\epsilon \beta)$, from which the first claim follows. The second claim is an immediate consequence of the orientation conventions. 
\end{proof}

\subsection{The group \boldmath$\Mod(S)[\phi]$; first examples of elements}\label{subsection:firstproperties} For any surface $S$, there is an obvious (left) action of $\Mod(S)$ on the set of $\Z/r\Z$-valued spin structures: for $f \in \Mod(S)$ and $c \in \mathcal S$, define $(f\cdot \phi)(c) = \phi(f^{-1}(c))$. Similarly, if $f: S \to S'$ is a diffeomorphism and $\phi$ is a $\Z/r\Z$-valued spin structure on $S'$, there is a pullback $f^*(\phi)$ defined on $S$ via $(f^*\phi)(c) = \phi(f(c))$. 
 
\begin{definition}[Stabilizer subgroup]\label{definition:stabilizer} Let $\phi$ be a spin structure on a surface $S$. The {\em stabilizer subgroup} of $\phi$, written $\Mod(S)[\phi]$, is defined as
\[
\Mod(S)[\phi] = \{ f\in \Mod(S) \mid (f \cdot \phi) = \phi\}.
\]
\end{definition}

Let $\phi$ be a $\Z/r \Z$-valued spin structure on a surface $S$. Below we discuss some fundamental examples of elements in $\Mod(S)[\phi]$. 

\para{Dehn twist powers and admissible twists} The twist-linearity formula of Definition \ref{definition:wnf} immediately implies the following  characterization of Dehn twists in $\Mod(S)[\phi]$.
\begin{lemma}\label{lemma:twists}
Let $c$ be a simple closed curve on $S$. If $c$ is separating, then $T_c \in \Mod(S)[\phi]$. If $c$ is nonseparating, then $T_c^k \in \Mod(S)[\phi]$ if and only if $k \phi(c) \equiv 0 \pmod r$. In particular, for $c$ nonseparating, $T_c \in \Mod(S)[\phi]$ if and only if $\phi(c) = 0$. 
\end{lemma}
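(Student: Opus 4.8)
The plan is to derive everything directly from the twist-linearity axiom of Definition~\ref{definition:wnf}, applied with a carefully chosen pair of curves. First I would handle the separating case. If $c$ is separating, then for {\em any} oriented simple closed curve $d$ we have $\pair{d,c} = 0$, since the algebraic intersection number of $d$ with a separating curve vanishes (a separating curve is null-homologous). Plugging this into the twist-linearity formula gives $\phi(T_c(d)) = \phi(d) + 0\cdot\phi(c) = \phi(d)$ for all $d \in \mathcal S$. By definition of the action of $\Mod(S)$ on spin structures, $(T_c^{-1}\cdot\phi)(d) = \phi(T_c(d)) = \phi(d)$, so $T_c^{-1}$, hence $T_c$, lies in $\Mod(S)[\phi]$.

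Next I would treat the nonseparating case. Applying twist-linearity iteratively, I claim $\phi(T_c^k(d)) = \phi(d) + k\pair{d,c}\phi(c)$ for all $k \in \Z$; this follows by induction on $k$, using at each step that $\pair{T_c^{\,j}(d),c} = \pair{d,c}$ because $T_c$ acts trivially on $[c] \in H_1(S;\Z)$ and preserves the algebraic intersection pairing, so $\pair{T_c^{\,j}(d),c} = \pair{T_c^{\,j}(d),T_c^{\,j}(c)} = \pair{d,c}$. Therefore $(T_c^{-k}\cdot\phi)(d) = \phi(d) + k\pair{d,c}\phi(c)$. Hence $T_c^k \in \Mod(S)[\phi]$ if and only if $k\pair{d,c}\phi(c) \equiv 0 \pmod r$ for every $d$. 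One direction is now immediate: if $k\phi(c)\equiv 0$, the condition holds for all $d$. For the converse, since $c$ is nonseparating there exists an oriented simple closed curve $d$ with $\pair{d,c} = 1$ (complete $[c]$ to a symplectic basis of $H_1$, or use the change-of-coordinates principle to find a dual curve); taking this $d$ forces $k\phi(c)\equiv 0 \pmod r$. The final sentence of the lemma is the special case $k = 1$.

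I would also double-check one subtlety: the twist-linearity axiom is stated for $\phi$ evaluated on the oriented curve $T_c(d)$, and orientations must be tracked consistently under the Dehn twist and under the action. Since a left-handed Dehn twist $T_c$ extends to an orientation-preserving diffeomorphism, it carries the oriented curve $d$ to a well-defined oriented curve $T_c(d)$, and the formula is orientation-coherent in the sense of Convention~\ref{convention:orientation}; when $k$ is such that we must reverse orientations the sign enters both $\pair{d,c}$ and $\phi(c)$ compatibly, so the condition $k\phi(c)\equiv 0$ is orientation-independent as required.

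The main obstacle — really the only non-formal point — is supplying, in the nonseparating case, a curve $d$ with $\pair{d,c} = 1$; but this is standard (it is exactly the statement that a primitive nonseparating class can be completed to a symplectic basis, cf.\ the change-of-coordinates principle of Section~\ref{section:basics}), so the proof is essentially a direct unwinding of Definition~\ref{definition:wnf}.
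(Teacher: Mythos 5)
Your proof is correct and is exactly the argument the paper intends: the paper states the lemma as an immediate consequence of twist-linearity, and your unwinding (vanishing of $\pair{d,c}$ for $c$ separating, the iterated formula $\phi(T_c^k(d)) = \phi(d) + k\pair{d,c}\phi(c)$, and a dual curve with $\pair{d,c}=1$ for the converse) is precisely that direct verification. No gaps; the orientation remark and the existence of the dual curve via change of coordinates are handled appropriately.
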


\begin{definition}[Admissible]\label{definition:admissible}
A nonseparating curve $c$ with $\phi(c) = 0$ is called an {\em admissible curve}. The associated element $T_c \in \Mod(S)[\phi]$ is called an {\em admissible twist}. The group generated by all admissible twists is written $\mathcal T_\phi$, and is called the {\em admissible subgroup}. 
\end{definition}

\para{Fundamental multitwists} Let $P\cong \Sigma_{0,3}$ be a pair of pants with boundary curves $\alpha,\beta,\gamma$. Suppose that $\phi(\alpha) = a, \phi(\beta) = b,$ and that $\phi(\gamma) = c$, with all curves oriented so that $P$ lies to the left.  By the homological coherence property, $a + b + c = -1$. 
\begin{definition}\label{definition:multi}
Let $P$ and $\phi$ be as above. A {\em $\phi$-bounding multitwist} associated to $P$, denoted $T_P(x,y,z)$, is given by
\[
T_P(x,y,z) = T_\alpha^{x}T_\beta^y T_\gamma^z
\]
for any choice of integers $x,y,z$ such that $T_P \in \Mod(S)[\phi]$. 
\end{definition}
By the above, $T_P(r, r, r)$ is a $\phi$-bounding multitwist for any $P$ and $\phi$, but for special values of $a,b,c$, there are much simpler examples. 
\begin{lemma}\label{lemma:fundtwist}
Let $P$ be as above, and suppose that $b = -a$, so that $c = -1$. Then $T_P(1,-1, b) = T_\alpha T_\beta^{-1} T_\gamma^{b}$ is a $\phi$-bounding multitwist. The element $T_P(1,-1,b)$ is called a {\em fundamental multitwist} for $P$ and is denoted $T_P$. 
\end{lemma}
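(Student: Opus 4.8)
The plan is to verify directly that the multitwist $h := T_\alpha T_\beta^{-1} T_\gamma^b$ fixes $\phi$ under the $\Mod(S)$-action, which by Definition \ref{definition:multi} is exactly the assertion that $T_P(1,-1,b)$ is a $\phi$-bounding multitwist. The engine of the computation is the formula describing how a power of a single Dehn twist acts on a spin structure: for any simple closed curve $c$, any $\Z/r\Z$-valued spin structure $\psi$, and any $n \in \Z$,
\[
(T_c^{\,n}\cdot\psi)(d) \;=\; \psi(d) - n\,\pair{d,c}\,\psi(c), \qquad d \in \mathcal S.
\]
This is immediate from twist-linearity (Definition \ref{definition:wnf}): rewriting it gives $\psi(T_c^{\mp 1}(d)) = \psi(d) \mp \pair{d,c}\psi(c)$ once one uses that $T_c$ preserves the algebraic intersection pairing and fixes $[c]$; iterating, and noting that $\pair{c,c}=0$ makes $\psi(c)$ invariant under the intermediate twists, yields the displayed identity. (Here $\pair{\cdot,\cdot}$ is extended bilinearly to homology classes, as in Section \ref{section:basics}.)

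With this in hand I would peel off the three factors of $h\cdot\phi = T_\alpha\cdot\big(T_\beta^{-1}\cdot(T_\gamma^{\,b}\cdot\phi)\big)$ one at a time, using two simplifications to keep the bookkeeping light. First, since $\alpha,\beta,\gamma$ are the three distinct boundary components of $P$, they are pairwise disjoint, so $\pair{\alpha,\beta}=\pair{\beta,\gamma}=\pair{\gamma,\alpha}=0$; in particular each intermediate spin structure still takes the values $a,b,c$ on $\alpha,\beta,\gamma$ respectively, so no cross-terms appear. Second, the homological coherence criterion (Proposition \ref{proposition:HCC}) applied to $P$ gives $a+b+c=-1$, so the hypothesis $b=-a$ forces $c=\phi(\gamma)=-1$. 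Carrying out the three steps with these facts yields
\[
(h\cdot\phi)(d) \;=\; \phi(d) + b\,\pair{d,\gamma} + b\,\pair{d,\beta} - a\,\pair{d,\alpha}.
\]

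Finally, substituting $b=-a$ collapses the correction term to $-a\,\pair{d,\,[\alpha]+[\beta]+[\gamma]}$, which vanishes because $[\alpha]+[\beta]+[\gamma]=0$ in $H_1(S;\Z)$ — the same homological relation underlying Proposition \ref{proposition:HCC}, valid with the ``$P$ to the left'' orientation convention in force. Hence $h\cdot\phi=\phi$, i.e. $T_\alpha T_\beta^{-1}T_\gamma^b \in \Mod(S)[\phi]$, as claimed. There is no substantial obstacle; the only thing demanding care is the sign bookkeeping — keeping the direction of twist-linearity and the left-action convention $(f\cdot\phi)(c)=\phi(f^{-1}(c))$ consistent, and matching the ``to the left'' orientations so that $[\alpha]+[\beta]+[\gamma]=0$ applies cleanly. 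One could alternatively check the equality $h\cdot\phi=\phi$ only on a homology basis via Theorem \ref{theorem:gsb}, but the computation above has the advantage of exhibiting agreement on all of $\mathcal S$ at once.
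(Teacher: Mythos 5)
Your proposal is correct and follows essentially the same route as the paper: a direct twist-linearity computation, using disjointness of $\alpha,\beta,\gamma$, the substitution $b=-a$, and the relation $[\alpha]+[\beta]+[\gamma]=0$ in $H_1(S;\Z)$ to make the correction term collapse. The only cosmetic difference is that you evaluate $(h\cdot\phi)(d)=\phi(h^{-1}(d))$ while the paper checks $\phi(h(d))=\phi(d)$ directly, which is the same verification with opposite sign bookkeeping.
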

\begin{proof}
Let $d$ be any curve on $S$; we must show that $\phi(d) = \phi(T_\alpha T_\beta^{-1} T_\gamma^b(d))$. As $\alpha, \beta, \gamma$ are all disjoint, the twist-linearity property, in combination with the fact that $[\alpha + \beta + \gamma] = 0$ in $H_1(S)$, gives
\begin{align*}
\phi(T_\alpha T_\beta^{-1} T_\gamma^b(d)) &= \phi(d) + \pair{d,\alpha} a - \pair{d,\beta} b - \pair{d, \gamma}b\\
	&= \phi(d) - \pair{d, \alpha + \beta + \gamma}b\\
	&= \phi(d).
\end{align*}
\end{proof}
\begin{remark} Of course, if $T_P(1,-1,b)$ is a fundamental multitwist, then so is $T_P(1,-1, b+ k r)$ for any $k \in \Z$. An important special case is when $\phi(\alpha) = \phi(\beta) = 0$. Then $T_\alpha T_\beta^{-1}$ is a fundamental multitwist. 
\end{remark}

\subsection{``Classical'' spin structures}\label{subsection:classical} Spin structures in the sense of Definition \ref{definition:wnf} generalize the more familiar notion of a ``classical'' spin structure. In our setting, a classical spin structure is a spin structure valued in $\Z/2\Z$. We pause here to briefly review the theory of classical spin structures and the connection with our definition. These results, especially the theory of the Arf invariant, will play a crucial role in the analysis of $\Z/r\Z$-valued spin structures for $r$ even to be begun in Proposition \ref{proposition:onlyarf} and Corollary \ref{corollary:allvalueseven}, and returned to in Section \ref{section:deven}.

Let $V$ be a vector space over the field $\Z/2\Z$ equipped with a nondegenerate symplectic pairing $\pair{\cdot,\cdot}$ (i.e. a nondegenerate bilinear pairing satisfying $\pair{x,x} = 0$ for all $x \in V$). The motivating example is $V = H_1(\Sigma_g; \Z/2\Z)$ with the intersection pairing. A {\em $\Z/2\Z$ quadratic form relative to $\pair{\cdot, \cdot}$} is a function $q: V \to \Z/2\Z$ such that for any $x,y \in V$, the equation
\begin{equation}\label{equation:q}
q(x+y) = q(x) + q(y) + \pair{x,y}
\end{equation}
holds. 

Let $\mathcal B = \{x_1, y_1, \dots, x_g, y_g\}$ be a symplectic basis for $V$. It is clear that $q$ is determined by its values on $\mathcal B$. Define $Q(V, \pair{\cdot, \cdot})$ as the set of $\Z/2\Z$ quadratic forms on $V$ relative to $\pair{\cdot, \cdot}$; then a choice of $\mathcal B$ provides a bijection 
\[
Q(V, \pair{\cdot,\cdot}) \cong (\Z/2\Z)^{2g}.
\]
There is an evident action of the group $\Sp(V, \pair{\cdot, \cdot})$ of $\pair{\cdot,\cdot}$-preserving automorphisms on $Q(V, \pair{\cdot,\cdot})$. 

To understand the set of orbits, we introduce the {\em Arf invariant}. The Arf invariant of $q$ is the element of $\Z/2\Z$ defined by the following formula:
\[
\Arf(q) := \sum_{i = 1}^g q(x_i)q(y_i).
\]
$q$ is said to be {\em even} or {\em odd} according to whether $\Arf(q) = 0,1$ respectively; in this way we will speak of the {\em parity} of a spin structure. The following records some well-known properties of the Arf invariant.

\begin{lemma}\label{lemma:arfprops}
Let $(V, \pair{\cdot, \cdot})$ be a symplectic vector space over $\Z/2\Z$, and let $q,q' \in Q(V, \pair{\cdot,\cdot})$ be quadratic forms.
\begin{enumerate}
\item\label{item:arfwd} $\Arf(q)$ is well-defined independently of the choice of symplectic basis,
\item\label{item:arforbit} $q$ and $q'$ are in the same orbit of $\Sp(V, \pair{\cdot, \cdot})$ if and only if $\Arf(q) = \Arf(q')$.
\end{enumerate}
\end{lemma}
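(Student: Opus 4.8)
The plan is to prove (\ref{item:arfwd}) first, by exhibiting $\Arf(q)$ as a manifestly basis-free quantity, and then to deduce both directions of (\ref{item:arforbit}).

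For (\ref{item:arfwd}), I would study the \emph{Gauss sum} $S(q) := \sum_{v \in V} (-1)^{q(v)}$, whose definition involves no choice of basis. Using (\ref{equation:q}) in the form $q(v) + q(w) = q(v+w) + \pair{v,w}$ and substituting $u = v+w$, one computes
\[
S(q)^2 = \sum_{v,w \in V} (-1)^{q(v) + q(w)} = \sum_{u \in V} (-1)^{q(u)} \sum_{w \in V} (-1)^{\pair{u,w}} = 2^{2g},
\]
since nondegeneracy of $\pair{\cdot,\cdot}$ forces the inner sum to vanish unless $u = 0$. Hence $S(q) = \pm 2^{g}$. On the other hand, fixing any symplectic basis $\mathcal B = \{x_1,y_1,\dots,x_g,y_g\}$ and using that (\ref{equation:q}) makes $q$ additive over the orthogonal direct sum decomposition $V = \bigoplus_i \pair{x_i,y_i}$ into hyperbolic planes, the sum $S(q)$ factors as a product of the contributions of these planes; evaluating $q$ on the four vectors of each plane shows the $i$-th factor equals $2(-1)^{q(x_i)q(y_i)}$. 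Therefore $S(q) = 2^{g}(-1)^{\Arf(q)}$, so $\Arf(q)$ is recovered from the sign of $S(q)$ and in particular does not depend on $\mathcal B$.

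The forward implication of (\ref{item:arforbit}) is then immediate: if $q' = A \cdot q$ for $A \in \Sp(V,\pair{\cdot,\cdot})$, then $A$ sends a symplectic basis to a symplectic basis while preserving the values of the form on it, so the $\Arf$-formula gives $\Arf(q') = \Arf(q)$ — legitimately, since we now know the formula is independent of the basis used. For the reverse implication, the strategy is a normal form argument: I claim every $q$ is $\Sp(V,\pair{\cdot,\cdot})$-equivalent to one of exactly two standard forms, distinguished by parity. Concretely, I would prove by induction on $g$ that there is a symplectic basis $\{x_1,y_1,\dots,x_g,y_g\}$ with $q(x_i) = q(y_i) = 0$ for $i < g$ and $(q(x_g),q(y_g)) \in \{(0,0),(1,1)\}$. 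The inductive step peels off a hyperbolic plane: if $V$ contains a nonzero $q$-isotropic vector $v$, pick $w$ with $\pair{v,w}=1$, replace $w$ by $w+v$ if necessary (using (\ref{equation:q})) to arrange $q(w)=0$, and pass to the nondegenerate complement $\pair{v,w}^\perp$; the only obstruction is that $q$ be anisotropic, and an elementary argument over $\Z/2\Z$ (if $\dim V \ge 4$, a suitable sum $v_1+v_2$ of orthogonal vectors is nonzero and $q$-isotropic) shows this forces $g=1$, which supplies the $(1,1)$ base case. Granting the normal form, I invoke the classical fact that $\Sp(V,\pair{\cdot,\cdot})$ acts transitively on symplectic bases of $V$: given $q,q'$ with $\Arf(q)=\Arf(q')$, the normalizing bases for $q$ and $q'$ have the same pattern of values, so the symplectic transformation carrying one to the other carries $q$ to $q'$, since the two forms then agree on a basis.

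The main obstacle is the inductive construction of the normal form in the reverse implication — in particular correctly dispatching the anisotropic case and identifying the base cases — but this is entirely classical and standard. The only genuinely computational ingredient is the Gauss-sum identity in (\ref{item:arfwd}), and it is short.
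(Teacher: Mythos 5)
Your argument is correct and complete. Note, however, that the paper offers no proof of this lemma at all: it is stated as a record of ``well-known properties of the Arf invariant,'' so there is no in-paper argument to compare yours against. What you have written is essentially the classical proof: the Gauss sum $S(q)=\sum_{v\in V}(-1)^{q(v)}$ (your computation is right, including the implicit use of $q(0)=0$ and the evaluation $2(-1)^{q(x_i)q(y_i)}$ on each hyperbolic plane) gives basis-independence of $\Arf$, and the normal-form induction -- split off a hyperbolic plane on which $q$ vanishes whenever a nonzero $q$-isotropic vector exists, observe that anisotropy forces $\dim V=2$ via $q(x_1+x_2)=0$ for orthogonal $x_1,x_2$ with $q(x_1)=q(x_2)=1$, and finish with transitivity of $\Sp(V,\pair{\cdot,\cdot})$ on symplectic bases together with the fact that a form in $Q(V,\pair{\cdot,\cdot})$ is determined by its values on a basis -- gives exactly the two-orbit classification. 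The only blemish is notational: writing $V=\bigoplus_i\pair{x_i,y_i}$ overloads $\pair{\cdot,\cdot}$, which elsewhere denotes the symplectic pairing; write $\mathrm{span}(x_i,y_i)$ instead. So there is no gap; your proof would serve as a legitimate self-contained substitute for the citation to folklore.
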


Suppose now that $\phi$ is a $\Z/2d\Z$-valued spin structure in the sense of Definition \ref{definition:wnf}. The reduction $\Z/2d\Z \to \Z/2\Z$ associates to $\phi$ an underlying $\Z/2\Z$-valued spin structure which we denote $\bar \phi$. {\em A priori}, $\bar \phi$ is defined on the set $\mathcal S$ of isotopy classes of oriented curves on $\Sigma_g$. It follows from \cite[Theorem 1A]{johnsonspin} that $\bar \phi$ factors through the map $[\cdot]: \mathcal S \to H_1(\Sigma_g;\Z/2\Z)$. The induced map 
\[
\bar{\bar\phi}: H_1(\Sigma_g;\Z/2\Z)\to \Z/2\Z
\] 
is not quite a classical spin structure, but it follows from \cite[Theorem 1A]{johnsonspin} that the function
\begin{equation}\label{equation:classicalspin}
q_\phi := \bar{\bar \phi} + 1
\end{equation}
{\em does} determine a classical spin structure.

In the remainder of this paper we will exclusively use the term ``spin structure'' in the sense of Definition \ref{definition:wnf}. The reader versed in classical spin structures should be aware that certain formulas appear different in this setting. For instance, a Dehn twist about some nonseparating curve $c$ preserves a $\Z/2\Z$-valued spin structure $\phi$ if and only if $\phi(c) = 0$, whereas a transvection about some nonzero $v\in V$ preserves a quadratic form $q$ if and only if $q(v) = 1$. Likewise, if $\phi$ is a $\Z/2d\Z$-valued spin structure, the formula for the Arf invariant $\Arf(\phi)$ of the underlying classical spin structure is given by
\begin{equation}\label{equation:arf}
\Arf(\phi) = \sum_{i = 1}^g (\bar \phi(x_i) + 1)(\bar \phi(y_i) + 1) \pmod 2.
\end{equation}

\section{The action of the mapping class group on spin structures} \label{section:action}
In what follows, we will need to understand the action of $\Mod(\Sigma_g)$ on the set of $\Z/r\Z$-valued spin structures. Following the discussion in Section \ref{subsection:classical}, when $r$ is even, the Arf invariant shows there are at least two orbits of $\Mod(S)$ on the set of $\Z/r \Z$-valued spin structures, but it is not clear what happens for odd $r$, nor whether there are further invariants leading to more orbits. The goal of this section is to give a complete description of this action. In the case of $r$ odd, the mapping class group action on the set of $\Z/r\Z$-valued spin structures is described in Proposition \ref{proposition:one}, and for $r$ even it is described in Proposition \ref{proposition:onlyarf}. Both results can be understood as asserting that there are no ``higher Arf invariants''.

\subsection{Odd \boldmath$r$} In the case of $r$ odd, we will need to consider surfaces with multiple boundary components. Before formulating the results, we define the notion of the {\em signature} of a $\Z/r\Z$-valued spin structure.

\begin{definition}[Signature of a $\Z/r\Z$-valued spin structure]\label{definition:signature}
Let $S$ be a surface equipped with a $\Z/r\Z$-valued spin structure $\phi$. Enumerate the boundary components as $\Delta_1, \dots, \Delta_n$, each one oriented so that $S$ is to the {\em left}. The {\em signature of $S$ rel $\phi$} is defined as the $n$-tuple of values $\sig(S, \phi) = (\phi(\Delta_1), \dots, \phi(\Delta_n))$. We will also speak of the signature of an individual $\Delta_k$, defined as $\phi(\Delta_k)$.
\end{definition}

\begin{proposition}\label{proposition:one}
Fix an odd integer $r$. Let $S$ be a surface, and let $\phi$ and $\psi$ be $\Z/r \Z$-valued spin structures on $S$ satisfying $\sig(\phi) = \sig(\psi)$. Suppose that either $g(S) \ne 1$ or else $g = 1$ and there is at least one boundary component with signature $\phi(c_1) = \psi(c_1) = k$ for some $k$ such that $k+1$ generates $\Z/r \Z$. Then there is a mapping class $f \in \Mod(S)$ such that $f^*(\psi) = \phi$. 
\end{proposition}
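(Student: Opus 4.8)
The plan is to induct on the complexity of $S$, reducing the general case to the base case of a surface with $g(S) \le 1$ by repeatedly peeling off handles and boundary components, always arranging to change coordinates so that $\phi$ and $\psi$ agree on a geometric symplectic basis together with all the boundary curves. By Theorem \ref{theorem:gsb} (and the discussion surrounding geometric symplectic bases), two $\Z/r\Z$-valued spin structures with the same signature are equal precisely when they agree on a geometric symplectic basis, so the whole problem becomes: find $f \in \Mod(S)$ carrying a geometric symplectic basis $\mathcal B$ to one on which $\psi$ takes the same values that $\phi$ takes on $\mathcal B$ (after recording that the boundary values automatically match since $\sig(\phi) = \sig(\psi)$). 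First I would handle the separable pieces: since $r$ is odd, $2$ is invertible mod $r$, and the fundamental and $\phi$-bounding multitwists of Section \ref{subsection:firstproperties} give enough flexibility to realize, via elements of $\Mod(S)$, the change of a single basis curve's $\phi$-value by any element of $\Z/r\Z$ — this is where oddness of $r$ is essential, and it is the analogue of the fact that there is no Arf-type obstruction.

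The key steps, in order: (1) Reduce to the case that $\partial S$ has at most one component. If $S$ has $n \ge 2$ boundary components, cap one off with a once-punctured disk or, better, realize the handle/boundary reduction through the homological coherence criterion (Proposition \ref{proposition:HCC}) so that $\phi$ and $\psi$ descend compatibly; the hypothesis that the signatures agree guarantees the capped-off spin structures still have equal signature. (2) On a surface with $g \ge 2$ and at most one boundary component, use the change-of-coordinates principle (Lemma \ref{lemma:gsb}) to normalize a genus-one subsurface $\Sigma_{1,1} \subset S$ carrying part of a geometric symplectic basis $\{\alpha_1,\beta_1\}$; then apply Dehn twists supported in $S$ — powers of twists about $\alpha_1,\beta_1$ and about curves obtained from them by curve-sum (Lemma \ref{lemma:smoothing}) and curve-arc-sum (Lemma \ref{lemma:curvearcsum}) — to adjust $\psi(\alpha_1)$ and $\psi(\beta_1)$ to the prescribed values $\phi(\alpha_1),\phi(\beta_1)$; here the twist-linearity formula plus invertibility of $2 \bmod r$ does the work. (3) Once this genus-one piece is normalized, the complement is a surface of smaller genus (with one more boundary component), and the normalized boundary curve of the $\Sigma_{1,1}$ becomes a boundary component of the complement with a known signature; apply the inductive hypothesis there. (4) Assemble the pieces: the mapping classes produced on each piece extend by the identity, and their composite carries $\psi$ to $\phi$. (5) Finally, treat the base case $g(S) \le 1$ directly — for $g = 0$ the spin structure is determined by its signature (by Proposition \ref{proposition:HCC} and Theorem \ref{theorem:gsb}) so nothing needs to be done, and for $g = 1$ the extra hypothesis (existence of a boundary curve $c_1$ with $\phi(c_1) = k$, $k+1$ a generator of $\Z/r\Z$) is exactly what is needed: a curve-arc-sum of a basis curve $\alpha_1$ with $c_1$ has $\phi$-value shifted by $\phi(c_1) + 1 = k+1$, a generator, so iterating we can realize any value on $\alpha_1$, and similarly for $\beta_1$.

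The main obstacle I expect is step (2): showing that, working only with mapping classes supported in a fixed subsurface of genus one (or genus two if one prefers a cleaner base case), one can independently adjust the two $\phi$-values $\psi(\alpha_1)$ and $\psi(\beta_1)$ to arbitrary prescribed values in $\Z/r\Z$ without disturbing the values on the rest of $\mathcal B$ or on the boundary. The point is that a Dehn twist about a curve $c$ supported in this subsurface alters $\psi$-values only of curves that intersect $c$, so one must choose the auxiliary curves carefully (as curve-sums $m\alpha_1 + n\beta_1$ and curve-arc-sums) so that twisting about them moves $\psi(\alpha_1)$ and $\psi(\beta_1)$ in a controlled, linearly-independent-over-$\Z/r\Z$ fashion — which is possible precisely because $r$ is odd, since over $\Z/2\Z$ the Arf invariant obstructs exactly this kind of free adjustment. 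Carefully checking that the genus-one case with the stated divisibility hypothesis is genuinely the smallest case where this works (and understanding why $g = 1$ without that hypothesis can fail) is the subtle point; once that is in hand, the induction in steps (1), (3), (4) is routine bookkeeping with Proposition \ref{proposition:HCC} and the change-of-coordinates principle.
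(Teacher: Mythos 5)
There is a genuine gap at your step (2), which you yourself flag as the main obstacle but then do not resolve, and the mechanism you suggest for it does not work. Mapping classes supported in a genus-one subsurface $T \cong \Sigma_{1,1}$ containing $\alpha_1,\beta_1$ act on the pair $\bigl(\psi(\alpha_1),\psi(\beta_1)\bigr)$ only through the $\SL_2(\Z)$-action on primitive classes: every essential nonperipheral curve in $T$ is a $(p,q)$-curve, its $\psi$-value is $p\,\psi(\alpha_1)+q\,\psi(\beta_1)$ by Lemma \ref{lemma:smoothing}, and $T_{\partial T}$ acts trivially. Hence the reachable pairs form a single $\SL_2(\Z/r\Z)$-orbit, which preserves $\gcd\bigl(\psi(\alpha_1),\psi(\beta_1),r\bigr)$; for instance $(1,1)$ can never be moved to $(0,0)$, no matter that $2$ is invertible mod $r$. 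So ``invertibility of $2$ bmod $r$ plus twist-linearity'' inside the genus-one piece cannot independently prescribe the two values, and the genus-two variant you mention is circular, since transitivity of $\Mod(\Sigma_{2,1})$ on spin structures with fixed signature is exactly the statement being proved in that case. What is actually needed — and what the paper does — is to leave the genus-one piece: first replace $\alpha_0$ by a curve sum $y\alpha_0 - x\beta_0$ to get one curve of value $0$, and then repeatedly curve-arc-sum the dual curve with a \emph{disjoint} separating curve $c$ of value $k$ with $k+1$ a generator (for $g \ge 2$ one takes $c$ bounding a genus-one subsurface elsewhere, so $k=1$ and $k+1=2$ generates because $r$ is odd; for $g=1$ one uses the hypothesized boundary component). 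This shifts the value by a generator without changing the intersection pattern, producing for each of $\phi$ and $\psi$ separately a pair of value-zero curves; one then matches the two genus-one pieces by a diffeomorphism and inducts on the complement, invoking Theorem \ref{theorem:gsb} at the end. You deploy exactly this curve-arc-sum mechanism, but only in your base case $g=1$; it is needed at every genus, and no twisting is involved in the normalization step at all.

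Two smaller points. Your step (1) (capping off boundary components to reduce to $n \le 1$) is both unnecessary and problematic: a $\Z/r\Z$-valued spin structure does not descend under capping unless the boundary value is forced to be $1$ by the normalization, and the paper's induction simply keeps all boundary components, inducting on genus alone (the genus-zero case with any number of boundary components is immediate from Proposition \ref{proposition:HCC}). Finally, in your step (3) one must check that the new boundary component created by cutting out the genus-one piece has signature $1$, so that when the complement has genus $1$ the hypothesis of the proposition is met because $1+1=2$ generates $\Z/r\Z$ for $r$ odd; this verification is where the oddness hypothesis propagates through the induction, and it should be made explicit.
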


\begin{proof}
The proof is by induction on the genus $g(S)$. If $g(S) = 0$, then every curve $c$ on $S$ is separating, so that the homological coherence criterion (Proposition \ref{proposition:HCC}) implies that $\phi(c)$ and $\psi(c)$ can be computed just from the signature. In this case, it follows that in fact $\phi = \psi$.

For $g(S) \ge 1$, let $\alpha_0,\beta_0$ be curves on $S$ satisfying $i(\alpha_0,\beta_0) = 1$. Choose nonzero integers $a,b \in \Z$ such that $a \equiv \phi(\alpha_0)$ and $b \equiv \phi(\beta_0) \pmod r$. Let $d = \gcd(a,b)$, and define $x = a/d, y=b/d$; by construction, $x,y$ are coprime. Define the curve $\alpha_1  = y\alpha_0 -x \beta_0$ in the sense of Definition \ref{definition:curvesum}. By Lemma \ref{lemma:smoothing}, $\phi(\alpha_1) = 0$.

Choose any curve $\gamma_0$ satisfying $i(\alpha_1, \gamma_0) = 1$. We claim there exists some {\em separating} oriented curve $c$ on $S$ that is disjoint from $\gamma_0 \cup \alpha_1$ and such that $\phi(c) = k$ for $k$ such that $k+1$ generates $\Z/r \Z$. In the case $g(S) = 1$ this is true by hypothesis, while for $g(S) \ge 2$, the curve $c$ can be taken to be the neighborhood of some subsurface $T \subset S$ with $T \cong \Sigma_{1,1}$ and $T$ disjoint from $\alpha_1 \cup \gamma_0$. In this case, orient $c$ so that $T$ lies to the right. By the homological coherence property, such a $c$ satisfies $\phi(c) = 1$, and since $r$ is odd, the claim follows.

Either $c$ is isotopic to a boundary component of $S$ and is oriented with $S$ lying to the right, or else (by the change-of-coordinates principle), there exists an arc $\epsilon_0$ from the left side of $\gamma_0$ to the left side of $c$ that is disjoint from $\alpha_1$.  In the former case, there exists an arc $\epsilon_0$ from the right side of $\gamma_0$ to the right side of $c$ that is disjoint from $\alpha_1$. Via Lemma \ref{lemma:curvearcsum}, the curve-arc sum $\gamma_1 = \gamma_0 +_{\epsilon_0} c$ satisfies $\phi(\gamma_1) = \phi(\gamma_0) - (k+1)$ in the former case, and $\phi(\gamma_1) = \phi(\gamma_0) + (k+1)$ in the latter case. Since the curve $c$ is null-homologous, there is an equality $[\gamma_1] = [\gamma_0]$. A further appeal to the change-of-coordinates principle shows that there is another arc $\epsilon_1$ from the left side of $\gamma_1$ to the left of $c$, again disjoint from $\alpha_1$. This process can therefore be repeated indefinitely, giving rise to curves $\gamma_m$ satisfying $\phi(\gamma_m) = \phi(\gamma_0) + m(k+1)$. By hypothesis, $k+1 \in \Z/r \Z$ is a generator, so that $\phi(\gamma_m) = 0$ for some $m$. Set $\beta_1 = \gamma_m$ for such an $m$. By construction, $i(\alpha_1, \beta_1) = 1$. 

Likewise, construct curves $\alpha_1', \beta_1'$ satisfying $i(\alpha_1', \beta_1') = 1$ and $\psi(\alpha_1') = \psi(\beta_1') = 0$. Take (open) regular neighborhoods $T_1$ and $T_1'$ of $\alpha_1\cup \beta_1$ and $\alpha_1' \cup \beta_1'$, respectively. There is a diffeomorphism $f_1: T_1 \to T_1'$ for which $f_1(\alpha_1) = \alpha_1'$ and $f_1(\beta_1) = \beta_1'$. Define $c_1 = \partial \overline{T_1}$ and $c_1' = \partial \overline{T_1'}$. Then $\phi(c_1) = 1$ when $c_1$ is oriented with $T_1$ on the right, and similarly for $c_1'$. The curve $c_1$ is therefore a boundary component of $S \setminus T_1$ with signature $\phi(c_1) = 1$, and likewise for $c_1'$. This shows that the inductive hypothesis is satisfied, and so there exists a diffeomorphism $f_2: S \setminus T_1 \to S\setminus T_1'$ taking $c_1$ to $c_1'$ and fixing each remaining mutual boundary component, such that 
\[
f_2^*(\psi\mid_{S \setminus T_1'}) = \phi\mid_{S \setminus T_1.}
\]
The diffeomorphisms $f_1$ and $f_2$ can be chosen in such a way as to extend to a diffeomorphism $f: S \to S$. Let $\mathcal B = \{\alpha_1, \beta_1, \dots, \alpha_g, \beta_g\}$ be a geometric symplectic basis for $S$, with $\alpha_1, \beta_1$ the same curves as above. Necessarily $\alpha_k, \beta_k$ are curves on $S \setminus T_1$ for $k \ge 2$. By construction, the spin structures $\phi$ and $f^*(\psi)$ take the same values on each element of $\mathcal B$, and $\sig(S, \phi) = \sig(S, f^*(\psi))$. It then follows from Proposition \ref{theorem:gsb} that $\phi = f^*(\psi)$ as claimed.
\end{proof}

Proposition \ref{proposition:one} has several corollaries that will be used extensively in the remainder of the paper. These play the role of a change-of-coordinates principle for surfaces in the presence of a $\Z/r\Z$-valued spin structure. The first of these was established in the second paragraph of the proof of Proposition \ref{proposition:one}. We remark that the assumption that $r$ is odd played no role in the argument.
\begin{corollary}\label{corollary:admissible}
Let $r$ be an integer and let $\phi$ be a $\Z/r\Z$-valued spin structure on a surface $S$. Let $S' \subset S$ be a subsurface of genus $h \ge 1$. Then there is some admissible curve $a \subset S'$ that is not parallel to a boundary component.
\end{corollary}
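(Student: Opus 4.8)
The plan is to carry out, inside the subsurface $S'$, the handle construction used in the second paragraph of the proof of Proposition~\ref{proposition:one}. Since $g(S') = h \ge 1$, the surface $S'$ contains an embedded $\Sigma_{1,1}$; let $\alpha_0,\beta_0$ be the core curves of its handle, put in minimal position so that $i(\alpha_0,\beta_0) = 1$. A curve meeting another curve exactly once is nonseparating in $S$ (a separating curve has algebraic intersection zero with every curve), and a nonseparating curve is never isotopic to a boundary component, since a boundary-parallel curve can be pushed into a collar and hence has zero geometric intersection with everything. Consequently, if either $\phi(\alpha_0) = 0$ or $\phi(\beta_0) = 0$, that curve already serves as an admissible curve in $S'$ not parallel to a boundary component, and we are finished.

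So suppose $\phi(\alpha_0) \ne 0$ and $\phi(\beta_0) \ne 0$ in $\Z/r\Z$. Lift these to nonzero integers $a \equiv \phi(\alpha_0)$ and $b \equiv \phi(\beta_0) \pmod r$, set $d = \gcd(a,b)$, and put $x = a/d$, $y = b/d$, so that $\gcd(x,y) = 1$ and $x,y \ne 0$. Form the curve sum $\alpha_1 = y\alpha_0 - x\beta_0$ of Definition~\ref{definition:curvesum}; a representative of $\alpha_1$ lies in a regular neighborhood of $\alpha_0 \cup \beta_0 \subset S'$. Since $i(\alpha_0,\beta_0) = 1$ and $\gcd(x,y) = 1$, Lemma~\ref{lemma:smoothing} tells us both that $\alpha_1$ is connected (i.e.\ a genuine simple closed curve) and that
\[
\phi(\alpha_1) = y\,\phi(\alpha_0) - x\,\phi(\beta_0) \equiv ya - xb = 0 \pmod r .
\]
On first homology, $[\alpha_1] = y[\alpha_0] - x[\beta_0]$, so $\pair{[\alpha_1],[\beta_0]} = y \ne 0$. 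As above, this forces $\alpha_1$ to be nonseparating in $S$ and hence not parallel to any boundary component. Thus $\alpha_1 \subset S'$ is the desired admissible curve.

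There is no serious obstacle: the argument is essentially a direct invocation of Lemma~\ref{lemma:smoothing}. The only points requiring a moment of care are (i) the degenerate cases $\phi(\alpha_0) = 0$ or $\phi(\beta_0) = 0$, which must be handled separately since $\gcd(a,b)$ and the ratios $a/d, b/d$ misbehave when an entry vanishes; (ii) the observation that being admissible already entails not being parallel to a boundary component, because admissible curves are by definition nonseparating; and (iii) using \emph{both} conclusions of Lemma~\ref{lemma:smoothing} — one to ensure $\alpha_1$ is a single curve, the other to evaluate $\phi(\alpha_1)$.
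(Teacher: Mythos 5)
Your argument is correct and is essentially the paper's own: the corollary is proved there by exactly the curve-sum construction from the second paragraph of the proof of Proposition \ref{proposition:one}, forming $y\alpha_0 - x\beta_0$ and invoking Lemma \ref{lemma:smoothing} for both admissibility and connectedness. The only cosmetic differences are that the paper avoids your case split by choosing \emph{nonzero} integer representatives $a,b$ of $\phi(\alpha_0),\phi(\beta_0)$ even when these vanish mod $r$, and that you spell out (via the nonzero intersection with $\beta_0\subset S'$, which is the right justification --- not nonseparation in $S$ alone) why the resulting curve is not boundary-parallel.
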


This in turn leads to another useful result that will allow us to construct curves with prescribed intersection properties and arbitrary $\phi$-values.

\begin{corollary}\label{corollary:intersections}
Let $r$ be an integer and let $\phi$ be a $\Z/r\Z$-valued spin structure on a surface $S$. Let $a, c_1, \dots, c_k$ be a collection of simple closed curves. Assume that there is some connected subsurface $T \subset S$ of positive genus disjoint from $a, c_1, \dots, c_k$, and that there is an arc $\epsilon$ connecting $a$ to $\partial T$ that is disjoint from all $c_i$. Then for $\ell \in \Z/r\Z$ arbitrary, there is a simple closed curve $a_\ell$ for which $i(a_\ell, c_i) = i(a,c_i)$ for $i = 1, \dots, k$, and for which $\phi(a_\ell) = \ell$.
\end{corollary}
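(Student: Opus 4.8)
The plan is to manufacture $a_\ell$ from $a$ by a sequence of curve-arc sums with admissible curves supported in $T$: each such operation raises $\phi$ by exactly $1$ (Lemma~\ref{lemma:curvearcsum}) and leaves all the geometric intersection numbers $i(\cdot, c_i)$ unchanged, so after the correct number of steps the value of $\phi$ is $\ell$. To begin, apply Corollary~\ref{corollary:admissible} to $T$: as $T$ has positive genus, it contains an admissible curve $b$ that is not boundary-parallel. The proof of that corollary realizes $b$ as a nonzero curve sum of a symplectic pair inside $T$, so $[b] \ne 0$ and $b$ is nonseparating in $T$, hence also in $S$.

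Next, fix a nonnegative integer $N$ with $N \equiv \ell - \phi(a) \pmod r$ and choose $N$ pairwise disjoint parallel copies $b_1, \dots, b_N \subset T$ of $b$. Build $a_\ell$ iteratively: put $a^{(0)} = a$ and $a^{(j)} = a^{(j-1)} +_{\epsilon_j} b_j$, where $\epsilon_j$ is an arc from $a^{(j-1)}$ to $b_j$. Since each $a^{(j-1)}$ meets $T$ only inside a thin neighborhood of $b_1 \cup \dots \cup b_{j-1}$, and since $\epsilon$ and $T$ are disjoint from the $c_i$, the change-of-coordinates principle lets us route each $\epsilon_j$ parallel to $\epsilon$ into $T$ so that it is disjoint from the $c_i$ and from the $b_l$ with $l \ne j$, meets $a^{(j-1)}$ from the side prescribed by Lemma~\ref{lemma:curvearcsum}, and orients $b_j$ consistently. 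As each $b_j$ is admissible with either orientation, Lemma~\ref{lemma:curvearcsum} yields $\phi(a^{(j)}) = \phi(a^{(j-1)}) + \phi(b_j) + 1 = \phi(a^{(j-1)}) + 1$; hence the curve $a_\ell := a^{(N)}$ satisfies $\phi(a_\ell) = \phi(a) + N \equiv \ell \pmod r$, reading off $\phi$ with the orientations furnished by the construction (Convention~\ref{convention:orientation}).

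It remains to show $i(a_\ell, c_i) = i(a, c_i)$ for each $i$. Fix representatives of $a$ and the $c_i$ in simultaneous minimal position and realize each $a^{(j)}$ concretely as $a^{(j-1)}$ modified near a single point --- chosen off the $c_i$ --- by a thin ``detour'' that runs out along $\epsilon_j$, once around $b_j$, and back; every detour is disjoint from all $c_i$. In this representative $a_\ell$ crosses $c_i$ exactly $i(a, c_i)$ times, so $i(a_\ell, c_i) \le i(a, c_i)$, and it is enough to exclude a bigon $B$ between $a_\ell$ and $c_i$. The arc $\partial B \cap c_i$ is disjoint from every detour, and the corners of $B$ lie on the undetoured part of $a_\ell$, so the arc $\sigma = \partial B \cap a_\ell$ either meets no detour --- in which case $B$ is a bigon between $a$ and $c_i$, contradicting minimal position --- or else runs completely through at least one detour. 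In the latter case $\sigma$, and therefore $\partial B$, has nonzero algebraic intersection number with a simple closed curve dual to $b$ inside $T$ (each traversed detour contributes $\pm 1$ with a common sign, by the consistent orientations, while the remainder of $\partial B$ misses that dual curve), which is impossible because $\partial B$ bounds the disk $B$. So no bigon occurs, and $i(a_\ell, c_i) = i(a, c_i)$.

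I expect this last step --- showing that curve-arc summation with a nonseparating curve housed in a subsurface disjoint from the $c_i$ does not perturb the geometric intersection numbers --- to be the main obstacle; the essential point is that a detour encircling a nonseparating curve cannot be swallowed by a bigon. This is also exactly where the positive-genus hypothesis on $T$ enters: a genus-zero subsurface disjoint from $a$ need not contain a nonseparating admissible curve, and curve-arc summation with a separating curve can collapse intersection numbers.
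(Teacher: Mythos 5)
Your proposal is correct and follows essentially the same route as the paper: take the admissible, non-boundary-parallel curve $b \subset T$ furnished by Corollary \ref{corollary:admissible}, raise $\phi$ one unit at a time by curve-arc summing along arcs extending $\epsilon$ (Lemma \ref{lemma:curvearcsum}), and verify via the bigon criterion that the intersection numbers with the $c_i$ are unchanged. The only real difference is local: where the paper excludes a new bigon by observing it would force an arc of $a$ together with an arc of $c_i$ to be isotopic to $b$, contradicting that $b$ is not boundary-parallel, you exclude it homologically by pairing $\partial B$ against a curve in $T$ dual to $b$ --- a sound variant, since your $b$ is nonseparating in $T$ and the arc of $c_i$ and the undetoured arcs of $a$ miss $T$ altogether.
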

\begin{proof}
By Corollary \ref{corollary:admissible}, there exists an admissible curve $b \subset T$ that is not boundary-parallel. The arc $\epsilon$ can be concatenated with an arc joining $\partial T$ to $b$; denote this extended arc by $\epsilon'$. Set $\ell_0 = \phi(a)$ (where $a$ is oriented with $\epsilon'$ lying to the left), and define $a_{\ell_0}: = a$. Define $a_{\ell_0+1} := a_{\ell_0} +_{\epsilon'} b$. By Lemma \ref{lemma:curvearcsum}, $\phi(a_{\ell_0+1}) = \phi(a_{\ell_0}) + 1 = \ell_0 + 1$. 

To see that $i(a_{\ell_0+1}, c_i) = i(a, c_i)$, we appeal to the {\em bigon criterion} of \cite[Proposition 1.7]{FM}. Choose representative curves for $a, c_1, \dots, c_k$, pairwise in minimal position. The bigon criterion asserts that $a, c_i$ are in minimal position if and only if the configuration $a \cup c_i$ does not bound any {\em bigons}, i.e. an embedded disk whose boundary is the union of an arc of $a$ and an arc of $c_i$ meeting in exactly two points. The curve-arc sum $a_{\ell_0 +1}$ meets each $c_i$ in exactly the same set of points as $a_{\ell_0}$. To conclude, it thus suffices to see that no bigons were introduced by the summing procedure. The only arc of $a_{\ell_0+1}$ that is not also an arc of $a_{\ell_0}$ is the one along which the summing procedure is performed; denote the original arc of $\alpha_{\ell_0}$ by $\alpha$ and the modified arc by $\alpha'$. Suppose that there is an arc $\gamma$ of $c_i$ such that $\alpha' \cup \gamma$ bounds a bigon. As $\alpha' = \alpha+_{\epsilon'} b$, it must be the case that the curve $\alpha \cup \gamma$ is isotopic to $b$. But by assumption, $b \subset T$ is not boundary-parallel, so this cannot happen.

To construct $a_\ell$ for $\ell \in \Z/r\Z$ arbitrary, one simply repeats the above construction, producing, for any $t \ge 0$, a curve $a_{\ell_0+t}$ with the same intersection properties as $a_{\ell_0}$ and satisfying $\phi(a_{\ell_0 + t}) = \ell_0 + t$.
\end{proof}

For the remaining corollaries of Proposition \ref{proposition:one}, we re-instate the requirement that $r$ be odd.

\begin{corollary}\label{corollary:allvalues}
Let $r$ be an odd integer and let $\phi$ be a $\Z/r\Z$-valued spin structure on a surface $S$. Let $S' \subset S$ be a subsurface of genus $h \ge 1$, and suppose that if $h = 1$, then $S'$ includes some boundary component of signature $k$ such that $k+1$ generates $\Z/r \Z$.
\begin{enumerate}
\item\label{item:single} For all $x \in \Z /r \Z$, there exists some nonseparating curve $c$ supported on $S'$ satisfying $\phi(c) = x$,
\item\label{item:oddgsb} For any $2h$-tuple $(i_1, j_1, \dots, i_h, j_h)$ of elements of $\Z / r \Z$, there is some geometric symplectic basis $\mathcal B = \{a_1, b_1, \dots, a_h, b_h\}$ for $S'$ with $\phi(a_\ell) = i_\ell$ and $\phi(b_\ell) = j_\ell$ for all $1 \le \ell \le h$,
\item\label{item:oddchain} For any $2h$-tuple $(k_1, \dots, k_{2h})$ of elements of $\Z/r\Z$, there is some chain $(a_1, \dots, a_{2h})$ of curves on $S'$ such that $\phi(a_\ell) = k_\ell$ for all $1 \le \ell \le 2h$.
%
\end{enumerate}
\end{corollary}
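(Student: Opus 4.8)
The plan is to reduce all three parts to the construction of a single auxiliary spin structure on $S'$ and then invoke Proposition \ref{proposition:one}. The hypotheses of Corollary \ref{corollary:allvalues} are precisely what is needed to apply Proposition \ref{proposition:one} to the surface $S'$ (equipped with the restriction of $\phi$): when $h \ge 2$ there is no extra condition, and when $h = 1$ the hypothesized boundary component of signature $k$ with $k+1$ a generator is exactly the genus-one hypothesis of Proposition \ref{proposition:one}. So it suffices to prove the following: \emph{there is a $\Z/r\Z$-valued spin structure $\psi$ on $S'$ with $\sig(S',\psi) = \sig(S',\phi)$ together with a geometric symplectic basis $\{a_1^0, b_1^0, \dots, a_h^0, b_h^0\}$ for $S'$ satisfying $\psi(a_\ell^0) = i_\ell$ and $\psi(b_\ell^0) = j_\ell$ for all $\ell$} (and, for part (3), the same with a chain $(c_1^0, \dots, c_{2h}^0)$ on $S'$ and prescribed values $\psi(c_\ell^0) = k_\ell$). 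Granting such a model, Proposition \ref{proposition:one} furnishes $f \in \Mod(S')$ with $f^*(\psi) = \phi$; since $(f^*\psi)(d) = \psi(f(d))$ this gives $\phi\big(f^{-1}(a_\ell^0)\big) = \psi(a_\ell^0) = i_\ell$, and likewise for the $b_\ell^0$ (resp. the $c_\ell^0$). As $f^{-1}$ is a diffeomorphism of $S'$, the family $f^{-1}(\{a_\ell^0, b_\ell^0\})$ is again a geometric symplectic basis for $S'$ (resp. $f^{-1}(c_1^0, \dots, c_{2h}^0)$ is again a chain), which proves (2) (resp. (3)).

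To construct the model $\psi$, I would use the cohomological description of Theorem \ref{theorem:HJhomological}: a $\Z/r\Z$-valued spin structure on $S'$ is the same as a class in $H^1(UTS';\Z/r\Z)$ taking the value $1$ on the fiber class $\zeta$. Put a standard geometric symplectic basis (resp. a maximal chain on a sub-$\Sigma_{h,1} \subseteq S'$) together with all but one boundary curve of $S'$ in a standard planar position, arranged so that the union of their homology classes forms a $\Z$-basis of $H_1(S';\Z)$; this uses that the classes of a geometric symplectic basis, or of a maximal chain, span a unimodular symplectic sublattice complementary to the radical of the intersection form, the latter being generated by the boundary classes. When $b \ge 1$ the bundle $UTS' \to S'$ is trivial, so $H^1(UTS';\Z/r\Z) = \Hom\big(H_1(S';\Z) \oplus \Z\langle\zeta\rangle,\, \Z/r\Z\big)$ is free on the duals of the chosen basis and of $\zeta$ (when $b = 0$ one argues identically using the presentation of $\pi_1(UT\Sigma_g)$ recalled in Remark \ref{remark:admits}, where $r \mid 2g-2$ makes $\psi(\zeta) = 1$ attainable). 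Hence I may freely set $\psi(\zeta) = 1$ and prescribe $\psi$ on the Johnson lifts of the chosen curves and of all but one boundary component; since a Johnson lift differs from its underlying homology class by an integer winding-number correction, any target values are realizable. The value of $\psi$ on the remaining boundary component is then forced, but by homological coherence (Proposition \ref{proposition:HCC}), valid since $\psi$ is by now a bona fide spin structure, it equals $\chi(S')$ minus the sum of the other boundary values; applying the same identity to $\phi$ shows this agrees with the prescribed signature value, so $\sig(S',\psi) = \sig(S',\phi)$.

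Part (1) then follows at once from part (2): take $i_1 = x$ with the remaining prescribed values arbitrary; the resulting curve $a_1$ is supported on $S'$, satisfies $\phi(a_1) = x$, and is nonseparating because $i(a_1, b_1) = 1$.

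The main obstacle I expect is the bookkeeping inside the model construction --- keeping track of the winding-number corrections that relate Johnson lifts to homology classes, and verifying that the geometric symplectic basis (or chain) together with the boundary curves really does give a $\Z$-basis of $H_1(S';\Z)$ in which $\psi$ may be prescribed coordinatewise. Once the model is in hand, the transport step via Proposition \ref{proposition:one} is purely formal. An alternative to the cohomological construction would be to build the model handle-by-handle via iterated curve-arc sums, in the spirit of the proof of Proposition \ref{proposition:one}; the difficulty there is keeping the auxiliary positive-genus subsurfaces used at each step disjoint from the curves already built, which is why I would favor the cohomological route.
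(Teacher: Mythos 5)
Your argument is correct, and for parts (1) and (2) it is essentially the paper's proof: both reduce to producing a model spin structure $\psi$ on $S'$ with prescribed values on a reference configuration and then transporting it to $\phi$ via Proposition \ref{proposition:one}. Where you diverge is in two places. First, you spell out the existence of the model $\psi$ with $\sig(S',\psi)=\sig(S',\phi)$ via the Humphries--Johnson identification $H^1(UTS';\Z/r\Z)$ of Theorem \ref{theorem:HJhomological} (prescribing values on the Johnson lifts of a homology basis together with $\psi(\zeta)=1$, the last boundary value being forced by Proposition \ref{proposition:HCC}); the paper simply asserts that ``there is some spin structure $\psi$'' with the stated values and leaves the signature matching tacit, so your explicitness here is a legitimate filling-in rather than a change of method. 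Second, for part (3) the paper does \emph{not} build a chain model directly: it deduces (3) from (2) by choosing the shifted tuple $i_\ell = 1-\ell+\sum_{t\le\ell}k_{2t-1}$, $j_\ell=k_{2\ell}$, completing the resulting geometric symplectic basis into a chain with connecting curves $f_\ell$ bounding pairs of pants, and computing $\phi(f_\ell)=k_{2\ell+1}$ by homological coherence. Your route instead realizes the chain itself as the reference configuration (using that a length-$2h$ chain spans a unimodular sublattice, so its lifts together with boundary lifts and $\zeta$ admit arbitrary prescribed values) and transports it; this trades the paper's index bookkeeping and pair-of-pants computation for the cohomological realization step, and both are sound. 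The one point to keep honest in your version is the closed case $S'=S$, where the prescription argument needs $r\mid(2g-2)$ from Remark \ref{remark:admits} to make $\psi(\zeta)=1$ consistent with the relation $(2g-2)\zeta=0$; you note this, and it always holds since $\phi$ exists.
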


\begin{proof}
Certainly \eqref{item:single} follows from \eqref{item:oddgsb}. To establish \eqref{item:oddgsb}, choose any geometric symplectic basis $\mathcal B = \{a_\ell', b_\ell'\}$ on $S'$. There is {\em some} spin structure $\psi$ on $S'$ for which $\psi(a_\ell') = i_\ell$ and $\psi(b_\ell') = j_\ell$. By Proposition \ref{proposition:one}, there is a diffeomorphism $f$ of $S'$ such that $f^*(\psi) = \phi$. Then $\mathcal B = f^{-1}(\mathcal B')$ has the required properties.

We will deduce \eqref{item:oddchain} from \eqref{item:oddgsb}. Given the $2h$-tuple $(k_1, \dots, k_{2h})$, define a $2h$-tuple $(i_1, j_1, \dots, i_h, j_h)$ as follows: set $i_\ell =1 -\ell + \sum_{t = 1}^\ell k_{2t - 1}$, and set $j_\ell = k_{2\ell}$. By \eqref{item:oddgsb}, there exists a geometric symplectic basis $\mathcal B = \{c_\ell, d_\ell\}$ on $S'$ whose $\phi$-values realize the tuple $(i_1, j_1, \dots, i_h, j_h)$. Any geometric symplectic basis can be ``completed'' into a chain as follows: for $\ell = 1, \dots, h-1$, let $f_\ell$ be a simple closed curve satisfying $i(f_\ell, d_{\ell}) = i(f_\ell, d_{\ell+1}) = 1$ and $i(f_\ell,x) = 0$ for all other elements $x \in \mathcal B$.  As $\mathcal B$ is a geometric symplectic basis, this imposes the homological relation $[f_\ell] = [c_{\ell+1}]-[c_{\ell}]$, and the intersection conditions imposed on the set of curves $\{f_\ell\}$ imply that this homology is realized geometrically: $c_\ell \cup f_\ell \cup c_{\ell+1}$ must bound a pair of pants $P_\ell$ for each $\ell = 1, \dots, h-1$. The orientations can be arranged so that $P_\ell$ lies to the right of $c_\ell$ and $f_\ell$ and to the left of $c_{\ell +1}$.

Applying the homological coherence property to each $P_\ell$, it follows that $\phi(f_\ell) = k_{2 \ell + 1}$. By construction, the curves $c_1, d_1, f_1, d_2, f_2, d_3,\dots, f_{h-1}, d_h$ form a chain of length $2h$; denote this chain by $C$. By construction, $\phi(c_1) = i_1 = k_1$, and $\phi(d_\ell) = k_{2 \ell}$. Altogether, this shows that $C$ has the required properties.
\end{proof}

\subsection{Even \boldmath${r}$} Following the discussion in Section \ref{subsection:classical}, we see that the Arf invariant distinguishes at least two orbits of $\Mod(\Sigma_g)$ on the set of $\Z/r\Z$-valued spin structures. To see that there are {\em exactly} two orbits, in Definition \ref{definition:modelspin} we formulate two ``model'' $\Z/r\Z$-valued spin structures $\phi^{\mathcal B}_{even}$ and $\phi^{\mathcal B}_{odd}$ of prescribed Arf invariant, and in Proposition \ref{proposition:onlyarf} we show that every $\Z/r\Z$-valued spin structure is equivalent to one of $\phi^{\mathcal B}_{even}$ or $\phi^{\mathcal B}_{odd}$. We restrict attention here to the case where the surface $S$ has at most one boundary component. The general setting of multiple boundary components introduces considerable subtlety owing to the failure for the intersection pairing to determine a symplectic form, and our results require only the case of at most one boundary component.

\begin{definition}\label{definition:modelspin}
Let $S$ be a surface of genus $g \ge 1$ with at most one boundary component. Fix a geometric symplectic basis $\mathcal B = \{\alpha_1, \beta_1, \dots, \alpha_g, \beta_g\}$. Define $\phi_{even}^{\mathcal B}$ and $\phi_{odd}^{\mathcal B}$ as the $\Z/r\Z$-valued spin structures such that $\phi_{even}^{\mathcal B}(\gamma) = \phi_{odd}^{\mathcal B}(\gamma)= 0$ for all $\gamma \in \mathcal B \setminus \{\beta_g\}$, and where $\phi_{even}^{\mathcal B}(\beta_g)$ and $\phi_{odd}^{\mathcal B}(\beta_g)$ are chosen to be $0$ or $1$ as necessary so that $\Arf(\phi_{even}^{\mathcal B}) = 0$ and $\Arf(\phi_{odd}^{\mathcal B}) = 1$.
\end{definition}

In spite of the evident dependence on geometric symplectic basis, as $\mathcal B$ ranges over the set of all geometric symplectic bases, the elements $\phi^{\mathcal B}_{odd}$ lie in a single orbit of $\Mod(S)$ (and the same is also true of $\phi_{even}^{\mathcal B}$). The following is immediate via the change-of-coordinates principle.

\begin{lemma}\label{lemma:evenoddorbit}
Let $\mathcal B$ and $\mathcal B'$ be geometric symplectic bases. Then there is a diffeomorphism $f: S \to S$ such that $f(\mathcal B) = \mathcal B'$. Consequently, $f^*(\phi_{even}^{\mathcal B'}) = \phi_{even}^{\mathcal B}$ and $f^*(\phi_{odd}^{\mathcal B'}) = \phi_{odd}^{\mathcal B}$.
\end{lemma}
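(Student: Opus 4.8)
The plan is to deduce everything from Lemma \ref{lemma:gsb}, the change-of-coordinates principle for geometric symplectic bases established earlier. First, given two geometric symplectic bases $\mathcal B$ and $\mathcal B'$ for $S$, Lemma \ref{lemma:gsb} directly provides a diffeomorphism $f \colon S \to S$ with $f(\mathcal B) = \mathcal B'$; this is the entire content of the first sentence, so no work is needed beyond invoking that lemma. The substance of the statement is the second sentence: that this same $f$ pulls back the model spin structures built from $\mathcal B'$ to those built from $\mathcal B$. The key point is that the models $\phi_{even}^{\mathcal B}$ and $\phi_{odd}^{\mathcal B}$ are defined purely in terms of the combinatorics of $\mathcal B$ together with the Arf invariant, both of which are transported faithfully by $f$.

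The main step is therefore to verify the pullback identities. I would argue as follows. The diffeomorphism $f$ sends each curve $\gamma \in \mathcal B$ to the corresponding curve $f(\gamma) \in \mathcal B'$, and since $f$ is orientation-preserving it can be taken (after possibly adjusting orientations, which does not affect membership in the unordered set $\mathcal B'$) to respect the indexing, so $f(\alpha_i) = \alpha_i'$ and $f(\beta_i) = \beta_i'$ for each $i$. By definition of pullback, $(f^*\phi_{even}^{\mathcal B'})(\gamma) = \phi_{even}^{\mathcal B'}(f(\gamma))$ for every curve $\gamma$. Hence for $\gamma \in \mathcal B \setminus \{\beta_g\}$ we get $(f^*\phi_{even}^{\mathcal B'})(\gamma) = \phi_{even}^{\mathcal B'}(f(\gamma)) = 0 = \phi_{even}^{\mathcal B}(\gamma)$, using that $f(\gamma) \in \mathcal B' \setminus \{\beta_g'\}$. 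On $\beta_g$ we have $(f^*\phi_{even}^{\mathcal B'})(\beta_g) = \phi_{even}^{\mathcal B'}(\beta_g')$, which is whichever of $0,1$ makes the Arf invariant vanish. Now I invoke that the Arf invariant is a $\Mod(S)$-invariant: $\Arf(f^*\phi_{even}^{\mathcal B'}) = \Arf(\phi_{even}^{\mathcal B'}) = 0$ (this follows from Lemma \ref{lemma:arfprops}\eqref{item:arfwd}, since $f$ acts on $H_1(S;\Z/2\Z)$ by a symplectic automorphism and so carries a symplectic basis to a symplectic basis, leaving the Arf formula unchanged). Thus $f^*\phi_{even}^{\mathcal B'}$ is a $\Z/r\Z$-valued spin structure vanishing on all of $\mathcal B \setminus \{\beta_g\}$ and having Arf invariant $0$; by Theorem \ref{theorem:gsb} (a $\Z/r\Z$-valued spin structure on a surface with at most one boundary component is determined by its values on a homology basis) this forces $f^*\phi_{even}^{\mathcal B'} = \phi_{even}^{\mathcal B}$, since the common value on $\beta_g$ is pinned down by the Arf constraint. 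The identical argument with ``even'' replaced by ``odd'' and Arf invariant $1$ gives $f^*\phi_{odd}^{\mathcal B'} = \phi_{odd}^{\mathcal B}$.

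The only genuine subtlety — the step I expect to require the most care — is confirming that $f$ can be chosen to match up the two bases \emph{index by index and orientation by orientation}, rather than merely as unordered sets; but this is part of the change-of-coordinates package (one builds $f$ by cutting $S$ along the curves of $\mathcal B$ and reassembling along $\mathcal B'$ in the prescribed order), so it is legitimate to cite Lemma \ref{lemma:gsb} for it. Everything else is a formal manipulation of the definition of pullback together with the basis-independence of the Arf invariant. I would present the proof in two short paragraphs: one disposing of the first sentence via Lemma \ref{lemma:gsb}, and one running the verification above for $\phi_{even}$, then remarking that the case of $\phi_{odd}$ is identical.
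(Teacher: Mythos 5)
Your proposal is correct and follows essentially the same route as the paper, which simply declares the lemma immediate from the change-of-coordinates principle (Lemma \ref{lemma:gsb}); your write-up just fills in the routine verification via the definition of pullback, invariance of the Arf invariant, and Theorem \ref{theorem:gsb}. Your flag about choosing $f$ to respect the indexing and orientations of the bases is a legitimate part of the change-of-coordinates package and is correctly handled.
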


\begin{definition}\label{definition:evenodd}
Let $S$ be a surface of genus $g \ge 1$ with at most one boundary component endowed with a $\Z/r\Z$-valued spin structure $\phi$. We say that $\phi$ is {\em even} if there is a geometric symplectic basis $\mathcal B$ such that $\phi = \phi_{even}^{\mathcal B}$, and we say that $\phi$ is {\em odd} if $\phi = \phi_{odd}^{\mathcal B}$. 
\end{definition}

\begin{proposition}\label{proposition:onlyarf}
Fix an even integer $r$. Let $S$ be a surface of genus $g \ge 2$ with at most one boundary component. Let $\phi$ be a $\Z/r \Z$-valued spin structure on $S$. Then in the sense of Definition \ref{definition:evenodd}, either $\phi$ is even, or else $\phi$ is odd.
\end{proposition}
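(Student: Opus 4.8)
The plan is to produce a geometric symplectic basis $\mathcal B = \{a_1, b_1, \dots, a_g, b_g\}$ for $S$ on which $\phi$ agrees with one of the two model spin structures of Definition \ref{definition:modelspin}, the choice being governed by $\delta := \Arf(\phi)$, and then to conclude via Theorem \ref{theorem:gsb}. This last step is legitimate precisely because $S$ has at most one boundary component, so the homology classes of a geometric symplectic basis form a basis of $H_1(S;\Z)$. Concretely, I will arrange $\phi(a_i) = 0$ for $1 \le i \le g$, $\phi(b_i) = 0$ for $1 \le i \le g-1$, and $\phi(b_g) \in \{0,1\}$; the Arf formula \eqref{equation:arf} then forces $\phi(b_g) \equiv \delta + g \pmod 2$, which is exactly $\phi_{even}^{\mathcal B}(\beta_g)$ when $\delta = 0$ and $\phi_{odd}^{\mathcal B}(\beta_g)$ when $\delta = 1$, so that $\phi$ coincides with $\phi_{even}^{\mathcal B}$ or $\phi_{odd}^{\mathcal B}$ accordingly.

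\emph{Step 1: getting the values right modulo $2$.} The classical spin structure $q_\phi = \bar{\bar\phi} + 1$ (equation \eqref{equation:classicalspin}) is a $\Z/2\Z$-quadratic form relative to the intersection pairing, which is nondegenerate since $S$ has at most one boundary component; its Arf invariant is $\delta$. By Lemma \ref{lemma:arfprops}, the $\Sp$-orbit of $q_\phi$ depends only on $\delta$. On the other hand, the quadratic form that is identically $1$ on the vectors of a symplectic basis, except possibly on the last one, has Arf invariant $g$ or $g-1$ according to that last value; choosing that value appropriately makes this form $\Sp$-equivalent to $q_\phi$, so there is a $\Z/2\Z$-symplectic basis on which $q_\phi$ is $1$ on every vector but the last. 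Using the surjectivity of the symplectic representation \eqref{equation:sprep} and of its reduction mod $2$, together with Lemma \ref{lemma:gsb}, I realize this as a geometric symplectic basis $\mathcal B_0 = \{a_1, b_1, \dots, a_g, b_g\}$ of curves on $S$. Then $\phi(a_i) \equiv 0$ for all $i$ and $\phi(b_i) \equiv 0 \pmod 2$ for $i < g$, and by \eqref{equation:arf} also $\phi(b_g) \equiv \delta + g \pmod 2$.

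\emph{Step 2: killing the even parts.} Now I modify the curves of $\mathcal B_0$ in turn by curve-arc sums (Lemma \ref{lemma:curvearcsum}). To alter $\phi(a_1)$: since $g \ge 2$, there is a ``spare'' pair $a_j \cup b_j$ with $j \ne 1$; let $c$ be the separating curve bounding a regular neighborhood of $a_j \cup b_j$, oriented with that neighborhood on its right, so that $\phi(c) = 1$ by homological coherence (Proposition \ref{proposition:HCC}), regardless of the values $\phi(a_j), \phi(b_j)$. Pick an arc $\epsilon$ from $a_1$ to $c$ disjoint from all of $b_1, a_2, b_2, \dots, b_g$ (such an arc exists: $a_1$ and $c$ both lie in the connected surface obtained by deleting a regular neighborhood of those $2g-1$ curves from $S$, and $c$ is one of its boundary components). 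Then $a_1 +_\epsilon c$ is a simple closed curve homologous to $a_1$, disjoint from all the other basis curves, meeting $b_1$ exactly once (by homology, or by the bigon criterion as in Corollary \ref{corollary:intersections}, noting that $c$, being separating, is not isotopic to $b_1$), and with $\phi(a_1 +_\epsilon c) = \phi(a_1) + 2$ by Lemma \ref{lemma:curvearcsum}. Since $\phi(a_1)$ is even, iterating this finitely many times drives it to $0$. I then repeat for $b_1$, $a_2$, $\dots$, $a_g$, at each stage using the boundary of a neighborhood of a pair that has not been altered (available since $g \ge 2$, and pairs already normalized are never disturbed), and finally reduce $\phi(b_g)$, which is $\equiv \delta + g \pmod 2$, to its representative in $\{0,1\}$ the same way. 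All homology classes and all geometric intersection numbers among the curves are preserved throughout, so the modified family $\mathcal B$ is again a geometric symplectic basis.

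By construction $\mathcal B$ has the values described in the first paragraph, so $\phi$ agrees on $\mathcal B$ with $\phi_{even}^{\mathcal B}$ if $\delta = 0$ and with $\phi_{odd}^{\mathcal B}$ if $\delta = 1$; Theorem \ref{theorem:gsb} then promotes this to equality on all of $S$. The main obstacle is the bookkeeping in Step 2: at each stage one must exhibit both a separating curve of $\phi$-value $1$ and a connecting arc avoiding all the curves to be preserved, and verify via the bigon criterion that geometric intersection numbers are genuinely unaffected. The case $g = 2$ is the tightest, since there is then only a single spare pair: one must check that modifying $\{a_1, b_1\}$ leaves $\{a_2, b_2\}$ untouched and conversely, and that the final reduction of $\phi(b_2)$ perturbs none of the already-normalized curves $a_1, b_1, a_2$.
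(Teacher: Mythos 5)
Your argument is correct, and it reaches the same endpoint as the paper (a geometric symplectic basis on which $\phi$ has the model values, followed by Theorem \ref{theorem:gsb}) but by a different mechanism for the parity step. The paper never invokes the algebraic Arf classification: it first makes each $\alpha_i$ exactly admissible by the smoothing trick of Proposition \ref{proposition:one} (replacing $\alpha_i,\beta_i$ by $y\alpha_i - x\beta_i$, etc.), then reduces each $\beta_i$ to $\{0,1\}$ by the same ``$+2$'' curve-arc sums you use, and finally clears the remaining $1$'s among $\beta_1,\dots,\beta_{g-1}$ by twisting along a chain of pants curves $\gamma_i$ with $\phi(\gamma_i)=-1$, so that the parity obstruction is pushed onto $\beta_g$ purely geometrically. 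You instead front-load the mod-$2$ work into Lemma \ref{lemma:arfprops}\eqref{item:arforbit}, realize the resulting $\Z/2\Z$-symplectic change of basis by a mapping class, and then only need the ``$+2$'' surgery. This buys a shorter and more transparent proof (the role of $\Arf$ is explicit from the start), at the cost of leaning on facts the paper does not record: surjectivity of $\Sp(2g,\Z)\to\Sp(2g,\Z/2\Z)$, surjectivity of the symplectic representation for a surface with one boundary component, and the descent of $\bar\phi$ to $H_1(S;\Z/2\Z)$ when $S$ has boundary (the paper cites \cite[Theorem 1A]{johnsonspin} only for closed surfaces, although its own Definition \ref{definition:evenodd} and Section \ref{section:deven} already presuppose the bounded-surface Arf invariant, so this is consistent with the paper's standards). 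Two small points of hygiene, neither a gap: your phrase ``a pair that has not been altered'' breaks down at $g=2$, where the second pair must be normalized using the boundary of a neighborhood of the already-normalized first pair --- which is fine, since homological coherence gives that boundary $\phi$-value $1$ regardless and the new curve stays outside the neighborhood; and $a_1$ does not literally lie in the complement of the other $2g-1$ basis curves (it crosses $b_1$ once), so the connecting arc should be taken from a point of $a_1$ away from that intersection, its existence being clear by change of coordinates.
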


\begin{proof}
The argument makes use of the techniques of the proof of Proposition \ref{proposition:one}. Let $\mathcal B = \{\alpha_1, \beta_1, \dots, \alpha_g, \beta_g\}$ be a geometric symplectic basis, and let $S_i$ denote the genus-$1$ subsurface determined by $\alpha_i, \beta_i$; define $D_i$ as the boundary curve of $S_i$. Exactly as in Proposition \ref{proposition:one}, each pair $\alpha_i, \beta_i$ can be replaced by new curves $\alpha_i', \beta_i'$ supported on $S_i$ and satisfying $i(\alpha_i', \beta_i') = 1$, such that $\alpha_i'$ is admissible. Denote the corresponding geometric symplectic basis by $\mathcal B'$. For an arc $\epsilon$ connecting $\beta_1'$ to $D_2$ and disjoint from all other $D_i$, the curve-arc sum $\beta_1' +_\epsilon D_2$ satisfies $\phi(\beta_1'+_\epsilon D_2) = \phi(\beta_1') + 2$. By repeatedly performing this curve-arc sum using an arc $\epsilon$ disjoint from $\mathcal B' \setminus \{\beta_2'\}$ (as in Proposition \ref{proposition:one}), $\beta_2'$ can be replaced with a curve $\beta_2''$ such that $\phi(\beta_2'') = 0 \mbox{ or }1$. By performing an analogous operation on all $\beta_i'$, one obtains a geometric symplectic basis $\mathcal B'' = \{\alpha_1', \beta_1'', \dots, \alpha_g', \beta_g''\}$ such that $\phi(\alpha_i') = 0$ and $\phi(\beta_i'') = 0 \mbox{ or }1$. 

It remains to further alter each $\beta_1'', \dots, \beta_{g-1}''$ so that $\phi(\beta_i'') = 0$ in this range. For $1 \le i \le g-1$, let $\gamma_i$ be a collection of disjoint curves such that $\beta_1, \gamma_1, \dots, \beta_{g-1}, \gamma_{g-1}, \beta_g$ forms a chain of length $2g-1$, and such that each $\gamma_i$ is disjoint from all $\alpha_j'$. Then necessarily $\alpha_i, \gamma_i, \alpha_{i+1}$ forms a pair of pants, and so $\phi(\gamma_i) = -1$. If $\phi(\beta_1'') = 1$, then $\phi(T_{\gamma_1}(\beta_1'')) = 0$. Replace $\beta_1'', \beta_2''$ by $T_{\gamma_1}(\beta_1''), T_{\gamma_1}(\beta_2'')$, respectively. Repeat, applying $T_{\gamma_2}^k$ to $T_{\gamma_1}(\mathcal B'')$ for $k$ such that $\phi(T_{\gamma_2}^k T_{\gamma_1}(\beta_2'')) = 0$. Proceed in this way, taking each $\beta_i''$ for $i \le g-1$ to some $\beta_i'''$ with $\phi(\beta_i''') = 0$. At the end, the geometric symplectic basis $\mathcal B''' = \{\alpha_1', \beta_1''', \dots, \alpha_g', \beta_g'''\}$ will satisfy $\phi(\gamma) = 0$ for all $\gamma \in \mathcal B'''$ except possibly $\gamma = \beta_g'''$. By repeating the curve-arc summing procedure, $\beta_g'''$ can be altered to satisfy $\psi(\beta_g''') = 0 \mbox{ or }1$ as required. Define $\widetilde{\mathcal B}$ to be this geometric symplectic basis. Applying Theorem \ref{theorem:gsb}, we see that $\phi = \phi_{even}^{\widetilde{\mathcal B}} \mbox{ or }\phi_{odd}^{\widetilde{\mathcal B}}$ as required.
\end{proof}

There is an analogue of Corollary \ref{corollary:allvalues} for $r$ even, although the Arf invariant provides an obstruction that was not present in the case of odd $r$.
\begin{corollary}\label{corollary:allvalueseven}
Let $r$ be an even integer, and let $S' \subset S$ be a subsurface of genus $h \ge 2$ endowed with a $\Z/r\Z$-valued spin structure $\phi$. Then the following assertions hold:
\begin{enumerate}
\item\label{item:singleeven} For all $x \in \Z /r \Z$, there exists some nonseparating curve $c$ supported on $S'$ satisfying $\phi(c) = x$.
\item\label{item:gsbeven} For a given $2h$-tuple $(i_1, j_1, \dots, i_h, j_h)$ of elements of $\Z / r \Z$, there is some geometric symplectic basis $\mathcal B = \{a_1, b_1, \dots, a_h, b_h\}$ for $S'$ with $\phi(a_\ell) = i_\ell$ and $\phi(b_\ell) = j_\ell$ for $1 \le \ell \le h$ if and only if the parity of the spin structure defined by these conditions agrees with the parity of the restriction $\phi\vline_{S'}$ to $S'$. 
\item\label{item:gsbevenrestricted} For {\em any} $(2h-2)$-tuple $(i_1, j_1, \dots,i_{h-1}, j_{h-1})$ of elements of $\Z / r \Z$, there is some geometric symplectic basis $\mathcal B = \{a_1, b_1, \dots, a_h, b_h\}$ for $S'$ with $\phi(a_\ell) = i_\ell$ and $\phi(b_\ell) = j_\ell$ for $1 \le \ell \le h-1$.
\item \label{item:chaineven} For a given $2h$-tuple $(k_1, \dots, k_{2h})$ of elements of $\Z/r\Z$, there is some chain $(a_1, \dots, a_{2h})$ of curves on $S'$ such that $\phi(a_\ell) = k_\ell$ for all $1 \le \ell \le 2h$ if and only if the parity of the spin structure defined by these conditions agrees with the parity of the restriction $\phi\vline_{S'}$ to $S'$. 
\item \label{item:chainevenrestricted} For {\em any} $(2h-2)$-tuple $(k_1, \dots, k_{2h-2})$ of elements of $\Z/r\Z$, there is some chain $(a_1, \dots, a_{2h-2})$ of curves on $S'$ such that $\phi(a_\ell) = k_\ell$ for all $1 \le \ell \le 2h-2$.
\end{enumerate}
\end{corollary}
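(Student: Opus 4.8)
The plan is to deduce all five assertions from Proposition \ref{proposition:onlyarf} together with Lemma \ref{lemma:evenoddorbit}, Lemma \ref{lemma:arfprops}, and the Arf formula \eqref{equation:arf}, following the template by which Corollary \ref{corollary:allvalues} was obtained from Proposition \ref{proposition:one}; the only new feature is that the Arf invariant now intervenes as an obstruction. Since a geometric symplectic basis or a chain of curves for a genus-$h$ surface is carried on a regular neighborhood of genus $h$ with a single boundary component, we restrict attention to surfaces $S'$ with at most one boundary component, so that $\phi\vline_{S'}$ has a well-defined Arf invariant (Lemma \ref{lemma:arfprops}\eqref{item:arfwd}); this is what ``the parity of $\phi\vline_{S'}$'' means. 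The basic fact used repeatedly, a consequence of Proposition \ref{proposition:onlyarf}, Lemma \ref{lemma:evenoddorbit}, and Lemma \ref{lemma:arfprops}\eqref{item:arforbit}, is: two $\Z/r\Z$-valued spin structures on a genus-$h$ surface with $h \ge 2$ and at most one boundary component lie in the same $\Mod$-orbit if and only if they have the same parity.

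First I would prove \eqref{item:gsbeven}. For the forward implication: if $\mathcal B = \{a_\ell, b_\ell\}$ realizes the prescribed tuple, then $\phi$ restricts on a regular neighborhood $N$ of $\mathcal B$ (a genus-$h$ subsurface with one boundary component) to a spin structure whose Arf invariant is computed from $(i_1,j_1,\dots,i_h,j_h)$ via \eqref{equation:arf}; this value is by definition the parity of the model spin structure determined by the tuple, and $\Arf(\phi\vline_N) = \Arf(\phi\vline_{S'})$. For the converse: fix a model geometric symplectic basis $\mathcal B_0 = \{a_\ell^0, b_\ell^0\}$ on $S'$, let $\psi$ be a $\Z/r\Z$-valued spin structure on $S'$ with $\psi(a_\ell^0) = i_\ell$ and $\psi(b_\ell^0) = j_\ell$, and observe that by hypothesis $\psi$ and $\phi$ have equal parity; the basic fact above then yields $f \in \Mod(S')$ with $f^*(\psi) = \phi$, so that $\mathcal B := \{f^{-1}(a_\ell^0), f^{-1}(b_\ell^0)\}$ is a geometric symplectic basis with $\phi(f^{-1}(a_\ell^0)) = i_\ell$ and $\phi(f^{-1}(b_\ell^0)) = j_\ell$ (using Theorem \ref{theorem:gsb} and the variance of $f^*$).

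Assertion \eqref{item:gsbevenrestricted} follows from \eqref{item:gsbeven}: extend the given $(2h-2)$-tuple to a $2h$-tuple by choosing the last pair $(i_h, j_h)$ so that the resulting parity matches that of $\phi\vline_{S'}$, which is possible since $h \ge 2$ (for instance take $i_h$ even and pick $j_h \in \{0,1\}$ to correct the parity), then apply \eqref{item:gsbeven}. Assertion \eqref{item:singleeven} is then immediate from \eqref{item:gsbevenrestricted} by choosing a tuple with $i_1 = x$ and setting $c = a_1$. The chain assertions \eqref{item:chaineven} and \eqref{item:chainevenrestricted} are deduced from \eqref{item:gsbeven} and \eqref{item:gsbevenrestricted} exactly as assertion \eqref{item:oddchain} of Corollary \ref{corollary:allvalues} was deduced from \eqref{item:oddgsb}: one converts a chain-tuple $(k_1,\dots,k_{2h})$ into the geometric-symplectic-basis tuple $i_\ell = 1 - \ell + \sum_{t=1}^\ell k_{2t-1}$, $j_\ell = k_{2\ell}$, realizes the latter by \eqref{item:gsbeven} (the two tuples define spin structures of the same parity, since the completed chain and the geometric symplectic basis share a common regular neighborhood, namely a genus-$h$ subsurface with one boundary component), and completes the resulting geometric symplectic basis $\{c_\ell, d_\ell\}$ into a chain by inserting curves $f_\ell$ with $i(f_\ell, d_\ell) = i(f_\ell, d_{\ell+1}) = 1$ and disjoint from the rest, so that the homological coherence criterion applied to the pairs of pants $c_\ell \cup f_\ell \cup c_{\ell+1}$ forces $\phi(f_\ell) = k_{2\ell+1}$; the forward implications are again handled by passing to a regular neighborhood of the chain. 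Assertion \eqref{item:chainevenrestricted} follows from \eqref{item:gsbevenrestricted} in the same way, there being no parity constraint to impose.

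The conceptual work is packaged into Proposition \ref{proposition:onlyarf}, which we are free to invoke, so the main obstacle is bookkeeping rather than a single hard step: one must verify carefully that ``the parity of the spin structure defined by'' a prescribed tuple (or chain-tuple) is exactly the quantity computed by \eqref{equation:arf}, that it is invariant under the chain$\leftrightarrow$basis conversion above, and that in the forward directions it genuinely coincides with $\Arf(\phi\vline_{S'})$. The most delicate point is the well-definedness of this parity once $S'$ is permitted more than one boundary component; the cleanest course, adopted above, is to require $S'$ to have at most one boundary component, where Lemma \ref{lemma:arfprops}\eqref{item:arfwd} applies directly.
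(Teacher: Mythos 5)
Your proposal is correct and follows essentially the same route as the paper: part \eqref{item:gsbeven} is obtained from Proposition \ref{proposition:onlyarf} (Arf classifies $\Mod$-orbits) by pulling back a model basis, \eqref{item:gsbevenrestricted} by adjusting the final pair $(i_h,j_h)$ via \eqref{equation:arf}, and the remaining parts exactly as in Corollary \ref{corollary:allvalues}. The only cosmetic difference is that you fix the boundary issue by restricting $S'$ to have at most one boundary component, whereas the paper passes to a one-boundary subsurface $S''$ containing the basis; both amount to the same reduction.
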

\begin{proof}
The proof is essentially identical to that of Corollary \ref{corollary:allvalues}. The arguments for \eqref{item:gsbeven} and \eqref{item:gsbevenrestricted} are slightly novel; the remaining points follow their counterparts in Corollary \ref{corollary:allvalues} verbatim. To establish \eqref{item:gsbeven}, let $\mathcal B' = \{a_\ell', b_\ell'\}$ be a geometric symplectic basis on $S'$. Let $S''$ be a subsurface of $S'$ containing each curve in $\mathcal B'$ that has only one boundary component. Given $(i_1, j_1, \dots, i_h, j_h)$, there is some spin structure $\psi$ on $S''$ for which $\psi(a'_\ell) = i_\ell$ and $\psi(b'_\ell) = j_\ell$ for $1 \le \ell \le h$. By Proposition \ref{proposition:onlyarf}, there is an element $f \in \Mod(S'')$ for which $f^*(\psi) = \phi$ if and only if the Arf invariants of $\phi$ and $\psi$ agree; if they do, then $\mathcal B = f^{-1}(\mathcal B')$ has the required properties.

\eqref{item:gsbevenrestricted} will be obtained from \eqref{item:gsbeven}. Let $\epsilon \in \Z/2\Z$ denote the Arf invariant of $\phi$, and define the quantity
\[
\eta = \sum_{\ell = 1}^{h-1} (i_\ell + 1)(j_\ell +1) \pmod 2.
\]

As the formula \eqref{equation:arf} for the Arf invariant shows, given any $(2h-2)$-tuple $(i_1, j_i, \dots,i_{h-1}, j_{h-1})$ and any value $\epsilon \in \Z/2\Z$, there is a choice of $i_h, j_h \in \Z/r\Z$ for which $\eta + (i_h +1)(j_h +1) \equiv \epsilon \pmod 2$. The result now follows by applying \eqref{item:gsbeven} to the tuple $(i_1, j_1, \dots, i_h, j_h)$. 
\end{proof}

We will also require a result establishing the existence of configurations $\mathscr D_n$ as in the $D_n$ relation (Proposition \ref{proposition:dnrel}).
\begin{corollary}\label{corollary:dneven}
Let $r = 2d$ be an even integer, and let $\Sigma_g$ be a closed surface endowed with a $\Z/2d\Z$-valued spin structure $\phi$. Let $\Delta$ be a curve on $\Sigma_g$ that separates $\Sigma_g$ into subsurfaces $S_1, S_2$ for which the genus $g(S_1) \ge d+1$. Set $n = 2g(S_1) - 2d + 1$. Then there exists a configuration $a, a', c_1, \dots, c_{n-2}$ of curves on $S_1$ arranged in the $\mathscr D_n$ configuration, such that the elements $a, a',$ and $c_i$ are admissible for all $i$, and such that $\Delta = \Delta_2$ as in Figure \ref{figure:dnrel}. 
\end{corollary}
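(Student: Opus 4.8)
The plan is to reduce the statement to a question about which spin structure is induced on a two-holed subsurface of $S_1$, and then to settle that question using the classification of spin structures on the one-holed surface $S_1$ itself.

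First, since $g(S_1)\ge d+1$, set $m:=g(S_1)-d\ge 1$; then $n=2g(S_1)-2d+1=2m+1$ is odd, so we are in the odd case of the $D_n$-relation. Recall from the discussion preceding Corollary \ref{corollary:dn} that a regular neighborhood of a $\mathscr D_{2m+1}$-configuration is a copy of $\Sigma_{m,2}$ whose two boundary components are $\Delta_0$ and $\Delta_2=C_m$. Thus what must be produced is a subsurface $\nu\subset S_1$ with $\nu\cong\Sigma_{m,2}$, one of whose boundary components is $\Delta=\partial S_1$, together with a $\mathscr D_{2m+1}$-configuration $\mathcal D$ with regular neighborhood $\nu$, such that (i) $\Delta$ plays the role of $\Delta_2$, and (ii) every curve of $\mathcal D$ is admissible, i.e.\ nonseparating with $\phi$-value $0$. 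Fix once and for all a model: a subsurface $\nu_0\subset S_1$ of this topological type (with $\Delta$ as its $\Delta_2$-component) and a $\mathscr D_{2m+1}$-configuration $\mathcal D_0$ with regular neighborhood $\nu_0$. By Theorem \ref{theorem:gsb} there is a unique $\Z/2d\Z$-valued spin structure $\sigma$ on $\nu_0$ assigning the value $0$ to every curve of $\mathcal D_0$; existence amounts to checking that the value-$0$ conditions on $\mathcal D_0$ are consistent with a choice of signature, and the signature is in fact forced by Proposition \ref{proposition:HCC} — using, for instance, the pair of pants cobounded by $a$, $a'$ and $\Delta_0$ inside the configuration — and one verifies it is exactly the signature compatible with gluing a $\Sigma_{d,1}$ along $\Delta_0$. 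By the change-of-coordinates principle, $\Mod(S_1,\Delta)$ acts transitively on subsurfaces of the above type, so it suffices to find $\psi\in\Mod(S_1,\Delta)$ with $\psi\cdot\phi$ agreeing with an $S_1$-extension of $\sigma$ on $\nu_0$: then $\psi^{-1}(\mathcal D_0)$ is the desired configuration, its neighborhood $\psi^{-1}(\nu_0)$ having $\psi^{-1}(\Delta)=\Delta$ as its $\Delta_2$-component, and for each $c_0\in\mathcal D_0$ one has $\phi(\psi^{-1}(c_0))=(\psi\cdot\phi)(c_0)=\sigma(c_0)=0$.

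It therefore remains to show that $\phi$ is $\Mod(S_1,\Delta)$-equivalent to some spin structure $\phi'$ on $S_1$ with $\phi'|_{\nu_0}=\sigma$. Since $S_1$ has genus $m+d\ge 2$ and exactly one boundary component, Proposition \ref{proposition:onlyarf} together with Lemma \ref{lemma:evenoddorbit} shows that the $\Mod(S_1,\Delta)$-orbits of $\Z/2d\Z$-valued spin structures on $S_1$ are precisely two, distinguished by the Arf invariant (the signature $\phi(\Delta)$ being forced by Proposition \ref{proposition:HCC} and hence not an orbit invariant). So we need only exhibit a spin structure $\phi'$ on $S_1$ restricting to $\sigma$ on $\nu_0$ with $\Arf(\phi')=\Arf(\phi)$. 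Such a $\phi'$ is built by extending $\sigma$ over the complementary subsurface $B:=S_1\setminus\nu_0\cong\Sigma_{d,1}$: the signature of $\sigma$ on $\partial\nu_0$ has been arranged compatible with this gluing, and one then prescribes arbitrary $\phi'$-values on a geometric symplectic basis of $B$ (that this yields a genuine spin structure follows from Theorem \ref{theorem:HJhomological}). Because $d\ge 1$, $B$ carries at least one handle, and by formula \eqref{equation:arf} the Arf invariant of $\phi'$ can be shifted by an arbitrary element of $\Z/2\Z$ by altering the values on that one handle; choosing these values so that $\Arf(\phi')=\Arf(\phi)$ completes the argument.

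The delicate point — and the reason the even-$r$ case needs this separate treatment — is exactly the role of the Arf invariant in the previous paragraph. Spin structures on the two-holed surface $\Sigma_{m,2}$ are not classified by their signature (the intersection pairing is degenerate there), so one cannot simply move $\phi$ to the standard form directly on the would-be neighborhood; the device is to step up to the one-holed surface $S_1$, where Proposition \ref{proposition:onlyarf} applies and only the Arf invariant obstructs, and then to use the genus-$d$ complement $B$ — nonempty precisely because of the hypothesis $g(S_1)\ge d+1$ — to absorb that obstruction. The remaining ingredients, namely existence and uniqueness of the model spin structure $\sigma$ and the homological-coherence consistency of its signature with the gluing, are routine verifications via Theorems \ref{theorem:gsb} and \ref{theorem:HJhomological} and Proposition \ref{proposition:HCC}.
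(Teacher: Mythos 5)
Your argument is correct, but it follows a genuinely different route from the paper's. The paper's proof is a direct, curve-by-curve construction: it invokes Corollary \ref{corollary:allvalueseven}.\ref{item:chainevenrestricted} to produce the admissible chain $a, c_1, \dots, c_{n-2}$ on $S_1$ (that corollary already absorbs the Arf obstruction by leaving one handle's values unprescribed), and then chooses $a'$ by change of coordinates so that $a \cup a' \cup \Delta$ bounds a subsurface of genus $g(S_1)-d-1$ containing $c_2,\dots,c_{n-2}$ with $i(a',c_1)=1$; the other side of $a \cup a'$ in $S_1$ then has genus exactly $d$, so homological coherence (Proposition \ref{proposition:HCC}) forces $\phi(a')\equiv 0 \pmod{2d}$, i.e.\ $a'$ is admissible. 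You instead fix a model configuration with regular neighborhood $\nu_0\cong\Sigma_{m,2}$, build a model spin structure $\sigma$ on it, extend over the genus-$d$ complement $B$ so as to match $\Arf(\phi)$, and transport the whole configuration at once using the orbit classification of Proposition \ref{proposition:onlyarf} and Lemma \ref{lemma:evenoddorbit}. Both proofs hinge on the same numerical coincidence --- the complement of the configuration in $S_1$ has genus exactly $d$, so values mod $2d$ work out (in your proof this appears as the signature match at $\Delta_0$; in the paper's, as the admissibility of $a'$) --- and both dispose of the Arf invariant by means of a free handle (yours explicitly in $B$, the paper's implicitly inside Corollary \ref{corollary:allvalueseven}.\ref{item:chainevenrestricted}). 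The verifications you flag as routine do check out: the classes of the configuration curves form a basis of $H_1(\nu_0;\Z)$, so $\sigma$ exists and is unique by Theorems \ref{theorem:HJhomological} and \ref{theorem:gsb}, and the value $\sigma(\Delta_0)$ computed from the pair of pants is exactly the one Proposition \ref{proposition:HCC} demands of the boundary of a glued-in $\Sigma_{d,1}$. The paper's argument is shorter given its toolkit; yours makes the role of the Arf invariant and of the hypothesis $g(S_1)\ge d+1$ more transparent, at the cost of the gluing and extension bookkeeping.
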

\begin{proof}
By Corollary \ref{corollary:allvalueseven}.\ref{item:chainevenrestricted}, there exists a chain $a, c_1, \dots, c_{n-2}$ of admissible curves on $S_1$. Let $a' \subset S_1$ be chosen so that $a \cup a' \cup \Delta$ bounds a subsurface of genus $g(S_1) - d - 1$ containing $c_i$ for $i \ge 2$, and such that $i(a',c_1) = 1$. The other side of $a \cup a'$ bounds a subsurface of genus $d$, and so the homological coherence property implies that $a'$ is admissible. By construction, the curves $a, a', c_1, \dots, c_{n-2}$ form the configuration $\mathscr D_{n}$ of the $D_n$ relation, and the boundary component $\Delta_2$ of Figure \ref{figure:dnrel} is given here by $\Delta$.
\end{proof}

\section{$r$ odd: generating $\Mod(\Sigma_g)[\phi]$ by Dehn twists}\label{section:dodd}

Let $\phi$ be a $\Z/r \Z$-valued spin structure on a closed surface $\Sigma_g$. Throughout this section we assume that $r \mid (2g-2)$ (so that, following Remark \ref{remark:admits}, $\Sigma_g$ admits a $\Z/r \Z$-valued spin structure) and that $r$ is {\em odd}. Recall from Definition \ref{definition:admissible} that the admissible subgroup is defined via
\[
\mathcal T_\phi = \pair{T_a \mid a \mbox{ nonseparating curve, } \phi(a) = 0}.
\]
By construction, $\mathcal T_\phi \leqslant \Mod(\Sigma_g)[\phi]$. The main result of this section is that for $r$ odd, this containment is an equality. 

\begin{proposition}\label{proposition:twistgen}
For any $g \ge 3$ and for any odd integer $r$ satisfying $r < g - 1$, there is an equality 
\[
\mathcal T_\phi = \Mod(\Sigma_g)[\phi].
\]
\end{proposition}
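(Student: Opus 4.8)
The plan is to show that $\Mod(\Sigma_g)[\phi]$ is generated by $\mathcal{T}_\phi$ together with separating twists and bounding pair maps of small genus, and then to absorb those extra generators into $\mathcal{T}_\phi$. More precisely, I would argue as follows. First, pass to the image under the symplectic representation $\Psi$. The group $\Psi(\Mod(\Sigma_g)[\phi])$ is the stabilizer in $\Sp(2g,\Z)$ of the mod-$r$ reduction data carried by $\phi$; since $r$ is odd, one can identify this with a congruence-type subgroup, and I would cite (or prove via the standard transvection argument) that it is generated by the images $\Psi(T_a)$ of admissible twists $T_a$ — that is, transvections $v \mapsto v + \langle v, [a]\rangle [a]$ for $[a]$ in the appropriate coset. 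The key input is that, by Corollary \ref{corollary:allvalues}, admissible curves realize every primitive homology class lying in the relevant sublattice, so their transvections already generate the full symplectic stabilizer. Hence $\mathcal{T}_\phi$ surjects onto $\Psi(\Mod(\Sigma_g)[\phi])$, and it remains to show that $\mathcal{T}_\phi$ contains the entire kernel, namely $\Mod(\Sigma_g)[\phi] \cap \mathcal{I}_g$.

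To control $\Mod(\Sigma_g)[\phi] \cap \mathcal{I}_g$, I would use the Johnson filtration. By Theorem \ref{theorem:johnson2}, the Johnson kernel $\mathcal{K}_g$ is generated by separating twists of genus at most two; every separating twist lies in $\Mod(\Sigma_g)[\phi]$ by Lemma \ref{lemma:twists}, so it suffices to show each such generator lies in $\mathcal{T}_\phi$. For this I would use the chain relation (Proposition \ref{proposition:chain}) and the $D_n$ relation (Corollary \ref{corollary:dn}): a genus-one or genus-two separating curve bounds a subsurface carrying an admissible chain (available by Corollary \ref{corollary:allvalues}\eqref{item:oddchain} once $r < g-1$ guarantees enough genus to build such chains with prescribed $\phi$-values including the boundary constraint), and the chain relation expresses the corresponding boundary twist as a product of twists about curves in the chain — which can be taken admissible. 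Since $r$ is odd, the obstruction $\phi(c)=1$ on the boundary of a $\Sigma_{1,1}$ is invertible, which is what allows the algebra to close up. This handles $\mathcal{K}_g \subset \mathcal{T}_\phi$.

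Next I would handle $(\Mod(\Sigma_g)[\phi]\cap\mathcal{I}_g)/\mathcal{K}_g$, which embeds via the Johnson homomorphism $\tau$ into $\wedge^3 H_\Z / H_\Z$. The strategy is to show $\tau(\mathcal{T}_\phi)$ already equals $\tau(\Mod(\Sigma_g)[\phi]\cap\mathcal{I}_g)$ by producing, inside $\mathcal{T}_\phi$, bounding pair maps $T_cT_d^{-1}$ with $c,d$ admissible realizing enough elements of $\wedge^3 H_\Z$. By Lemma \ref{lemma:jhom}(3), a genus-one bounding pair contributes $[\alpha]\wedge[\beta]\wedge[\gamma]$ for $\alpha,\beta,\gamma$ a maximal chain on a $\Sigma_{1,2}$; choosing these chains admissible (again via Corollary \ref{corollary:allvalues}) and letting the homology classes vary, one gets a spanning set for the relevant subspace of $\wedge^3 H_\Z/H_\Z$ — here the $\Sp$-equivariance of $\tau$ from Lemma \ref{lemma:jhom}(1) together with the already-established surjectivity onto the symplectic stabilizer lets one conjugate a single well-chosen bounding pair around to fill out the whole $\tau$-image. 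Combining the three steps: $\mathcal{T}_\phi$ surjects onto $\Psi(\Mod(\Sigma_g)[\phi])$, contains $\mathcal{K}_g$, and has full $\tau$-image on the Torelli part, so $\mathcal{T}_\phi = \Mod(\Sigma_g)[\phi]$.

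The main obstacle I anticipate is the bookkeeping in the bottom two steps: one must verify that the subsurfaces supporting the needed admissible chains and bounding pairs genuinely exist inside $\Sigma_g$ with the prescribed $\phi$-values on all relevant boundary components, which is exactly where the hypothesis $r < g-1$ (ensuring enough "spare genus") is consumed, and where the case $g(S)=1$ caveat in Proposition \ref{proposition:one} must be checked not to cause trouble. A secondary subtlety is making the generation-of-the-symplectic-stabilizer step precise: one needs that admissible transvections generate the stabilizer of $\phi$'s reduction, not merely a finite-index subgroup, and the cleanest route is probably to invoke the known generation of level-type congruence subgroups of $\Sp(2g,\Z)$ by transvections together with the surjectivity of $\Psi|_{\mathcal{T}_\phi}$ onto the correct target, rather than re-deriving it. Everything else is an assembly of the relation calculus (chain relation, $D_n$ relation, braid relation) with the change-of-coordinates corollaries of Section \ref{section:action}.
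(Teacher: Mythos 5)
Your overall dévissage (compare symplectic images, then Johnson-homomorphism images, then show $\mathcal K_g \leqslant \mathcal T_\phi$) is the same skeleton the paper uses, and your first and last steps are essentially salvageable: for $r$ odd the spin structure has no homological shadow, so $\Psi(\Mod(\Sigma_g)[\phi])$ is in fact all of $\Sp(2g,\Z)$ rather than a proper congruence subgroup, but your transvection argument (admissible curves hit every primitive class, via Lemma \ref{item:homology}) still gives $\Psi(\mathcal T_\phi)=\Sp(2g,\Z)$; and for the Johnson kernel the chain relation works, provided you place the admissible maximal chain on the complementary side of the separating curve of genus $\ge 2$ --- a genus-one side has boundary value $-1$, so Corollary \ref{corollary:allvalues} does not apply there, and a $\Sigma_{1,1}$ whose restricted spin structure has, say, $\phi=1$ on one basis curve supports no maximal admissible chain at all. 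This is exactly the fix in Lemma \ref{lemma:containsjk}.

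The genuine gap is in your middle step. First, you never determine what $\tau(\Mod(\Sigma_g)[\phi]\cap\mathcal I_g)$ actually is: the paper's Lemma \ref{lemma:tauimage} shows, via Chillingworth's homomorphism and the contraction $C_r:\wedge^3 H_\Z/H_\Z\to H_{\Z/r\Z}$, that this image lies in $\ker(C_r)$, and without that upper bound the equality $\tau(\mathcal T_\phi\cap\mathcal I_g)=\tau(\Mod(\Sigma_g)[\phi]\cap\mathcal I_g)$ cannot be closed up --- ``the relevant subspace'' is precisely the thing that must be identified. Second, your proposed mechanism for filling out that image cannot work: a genus-one bounding pair $c\cup d$ satisfies $\phi(c)+\phi(d)=-2$ by homological coherence, so both curves admissible forces $r\mid 2$, impossible for odd $r\ge 3$; consistently, $\tau$ of a genus-one bounding pair map has nonzero contraction ($C_r(z\wedge x_i\wedge y_i)=z$), so such an element does not even lie in $\Mod(\Sigma_g)[\phi]$, let alone in $\mathcal T_\phi$, and conjugating ``a single well-chosen bounding pair'' by $\Sp(2g,\Z)$ therefore cannot span the correct target. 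The paper instead exhibits three distinct families of generators of $\ker(C_r)$ inside $\tau(\mathcal T_\phi\cap\mathcal I_g)$: $r$-th powers of bounding-pair maps about non-admissible curves (which requires Lemma \ref{item:allcurves}, itself resting on the $D_n$ relation), differences $z\wedge(x_i\wedge y_i-x_j\wedge y_j)$ realized by a chain-relation product $T_aT_bT_c^{-2}$, and the Lagrangian generators $y_1\wedge y_2\wedge y_3$ realized by comparing two $3$-chains sharing a boundary curve. Your proposal is missing both the Chillingworth upper bound and any viable construction of these elements.
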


Before beginning with the proof, we will first establish some properties of the group $\mathcal T_\phi$ which will be used throughout this section and the next.

\begin{lemma}\label{item:allcurves} Let $\phi$ be a $\Z/r\Z$-valued spin structure on a surface $\Sigma_g$ with $r < g-1$ and $g \ge 5$. Let $c$ be any nonseparating simple closed curve on $\Sigma_g$. Suppose that $r$ is odd, or else that $r$ is even and $\phi(c) \equiv 1 \pmod 2$. Then $T_c^r \in \mathcal T_\phi$. 
\end{lemma}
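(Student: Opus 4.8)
The plan is to reduce the statement about an arbitrary nonseparating curve $c$ to the case of a curve of known $\phi$-value sitting inside an admissible configuration, using the $D_n$ relation (Proposition \ref{proposition:dnrel}) together with Corollary \ref{corollary:dn} to extract the needed twist power as an element of a group generated by admissible twists. First I would set up the subsurface: since $r < g - 1$, the integer $n := 2g - 2r + 1 \geq 2g - 2(g-2) + 1 = 5$ is available, and I can find a separating curve $\Delta$ cutting off a subsurface $S_1$ containing $c$ with $g(S_1)$ large enough (at least $r+1$, which is where $r < g-1$ and $g \geq 5$ are used) so that the $D_n$-machinery applies on $S_1$ with $\Delta$ playing the role of $\Delta_0$ or $\Delta_2$.

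Next, by Corollary \ref{corollary:allvalues} (when $r$ odd) or Corollary \ref{corollary:allvalueseven}.\eqref{item:chaineven}/\eqref{item:chainevenrestricted} (when $r$ even, where the parity hypothesis $\phi(c)\equiv 1\pmod 2$ is exactly what lets me match Arf invariants), I would produce a chain or $\mathscr D_n$-configuration of curves $a, a', c_1, \dots, c_{n-2}$ with all curves admissible (the $r$-odd case) and — crucially — arranged so that $c$ appears as one of the curves $C_k$ (in the notation preceding Corollary \ref{corollary:dn}) or can be obtained from one of them by an admissible mapping class; here I'd invoke Corollary \ref{corollary:intersections} to get a curve of the correct $\phi$-value with the correct intersection pattern, and Corollary \ref{corollary:admissible} to guarantee a genus-$1$ admissible handle is available to perform curve-arc sums. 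With such a configuration in place, all the twists $T_a, T_{a'}, T_{c_i}$ lie in $\mathcal T_\phi$, so the group $H_n$ of Proposition \ref{proposition:dnrel} satisfies $H_n \leqslant \mathcal T_\phi$, and Corollary \ref{corollary:dn}.(2) then yields $T_{C_k}^m \in \mathcal T_\phi$ whenever $(2k-1)m \mid g$ — in particular a suitable power $T_c^m \in \mathcal T_\phi$. To upgrade from $T_c^m$ to $T_c^r$ I would choose the configuration so that $2k - 1 = r$ (possible since $r$ is odd: take $k = (r+1)/2$, which requires $g(S_1) \geq k+1$, i.e. genus roughly $r/2$, comfortably within our bounds), so $(2k-1)m = rm$ divides $g$ for some $m$, and conclude $T_c^{r} \in \mathcal T_\phi$ after absorbing the factor $m$ — or, more cleanly, arrange $2k-1 = r$ and $m = 1$, which needs $r \mid g$; when $r \nmid g$ one instead uses $r \mid (2g-2)$ and a slightly different bounding-pair/chain relation bookkeeping to still land on $T_c^r$.

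The main obstacle I anticipate is precisely this last divisibility juggling: Corollary \ref{corollary:dn}.(2) only gives $T_{C_k}^m$ for $(2k-1)m \mid g$, whereas the lemma demands the clean exponent $r$, and the hypothesis available is $r \mid (2g-2)$, not $r \mid g$. Resolving this will require either (i) applying the $D_n$ relation for two different values of the boundary genus and combining the resulting twist powers via Bézout to isolate $T_c^{\gcd} $ with $\gcd$ dividing $r$, then using $\phi(c) \mid$-considerations and Lemma \ref{lemma:twists} to see the relevant order is exactly $r/\gcd(r,\phi(c))$ — which is a divisor of $r$ — or (ii) observing that what we ultimately need is $T_c^r \in \mathcal T_\phi$ and that $T_c^{r}$ is the smallest power of $T_c$ lying in $\Mod(\Sigma_g)[\phi]$ when $\phi(c)$ generates, so it suffices to produce \emph{any} nontrivial admissible power together with the fact that $\langle T_c \rangle \cap \mathcal T_\phi$ is a subgroup of $\langle T_c\rangle \cap \Mod(\Sigma_g)[\phi] = \langle T_c^{r/\gcd(r,\phi(c))}\rangle$; getting surjectivity onto this intersection is where the real work lies. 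I expect the cleanest route is to prove it first for one well-chosen admissible curve $c_0$ with $\phi(c_0) = 1$ (or the needed parity), then use that $\mathcal T_\phi$ acts transitively enough on such curves (via Corollary \ref{corollary:intersections} and admissible twists) to conjugate the conclusion to an arbitrary $c$ of the same $\phi$-value, and finally handle general $\phi(c)$ by the curve-sum formula of Lemma \ref{lemma:smoothing} expressing a curve of $\phi$-value $\phi(c)$ in terms of an admissible curve and a curve of $\phi$-value $1$.
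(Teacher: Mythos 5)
Your overall strategy---realize $c$ as one of the curves $C_k$ inside a $\mathscr D_n$-configuration of admissible curves and then extract a power of $T_c$ via Corollary \ref{corollary:dn}---is the same as the paper's, but two of your key steps fail as stated. First, $k$ is not yours to choose: by homological coherence, the boundary curve $C_k$ of the genus-$k$ neighborhood of $\mathscr D_{2k+1}$ satisfies $\phi(C_k) \equiv 2k-1 \pmod r$ (up to orientation), so if $c$ is to play the role of $C_k$ then $k$ is forced by $\phi(c) \equiv 2k-1 \pmod r$; this solvability is precisely where the hypothesis ``$r$ odd, or $r$ even and $\phi(c)$ odd'' is used (not Arf-matching---the paper sidesteps Arf by invoking the restricted statement Corollary \ref{corollary:allvalueseven}.\ref{item:chainevenrestricted}). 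Your proposal to ``choose the configuration so that $2k-1=r$'' would force $\phi(c)\equiv 0 \pmod r$, and no admissible (or even $\phi$-preserving) mapping class can move $c$ to a curve of different $\phi$-value, so this cannot treat a general $c$. Second, the ``divisibility juggling'' you anticipate is an artifact of misreading Corollary \ref{corollary:dn}: the parameter called $g$ there is the size parameter of the configuration ($n=2g+3$), not the genus of the ambient surface. The paper constructs the configuration $\mathscr D_{2r+3}$ inside $\Sigma_g$ (possible because $r<g-1$), so that parameter equals $r$; the $D_{2k+1}$ relation gives $T_{\Delta_0}^{2k-1}T_{C_k}\in H_{2r+3}$, while $T_{\Delta_0}^{r}\in H^+_{2r+3}$, and raising the former to the $r$-th power and cancelling the $\Delta_0$-twists yields exactly $T_{C_k}^{r}=T_c^{r}$. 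No condition such as $r\mid g$ arises, and no B\'ezout argument is needed.

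Your fallback reductions also do not close the gap: conjugation by $\mathcal T_\phi$ only relates curves of equal $\phi$-value, and the final step---passing from the case $\phi(c_0)=1$ to arbitrary $\phi(c)$ via the curve-sum formula of Lemma \ref{lemma:smoothing}---is not a valid inference, since expressing $c$ as a smoothing of other curves gives no relation between $T_c^r$ and powers of the twists about the summands; likewise your option (ii) never actually produces any nontrivial power of $T_c$ in $\mathcal T_\phi$. Finally, the opening setup (a separating curve $\Delta$ cutting off $S_1\supset c$, with $\Delta$ as $\Delta_0$ or $\Delta_2$) mimics Corollary \ref{corollary:dneven}, which is tailored to separating curves, and skips the real geometric content of the proof: one must build admissible chains disjoint from $c$ (Corollary \ref{corollary:allvalues}.\ref{item:oddchain} or \ref{corollary:allvalueseven}.\ref{item:chainevenrestricted}), a curve $a'$ so that $a\cup a'\cup c$ bounds a genus-$(k-1)$ subsurface (making $c=C_k$), and an admissible curve $c_{2k}$ meeting $c$ twice, whose existence comes from Corollary \ref{corollary:intersections} only after checking that the complement of the configuration has positive genus---the arithmetic $g-r-2\ge 1$, which is exactly where $g\ge 5$ and $r\mid (2g-2)$, $r<g-1$ enter.
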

\begin{proof}
Let $c$ be as in the statement of Lemma \ref{item:allcurves}. Our first objective is to construct a configuration of admissible curves $\mathscr D_{2r+3}$ as in Corollary \ref{corollary:dn} for which $c = C_k$. By hypothesis, there is an expression of the form $\phi(c) = 2k-1 \pmod r$ for some integer $1 \le k \le r$. Invoking Corollary \ref{corollary:allvalues}.\ref{item:oddchain} or \ref{corollary:allvalueseven}.\ref{item:chainevenrestricted} as appropriate, the hypothesis $r< g-1$ implies that there is a chain $a, c_1, \dots, c_{2k-1}$ of admissible curves disjoint from $c$, and there is a chain $c_{2k+1}, \dots, c_{2r+1}$ of admissible curves disjoint from $c$ and from $a, c_1, \dots, c_{2k-1}$. Let $a'$ be a curve such that $a \cup a' \cup c$ bounds a surface of genus $k-1$ containing $c_2, \dots, c_{2k-1}$, and satisfying $i(a', c_1) = 1$ and $i(a', c_i) = 0$ for $2k+1 \le i \le 2r+1$. The homological coherence property implies that $a'$ is admissible.  

To complete the construction, it remains only to find the curve $c_{2k}$. Such a curve $c_{2k}$ must be admissible, and $c_{2k}$ must have the following intersection properties: 
\begin{equation}\label{equation:c2k}
i(c_{2k}, c_{2k \pm 1}) = 1, \quad i(c_{2k}, a) = i(c_{2k}, a') = i(c_{2k}, c_i) = 0 \mbox{ for }\abs{i-2k} > 1, \quad i(c_{2k}, c) = 2.
\end{equation}
Let $c_{2k}'$ be any curve satisfying the intersection properties \eqref{equation:c2k}. If we can show that the complement of a regular neighborhood of the configuration $\mathscr D'_{2r+3} := a, a', c_1, \dots, c_{2k-1}, c_{2k'}, c_{2k+1}, \dots, c_{2r+1}$ is a surface of positive genus, then the existence of $c_{2k}$ will follow from Corollary \ref{corollary:intersections}.

The configuration $\mathscr D'_{2r+3}$ is contained in a surface of genus $r+1$ with two boundary components. Each boundary component is homologous to the nonseparating curve $c$, so the complement has genus $g - r - 2$. We must show that this quantity is positive. Establishing $g -r - 2 \ge 1$ is a matter of simple arithmetic. Writing $r = \frac{2g-2}{m}$ for some $m \ge 3$, we have 
\[
g -r - 2=  \frac{m-2}{m}(g-1) -1 \ge \frac{g}{3}-1>0,
\]
since $g \ge 5$ by hypothesis.

Recalling that the group $H_{2r + 3}^+$ from Corollary \ref{corollary:dn} is defined to be the group generated by the Dehn twists about the elements of $\mathscr D_{2 r + 3} \cup \{\Delta_1\}$, it follows that if each element of $\mathscr D_{2 r +3}$ is admissible, then $H_{2r+3}^+ \leqslant \mathcal T_\phi$. We have constructed the curves $a, a', c_1, \dots, c_{2r+1}$ so as to be admissible; homological coherence implies that also $\Delta_1$ is admissible. Corollary \ref{corollary:dn}.2 then implies that $T_{C_k}^r \in \mathcal T_\phi$ for any $1 \le k \le r+1$.
\end{proof}

\begin{lemma}\label{item:homology}
Let $\phi$ be a $\Z/r\Z$-valued spin structure on a surface $\Sigma_g$, and let $v \in H_1(\Sigma_g; \Z)$ be any primitive homology class. If $r$ is odd, then for any $k \in \Z/r \Z$, there exists a curve $c$ for which $[c] = v$ and $\phi(c) = k$. If $r$ is even, then for any $k \in \Z/r \Z$ such that $\phi \pmod 2(v) \equiv k \pmod 2$, there exists a curve $c$ for which $[c] = v$ and $\phi(c) = k$. 
\end{lemma}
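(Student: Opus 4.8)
The plan is to realize $v$ by a simple closed curve and then repeatedly apply curve-arc sums with a fixed null-homologous curve, adjusting the value of $\phi$ while keeping the homology class constant --- essentially the technique used in the proof of Proposition~\ref{proposition:one}, but summing only with separating curves so as to preserve homology.

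First I would pick a simple closed curve $c_0$ representing $v$ (possible since $v$ is primitive, hence nonzero), oriented so that $[c_0]=v$. Then $c_0$ is nonseparating, so $\Sigma_g\setminus c_0\cong\Sigma_{g-1,2}$ is connected of genus $g-1\ge 1$ (here, as in all applications, $g\ge 2$), and therefore contains an embedded one-holed torus $T$ disjoint from $c_0$. Setting $d=\partial T$ oriented with $T$ to its \emph{right}, one has $[d]=0$, and the homological coherence criterion (Proposition~\ref{proposition:HCC}) applied to the complementary $\Sigma_{g-1,1}$ gives $\phi(d)=\chi(\Sigma_{g-1,1})=3-2g\equiv 1\pmod r$, using $r\mid(2g-2)$. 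Choosing an arc $\epsilon$ from the left side of $c_0$ to the left side of $d$ --- routed inside the $\Sigma_{g-1,1}$ piece, which contains $c_0$ --- Lemma~\ref{lemma:curvearcsum} yields a simple closed curve $c_1:=c_0+_\epsilon d$ with $[c_1]=[c_0]+[d]=v$ and $\phi(c_1)=\phi(c_0)+\phi(d)+1=\phi(c_0)+2$.

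Since $c_1$ is still homologous to $v\ne 0$, hence nonseparating, the construction iterates, producing for every $m\ge 0$ a simple closed curve $c_m$ with $[c_m]=v$ and $\phi(c_m)=\ell_0+2m$, where $\ell_0:=\phi(c_0)$. The attainable values therefore fill out the coset $\ell_0+2(\Z/r\Z)\subseteq\Z/r\Z$. When $r$ is odd, $2$ generates $\Z/r\Z$ and this coset is all of $\Z/r\Z$, so every prescribed $k$ is realized. When $r$ is even, the coset consists precisely of the residues congruent to $\ell_0$ modulo $2$; since the mod-$2$ reduction of $\phi$ factors through $H_1(\Sigma_g;\Z/2\Z)$ (Section~\ref{subsection:classical}, via \cite[Theorem~1A]{johnsonspin}), one has $\ell_0\equiv(\phi\bmod 2)(v)\pmod 2$, and these are exactly the residues allowed by the hypothesis. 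In either case $c=c_m$ works for a suitable $m$.

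I expect the only real care to be orientation bookkeeping: the summand $d$ must be null-homologous (to fix $[c]$) yet carry $\phi(d)=1$ rather than $-1$ (so that each step shifts $\phi$ by $+2$, not $0$), which is exactly why the one-holed-torus boundary, oriented with the torus to the right, is the right choice; aligning $\epsilon$ with the left-side conventions of Lemma~\ref{lemma:curvearcsum} is then routine. Beyond this there is no genuine obstacle --- the lemma is a direct specialization of the curve-arc-sum machinery already deployed in Proposition~\ref{proposition:one} and Corollary~\ref{corollary:intersections}, the new feature being that summing only with null-homologous curves preserves the homology class at the cost of changing $\phi$ in steps of $2$.
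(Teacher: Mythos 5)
Your proposal is correct and is essentially the paper's own argument: summing $c_0$ with the boundary of a disjoint one-holed torus via a curve-arc sum produces exactly the other boundary curve of a genus-$1$ subsurface containing $c_0$, which is the curve $c_1$ the paper takes directly, and the rest (iterating to shift $\phi$ by $2$ while fixing the homology class, then using that $2$ generates $\Z/r\Z$ for $r$ odd and sweeps out the allowed mod-$2$ coset for $r$ even) matches the paper's proof. The orientation bookkeeping and the observation that the mod-$2$ reduction of $\phi$ depends only on homology are handled correctly.
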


\begin{proof}
Let $c_0$ be any (oriented) curve on $\Sigma_g$ with $[c_0] = v$; set $\phi(c_0) = k_0$. Let $c_1$ be a curve disjoint from $c_0$ such that $c_0 \cup c_1$ bounds a subsurface of genus $1$, oriented to the left of $c_0$. Then $\phi(c_1) = k_0+2$ when oriented with the subsurface to the right, and $[c_0] = [c_1]$. This construction can be repeated, giving rise to curves $c_m$ with $\phi(c_m) = k_0+2m$. If $r$ is odd, then the set of values $k_0 + 2m$ for various values of $m$ exhausts $\Z/r\Z$, and if $r$ is even, then the set of values $k_0 + 2m$ exhausts the coset $k_0 + 2 \Z/r\Z$. The claim follows by taking $c = c_m$ for the appropriate value of $m$.
\end{proof}

\begin{proof} {\em (of Proposition \ref{proposition:twistgen})}
The method is to compare the intersections of $\mathcal T_\phi$ and $\Mod(\Sigma_g)[\phi]$ with $\mathcal I_g$ and $\mathcal K_g$. We first present a high-level overview of the logical structure of the proof that explains how Proposition \ref{proposition:twistgen} follows from ancillary results; these results are then obtained in Steps 1--4.

\para{Overview} Recall from \eqref{equation:sprep} the symplectic representation $\Psi: \Mod(\Sigma_g) \to \Sp(2g, \Z)$ with kernel given by the Torelli group $\mathcal I_g$. To show that $\mathcal T_\phi = \Mod(\Sigma_g)[\phi]$, it suffices to show that (I) $\Psi(\mathcal T_\phi) = \Psi(\Mod(\Sigma_g)[\phi])$ and that (II) $\mathcal T_\phi \cap \mathcal I_g = \Mod(\Sigma_g)[\phi] \cap \mathcal I_g$. 

The equality of (I) is obtained in Step 1 as Lemma \ref{lemma:spimage}. The proof of (II) is carried out in Steps 2-4. The method is to study the restriction of the Johnson homomorphism to the groups $\mathcal T_\phi \cap \mathcal I_g$ and $\Mod(\Sigma_g)[\phi] \cap \mathcal I_g$. Recall from \eqref{equation:johnsonhom} that the Johnson homomorphism is the surjective homomorphism
\[
\tau: \mathcal I_g \to \wedge^3 H_\Z/H_\Z,
\]
and that the kernel is written $\mathcal K_g$. To establish (II), it suffices to show that (i) $\tau(\mathcal T_\phi \cap \mathcal I_g) = \tau(\Mod(\Sigma_g)[\phi] \cap \mathcal I_g)$ and that (ii) $\mathcal T_\phi \cap \mathcal K_g = \Mod(\Sigma_g)[\phi] \cap \mathcal K_g$. The equality of (i) is carried in Steps 2 and 3. The main result of Step 2, Lemma \ref{lemma:tauimage}, establishes an upper bound on the image $\tau(\Mod(\Sigma_g)[\phi] \cap \mathcal I_g)$, and the main result of Step 3, Lemma \ref{lemma:converse}, shows that the subgroup $\tau(\mathcal T_\phi \cap \mathcal I_g)$ realizes this upper bound. Finally (ii) is established in Step 4: Lemma \ref{lemma:containsjk} shows that there is a containment $\mathcal K_g \leqslant \mathcal T_\phi$.

\para{Step 1: The symplectic quotient} The first step is to understand the image of $\mathcal T_\phi$ and $\Mod(\Sigma_g)[\phi]$ in the symplectic group $\Sp(2g,\Z)$. 

\begin{lemma}\label{lemma:spimage}
For $r$ odd, the symplectic representation $\Psi: \Mod(\Sigma_g) \to \Sp(2g,\Z)$ restricts to a {\em surjection}
\[
\Psi: \mathcal T_{\phi} \twoheadrightarrow \Sp(2g,\Z).
\]
It follows that also $\Psi: \Mod(\Sigma_g)[\phi] \twoheadrightarrow \Sp(2g,\Z)$ is a surjection.
\end{lemma}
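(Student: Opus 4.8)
The plan is to exhibit enough transvections inside $\mathcal{T}_\phi$ to generate $\Sp(2g,\Z)$. Recall that $\Sp(2g,\Z)$ is generated by the transvections $T_v: w \mapsto w + \pair{w,v}v$ as $v$ ranges over primitive classes in $H_\Z$ — in fact it suffices to take $v$ ranging over a symplectic-type generating set, but we will have all primitive $v$ available. Under the symplectic representation $\Psi$, a Dehn twist $T_c$ about an oriented simple closed curve $c$ maps to the transvection $T_{[c]}$. So the key point is: \emph{for every primitive $v \in H_1(\Sigma_g;\Z)$ there is an admissible curve $c$ (i.e.\ nonseparating with $\phi(c) = 0$) with $[c] = v$.} Granting this, $\Psi(\mathcal{T}_\phi)$ contains $T_v$ for all primitive $v$, hence equals $\Sp(2g,\Z)$, and since $\mathcal{T}_\phi \leqslant \Mod(\Sigma_g)[\phi] \leqslant \Mod(\Sigma_g)$ the surjectivity of $\Psi$ on the larger group follows immediately.

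The existence of such a curve is precisely Lemma \ref{item:homology}: for $r$ odd and any primitive $v$ and any $k \in \Z/r\Z$ — in particular $k = 0$ — there is a curve $c$ with $[c] = v$ and $\phi(c) = k$. A curve with $\phi(c) = 0$ is admissible, so $T_c \in \mathcal{T}_\phi$, and $\Psi(T_c) = T_v$. Thus I would simply invoke Lemma \ref{item:homology} with $k = 0$ to produce the requisite admissible curves.

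The only genuine content to verify is the algebraic fact that transvections about primitive classes generate $\Sp(2g,\Z)$; this is classical (see, e.g., \cite[Section 6.3]{FM} for the relevant statements about the symplectic representation, together with the standard fact that the elementary symplectic transvections generate). Given that, the argument is essentially a one-line reduction to Lemma \ref{item:homology}. The statement that $\Psi$ is surjective on $\Mod(\Sigma_g)[\phi]$ is then automatic from the inclusions $\mathcal{T}_\phi \leqslant \Mod(\Sigma_g)[\phi] \leqslant \Mod(\Sigma_g)$ and the surjectivity of $\Psi$ on $\mathcal{T}_\phi$. There is no real obstacle here — the hypothesis that $r$ is odd enters only through Lemma \ref{item:homology}, which guarantees we can hit the value $\phi(c) = 0$ in every primitive homology class without a parity constraint (for $r$ even one would be stuck with classes $v$ where $\phi \bmod 2$ forces $\phi(c)$ odd, which is exactly why the even case is handled separately later).
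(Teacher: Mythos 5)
Your proposal is correct and follows essentially the same route as the paper: invoke Lemma \ref{item:homology} with $k=0$ to realize every primitive class $v \in H_1(\Sigma_g;\Z)$ by an admissible curve, note $\Psi(T_c) = T_{[c]}$, and use the classical fact that transvections about primitive classes generate $\Sp(2g,\Z)$, with surjectivity on $\Mod(\Sigma_g)[\phi]$ following from the containment $\mathcal T_\phi \leqslant \Mod(\Sigma_g)[\phi]$. No gaps; your closing remark about where oddness of $r$ enters matches the paper's treatment of the even case via Lemma \ref{lemma:spimageeven}.
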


\begin{proof}
Let $v \in H_1(\Sigma_g; \Z)$ be a primitive element. By Lemma \ref{item:homology}, there is some curve $c$ with $[c] = v$ and $\phi(c) = 0$. The result follows from this, since $\Sp(2g, \Z)$ is generated by the set of transvections $T_v$ given by $x \mapsto x + \pair{x,v}v$ for $v \in H_1(\Sigma_g; \Z)$ primitive, and $\Psi(T_c) = T_{[c]}$. 
\end{proof}

\para{Step 2: \boldmath$\Mod(\Sigma_g)[\phi]$ and the Johnson homomorphism} Our next objective is Lemma \ref{lemma:tauimage} below. This concerns the image of $\Mod(\Sigma_g)[\phi] \cap \mathcal I_g$ under the Johnson homomorphism. In order to formulate the result, it is necessary to first study a different quotient of $\mathcal I_g$ first constructed by Chillingworth in \cite{chillingworth} and \cite{chillingworth2}. Chillingworth's work is formulated using the notion of a ``winding number function''; as explained in \cite[Introduction]{HJ}, a winding number function is a particular instance of a spin structure. The properties of a winding number function that Chillingworth exploits in his work are common to all spin structures, and so we formulate his results in this larger context. See also \cite[Section 5]{johnsonhom} for a brief summary of Chillingworth's work. Recall in the statement below that $\mathcal S$ is defined to be the set of isotopy classes of oriented simple closed curves on a surface $\Sigma_g$.

\begin{theorem}[Chillingworth]\label{theorem:chillingworth}
Let $\phi$ be a $\Z/r\Z$-valued spin structure on a closed surface $\Sigma_g$. Let $c$ be the function $c: \mathcal I_g \times \mathcal S \to \Z/r \Z$ defined by the formula
\[
c(f, \gamma) = \phi(f(\gamma)) - \phi(\gamma).
\]
Then the value $c(f, \gamma)$ depends only on the homology class $[\gamma] \in H_\Z$, and $c$ descends to a homomorphism
\[
c: \mathcal I_g \to \Hom(H_\Z, \Z/r \Z) \cong H^1(\Sigma_g; \Z/r \Z).
\] 
In particular, $c$ does not depend on the choice of $\Z/r \Z$-valued spin structure.
\end{theorem}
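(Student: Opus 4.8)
The plan is to deduce Theorem~\ref{theorem:chillingworth} from the cohomological description of spin structures given in Theorem~\ref{theorem:HJhomological}. The key observation is that for \emph{any} orientation-preserving homeomorphism $f$ of $\Sigma_g$, the pullback $f^*\phi$, defined by $(f^*\phi)(\gamma) = \phi(f(\gamma))$, is again a $\Z/r\Z$-valued spin structure: twist-linearity follows from $f T_c f^{-1} = T_{f(c)}$ together with the identity $\pair{f(d),f(c)} = \pair{d,c}$, and normalization holds since $f$ sends an embedded disk to an embedded disk. Hence the function $\gamma \mapsto c(f,\gamma) = \phi(f(\gamma)) - \phi(\gamma)$ is precisely the difference $f^*\phi - \phi$ of two spin structures.

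First I would use this to prove that $c(f,\gamma)$ depends only on $[\gamma]$ and is linear in it. Under Theorem~\ref{theorem:HJhomological}, $\phi$ and $f^*\phi$ correspond to classes $\Phi_\phi, \Phi_{f^*\phi} \in H^1(UT\Sigma_g;\Z/r\Z)$, both taking the value $1$ on the class $\tilde\zeta$ of the $S^1$-fiber, and $c(f,\gamma) = (\Phi_{f^*\phi} - \Phi_\phi)(\tilde\gamma)$ where $\tilde\gamma$ is the Johnson lift of $\gamma$. The difference $\eta := \Phi_{f^*\phi} - \Phi_\phi$ is a genuine cohomology class with $\eta(\tilde\zeta) = 0$. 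By Remark~\ref{remark:admits}, $[\tilde\zeta]$ generates the torsion subgroup $\Z/(2g-2)\Z$ of $H_1(UT\Sigma_g;\Z) = \Z^{2g} \oplus \Z/(2g-2)\Z$, and the bundle projection $p\colon UT\Sigma_g \to \Sigma_g$ induces an isomorphism $H_1(UT\Sigma_g;\Z)/\pair{[\tilde\zeta]} \cong H_\Z$ carrying $[\tilde\gamma]$ to $[\gamma]$. Therefore $\eta$ is pulled back along $p$ from a homomorphism $\bar\eta \in \Hom(H_\Z, \Z/r\Z)$, and $c(f,\gamma) = \bar\eta([\gamma])$, as desired. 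Note that this step has used nothing about $f$ except that it is a homeomorphism.

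Next I would establish the homomorphism property and the independence of the choice of $\phi$; here the hypothesis $f \in \mathcal I_g$ enters. For $f_1, f_2 \in \mathcal I_g$, a telescoping computation gives $c(f_1 f_2, \gamma) = c(f_1, f_2(\gamma)) + c(f_2, \gamma)$; since $f_2$ acts trivially on homology, $[f_2(\gamma)] = [\gamma]$, so by the previous paragraph $c(f_1, f_2(\gamma)) = c(f_1, \gamma)$, and thus $f \mapsto c(f,-)$ is a homomorphism into $\Hom(H_\Z,\Z/r\Z) \cong H^1(\Sigma_g;\Z/r\Z)$. For independence of $\phi$: the same descent argument, applied to the difference of two spin structures, shows that any two $\Z/r\Z$-valued spin structures $\phi,\phi'$ satisfy $\phi'(\gamma) = \phi(\gamma) + \lambda([\gamma])$ for some $\lambda \in \Hom(H_\Z,\Z/r\Z)$, whence $\phi'(f(\gamma)) - \phi'(\gamma) = \phi(f(\gamma)) - \phi(\gamma) + \lambda([f(\gamma)] - [\gamma]) = \phi(f(\gamma)) - \phi(\gamma)$, using again that $f \in \mathcal I_g$.

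The only genuinely delicate point is the descent step: that a class in $H^1(UT\Sigma_g;\Z/r\Z)$ annihilating the fiber class is the pullback along $p$ of a class on $\Sigma_g$, in a way compatible with Johnson lifts. This is a direct consequence of the structure of $H_1(UT\Sigma_g;\Z)$ recorded in Remark~\ref{remark:admits}, but it is the hinge of the whole argument; one must also take care that the normalizations of ``Johnson lift'' and ``fiber class'' match, which is precisely the content of the parenthetical there. Everything else is short formal manipulation.
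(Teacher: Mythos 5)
Your proof is correct, but it is worth noting that the paper itself contains no proof of Theorem \ref{theorem:chillingworth}: the result is quoted from Chillingworth's papers (via Humphries--Johnson and Johnson), whose original arguments are explicit computations with winding number functions. What you have written is a genuinely different, self-contained derivation internal to the paper's own framework: you observe that $f^*\phi$ is again a $\Z/r\Z$-valued spin structure (twist-linearity via $fT_cf^{-1}=T_{f(c)}$ and invariance of $\pair{\cdot,\cdot}$, normalization since $f$ preserves orientation), so that $c(f,\cdot)=f^*\phi-\phi$ is a difference of two classes in $H^1(UT\Sigma_g;\Z/r\Z)$ agreeing on the fiber class, and hence is pulled back from $H^1(\Sigma_g;\Z/r\Z)$. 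The hinge you single out is indeed the only delicate point, and it is sound: the kernel of $p_*\colon H_1(UT\Sigma_g;\Z)\to H_\Z$ is the image of the fiber subgroup, which is exactly the torsion summand generated by $[\tilde\zeta]$ as recorded in Remark \ref{remark:admits}, and both spin classes take the value $1$ there, so their difference kills the generator of $\ker p_*$ and descends; compatibility with Johnson lifts is automatic since $p\circ\tilde\gamma=\gamma$. You also use the Torelli hypothesis exactly where it is needed (the cocycle identity $c(f_1f_2,\gamma)=c(f_1,f_2(\gamma))+c(f_2,\gamma)$ collapses to additivity, and the change-of-$\phi$ term $\lambda([f(\gamma)]-[\gamma])$ vanishes), while your first step shows the stronger fact that $f\mapsto f^*\phi-\phi$ is a crossed homomorphism on all of $\Mod(\Sigma_g)$ whose restriction to $\mathcal I_g$ is the Chillingworth homomorphism. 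The trade-off: your route buys a short conceptual proof from Theorem \ref{theorem:HJhomological} with no outside input, whereas the paper's citation defers to Chillingworth's more hands-on winding-number computations, which predate and do not use the cohomological classification but require the translation ``winding number function $=$ instance of a spin structure'' that the paper explains.
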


In \cite{johnsonhom}, Johnson related Chillingworth's homomorphism to the Johnson homomorphism. To formulate the precise connection, we require the following well-known lemma; see e.g \cite[Sections 5,6]{johnsonhom}.
\begin{lemma}\label{lemma:contraction}
There is a $\Sp(2g,\Z)$-equivariant surjection
\[
C: \wedge^3 H_{\Z} / H_{\Z} \to H_{\Z/(g-1)\Z}
\]
given by the ``contraction''
\begin{equation}\label{equation:contract}
C(x\wedge y \wedge z) = \pair{x,y}z + \pair{y,z} x + \pair{z,x} y \pmod{g-1}.
\end{equation}
\end{lemma}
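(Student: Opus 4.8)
The final statement is Lemma \ref{lemma:contraction}: the existence of a $\Sp(2g,\Z)$-equivariant surjection $C:\wedge^3 H_\Z/H_\Z \to H_{\Z/(g-1)\Z}$ given by the contraction formula. This is a standard piece of symplectic representation theory, so my plan is to verify each piece of the claim directly, treating $H_\Z$ as a free $\Z$-module with symplectic form $\pair{\cdot,\cdot}$ and a chosen symplectic basis $x_1,\dots,y_g$.

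First I would define the contraction $\tilde C:\wedge^3 H_\Z \to H_\Z$ on decomposables by the stated formula $\tilde C(x\wedge y\wedge z)=\pair{x,y}z+\pair{y,z}x+\pair{z,x}y$ and check it extends to a well-defined linear map: one checks it is alternating in $x,y,z$ (the formula is manifestly cyclically symmetric, and swapping two arguments negates it using antisymmetry of $\pair{\cdot,\cdot}$), so it descends from $H_\Z^{\otimes 3}$ to $\wedge^3 H_\Z$. Next I would verify $\Sp(2g,\Z)$-equivariance: for $g\in\Sp(2g,\Z)$, since $g$ preserves $\pair{\cdot,\cdot}$, we have $\tilde C(gx\wedge gy\wedge gz)=\pair{gx,gy}gz+\cdots = g\tilde C(x\wedge y\wedge z)$, and equivariance on decomposables suffices. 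Then I would compute $\tilde C$ on the embedded copy $H_\Z\hookrightarrow\wedge^3 H_\Z$, $z\mapsto z\wedge\omega$ where $\omega=\sum x_i\wedge y_i$: expanding $\tilde C(z\wedge x_i\wedge y_i)$ and summing over $i$, using $\pair{x_i,y_i}=1$ and $\pair{z,x_i},\pair{z,y_i}$ picking out coordinates of $z$, gives $\tilde C(z\wedge\omega)=(g-1)z$ (the coefficient $g$ comes from the $\pair{x_i,y_i}z$ terms, minus one from the recombination of the other terms into $-z$; this is the routine computation I'd present briefly). Hence $\tilde C$ kills $H_\Z\otimes\Z/(g-1)\Z$-worth of the image when reduced mod $g-1$, so $\tilde C$ induces $C:\wedge^3 H_\Z/H_\Z\to H_{\Z/(g-1)\Z}$.

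For surjectivity, I would simply note that $\tilde C(x_1\wedge y_1\wedge x_j)$ and similar triples already hit basis vectors: e.g. $\tilde C(x_1\wedge y_1\wedge v)=v+(\text{correction terms in }x_1,y_1)$, so modulo the span of $x_1,y_1$ one reaches everything, and handling $x_1,y_1$ separately (using a different index pair) shows $\tilde C$ is onto $H_\Z$, hence $C$ is onto $H_{\Z/(g-1)\Z}$. Equivariance of the quotient map $H_\Z\to H_{\Z/(g-1)\Z}$ and of the inclusion $H_\Z\hookrightarrow\wedge^3H_\Z$ (which is $\Sp$-equivariant since $\omega$ is fixed by $\Sp(2g,\Z)$) ensures $C$ is well-defined and equivariant on the quotient.

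I do not expect a genuine obstacle here; the only point requiring care is the bookkeeping in the computation $\tilde C(z\wedge\omega)=(g-1)z$, where one must be careful that the cross terms $\pair{z,x_i}y_i-\pair{z,y_i}x_i$ summed over $i$ reconstitute exactly $-z$ (not $+z$ or $0$), so that the net coefficient is $g-1$ and not $g+1$ or $g$ — getting the sign right is what makes $g-1$ (rather than some other modulus) appear, which is essential for the subsequent identification with the Chillingworth homomorphism. I would also remark that $g-1\ge 2$ under the standing hypotheses so the target is nontrivial, though the lemma as stated holds formally regardless. Since this is a well-known fact (with the reference \cite[Sections 5,6]{johnsonhom} already cited), I would keep the write-up to the essential verifications above rather than belaboring the linear algebra.
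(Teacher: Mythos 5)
Your proposal is correct, and the verification is complete: the key computation $\tilde C(z\wedge\omega)=(g-1)z$ (with the cross terms recombining to $-z$) is exactly what makes the map descend to $\wedge^3 H_\Z/H_\Z$ with target $H_{\Z/(g-1)\Z}$, and equivariance and surjectivity follow as you say. The paper itself gives no proof of this lemma, citing Johnson instead, and your direct check is the standard argument found there, so there is nothing to compare beyond noting that your write-up supplies the routine details the paper omits.
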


 It follows that for any $r \mid (g-1)$, there is a $\Sp(2g,\Z)$-equivariant surjection 
 \[
 C_r: \wedge^3 H_{\Z} / H_{\Z} \to H_{\Z/r\Z}
 \]
 given by post-composing $C$ with the reduction mod $r$. We can now formulate the main result of Step 3.

\begin{lemma}\label{lemma:tauimage}
Let $\phi$ be a $\Z/r\Z$-valued spin structure on a surface of genus $g$, with $g \ge 3$ and $r$ odd. Then $C_r \circ \tau = 0$ on $\Mod(\Sigma_g)[\phi] \cap \mathcal I_g$.
\end{lemma}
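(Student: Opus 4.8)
The plan is to identify the homomorphism $C_r \circ \tau$ with the Chillingworth homomorphism $c$ of Theorem~\ref{theorem:chillingworth}, up to a scalar that is a unit modulo $r$; this is the relation between the Chillingworth and Johnson homomorphisms established by Johnson in \cite{johnsonhom}. Granting this, the lemma is immediate: if $f \in \Mod(\Sigma_g)[\phi] \cap \mathcal I_g$ then $f$ fixes $\phi$, so $c(f,\gamma) = \phi(f(\gamma)) - \phi(\gamma) = 0$ for every $\gamma \in \mathcal S$; hence $c(f) = 0$, and therefore $C_r(\tau(f)) = 0$.

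To establish the identification I would compute both $c$ and $C_r \circ \tau$ on a genus-one bounding pair map $f = T_c T_d^{-1}$, where $c \cup d$ bounds a subsurface $S \cong \Sigma_{1,2}$; since genus-one bounding pair maps generate $\mathcal I_g$ for $g \ge 3$ (see \cite{FM}) and both $c$ and $C_r \circ \tau$ are homomorphisms, this is enough. Orient $c$ so that $S$ lies to its left and $d$ so that $S$ lies to its right; then $[c] = [d]$, and write $x$ for this common homology class in $H_\Z$. Fix a subsurface $\Sigma_{1,1} \subset S$ disjoint from $c \cup d$ with symplectic basis $x_1, y_1$. The formula for $\tau$ on a genus-one bounding pair map in Lemma~\ref{lemma:jhom} gives $\tau(f) = (x_1 \wedge y_1) \wedge x$, and since $x$ is symplectically orthogonal to $x_1$ and $y_1$, the contraction formula \eqref{equation:contract} yields $C_r(\tau(f)) = x \pmod r$. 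On the other hand, applying the twist-linearity formula of Definition~\ref{definition:wnf} to both factors of $f$, together with $[T_d^{-1}\gamma] = [\gamma] - \pair{[\gamma],x}\,x$, gives $c(f,\gamma) = \phi(f(\gamma)) - \phi(\gamma) = \pair{[\gamma],x}\,(\phi(c) - \phi(d))$; and the homological coherence criterion (Proposition~\ref{proposition:HCC}) applied to $S$, combined with the identity $\phi(-\alpha) = -\phi(\alpha)$ from Lemma~\ref{lemma:smoothing}, gives $\phi(c) - \phi(d) = \chi(\Sigma_{1,2}) = -2$. Under a suitable Poincar\'e duality identification $H^1(\Sigma_g; \Z/r\Z) \cong H_{\Z/r\Z}$, this becomes $c(f) = -2\,C_r(\tau(f))$.

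The scalar $-2 = \chi(\Sigma_{1,2})$ is independent of $f$, so $c = -2\,(C_r \circ \tau)$ as an identity of homomorphisms on $\mathcal I_g$; since $r$ is odd, $2$ is a unit modulo $r$, so $c$ and $C_r \circ \tau$ have the same kernel, and the first paragraph then finishes the proof. I expect the only real obstacle to be orientation bookkeeping in the second step: one must fix the orientations of $c$ and $d$ so that the sign of $\phi(c) - \phi(d)$ is compatible with the orientation of $x = [c]$ used in Lemma~\ref{lemma:jhom}, and then confirm via Proposition~\ref{proposition:HCC} and Lemma~\ref{lemma:smoothing} that this difference equals $\chi(\Sigma_{1,2})$ and not, say, $0$ or $\pm 1$. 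This is also the only place where the hypothesis that $r$ is odd is used beyond guaranteeing that $C_r$ is defined at all: it is what lets one pass from $2\,C_r(\tau(f)) = 0$ to $C_r(\tau(f)) = 0$.
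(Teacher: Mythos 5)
Your proof is correct, and its overall skeleton is the same as the paper's: both arguments reduce the lemma to the observation that the Chillingworth homomorphism $c$ of Theorem \ref{theorem:chillingworth} manifestly vanishes on $\Mod(\Sigma_g)[\phi] \cap \mathcal I_g$, once $C_r \circ \tau$ is identified with $c$ up to Poincar\'e duality. The difference is in how that identification is obtained. The paper simply cites Johnson's Theorem 3 of \cite{johnsonhom}, which relates the contraction of $\tau$ to the Chillingworth class, and is done in two sentences. You instead verify the identification directly: using that genus-one bounding pair maps generate $\mathcal I_g$ for $g \ge 3$, you compute $C_r(\tau(T_cT_d^{-1})) = [c]$ from Lemma \ref{lemma:jhom} and the contraction formula \eqref{equation:contract}, and $c(T_cT_d^{-1}, \gamma) = \pair{[\gamma],[c]}\,(\phi(c)-\phi(d)) = -2\pair{[\gamma],[c]}$ from twist-linearity, Proposition \ref{proposition:HCC}, and Lemma \ref{lemma:smoothing}; your orientation bookkeeping here checks out, and since both sides are homomorphisms (the homomorphism property and homological dependence of $c$ coming from Theorem \ref{theorem:chillingworth}), agreement on generators gives $c = -2\,(C_r\circ\tau)$ globally. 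What the paper's route buys is brevity; what yours buys is self-containedness, independence from the precise normalization in Johnson's statement (which in fact involves a factor of $2$ that the paper's phrase ``coincides up to Poincar\'e duality'' elides, harmlessly since $r$ is odd), and a transparent record of exactly where the hypothesis that $r$ is odd enters, namely in inverting the scalar $-2$.
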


\begin{proof}
According to \cite[Theorem 3]{johnsonhom}, the composition $C_r \circ \tau$ coincides (up to an application of Poincar\'e duality) with the mod-$r$ Chillingworth homomorphism $c: \mathcal I_g \to H^1(\Sigma_g;{\Z/r \Z})$. The formula for $c$ given in Theorem \ref{theorem:chillingworth} shows that $c$ measures how $f \in \mathcal I_g$ alters the set of values $\{\phi(\gamma)\mid \gamma \in \mathcal S\}$; it therefore follows immediately that the restriction of $c$ to $\Mod(\Sigma_g)[\phi] \cap \mathcal I_g$ is trivial.
\end{proof}

\para{Step 3: \boldmath$\mathcal T_\phi$ and the Johnson homomorphism} In the previous step, we showed that there is a containment
\[
\tau(\Mod(\Sigma_g)[\phi] \cap \mathcal I_g) \leqslant \ker(C_r \circ \tau). 
\]
Our next result establishes that this containment is an equality, even when restricted to the subgroup $\tau(\mathcal T_\phi \cap \mathcal I_g)$.
\begin{lemma}\label{lemma:converse}
For $r< g-1$ odd and for $g \ge 3$, the Johnson homomorphism $\tau$ gives a surjection
\[
\tau: \mathcal T_\phi \cap \mathcal I_g \twoheadrightarrow \ker(C_r \circ \tau).
\]
\end{lemma}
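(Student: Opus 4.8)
The plan is to pin down $M:=\tau(\mathcal{T}_\phi\cap\mathcal{I}_g)$ by comparing it to $\ker(C_r\circ\tau)$ using the $\Sp(2g,\Z)$-module structure of $\wedge^3 H_\Z/H_\Z$. First I would note that $\mathcal{T}_\phi\cap\mathcal{I}_g$ is normal in $\mathcal{T}_\phi$ (since $\mathcal{I}_g$ is normal in $\Mod(\Sigma_g)$), that $\tau$ is $\Sp(2g,\Z)$-equivariant (Lemma~\ref{lemma:jhom}(1)), and that $\Psi(\mathcal{T}_\phi)=\Sp(2g,\Z)$ (Lemma~\ref{lemma:spimage}); together these force $M$ to be an $\Sp(2g,\Z)$-submodule of $\wedge^3 H_\Z/H_\Z$. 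Identifying $\ker(C_r\circ\tau)$ with its image $\ker C_r\leqslant\wedge^3 H_\Z/H_\Z$ under the surjection $\tau$, Lemma~\ref{lemma:tauimage} gives $M\leqslant\ker C_r$, so it remains to prove $M\supseteq\ker C_r$. (The case $r=1$ is vacuous, so I may assume $r\ge 3$, whence $r<g-1$ forces $g\ge 5$ and Lemma~\ref{item:allcurves} is available.)

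Next I would exhibit two families of elements of $M$. Using $r<g-1$ and the change-of-coordinates results of Corollary~\ref{corollary:allvalues}, construct a bounding pair $T_cT_d^{-1}$ in which $c,d$ are admissible and $c\cup d$ bounds a subsurface of genus $r$ (homological coherence only constrains $\phi(c)+\phi(d)$, which is forced to vanish, and since $r$ is odd the two values can then be taken to be $0$). Then $T_cT_d^{-1}\in\mathcal{T}_\phi\cap\mathcal{I}_g$, and Lemma~\ref{lemma:jhom}(3) computes $\tau(T_cT_d^{-1})=(x_1\wedge y_1+\dots+x_r\wedge y_r)\wedge[c]=:w$, so $M$ contains the $\Sp(2g,\Z)$-submodule generated by $w$. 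Separately, for any genus-$1$ bounding pair $T_{c'}T_{d'}^{-1}$, the power $T_{c'}^r T_{d'}^{-r}$ lies in $\mathcal{T}_\phi$ by Lemma~\ref{item:allcurves} and (trivially) in $\mathcal{I}_g$, and $\tau(T_{c'}^r T_{d'}^{-r})=r\,(x\wedge y\wedge[c'])$; since the $\Sp(2g,\Z)$-orbit of a single genus-$1$ bounding-pair class generates $\wedge^3 H_\Z/H_\Z$ for $g\ge 3$, it follows that $M\supseteq r\,(\wedge^3 H_\Z/H_\Z)$.

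The remaining — and main — step is the purely module-theoretic identity
\[
\langle\,\Sp(2g,\Z)\cdot w\,\rangle + r\,(\wedge^3 H_\Z/H_\Z)=\ker C_r.
\]
Here ``$\subseteq$'' is immediate, since $\bar C(w)=r[c]\equiv 0\pmod r$ (writing $\bar C$ for the integral contraction $\wedge^3 H_\Z\to H_\Z$) and $r\,(\wedge^3 H_\Z/H_\Z)$ is killed by $C_r$. For ``$\supseteq$'', equivariance of $\bar C$ together with $\bar C(w)=r[c]$ shows that $\Z$-combinations of $\Sp(2g,\Z)$-translates of $w$ realize every element of $rH_\Z$ as a contraction value; hence, after subtracting a suitable such combination, an arbitrary $\xi\in\ker C_r$ is carried into the kernel of the contraction $C\colon\wedge^3 H_\Z/H_\Z\to H_{\Z/(g-1)\Z}$ of Lemma~\ref{lemma:contraction}. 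That kernel is generated by the isotropic-triple classes $x_i\wedge x_j\wedge x_k$ together with $(g-1)$-fold multiples, and the latter lie in $r\,(\wedge^3 H_\Z/H_\Z)$ because $r\mid(g-1)$. Finally, an explicit transvection computation exhibits twice any isotropic-triple class inside $\langle\,\Sp(2g,\Z)\cdot w\,\rangle$; since $r$ is odd we have $\gcd(2,r)=1$, so each isotropic-triple class itself lies in $\langle\,\Sp(2g,\Z)\cdot w\,\rangle+r\,(\wedge^3 H_\Z/H_\Z)$, which closes the argument.

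I expect the third step to be the principal difficulty: producing the two families of elements uses only machinery already in place (the surjectivity of Step~1, the $D_n$-relation consequence Lemma~\ref{item:allcurves}, and Corollary~\ref{corollary:allvalues}), but pinning down $\ker C_r$ as an $\Sp(2g,\Z)$-module — in particular the structure of $\ker C$ and the location of isotropic-triple classes inside the orbit-module of $w$ (ultimately a modular-representation-theoretic input about $\wedge^3$ of the standard symplectic module) — is where the real content lies. This is also precisely the place where the hypothesis that $r$ is odd enters, through the coprimality $\gcd(2,r)=1$.
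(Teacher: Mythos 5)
Your overall strategy is viable and genuinely different from the paper's: where the paper writes down an explicit generating set for $\ker C_r$ (namely $r\,z\wedge x_i\wedge y_i$, the differences $z\wedge(x_i\wedge y_i-x_j\wedge y_j)$, and the isotropic triples $z_i\wedge z_j\wedge z_k$) and realizes each generator by a bespoke configuration of curves (genus-one bounding pairs raised to the $r$-th power; a genus-two configuration plus the chain relation; a two-chain/curve-arc-sum configuration), you instead feed just two families of bounding-pair elements into the $\Sp(2g,\Z)$-module structure --- using Lemma \ref{lemma:spimage}, normality of $\mathcal I_g$, and equivariance of $\tau$ to make $\tau(\mathcal T_\phi\cap\mathcal I_g)$ an $\Sp(2g,\Z)$-submodule --- and reduce everything to the identity $\langle\Sp(2g,\Z)\cdot w\rangle+r(\wedge^3H_\Z/H_\Z)=\ker C_r$. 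Most of the pieces check out: the genus-$r$ bounding pair with both curves admissible exists (though not for the reason you give --- homological coherence forces $\phi(c)+\phi(d)\equiv-2r\equiv 0$, so once one curve is made admissible the other is automatically; oddness of $r$ plays no role there), the subtraction step landing you in $\ker C$ is fine, and the transvection computation you allude to does exist: applying $T_{y_1+y_2}$ to $(x_1\wedge y_1-x_2\wedge y_2)\wedge x_3$ changes it by $-2\,y_1\wedge y_2\wedge x_3$, and oddness of $r$ lets you cancel the factor $2$ in the $r$-torsion quotient.

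The genuine gap is your description of $\ker C$: it is \emph{not} generated by the isotropic triples together with $(g-1)$-fold multiples. The difference classes $z\wedge(x_i\wedge y_i-x_j\wedge y_j)$ lie in $\ker C$ (their integral contraction already vanishes) but not in your claimed span: the functional on $\wedge^3 H_\Z$ given by the coefficient of $x_3\wedge x_1\wedge y_1$ minus the coefficient of $x_3\wedge x_2\wedge y_2$, reduced mod $g-1$, kills every isotropic triple, every $(g-1)$-multiple, and the image of $H_\Z$, yet takes the value $2\not\equiv 0$ on $x_3\wedge(x_1\wedge y_1-x_2\wedge y_2)$ once $g\ge 4$ (the same test mod $r$ shows these classes are not in triples plus $r(\wedge^3H_\Z/H_\Z)$ either). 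So as written your argument never accounts for exactly the paper's second family of generators. The repair stays inside your framework: since $r<g-1$ there is a spare handle, and if $S\in\Sp(2g,\Z)$ swaps the first handle of $w=(x_1\wedge y_1+\dots+x_r\wedge y_r)\wedge z$ with a handle disjoint from the support of $w$, then $w-Sw=(x_1\wedge y_1-x_{r+2}\wedge y_{r+2})\wedge z$, so the difference classes do lie in $\langle\Sp(2g,\Z)\cdot w\rangle$; the correct structural statement is that $\ker C$ is the image of the kernel of the integral contraction, which is spanned by the isotropic triples together with these differences, and with that substitution your proof closes. Two smaller points: the case $r=1$ is not vacuous (it is exactly Johnson's surjectivity of $\tau$), and your assertion that the $\Sp(2g,\Z)$-orbit of a single genus-one bounding-pair class spans $\wedge^3H_\Z/H_\Z$ deserves a line of proof, e.g.\ by the moves used in Lemma \ref{lemma:next}.
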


\begin{proof} Define $K := \ker(C_r)$. We must show that $\mathcal T_\phi\cap \mathcal I_g$ surjects onto $K$ under $\tau$. The first step will be to determine a generating set for $K$, and then we will exhibit each generator within $\tau(\mathcal T_\phi \cap \mathcal I_g)$.  

To determine a generating set for $K$, we consider the short exact sequence
\[
1 \to K \to \wedge^3 H_\Z / H_\Z \to H_{\Z/r \Z} \to 1.
\]
determined by $C_r$. By lifting a set of relations $\{r_i\}$ for $H_{\Z/r\Z}$ to $\wedge^3 H_\Z/H_\Z$, we will obtain a set of generators $\{\tilde r_i\}$ for $K$. Let $\mathcal B = \{x_1, y_1, \dots, x_g, y_g\}$ be a symplectic basis for $H_\Z$. There is an associated basis $\wedge^3 \mathcal B \subset \wedge^3 H_\Z$ given by
\[
\wedge^3 \mathcal B := \{z_1 \wedge z_2 \wedge z_3 \mid z_i \in \mathcal B \mbox{ distinct}\}
\]
Thus also $\wedge^3H_\Z /H_\Z$ is generated by the image of $\wedge^3 \mathcal B$. 

To determine the relations $r_i$, we must understand $C_r(z_1 \wedge z_2 \wedge z_3)$ for the various possibilities for $\{z_1, z_2, z_3\}$. There are two orbits of generators under the action of $\Sp(2g, \Z)$. The first orbit consists of elements of the form $z \wedge x_i \wedge y_i$ (necessarily with $z \ne x_i, y_i$), and the second orbit consists  of elements of the form $z_i \wedge z_j \wedge z_k$ with each $z_\ell \in \{x_\ell, y_\ell\}$ and with $i,j, k$ mutually distinct. 

The image of $z \wedge x_i \wedge y_i$ in $H_{\Z/r \Z}$ is 
\[
C_r(z \wedge x_i \wedge y_i) = z,
\]
while $C_r(z_i \wedge z_j \wedge z_k) = 0$ for elements of the second type. Define $A$ to be the abelian group generated by the symbols $C_r(z_1 \wedge z_2 \wedge z_3)$ for $z_1 \wedge z_2 \wedge z_3 \in \wedge^3 \mathcal B$, subject to the relations (R1)-(R3) below:
\begin{enumerate}[(R1)]
\item $r C_r(z \wedge x_i \wedge y_i) = 0$
\item $C_r(z \wedge x_i \wedge y_i) - C_r(z \wedge x_j \wedge y_j) = 0$
\item $C_r(z_i \wedge z_j \wedge z_k) = 0$ for $\{i,j,k\} \subset \{1, \dots, g\}$ distinct.
\end{enumerate}
It can be easily verified that there is an isomorphism $A \cong H_{\Z/r \Z}$, so that the relations (R1) - (R3) can be lifted to $\wedge^3 H_\Z/ H_\Z$ to give a generating set for $K$ as desired. The corresponding generators are given below.
\begin{enumerate}[(G1)]
\item\label{item:deltatwist} $r z \wedge x_i \wedge y_i$
\item\label{item:difference} $z \wedge (x_i \wedge y_i - x_j\wedge y_j)$
\item\label{item:lagrangian} $z_i \wedge z_j \wedge z_k$ for $\{i,j,k\} \subset \{1, \dots, g\}$ distinct. 
\end{enumerate}

Having determined a generating set for $K$, it remains to exhibit each such generator in the form $\tau(f)$ for $f \in \mathcal T_\phi \cap \mathcal I_g$. These will be handled on a case-by-case basis. We start with  (G\ref{item:deltatwist}). By Lemma \ref{lemma:jhom}, there exist curves $c,d$ that determine a genus-$1$ bounding pair map with $\tau(T_c T_d^{-1}) = z \wedge x_i \wedge y_i$. By Lemma \ref{item:allcurves}, $T_c^r, T_d^r \in \mathcal T_\phi$, so that $T_c^r T_d^{-r} \in \mathcal T_\phi$ is an element with the required properties.

\begin{figure}
\labellist
\small
\pinlabel $a$ [bl] at 24 102
\pinlabel $b$ [bl] at 211.2 102
\pinlabel $c$ [bl] at 120 102
\pinlabel $x_i$ [bl] at 72 73.4
\pinlabel $y_i$ [l] at 65 26.8
\pinlabel $x_j$ [bl] at 170 72.4
\pinlabel $y_j$ [l] at 161.6 26.8
\endlabellist
\includegraphics{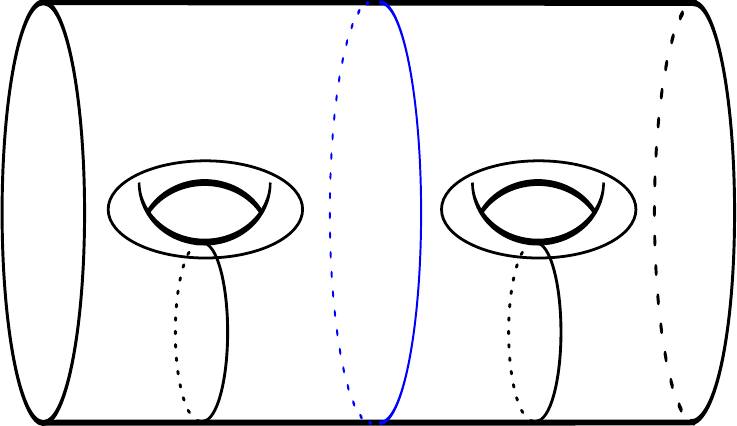}
\caption{The configuration of curves used to exhibit (G\ref{item:difference}).}
\label{figure:g2}
\end{figure}

Next we consider (G\ref{item:difference}). Let $c$ be a curve with $[c] = z$ and $\phi(c) = 0$. By the change-of-coordinates principle, there exist curves $a,b$ with the following properties: (1) $a \cup b$ bounds a subsurface $S$ of genus $2$, (2) $c \subset S$, (3) $[a] = [b] = [c]$, and $c$ separates $S$ into two subsurfaces $S_1, S_2$ each of genus $1$, (4) $x_i, y_i$ determine a symplectic basis for $S_1$ and $x_j, y_j$ determine a symplectic basis for $S_2$. Such a configuration is shown in Figure \ref{figure:g2}. By homological coherence, $\phi(a) = \phi(b) = -2$ when $a,b$ are oriented with $S$ to the left. 
By Lemma \ref{lemma:jhom},
\[
\tau(T_a T_c^{-1}) = z \wedge x_i \wedge y_i
\]
and
\[
\tau(T_b T_c^{-1}) = -z \wedge x_j \wedge y_j.
\]
Therefore, it is necessary to show $T_a T_b T_c^{-2} \in \mathcal T_\phi$. By hypothesis, $T_c \in \mathcal T_\phi$. By Corollary \ref{corollary:allvalues}.\ref{item:oddchain}, there exists a chain $a_1, \dots, a_5$ of curves on $S$ for which $\phi(a_i) = 0$. By the chain relation (Proposition \ref{proposition:chain}), $T_a T_b \in \mathcal T_\phi$, and the result follows. 

\begin{figure}
\labellist
\small
\pinlabel $f$ [tl] at 188 200
\pinlabel $b$ [tl] at 287.2 200
\pinlabel $c_2$ [br] at 53.6 270
\pinlabel $c_1$ [tl] at 92.8 200
\pinlabel $c_1$ [tl] at 92.8 21.6
\pinlabel $c_2$ [br] at 53.6 94.4
\pinlabel $c_3$ [b] at 123.2 90.4
\pinlabel $c_1'$ [t] at 176.8 62.4
\pinlabel $\epsilon$ [tr] at 47.2 31.2
\pinlabel $d$ [bl] at 141.6 123.2
\pinlabel $b$ [l] at 287.2 38.4
\endlabellist
\includegraphics{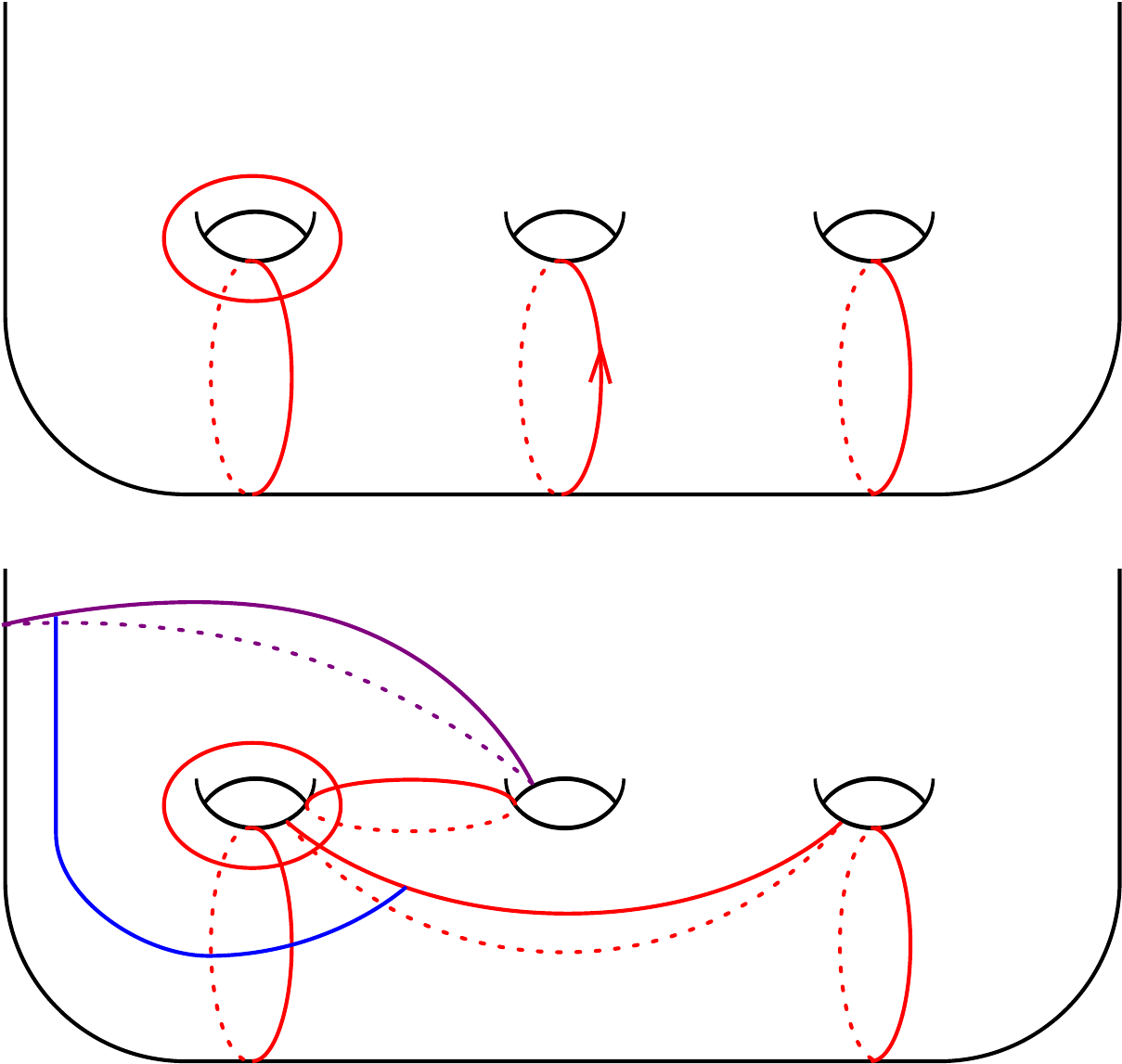}
\caption{Top: The relevant portion of the geometric symplectic basis $\mathcal B$. Bottom: The configuration of curves used to exhibit (G\ref{item:lagrangian}). Orientations have been suppressed wherever possible.}
\label{figure:g3}
\end{figure}

It remains to exhibit generators of type (G\ref{item:lagrangian}). Any such generator is equivalent under the action of $\Sp(2g, \Z)$ to $y_1 \wedge y_2 \wedge y_3$. By combining the $\Sp(2g,\Z)$-equivariance of $\tau$ (Lemma \ref{lemma:jhom}.1) with the result of Lemma \ref{lemma:spimage}, it suffices to exhibit only $y_1 \wedge y_2 \wedge y_3$. Figure \ref{figure:g3} shows the two $3$-chains $C_1 = (c_1,c_2,c_3)$ and $C_2 = (c_1',c_2,c_1'+_\epsilon d)$. Observe that $d$ is a boundary component for regular neighborhoods of both $C_1$ and $C_2$; let $e_1, e_2$ denote the other boundary component of $C_1, C_2$, respectively. 

By Corollary \ref{corollary:allvalues}.\ref{item:oddgsb}, there exists a geometric symplectic basis $\mathcal B$ that contains the elements $c_1, c_2, b, f$ as depicted in the top portion of Figure \ref{figure:g3}, with homology classes and $\phi$-values given in the table below.  The remaining entries in the table have been filled in using the homological coherence property. (A value of $*$ indicates that the value is irrelevant and/or underdetermined, and if an orientation is left unspecified, this is in accordance with Convention \ref{convention:orientation}).
\[
\begin{array}{|c|cccccccc|}
\hline
\mbox{Curve:}			& c_1	& c_2	& c_3		& c_1'		& b		& f		& d		& c_1'+_\epsilon d \\ \hline
\mbox{Homology class:}	& x_1	&y_1		&y_2-x_1		&x_1-y_3		& y_3	& -y_2	&y_2		&y_2 + y_3-x_1		\\
\mbox{$\phi$-value:}		& 0 		& *		& 0			& *			& *		& -1		&-2		& *				\\ \hline
\end{array}
\]

By Lemma \ref{lemma:jhom}, 
\[
\tau(T_d T_{e_1}^{-1})= x_1 \wedge y_1 \wedge y_2,
\]
\[
\tau(T_d T_{e_2}^{-1}) = (x_1 - y_3) \wedge y_1 \wedge y_2.
\]
It follows that $\tau(T_{e_1}^{-1}T_{e_2}) = y_3 \wedge y_1 \wedge y_2$. As $d \cup e_1$ and $d \cup e_2$ each bound subsurfaces of genus $1$ and $\phi(d) = -2$ when $d$ is oriented with these subsurfaces to the left, the homological coherence property implies that $e_1$ and $e_2$ are admissible. The result follows.\end{proof}

\para{Step 4: The Johnson kernel} The final piece of the analysis concerns the relationship between $\Mod(\Sigma_g)[\phi]$ and the Johnson kernel $\mathcal K_g$.

\begin{lemma}\label{lemma:containsjk}
Let $\phi$ be a $\Z/r\Z$-valued spin structure with $r$ odd. If $g \ge 3$, then $\mathcal T_\phi$ contains the Johnson kernel $\mathcal K_g$. It follows that also
\[
\mathcal K_g \leqslant \Mod(\Sigma_g)[\phi].
\]
\end{lemma}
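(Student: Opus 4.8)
The plan is to deduce this from Johnson's description of $\mathcal{K}_g$ together with the chain relation. By Theorem \ref{theorem:johnson2} (which applies since $g \ge 3$), the Johnson kernel $\mathcal{K}_g$ is generated by separating twists $T_c$ of genus at most two, so it suffices to show $T_c \in \mathcal{T}_\phi$ for each separating curve $c$ with $g(c) \le 2$.

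Fix such a $c$. It splits $\Sigma_g$ into two subsurfaces of genera $a$ and $b$ with $a+b = g$, and $\min\{a,b\} = g(c) \le 2$; since $g \ge 3$ this forces $h' := \max\{a,b\} \ge 2$. Let $S' \cong \Sigma_{h',1}$ be the side of genus $h'$, so that $\partial S' = c$. Because $h' \ge 2$, the genus-one caveat in Corollary \ref{corollary:allvalues} is vacuous, so applying Corollary \ref{corollary:allvalues}.\ref{item:oddchain} (using that $r$ is odd) to $S'$ with the all-zero $2h'$-tuple produces a chain $(a_1,\dots,a_{2h'})$ of curves on $S'$ with $\phi(a_\ell) = 0$ for every $\ell$, i.e.\ a chain of admissible curves. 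A regular neighborhood $\nu$ of a length-$2h'$ chain is, by the change-of-coordinates principle, again a copy of $\Sigma_{h',1}$, so $S' \setminus \nu$ is a connected planar surface with two boundary components, hence an annulus; thus $\partial \nu$ is isotopic to $c$ and the chain is maximal in $S'$. The even-length case of the chain relation (Proposition \ref{proposition:chain}) then gives
\[
(T_{a_1}\cdots T_{a_{2h'}})^{4h'+2} = T_{\partial \nu} = T_c,
\]
and since each $T_{a_\ell}$ is an admissible twist we conclude $T_c \in \mathcal{T}_\phi$. Letting $c$ range over all separating curves of genus at most two yields $\mathcal{K}_g \le \mathcal{T}_\phi$, and then $\mathcal{K}_g \le \Mod(\Sigma_g)[\phi]$ follows because $\mathcal{T}_\phi \le \Mod(\Sigma_g)[\phi]$ by construction (Lemma \ref{lemma:twists}).

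The delicate point — the one I would want to state carefully — is that the admissible chain supplied by Corollary \ref{corollary:allvalues} can be arranged to fill the chosen side $S'$, so that the chain relation outputs exactly $T_c$ rather than a twist about some other separating curve. This is where the change-of-coordinates principle does the real work: it forces any length-$2h'$ chain inside $\Sigma_{h',1}$ to be maximal, since the complement of its regular neighborhood is pinned down by its genus and Euler characteristic to be an annulus. The remaining ingredients — choosing which side of $c$ to work on and invoking Theorem \ref{theorem:johnson2} — are routine.
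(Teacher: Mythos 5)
Your proposal is correct and follows essentially the same route as the paper: reduce via Johnson's Theorem \ref{theorem:johnson2} to separating twists, pass to the side of $c$ of genus at least two, produce a length-$2h'$ chain of admissible curves there via Corollary \ref{corollary:allvalues}.\ref{item:oddchain}, and apply the even case of the chain relation. Your extra verification that any such chain in $\Sigma_{h',1}$ is automatically maximal (so the chain relation yields exactly $T_c$) is a correct filling-in of a step the paper leaves implicit.
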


\begin{proof}
According to Johnson's Theorem \ref{theorem:johnson2}, $\mathcal K_g$ has a generating set consisting of the set of all $T_c$ for $c$ a separating curve. Each such $c$ divides $\Sigma_g$ into two subsurfaces $S, S'$, and since $g \ge 3$, without loss of generality we can assume that $g(S) > 1$. By Corollary \ref{corollary:allvalues}.\ref{item:oddchain}, there exists a chain $a_1, \dots, a_{2g(S)}$ of curves on $S$ such that $\phi(a_i) = 0$ for all $i$. By hypothesis, $T_{a_i} \in \mathcal T_\phi$ for all $i$. By the chain relation (Proposition \ref{proposition:chain}), it follows that $T_c \in \mathcal T_\phi$ as required. 
\end{proof}
This concludes the proof of Proposition \ref{proposition:twistgen}.
\end{proof}

\section{$r$ even: $\mathcal T_\phi$ has finite index in $\Mod(\Sigma_g)$}\label{section:deven}
We continue to assume that $r \mid (2g-2)$, but now we take $r = 2d$ to be even. For $r$ even, we cannot give a complete characterization of $\mathcal T_\phi$ as in Proposition \ref{proposition:twistgen}, but we will show that $\mathcal T_\phi$ has finite index in $\Mod(\Sigma_g)$. The minimal genus for which the ensuing arguments apply has a rather intricate dependence on $r$, encapsulated in the definition below.

\begin{definition}\label{definition:gd}
For an integer $d \ge 1$, define $k(d)$ as follows:
\[
k(d) = \begin{cases}
2 & d \mbox{ odd or }d \ge 6 \mbox{ even}\\
6 & d =2\\
5 & d = 4.
\end{cases}
\]
Suppose $r = 2d$ is an even integer. Define 
\[
g(r) = k(d) d + 1.
\]
\end{definition}

\begin{proposition}\label{proposition:twistgeneven}
Let $r = 2d$ be an even integer. Suppose $g \ge g(r)$ and that $r < g - 1$. Then $\mathcal T_\phi$ is a finite-index subgroup of $\Mod(\Sigma_g)$. 
\end{proposition}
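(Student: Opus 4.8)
The plan is to adapt the strategy of the proof of Proposition~\ref{proposition:twistgen} — comparing $\mathcal{T}_\phi$ against the symplectic quotient, the image of the Johnson homomorphism, and the Johnson kernel — but now aiming only for finite index and tracking the Arf invariant throughout. Two preliminary observations organize everything. First, by Theorem~\ref{theorem:HJhomological} the set of $\Z/r\Z$-valued spin structures is a finite set (a subset of $H^1(UT\Sigma_g;\Z/r\Z)$) on which $\Mod(\Sigma_g)$ acts, so $\Mod(\Sigma_g)[\phi]$ already has finite index in $\Mod(\Sigma_g)$; thus it suffices to bound $[\Mod(\Sigma_g)[\phi]:\mathcal{T}_\phi]$. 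Second, if $f\in\Mod(\Sigma_g)[\phi]$ and $a$ is admissible then $\phi(f(a))=\phi(a)=0$, so $fT_af^{-1}=T_{f(a)}$ is again admissible; hence $\mathcal{T}_\phi\trianglelefteq\Mod(\Sigma_g)[\phi]$, and it is enough to show the quotient $\Mod(\Sigma_g)[\phi]/\mathcal{T}_\phi$ is finite. By the standard index identity for the filtration $\mathcal{K}_g\le\mathcal{I}_g\le\Mod(\Sigma_g)$,
\[
[\Mod(\Sigma_g):\mathcal{T}_\phi]=[\Sp(2g,\Z):\Psi(\mathcal{T}_\phi)]\cdot\big[\wedge^3H_\Z/H_\Z:\tau(\mathcal{T}_\phi\cap\mathcal{I}_g)\big]\cdot[\mathcal{K}_g:\mathcal{K}_g\cap\mathcal{T}_\phi],
\]
so it suffices to verify that each of the three factors on the right is finite.

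\textbf{Step 1 (symplectic level).} By Lemma~\ref{item:homology}, every primitive $v\in H_1(\Sigma_g;\Z)$ with $\bar\phi(v)=0$ — equivalently $q_\phi(v)=1$ — is carried by an admissible curve, so $\Psi(\mathcal{T}_\phi)$ contains the transvection about every such $v$. The mod-$2$ reductions of these transvections generate the orthogonal group $O(q_\phi)\le\Sp(2g,\Z/2\Z)$ by the classical description of orthogonal groups over $\F_2$, and adjoining the squares of transvections (which lie in $\mathcal{T}_\phi$ by Lemma~\ref{item:allcurves} applied to curves of odd $\phi$-value, using $r<g-1$ and $g\ge 5$) one obtains the full preimage of $O(q_\phi)$ in $\Sp(2g,\Z)$, a finite-index subgroup. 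Hence the first factor is finite.

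\textbf{Step 2 (Johnson homomorphism).} One must exhibit a full-rank sublattice of $\wedge^3H_\Z/H_\Z$ inside $\tau(\mathcal{T}_\phi\cap\mathcal{I}_g)$. Following the proof of Lemma~\ref{lemma:converse}, it is enough to realize, up to finite multiples, one representative of each of the three $\Sp(2g,\Z)$-orbits of basis elements $z\wedge x_i\wedge y_i$, $z\wedge(x_i\wedge y_i-x_j\wedge y_j)$, $z_i\wedge z_j\wedge z_k$: the first from a genus-$1$ bounding pair map $T_cT_d^{-1}$ with $\phi(c),\phi(d)$ odd (so $T_c^r,T_d^r\in\mathcal{T}_\phi$ by Lemma~\ref{item:allcurves}), the latter two from chain-relation products of admissible twists built as in Lemma~\ref{lemma:converse}. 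The role of the hypothesis $g\ge g(r)$ is precisely to guarantee that the needed configurations — chains and bounding pairs of admissible curves with the Arf invariant of the induced sub-spin-structure matching that of $\phi$ restricted to the relevant subsurface — exist; this is supplied by Corollary~\ref{corollary:allvalueseven} in place of Corollary~\ref{corollary:allvalues}, and is where the three-case shape of $g(r)$ in Definition~\ref{definition:gd} originates. Spreading these elements around their $\Psi(\mathcal{T}_\phi)$-orbits using $\Sp$-equivariance (Lemma~\ref{lemma:jhom}.1) and Step~1 yields a finite-index sublattice, so the second factor is finite.

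\textbf{Step 3 (Johnson kernel), the main obstacle.} One must show $[\mathcal{K}_g:\mathcal{K}_g\cap\mathcal{T}_\phi]<\infty$. By Johnson's Theorem~\ref{theorem:johnson2}, $\mathcal{K}_g$ is generated by separating twists of genus at most two, which form finitely many $\Mod(\Sigma_g)[\phi]$-orbits (an orbit being determined by the isomorphism types of the restrictions of $\phi$ to the two complementary subsurfaces). For a genus-$h$ separating curve $c$ ($h\le 2$) bounding a subsurface $S$, the chain relation (Proposition~\ref{proposition:chain}) expresses $T_c$ as a word in the twists about a maximal chain on $S$; by Corollary~\ref{corollary:allvalueseven}.\ref{item:chaineven} that chain can be chosen to consist of admissible curves exactly when the Arf invariant of the all-zero sub-spin-structure — which one computes to be $\lceil h/2\rceil\bmod 2$, hence $1$ for both $h=1$ and $h=2$ — matches the parity of $\phi|_S$, and then $T_c\in\mathcal{T}_\phi$. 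The difficulty is the ``wrong-parity'' orbits. Here the hypothesis $g\ge g(r)$ is used to enlarge $S$ to a subsurface $\widehat S$ on which admissible chains of the appropriate length and parity do exist; running the chain relation on $\widehat S$, comparing with the $\widehat S$-versus-$S$ discrepancy (a product of bounding pair maps and separating twists already controlled at the $\mathcal{I}_g$- and $\Sp$-levels by Steps~1--2), and using the normality $\mathcal{T}_\phi\trianglelefteq\Mod(\Sigma_g)[\phi]$ together with Lemma~\ref{item:allcurves}, one shows that a bounded power $T_c^{N}$ lies in $\mathcal{T}_\phi$ for every genus-$\le 2$ separating curve $c$. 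Since there are only finitely many such orbits and $\mathcal{K}_g\cap\mathcal{T}_\phi$ is normal in $\Mod(\Sigma_g)[\phi]$, the image of each generating twist of $\mathcal{K}_g$ in $\mathcal{K}_g/(\mathcal{K}_g\cap\mathcal{T}_\phi)$ has finite order and the quotient is finite. Making this last step precise — identifying exactly which separating twists lie in $\mathcal{T}_\phi$, which only do so after taking a power, and checking the resulting index bound is finite — is the technical core of the even case, and is the reason the genus threshold $g(r)$ must be defined as carefully as it is.
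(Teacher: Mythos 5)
Your proposal contains genuine gaps, and the most serious one is the concluding logic of Step 3. Showing that each generating separating twist of $\mathcal K_g$ has a bounded power $T_c^N$ in $\mathcal T_\phi$, i.e.\ that each generator has finite order in $\mathcal K_g/(\mathcal K_g\cap\mathcal T_\phi)$, does \emph{not} imply that this quotient is finite: a group generated by finitely many conjugacy classes of torsion elements can perfectly well be infinite, so ``finite order of the generators $+$ finitely many orbits $+$ normality'' is not a proof of finite index. This is precisely why the paper does not argue this way: it proves the genuine containment $\mathcal K_g\leqslant\mathcal T_\phi$ (Lemma \ref{lemma:containsjkeven}), and the mechanism is concrete rather than the sketch you give. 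The paper introduces the dichotomy of ``easy'' vs.\ ``hard'' separating curves via Lemma \ref{lemma:easy} (whether a maximal admissible chain exists is governed by $g(S)\bmod 4$ and the Arf invariant $\epsilon(S)$), shows that separating twists of genus $d$ (for $d$ odd) or of genus $\equiv d+2,\,d+4\pmod{2d}$ (for $d$ even) always lie in $\mathcal T_\phi$ by attaching chains that flip the parity obstruction (Lemmas \ref{lemma:dodd}, \ref{lemma:deven}, \ref{lemma:deven2}), and then uses the $D_n$ relation to write $T_{\Delta_0}^{n-1}T_c\in\mathcal T_\phi$ and, in the $d$ even case, extracts $T_{\Delta_0}$ itself from $T_{\Delta_0}^3,T_{\Delta_0}^7\in\mathcal T_\phi$. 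The genus threshold $g(r)$ of Definition \ref{definition:gd} is engineered exactly for this step (to make the complementary side of a genus-$\le 2$ separating curve large enough to house the $\mathscr D_n$ configurations and the auxiliary subsurfaces), not, as you claim, for the Johnson-homomorphism step.

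Your Steps 1 and 2 also deviate in ways that need repair. In Step 2 you propose to redo Lemma \ref{lemma:converse} by finding Arf-compatible configurations realizing all three generator types (G1)--(G3); the paper states explicitly that this is not known to work (the Arf invariant obstructs the needed configurations, e.g.\ the curves $a,b,c$ in the (G2) construction have even $\phi$-values, so Lemma \ref{item:allcurves} does not apply to them), and instead it produces a \emph{single} element $r(x_1\wedge y_1\wedge x_4)=\tau(T_b^rT_{b'}^{-r})$ from the boundary of a $3$-chain of admissible curves and then proves the purely algebraic Lemma \ref{lemma:next}, that the $\Sp(2g,\Z)[q]$-orbit of $x_1\wedge y_1\wedge x_4$ spans $\wedge^3H_\Z$, which yields finite index of $\tau(\mathcal T_\phi\cap\mathcal I_g)$. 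In Step 1, your justification that the mod-$2$ kernel is captured is incorrect as stated: Lemma \ref{item:allcurves} gives $T_c^r$, hence $r$-th powers of transvections (not squares unless $r=2$), and it is not established that these together with the anisotropic transvections have finite index in $\Sp(2g,\Z)$. The paper instead proves Theorem \ref{theorem:aniso}: the anisotropic transvections alone generate $\Sp(2g,\Z)[q]$, using Johnson's generation of $\Sp(2g,\Z)[2]$ by square transvections together with the chain relation $(T_{v_1}T_{v_2}T_{v_3})^4=T_w^2$ carried out at the symplectic level, so that $\Psi(\mathcal T_\phi)=\Sp(2g,\Z)[q]$ exactly. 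So while your overall filtration strategy matches the paper's, the three steps as you have written them either rest on an invalid deduction (Step 3), on constructions the Arf invariant obstructs (Step 2), or on a misquoted lemma (Step 1).
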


 The presence of an underlying $\Z/2\Z$ spin structure makes proving the analogues of Lemma \ref{lemma:converse} and Lemma \ref{lemma:containsjk} substantially more difficult. At present, we do not know how to establish the analogue of Lemma \ref{lemma:converse}, owing to the fact that the Arf invariant provides an obstruction to finding the configurations of curves on subsurfaces needed for the arguments therein. Thus we content ourselves with showing that $\mathcal T_\phi \leqslant \Mod(\Sigma_g)$ is finite-index.

\begin{proof}{\em (of Proposition \ref{proposition:twistgeneven})} The proof of Proposition \ref{proposition:twistgeneven} follows a similar outline to that of Proposition \ref{proposition:twistgen}. We begin with an overview of the proof.

\para{Overview} To establish finiteness of the index $[\Mod(\Sigma_g): \mathcal T_\phi]$, it suffices to show that the indices $[\Sp(2g,\Z): \Psi(\mathcal T_\phi)]$ and $[\mathcal I_g: \mathcal T_\phi \cap \mathcal I_g]$ are both finite. Finiteness of $[\Sp(2g,\Z): \Psi(\mathcal T_\phi)]$ is established in Lemma \ref{lemma:spimageeven} of Step 1, which moreover gives a complete description of the subgroup $\Psi(\mathcal T_\phi)$. 

Finiteness of $[\mathcal I_g: \mathcal T_\phi \cap \mathcal I_g]$ is obtained in Steps 2 and 3, again by using the Johnson homomorphism to analyze the intersection $\mathcal T_\phi \cap \mathcal I_g$ as in Steps 2-4 of the proof of Proposition \ref{proposition:twistgen}. The main result of Step 2 is Lemma \ref{lemma:taufindex}, which shows that $\tau(\mathcal T_\phi \cap \mathcal I_g)$ has finite index in $\wedge^3 H_\Z/H_\Z$. Step 3 completes the argument by showing the containment $\mathcal K_g \leqslant \mathcal T_\phi$; this is obtained as Lemma \ref{lemma:containsjkeven}. We advise the reader that Step 3 is substantially more complicated than its counterpart Step 4 of the proof of Proposition \ref{proposition:twistgen}, and will require an explanatory outline of its own.

\para{Step 1: The symplectic quotient} 
The case of $r$ even is no more difficult than for $r$ odd. Let $q$ be a $\Z/2\Z$-valued spin structure. An {\em anisotropic transvection} is a transvection
\[
T_v(w) = w + \pair{w,v}v
\]
for a primitive $v \in H_1(\Sigma_g; \Z)$ such that $q(v) = 0$. 

The following theorem is surely well-known to experts but we were unable to find a reference. A special case is treated in \cite[Proposition 14]{dieudonne}. 
\begin{theorem}[Folklore]\label{theorem:aniso}
Let $q$ be a $\Z/2\Z$-valued spin structure on $\Sigma_g$ for $g \ge 3$, and let $\Sp(2g, \Z)[q]$ denote the subgroup of $\Sp(2g, \Z)$ that fixes $q$. Then $\Sp(2g, \Z)[q]$ is generated by the collection of anisotropic transvections 
\[
\{T_v \mid v \in H_1(\Sigma_g; \Z) \mbox { primitive, }q(v) = 0\}.
\]
\end{theorem}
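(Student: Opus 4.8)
The plan is to pass to the underlying classical quadratic form and then reduce modulo $2$. Let $H = H_1(\Sigma_g; \Z/2\Z)$ and let $Q$ be the classical $\Z/2\Z$-valued quadratic form on $H$ associated to $q$ as in \eqref{equation:classicalspin}. Since $q$ factors through $H$, the group $\Sp(2g,\Z)$ acts on $q$ through its action on $H$, and $\Sp(2g,\Z)[q]$ is exactly the stabilizer of $Q$; moreover a transvection $T_v$ (with $v \in H_1(\Sigma_g;\Z)$ primitive) preserves $q$ if and only if $q(v) = 0$, equivalently $Q(\bar v) = 1$, and these are precisely the anisotropic transvections. Writing $G \leqslant \Sp(2g,\Z)[q]$ for the subgroup they generate, the goal is to prove $G = \Sp(2g,\Z)[q]$.

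First I would analyse the mod-$2$ reduction $\rho\colon \Sp(2g,\Z) \to \Sp(2g,\Z/2\Z)$. It carries $\Sp(2g,\Z)[q]$ into the finite orthogonal group $O(Q) \leqslant \Sp(2g,\Z/2\Z)$ of elements preserving $Q$, and the kernel of $\rho$ restricted to $\Sp(2g,\Z)[q]$ is exactly the level-$2$ congruence subgroup $\Sp(2g,\Z)[2]$, since every element of $\Sp(2g,\Z)[2]$ acts trivially on $H$ and hence fixes $q$. The hypothesis $g \geq 3$ guarantees, via the classical structure theory of orthogonal groups over $\F_2$ (cf.\ \cite[Proposition~14]{dieudonne}), that $O(Q)$ is generated by the transvections $\tau_{\bar v}$ it contains, namely those with $Q(\bar v) = 1$. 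Each such $\tau_{\bar v}$ is the $\rho$-image of the anisotropic transvection $T_v \in G$ attached to any primitive integral lift $v$ of $\bar v$ (the value $q(v) = 0$ being automatic). Hence $\rho(G) = O(Q) = \rho(\Sp(2g,\Z)[q])$, so $\Sp(2g,\Z)[q] = G \cdot \Sp(2g,\Z)[2]$, and the theorem reduces to the inclusion $\Sp(2g,\Z)[2] \leqslant G$.

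To prove $\Sp(2g,\Z)[2] \leqslant G$, I would use the standard fact (for $g \geq 3$, going back to the work of Bass--Milnor--Serre on the congruence subgroup problem) that $\Sp(2g,\Z)[2]$ is generated by the squares $T_v^2$ of transvections about primitive classes $v$. When $q(v) = 0$ we have $T_v \in G$, so $T_v^2 \in G$. For the generators with $q(v) = 1$, I would proceed in two moves. First, I exhibit one such element in $G$: by Corollary~\ref{corollary:allvalueseven}.\ref{item:chainevenrestricted} (applied to a subchain of a chain of four admissible curves, which requires $g \geq 3$) there is a chain $(a_1, a_2, a_3)$ of admissible curves; its regular neighbourhood is a surface $\Sigma_{1,2}$ whose boundary curves $\Delta_1, \Delta_2$ satisfy $[\Delta_1] = -[\Delta_2] =: w_0$ in $H_1(\Sigma_g;\Z)$, and the chain relation (Proposition~\ref{proposition:chain}) gives $(T_{a_1}T_{a_2}T_{a_3})^4 = T_{\Delta_1}T_{\Delta_2}$, whose image under the symplectic representation $\Psi$ of \eqref{equation:sprep} is $T_{w_0}^2$; since each $T_{a_i} \in G$, we get $T_{w_0}^2 \in G$, and $w_0 = [a_1] + [a_3]$ with $q(a_i) = 0$ and $i(a_1,a_3)=0$ forces $q(w_0) = 1$. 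Second, I claim that $G$ acts transitively on the set of primitive classes $v$ with $q(v) = 1$; granting this, any such $v$ equals $h(w_0)$ for some $h \in G$, whence $T_v^2 = h\,T_{w_0}^2\,h^{-1} \in G$, and the proof is complete.

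It remains to prove the transitivity claim, and this is the step I expect to require the most care. Given primitive $v, v'$ with $q(v) = q(v') = 1$, one first reduces to the case that $v$ and $v'$ are $\Q$-linearly independent (otherwise an intermediate class is inserted); then an elementary dimension count --- valid for $g \geq 3$ --- produces a primitive class $u$ with $q(u) = 0$, $\pair{v,u} = 1$, $\pair{u,v'} = 1 - \pair{v,v'}$, and $v' - v - u$ primitive. Then $T_u \in G$ sends $v$ to $v'' := v + u$, which satisfies $\pair{v'',v'} = 1$ and $q(v'') = 1$, while $u' := v' - v''$ is primitive with $q(u') = 0$, so that $T_{u'} \in G$ sends $v''$ to $v'' + u' = v'$, giving $v' = (T_{u'}T_u)(v)$. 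Making this count fully rigorous already at $g = 3$ (in particular treating the degenerate subcase $\bar v = \bar v'$ by a brief auxiliary step) is the most delicate point, along with pinning down references for the two classical inputs used above --- the transvection-generation of $O(Q)$ over $\F_2$ and of $\Sp(2g,\Z)[2]$ by squared transvections; failing a convenient reference for the latter, it can be recovered by a commutator argument among level-$2$ transvections mirroring the standard treatment of $\Gamma(2) \leqslant \SL_n(\Z)$ for $n \geq 3$.
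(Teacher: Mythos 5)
Your overall skeleton is the same as the paper's: reduce mod $2$ to the finite orthogonal group, which is generated by the transvections it contains, and reduce the theorem to showing that the squared transvections $T_w^2$ ($w$ primitive) generating $\Sp(2g,\Z)[2]$ lie in the group $G$ generated by anisotropic transvections; the case $q(w)=0$ is immediate, and the chain relation is the tool for $q(w)=1$. The divergence is in how you treat $q(w)=1$: the paper handles \emph{every} such $w$ at once, producing by change of coordinates classes $v_1,v_2,v_3$ with $q(v_i)=0$, the chain intersection pattern, $\pair{v_i,w}=0$ and $v_1+v_3=w$, so that $(T_{v_1}T_{v_2}T_{v_3})^4=T_w^2$ exhibits $T_w^2\in G$ directly. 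You instead produce a single class $w_0$ (that step is fine, and primitivity of $w_0$ does follow from the presence of the fourth curve $a_4$ in your chain, since $\pair{[a_4],[a_1]+[a_3]}=\pm1$ --- but you should say this) and then try to propagate by claiming that $G$ acts transitively on primitive classes with $q$-value $1$.

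That transitivity step is where your argument breaks. The ``elementary dimension count'' asserting a primitive $u$ with $\pair{v,u}=1$ and $\pair{u,v'}=1-\pair{v,v'}$ is false in general: the obstruction is divisibility, not dimension. Take a symplectic basis $x_1,y_1,\dots,x_g,y_g$, choose $q$ with $q(x_1)=q(x_2)=1$ (possible since $q$ is freely prescribed on a basis), and set $v=x_1$, $v'=x_1+3x_2$; both are primitive with $q$-value $1$ because $q$ factors through mod-$2$ homology. Here $\pair{v,v'}=0$, so you need $\pair{x_1,u}=1$ and $\pair{u,x_1+3x_2}=1$, i.e.\ $-1+3\pair{u,x_2}=1$, which has no integer solution. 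The failure occurs precisely when $\Z v+\Z v'$ is not saturated, so no small perturbation of the count fixes it; you would need a genuinely longer sequence of moves (plus the deferred case $\bar v=\bar v'$), and you cannot borrow transitivity from $\Sp(2g,\Z)[q]$ itself, since that is the group whose generation you are trying to prove. The cleanest repair is to drop the transitivity detour and argue as the paper does: for each primitive $w$ with $q(w)=1$, build the three classes $v_1,v_2,v_3$ (equivalently a $3$-chain of curves with $q$-value $0$ whose neighborhood has boundary of class $\pm w$ --- the condition $q(w)=1$ is exactly what homological coherence forces, so there is no obstruction) and apply the chain relation to that $w$ itself.
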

\begin{proof}
The action of $\Sp(2g,\Z)$ on the set of spin structures factors through the quotient $f: \Sp(2g, \Z) \to \Sp(2g, \Z/2\Z)$. Define $\Sp(2g,\Z)[2] := \ker(f)$.  Thus, there is a short exact sequence
\[
1 \to \Sp(2g, \Z)[2] \to \Sp(2g, \Z)[q] \to \Sp(2g, \Z/2\Z)[q] \to 1,
\]
with $\Sp(2g, \Z/2\Z)[q]$ denoting the stabilizer of $q$ in $\Sp(2g, \Z/2\Z)$. According to \cite[Theorem 14.16]{grove}, the group $\Sp(2g, \Z/2\Z)[q]$ is generated by the images of all anisotropic transvections. So it remains to see only that the subgroup of $\Sp(2g, \Z)[q]$ generated by anisotropic transvections contains $\Sp(2g, \Z)[2]$. According to \cite[Lemma 5]{johnsoniii}, the group $\Sp(2g, \Z)[2]$ is generated by the collection of ``square transvections'' $T_w^2$, where $w$ ranges over all primitive $w \in H_1(\Sigma_g; \Z)$. 

If $q(w) = 0$ then $T_w \in \Sp(2g, \Z)[q]$ and so there is nothing to show. Assume now that $q(w) = 1$. It is easy to produce (e.g. by the change-of-coordinates principle on $\Sigma_g$) vectors $v_1, v_2, v_3 \in H_1(\Sigma_g; \Z)$ with the following properties:
\begin{enumerate}
\item $q(v_i) = 0$ for all $i$,
\item $\pair{v_1,v_2} = \pair{v_2,v_3} = 1$ and $\pair{v_1, v_3} = 0$,
\item $\pair{v_i, w} = 0$ for all $i$,
\item $v_1 + v_3 = w$.
\end{enumerate}
The chain relation in $\Mod(\Sigma_g)$ (Proposition \ref{proposition:chain}) descends to show the relation
\[
(T_{v_1}T_{v_2}T_{v_3})^4 = T_w^2.
\]
Since the left-hand side is a product of anisotopic transvections, it follows that for $w$ arbitrary, $T_w^2 \in \Sp(2g, \Z)[q]$ as required. \end{proof}

The following is the main result of Step 1.
\begin{lemma}\label{lemma:spimageeven}
Let $\phi$ be a $\Z/r \Z$-valued spin structure for $r$ an even integer, and let 
\[
q := \phi \pmod 2
\]
 denote the associated $\Z/2\Z$-valued spin structure. The symplectic representation $\Psi: \Mod(\Sigma_g) \to \Sp(2g, \Z)$ restricts to a surjection
\[
\Psi: \mathcal T_\phi \twoheadrightarrow \Sp(2g, \Z)[q],
\]
where $\Sp(2g, \Z)[q]$ denotes the stabilizer of $q$ in $\Sp(2g,\Z)$. 
\end{lemma}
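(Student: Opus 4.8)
The plan is to mirror the argument of Lemma \ref{lemma:spimage} from the $r$ odd case, substituting the Folklore Theorem \ref{theorem:aniso} for the classical fact that $\Sp(2g,\Z)$ is generated by transvections, and substituting the parity-sensitive statement of Lemma \ref{item:homology} for its $r$ odd counterpart.

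First I would verify the containment $\Psi(\mathcal T_\phi) \leqslant \Sp(2g,\Z)[q]$. Since $\mathcal T_\phi \leqslant \Mod(\Sigma_g)[\phi]$ and any mapping class fixing $\phi$ also fixes its reduction $q = \phi \pmod 2$, it suffices to recall that the $\Mod(\Sigma_g)$-action on $\Z/2\Z$-valued spin structures factors through $\Psi$: indeed, by \cite[Theorem 1A]{johnsonspin} (reviewed in Section \ref{subsection:classical}) the function $\bar\phi = \phi \pmod 2$ depends only on the homology class of a curve, and $\Mod(\Sigma_g)$ acts on $H_1(\Sigma_g;\Z)$ via $\Psi$. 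Hence $\Psi$ carries $\Mod(\Sigma_g)[\phi]$, and a fortiori $\mathcal T_\phi$, into $\Sp(2g,\Z)[q]$.

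For surjectivity I would invoke Theorem \ref{theorem:aniso}, which asserts that $\Sp(2g,\Z)[q]$ is generated by the anisotropic transvections, i.e. the maps $T_v$ for $v \in H_1(\Sigma_g;\Z)$ primitive with $q(v) = 0$. Fix such a $v$. The condition $q(v) = 0$ says precisely that $\phi \pmod 2$ vanishes on $v$, so Lemma \ref{item:homology} (applied with $k = 0$) produces a simple closed curve $c$ with $[c] = v$ and $\phi(c) = 0$. Such a $c$ is admissible, hence $T_c \in \mathcal T_\phi$, and $\Psi(T_c) = T_{[c]} = T_v$. Thus each generator of $\Sp(2g,\Z)[q]$ lies in $\Psi(\mathcal T_\phi)$, giving the reverse containment and therefore the asserted surjection.

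I do not expect a genuine obstacle here: the substantive input is the Folklore Theorem \ref{theorem:aniso} (already established) together with Lemma \ref{item:homology}. The only point requiring care is the bookkeeping of parity conventions — namely that the defining condition $q(v) = 0$ of an anisotropic transvection is exactly the hypothesis under which Lemma \ref{item:homology} yields an \emph{admissible} representative curve in the class $v$, so that the relevant transvection is realized by an element of $\mathcal T_\phi$ rather than merely of $\Mod(\Sigma_g)[\phi]$.
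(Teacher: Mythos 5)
Your proposal is correct and follows essentially the same route as the paper: the containment $\Psi(\mathcal T_\phi)\leqslant \Sp(2g,\Z)[q]$ from preservation of the mod-$2$ reduction, then surjectivity by realizing each anisotropic transvection $T_v$ (via Theorem \ref{theorem:aniso}) as $\Psi(T_c)$ for an admissible curve $c$ with $[c]=v$ supplied by Lemma \ref{item:homology}. Your extra remark that $q$ descends to homology (so that $\Sp(2g,\Z)[q]$ is well-defined) is a harmless elaboration of what the paper records in Section \ref{subsection:classical}.
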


\begin{proof}
As $\mathcal T_\phi$ preserves the $\Z/r\Z$-valued spin structure $\phi$, it also preserves the mod-$2$ reduction $q$. Thus $\mathcal T_\phi \leqslant \Mod(\Sigma_g)[q]$ and so $\Psi(\mathcal T_\phi) \leqslant \Sp(2g, \Z)[q]$. Let $v \in H_1(\Sigma_g; \Z)$ be a primitive element satisfying $q(v) = 0$. By Lemma \ref{item:homology}, there is some curve $c$ with $[c] = v$ and $\phi(c) = 0$. As $T_c \in \Mod(\Sigma_g)[\phi]$ and $\Psi(T_c) = T_v$, the result now follows from Theorem \ref{theorem:aniso}.
\end{proof}

\para{Step 2: The Johnson homomorphism} 
We remind the reader that while the value $\phi(c)$ on a simple closed curve depends on more than the homology class $[c] \in H_1(\Sigma_g;\Z)$, the discussion of Section \ref{subsection:classical} establishes that the mod-$2$ reduction $q(c)$ {\em does} depend only on the homology class $[c]$ (indeed, the coefficients here can be taken to be $\Z/2\Z$). Thus the arguments in Step 3 can be carried out entirely in the homological setting.

For the duration of Step 2, we adopt the following notation. As usual, define 
\[
q:= \phi \pmod 2.
\]
 There exists a symplectic basis $\{x_1, y_1, \dots, x_g, y_g\}$ for $H_1(\Sigma_g;\Z)$ such that $q(x_i) = 0$ for $1 \le i \le g$ and $q(y_j) = 0$ for $1 \le j \le g-1$; with such a basis, $\Arf(q)$ depends only on $g$ and on $q(y_g)$. 

Before proceeding to the main result of Step 2 (Lemma \ref{lemma:taufindex}), we begin with an algebraic lemma.
\begin{lemma}\label{lemma:next}
Set $v:= x_1 \wedge y_1 \wedge x_4$. Let $V \leqslant \wedge^3 H_\Z$ denote the submodule generated by the set
\[
\{gv \mid g \in \Sp(2g, \Z)[q]\}.
\]
Then $V = \wedge^3 H_\Z$ for $g \ge 5$. 
\end{lemma}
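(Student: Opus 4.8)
The plan is to prove that $V$ contains every element $z_i \wedge z_j \wedge z_k$ of the standard $\Z$-basis of $\wedge^3 H_\Z$ (here $i,j,k$ distinct and each $z\in\{x,y\}$), whence $V=\wedge^3 H_\Z$. Throughout write $q=\phi\bmod 2$ and recall that $q(x_i)=0$ for all $i$, that $q(y_j)=0$ for $j<g$, and that $q(a+b)\equiv q(a)+q(b)+\langle a,b\rangle+1\pmod 2$; in particular $q(x_a+x_b+x_c)=0$ for distinct $a,b,c$, while $q(x_a+x_b)=1$. By Theorem \ref{theorem:aniso}, $\Sp(2g,\Z)[q]$ is generated by the transvections $T_w$ with $q(w)=0$, and $V$ is a module over this group, hence closed under $\Z$-combinations and under every such $T_w$. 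The one computational tool I will use is the identity
\[
T_w\,\xi-\xi \;=\; w\wedge\iota_w(\xi)\qquad(\xi\in\wedge^3 H_\Z),
\]
where $T_w$ acts diagonally on $\wedge^3 H_\Z$ and $\iota_w$ is the degree $-1$ derivation $\iota_w(a\wedge b\wedge c)=\langle a,w\rangle\, b\wedge c-\langle b,w\rangle\, a\wedge c+\langle c,w\rangle\, a\wedge b$; it follows by expanding $(T_wa)\wedge(T_wb)\wedge(T_wc)$ and using $w\wedge w=0$. Thus $w\wedge\iota_w(\xi)\in V$ whenever $\xi\in V$ and $q(w)=0$.

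First I would observe that $V$ contains all ``type-$B$'' basis wedges. The triple $(x_1,y_1,x_4)$ consists of primitive classes with $q=0$, satisfying $\langle x_1,y_1\rangle=1$ and $\langle x_1,x_4\rangle=\langle y_1,x_4\rangle=0$. For $g\ge 5$, $\Sp(2g,\Z)[q]$ acts transitively on all triples of this kind --- the change-of-coordinates principle in the presence of a $\Z/2\Z$-valued spin structure --- as one sees by extending two such triples to geometric symplectic bases of $\Sigma_g$ with matching $\phi$-values via Corollary \ref{corollary:allvalueseven} and then invoking Lemma \ref{lemma:gsb} and Theorem \ref{theorem:gsb} to produce a $\phi$-preserving diffeomorphism carrying one basis to the other. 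Since $V$ is $\Sp(2g,\Z)[q]$-stable and contains $v$, it contains $\alpha\wedge\beta\wedge\gamma$ for every such triple; in particular $x_i\wedge y_i\wedge z_j\in V$ whenever $i\ne j$, $z\in\{x,y\}$, and $y_i$ is anisotropic (that is, $i\le g-1$, or $i=g$ and $q(y_g)=0$).

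The heart of the matter is then a short ``collapsing'' computation. Given distinct indices $a,b,c$, choose the labelling so that $b\le g-1$ (possible, since at most one index equals $g$), and set $w=x_a+x_b+x_c$, so $q(w)=0$. Applying the displayed identity to $\xi=x_b\wedge y_b\wedge x_c\in V$, one computes $\iota_w(\xi)=x_b\wedge x_c$ and hence $T_w\xi-\xi=(x_a+x_b+x_c)\wedge x_b\wedge x_c=x_a\wedge x_b\wedge x_c$ (the $x_b$- and $x_c$-terms of $w$ wedging to zero). Therefore $x_a\wedge x_b\wedge x_c\in V$ for all distinct $a,b,c$, i.e. $\wedge^3\langle x_1,\dots,x_g\rangle\subseteq V$. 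Next, for $i\le g-1$ the twist $T_{y_i}$ lies in $\Sp(2g,\Z)[q]$ and acts by $x_i\mapsto x_i+y_i$, fixing the other basis vectors; applying the $T_{y_i}$ and their products to the wedges just obtained and subtracting the already-known terms produces every basis wedge $z_a\wedge z_b\wedge z_c$ with $a,b,c\le g-1$, together with every $x_g\wedge z_i\wedge z_j$ with $i,j\le g-1$. If $q(y_g)=0$ then $y_g$ is anisotropic and the same arguments apply with $g$ in place of $g-1$, so $V=\wedge^3 H_\Z$. If $q(y_g)=1$, the only wedges still missing are $y_g\wedge z_i\wedge z_j$ and $x_g\wedge y_g\wedge z_k$ with $i,j,k\le g-1$; for these I would use the vector $w_0=y_g+x_l$ with $l\le g-1$, which satisfies $q(w_0)=0$ and $T_{w_0}(x_g)=x_g+y_g+x_l$ while fixing $x_i,y_i$ for $i\ne g,l$. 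Choosing $l$ outside the indices occurring and applying $T_{w_0}$ to $x_g\wedge z_i\wedge z_j$ (respectively to $x_g\wedge y_l\wedge z_k$, using $T_{w_0}(y_l)=y_l-w_0$) expresses each missing wedge as a $\Z$-combination of elements already shown to lie in $V$.

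The step I expect to be the real obstacle is the jump from a finite-index statement to the integral equality. Applying a single anisotropic transvection to $v$ tends to yield a multiple of a basis wedge, or a sum of several, rather than an isolated one --- fundamentally because no transvection $T_w\in\Sp(2g,\Z)[q]$ has $w$ equal to the sum of two standard anisotropic basis vectors from distinct hyperbolic planes (such $w$ have $q(w)=1$). The trick that sidesteps this is the collapsing computation above, which must be fed the type-$B$ wedges coming from the $\Sp(2g,\Z)[q]$-orbit of $v$; once $\wedge^3\langle x_1,\dots,x_g\rangle\subseteq V$ is in hand, the benign twists $T_{y_i}$ and the vectors $w_0$ propagate the conclusion to the entire basis. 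The remaining delicate points --- the transitivity statement used to produce the orbit, and the index book-keeping in the case $q(y_g)=1$ --- are routine once $g\ge 5$.
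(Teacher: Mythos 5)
Your computational skeleton is correct and is genuinely different from the paper's proof: the identity $T_w\xi-\xi=w\wedge\iota_w(\xi)$, the collapsing step with $w=x_a+x_b+x_c$ (note $q(w)=0$ under the paper's convention $q=\phi\bmod 2$), the propagation by the twists $T_{y_i}$, and the endgame with $w_0=y_g+x_l$ in the case $q(y_g)=1$ all check out, and together they do account for every basis wedge. The paper, by contrast, never invokes an orbit-transitivity theorem: it conjugates $v$ only by explicit elements of $\Sp(2g,\Z)[q]$ (the pair-swaps $S_{i,j}$, the rotations $R_i$, and a few specific transvections such as $T_{x_4-x_1}$, $T_{y_{g-1}+y_g}$, $T_{x_1+x_g-y_g}$) and chases generators by hand; your transvection identity makes that chase shorter and more transparent.

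The genuine gap is the transitivity claim that feeds the argument: that $\Sp(2g,\Z)[q]$ acts transitively on triples $(\alpha,\beta,\gamma)$ of primitive classes with $q=0$, $\pair{\alpha,\beta}=1$, $\pair{\alpha,\gamma}=\pair{\beta,\gamma}=0$. The statement is true, but the justification you give does not establish it: Corollary \ref{corollary:allvalueseven}, Lemma \ref{lemma:gsb} and Theorem \ref{theorem:gsb} let you prescribe $\phi$-values on a geometric symplectic basis that you are free to choose, and say nothing about prescribing the \emph{homology classes} of its members. To run your argument you would first need to realize each homological triple by curves $a,b,c$ with $i(a,b)=1$, $i(a,c)=i(b,c)=0$ representing the given classes, extend to geometric symplectic bases whose corresponding members have matching $\phi$-values (with the Arf bookkeeping on the complementary subsurface, which for even $r$ the paper only controls in the one-boundary-component setting), and only then apply Theorem \ref{theorem:gsb}; none of this is supplied by the cited results, and it amounts to a mod-$2$ change-of-coordinates argument of roughly the same length as the paper's explicit computation. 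The cheapest repair is to note that you never need the general transitivity: the only inputs to your collapsing step are wedges $x_i\wedge y_i\wedge z_j$ adapted to the fixed basis, and these lie in the $\Sp(2g,\Z)[q]$-orbit of $v$ under the visibly $q$-preserving block elements $S_{i,j}$ and $R_i$ (plus a transvection such as $T_{y_{g-1}+y_g}$ when the index $g$ with $q(y_g)=1$ is involved), exactly as in the paper's proof. With that substitution your argument is complete. (A minor imprecision: your type-B claim as stated would include $x_i\wedge y_i\wedge y_g$ when $q(y_g)=1$, where the triple does not have all $q$-values zero; this does no harm, since your $w_0$-step recovers those wedges at the end.)
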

\begin{proof}
As remarked in Lemma \ref{lemma:converse}, $\wedge^3H_\Z$ is generated by elements of the form $z_i \wedge z_j \wedge z_k$ with each $z_i \in \{x_1, y_1,\dots, x_g, y_g\}$. To begin with, we will exhibit generators for the submodule of $\wedge^3 H_\Z$ spanned by generators $z_i\wedge z_j \wedge z_k$ for which $z_i,z_j,z_k \in \{x_1,y_1, \dots,x_{g-1}, y_{g-1}\}$. The restriction of $\Sp(2g;\Z)[q]$ to this submodule is independent of the parity of $q$. For $i \ne j \le g-1$, define $S_{i,j} \in \Sp(2g, \Z)$ via
\begin{align*}
S_{i,j}(x_i) &= x_j, &  S_{i,j}(y_i) &= y_j,\\
 S_{i,j}(x_j) &= x_i, & S_{i,j}(y_j) & = y_i,
\end{align*}
with all other generators fixed. As $q(x_k) = q(y_k) = 0$ for $k \le g-1$, in fact $S_{i,j}$ is an element of $\Sp(2g, \Z)[q]$. Applying $S_{i,j}$ for $i,j \ne 4$ to $v$ shows that $V$ contains all generators of the form $x_i \wedge y_i \wedge x_4$. Applying $S_{i,4}$ to $x_j \wedge y_j \wedge x_4$ for $i \ne j$ shows that $V$ contains all generators of the form $x_j \wedge y_j \wedge x_i$ for $j \ne 4$; then applying $S_{j,4}$ to $x_j \wedge y_j \wedge x_i$ shows that $V$ contains all elements of the form $x_j \wedge y_j \wedge x_i$.

For $1 \le i \le g-1$, define $R_i \in \Sp(2g, \Z)$ via
\[
R_i(x_i) = y_i,\quad R_i(y_i)= -x_i
\]
with all other generators fixed. Again, the condition $q(x_k) = q(y_k) = 0$ for $k \le g-1$ implies that $R_i$ is an element of $\Sp(2g,\Z)[q]$. Applying $R_i$ to $x_j \wedge y_j \wedge x_i$ shows that also $V$ contains all elements of the form $x_j \wedge y_j \wedge y_i$. 

It remains to exhibit generators of the form $z_i \wedge z_j \wedge z_k$ with $z_\ell \in \{x_\ell,y_\ell\}$ and $i,j,k<g$ all distinct. Consider the transvection $T_{x_4 - x_1} \in \Sp(2g,\Z)[q]$. Applied to $x_1 \wedge y_1 \wedge x_2$, this shows that
\[
x_1 \wedge(y_1 + x_4) \wedge x_2 \in V,
\]
hence also $x_1 \wedge x_2 \wedge x_4 \in V$. Now by repeated applications of the elements $S_{i,j}$ and $R_i$, one can produce all remaining generators. 

In the case $q(y_g) = 0$, the elements $S_{i,g}$ and $R_g$ are contained in $\Sp(2g,\Z)[q]$, and so the above argument extends to complete this case. It remains to consider the case where $q(y_g) = 1$. In this case, the formula \eqref{equation:q} defining a $\Z/2\Z$-valued quadratic form shows that $q(y_{g-1} + y_{g}) = 0$. Applying $T_{y_{g-1} + y_{g}}$ to the elements $x_1 \wedge x_2 \wedge x_{g-1}$ and $x_1 \wedge y_1 \wedge x_{g-1}$ shows that $x_1 \wedge x_2 \wedge y_g$ and $x_1 \wedge y_1 \wedge x_g$ are elements of $V$. Applying $S_{i,j}$ and $R_i$ for $i,j \le g-1$ produces all elements of the form $z_i \wedge z_j \wedge y_g$ with $z_\ell \in \{x_\ell,y_\ell\}\ (i,j \le g-1)$. Then applying $T_{x_g}$ to these elements shows that also each $z_i \wedge z_j \wedge x_g \in V$. 

By \eqref{equation:q}, we have $q(x_1 + x_g - y_g) = 0$. Applying $T_{x_1 + x_g - y_g}^{-1}$ to $y_1 \wedge y_2 \wedge y_g$ gives
\[
w = (y_1 + x_1 + x_g - y_g) \wedge y_2 \wedge (x_1 + x_g);
\] 
expanding this product yields the expression $w = -y_2 \wedge x_g \wedge y_g + w'$, with $w'$ expressed entirely in terms of generators already known to be elements of $V$. Applying $S_{i,j}$ and $R_{i}$ as in the above paragraph shows that all the remaining generators $z_i \wedge x_g \wedge y_g$ are elements of $V$.
\end{proof}

The following is the main result of Step 2.
\begin{lemma}\label{lemma:taufindex}
For $g \ge 5$, the image $\tau(\mathcal T_\phi \cap \mathcal I_g)$ under the Johnson homomorphism is a finite-index subgroup of $\wedge^3 H_\Z/H_\Z$.
\end{lemma}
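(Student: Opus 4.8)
The plan is to pin down a single explicit element of $\wedge^3 H_\Z/H_\Z$ lying (up to the factor $r$) in $\tau(\mathcal T_\phi\cap\mathcal I_g)$, namely $r$ times the generator $x_1\wedge y_1\wedge x_4$ of Lemma~\ref{lemma:next}, and then to invoke Lemma~\ref{lemma:next} together with the $\Sp(2g,\Z)[q]$-action to conclude that $\tau(\mathcal T_\phi\cap\mathcal I_g)$ contains the image of $r\cdot\wedge^3 H_\Z$, which is of finite index. Throughout I keep the symplectic basis $\{x_1,y_1,\dots,x_g,y_g\}$ fixed above (so $q(x_i)=0$ for all $i$ and $q(y_j)=0$ for $j\le g-1$) and use $r<g-1$ from the standing hypotheses of Proposition~\ref{proposition:twistgeneven}.

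First I would produce trivectors directly from bounding pair maps, the mechanism being that, unlike in Section~\ref{section:dodd}, we need only control $\Z/2\Z$-reductions. Given a primitive $w\in H_\Z$ with $q(w)=1$ and $\pair{w,x_1}=\pair{w,y_1}=0$, the change-of-coordinates principle furnishes a genus-one bounding pair $(c,d)$ with $[c]=[d]=w$ cobounding a $\Sigma_{1,2}$ that contains a one-holed torus whose symplectic basis is represented by $x_1,y_1$; then $\tau(T_cT_d^{-1})=x_1\wedge y_1\wedge w$ by Lemma~\ref{lemma:jhom}(3). Since $\phi(c)\equiv q(w)=1\pmod 2$ and likewise for $d$, Lemma~\ref{item:allcurves} gives $T_c^r,T_d^r\in\mathcal T_\phi$, whence $(T_cT_d^{-1})^r=T_c^rT_d^{-r}\in\mathcal T_\phi\cap\mathcal I_g$ and $\tau$ of it equals $r\,(x_1\wedge y_1\wedge w)$.

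The main obstacle is that the generator $x_1\wedge y_1\wedge x_4$ required by Lemma~\ref{lemma:next} has third entry $x_4$ with $q(x_4)=0$, and there is genuinely no genus-one admissible bounding pair to realize it directly — for $r>2$ the homological coherence criterion forces $\phi$ to be nonzero on a genus-one bounding pair. I would get around this by expressing $x_4$ as a difference of $q$-odd classes orthogonal to $x_1$ and $y_1$: with $w_1=x_3+x_4+y_3$ and $w_2=x_3+y_3$ one checks (using $g\ge 5$) that $q(w_1)=q(w_2)=1$, that both are primitive and orthogonal to $x_1$ and $y_1$, and that
\[
x_1\wedge y_1\wedge x_4 = x_1\wedge y_1\wedge w_1 - x_1\wedge y_1\wedge w_2.
\]
Applying the previous step to $w_1$ and $w_2$ then places $r\,(x_1\wedge y_1\wedge x_4)$ in $\tau(\mathcal T_\phi\cap\mathcal I_g)$.

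Finally I would propagate this by the symplectic group. Because $\mathcal I_g$ is normal in $\Mod(\Sigma_g)$, the group $\mathcal T_\phi$ normalizes $\mathcal T_\phi\cap\mathcal I_g$; hence by the $\Sp(2g,\Z)$-equivariance of $\tau$ (Lemma~\ref{lemma:jhom}(1)) and the surjection $\Psi\colon\mathcal T_\phi\twoheadrightarrow\Sp(2g,\Z)[q]$ of Lemma~\ref{lemma:spimageeven}, the set $\tau(\mathcal T_\phi\cap\mathcal I_g)$ is an $\Sp(2g,\Z)[q]$-submodule of $\wedge^3 H_\Z/H_\Z$. It therefore contains the $\Sp(2g,\Z)[q]$-submodule generated by $r\,(x_1\wedge y_1\wedge x_4)$, which by Lemma~\ref{lemma:next} is the image of $r\cdot\wedge^3 H_\Z$ in $\wedge^3 H_\Z/H_\Z$; since the latter group is finitely generated, this image has finite index, so $\tau(\mathcal T_\phi\cap\mathcal I_g)$ has finite index in $\wedge^3 H_\Z/H_\Z$, as claimed.
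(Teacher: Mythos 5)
Your overall strategy is the paper's: produce the single element $r\,(x_1\wedge y_1\wedge x_4)$ inside $\tau(\mathcal T_\phi\cap\mathcal I_g)$ and then let Lemma \ref{lemma:next}, Lemma \ref{lemma:spimageeven} and the $\Sp(2g,\Z)$-equivariance of $\tau$ finish; your closing finite-index argument is fine. Where you differ is the mechanism: the paper manufactures one bounding pair from a $3$-chain of admissible curves via the chain relation, while you use $r$-th powers of genus-one bounding pair maps supported on classes of odd type together with a difference trick. That idea is viable, but there is a concrete error in your middle step.

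The error is in the computation of $q(w_2)$. You compute $q$ on sums as if $q=\phi\bmod 2$ were a quadratic form vanishing on the chosen basis, i.e.\ as if $q(u+v)=q(u)+q(v)+\pair{u,v}$. But by Section \ref{subsection:classical} the function satisfying \eqref{equation:q} is $q_\phi=\bar{\bar\phi}+1$, not $\bar{\bar\phi}=\phi\bmod 2$; the correct rule for the latter is $q(u+v)=q(u)+q(v)+\pair{u,v}+1$. (Geometrically: by Lemma \ref{lemma:smoothing} the smoothing of curves representing $x_3$ and $y_3$ with even $\phi$-values is a curve in the class $x_3+y_3$ with even $\phi$-value, and the mod-$2$ value depends only on the class.) Hence $q(w_2)=q(x_3+y_3)=0$: every curve $c$ with $[c]=x_3+y_3$ has $\phi(c)$ even, the hypothesis of Lemma \ref{item:allcurves} fails for it, and you have no way to place $T_c^rT_d^{-r}$ in $\mathcal T_\phi$ for your second bounding pair. (Your $w_1=x_3+x_4+y_3$ does have $q(w_1)=1$ under the correct rule, so that half survives; the first sentence of that step, $\phi(c)\equiv q(w)\pmod 2$, uses the paper's convention, so the two halves of your argument are using incompatible conventions.) The quadratic relation for $q_\phi$ forces the real requirement on the difference trick: you need $q(w_2)=1$ \emph{and} $\pair{w_2,x_4}$ odd. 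For instance $w_2=x_3+y_3+y_4$ and $w_1=x_3+y_3+x_4+y_4$ (both primitive, orthogonal to $x_1,y_1$, and of odd type for $g\ge 5$) repair the step, after which the rest of your proof goes through. Incidentally, this parity bookkeeping is genuinely delicate: the paper's own asserted chain with $[a_3]=x_4-x_1$ and $a_3$ admissible runs into the same constraint (no admissible curve represents $x_4-x_1$ when $q(x_1)=q(x_4)=0$), so a workaround of exactly the kind you propose, with corrected arithmetic, is a reasonable way to reach $x_1\wedge y_1\wedge x_4$.
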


\begin{proof}
As stated in Lemma \ref{lemma:jhom}.1, the homomorphism $\tau: \mathcal I_g \to \wedge^3 H_\Z/H_\Z$ is $\Sp(2g, \Z)$-equivariant. The strategy for the proof of Lemma \ref{lemma:taufindex} is to first exhibit a single nonzero element of $\tau(\mathcal T_\phi \cap \mathcal I_g)$, and then to exploit this equivariance.

By Corollary \ref{corollary:allvalueseven}.\ref{item:chainevenrestricted}, there exists a $3$-chain of admissible curves $a_1, a_2, a_3$ such that
\[
[a_1] = x_1,\quad [a_2] = y_1, \quad [a_3] = x_4- x_1.
\]
Let $\nu$ be a regular neighborhood of this chain, and denote the boundary curves as $b, b'$. As $a_1$ and $a_2$ are admissible, homological coherence implies that $\phi(b) = \phi(b') = -1$ when oriented so that $\nu$ lies to the left of both $b, b'$. By Lemma \ref{item:allcurves}, $T_b^r$ is an element of $\mathcal T_\phi$. It follows by the chain relation (Proposition \ref{proposition:chain}) that the bounding pair map $T_b^r T_{b'}^{-r} \in \mathcal T_\phi$.  One sees that $[b] = [a_1]+ [a_3] =  x_4$. By Lemma \ref{lemma:jhom}.3, 
\[
\tau(T_b^r T_{b'}^{-r}) = r (x_1 \wedge y_1 \wedge x_4).
\]
By Lemma \ref{lemma:spimageeven} and the equivariance of $\tau$ with respect to $\Sp(2g,\Z)$ (and {\em a fortiori} with respect to $\Sp(2g,\Z)[q]$), it follows that $\tau(\mathcal T_\phi \cap \mathcal I_g)$ contains the $\Z$-span of the entire $\Sp(2g, \Z)[q]$-orbit of $v:= r (x_1 \wedge y_1 \wedge x_4)$. Lemma \ref{lemma:taufindex} now follows from Lemma \ref{lemma:next}. 
\end{proof}

\para{Step 3: The Johnson kernel} In this section, we establish the following result.
\begin{lemma}\label{lemma:containsjkeven}
Let $\phi$ be a $\Z/2d\Z$-valued spin structure on $\Sigma_g$. Assume that $g$ satisfies the hypotheses of Proposition \ref{proposition:twistgeneven}. Then $\mathcal T_\phi$ contains the Johnson kernel $\mathcal K_g$.
\end{lemma}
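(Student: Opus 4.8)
By Johnson's Theorem~\ref{theorem:johnson2}, $\mathcal K_g$ is generated by separating twists --- in fact by those of genus at most two --- so it suffices to prove $T_c \in \mathcal T_\phi$ for every separating curve $c$. The basic mechanism is the chain relation (Proposition~\ref{proposition:chain}): if $c$ bounds a subsurface $S$ of genus $h$ and $S$ carries a \emph{maximal} chain of admissible curves $a_1, \dots, a_{2h}$, then $T_c = (T_{a_1}\cdots T_{a_{2h}})^{2h+2} \in \mathcal T_\phi$. By Corollary~\ref{corollary:allvalueseven}.\ref{item:chaineven}, such a chain exists exactly when $\Arf(\phi|_S)$ equals the Arf invariant of the all-zero spin structure on $\Sigma_{h,1}$, a quantity which one computes (using a symplectic basis adapted to a chain) to be $h \bmod 2$. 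Thus the first step is to dispose of the ``good parity'' case: if $c$ bounds a subsurface $S$ with $\Arf(\phi|_S)\equiv g(S) \pmod 2$ and $g(S)\geq 2$, then $T_c \in \mathcal T_\phi$.

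Because the Arf invariant is additive across a separating curve, $\Arf(\phi|_{S})+\Arf(\phi|_{S'})=\Arf(\phi)$ for the two sides $S,S'$ of $c$; hence one side is always in the good-parity case unless $\Arf(\phi|_{S})\equiv g(S)+1$ and $\Arf(\phi|_{S'})\equiv g(S')+1$, which forces $\Arf(\phi)\equiv g \pmod 2$. This residual \emph{bad-parity} case is the real content of the lemma, and is where the hypothesis $g\geq g(r)$ is needed. The plan is to route around the Arf obstruction using the $D_n$ relation. Since the larger side of $c$ has genus comparable to $g$, Corollary~\ref{corollary:dneven} produces a configuration $\mathscr D_n$ of admissible curves on that side in which $c$ occurs as one of the distinguished curves, and Corollary~\ref{corollary:dn} together with Proposition~\ref{proposition:dnrel} then expresses a power of $T_c$, or $T_c$ together with twists about auxiliary boundary curves of the configuration, as an element of the group generated by admissible twists. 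One must then extract $T_c$ itself: this should be done by combining two such $D_n$ relations --- an odd one, contributing an exponent $2g_{\mathrm{loc}}-1$ on the relevant auxiliary twist, and an even one, contributing an exponent $g_{\mathrm{loc}}'$ --- chosen so that these exponents are coprime, together with the fact (from Lemma~\ref{item:allcurves}, available because $r$ is even) that a large power of any twist about a curve of odd $\phi$-value already lies in $\mathcal T_\phi$. Reducing the auxiliary twists to the good-parity case, using the freedom in Corollary~\ref{corollary:allvalueseven} to prescribe the Arf invariants of the subsurfaces they bound, completes the argument.

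The main obstacle is precisely the bad-parity case. The difficulty is not any single step but the simultaneous bookkeeping: one must keep every curve of the $D_n$ configurations admissible (via Corollary~\ref{corollary:allvalueseven} and homological coherence), arrange the auxiliary boundary curves so that their twists are already known to lie in $\mathcal T_\phi$, tune the exponents coming out of the odd and even $D_n$ relations so that a coprimality argument extracts the desired twist, and fit all of the configurations inside $\Sigma_g$. It is this last constraint that determines the genus bound $g\geq g(r)=k(d)d+1$ of Definition~\ref{definition:gd}; the cases $d=2$ and $d=4$, where there is the least room to maneuver, are exceptional and are presumably handled by an ad hoc version of the same argument with the enlarged constants $k(2)=6$ and $k(4)=5$.
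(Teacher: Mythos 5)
Your overall skeleton is the same as the paper's: handle ``easy'' separating curves (those bounding a side carrying a maximal chain of admissible curves) by the chain relation, and attack the remaining case with the $D_n$ relation via Corollary~\ref{corollary:dneven}, extracting $T_c$ by a coprimality argument on the exponents of the auxiliary boundary twist. But there is a genuine gap at exactly the crux of the hard case: your plan to make the auxiliary twists (the curve $\Delta_0$, and the curves $C_2,C_4$ of Corollary~\ref{corollary:dn}) tractable by ``using the freedom in Corollary~\ref{corollary:allvalueseven} to prescribe the Arf invariants of the subsurfaces they bound'' does not work. Once $\phi$ and a separating curve are fixed, the Arf invariant of $\phi$ restricted to either side is an invariant; Corollary~\ref{corollary:allvalueseven} only lets you choose curves and chains \emph{subject to} that invariant, not change it. Nor does positioning the $\mathscr D_n$ configuration help: its curves are all admissible, so the parity contribution of the configuration piece is forced, and hence so is the Arf invariant of the genus-$d$ piece bounded by $\Delta_0$. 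Also, Lemma~\ref{item:allcurves} concerns \emph{nonseparating} curves of odd $\phi$-value, so it cannot be applied to $\Delta_0$ or $C_k$, which are separating. The paper's resolution is a different mechanism (Lemmas~\ref{lemma:dodd}, \ref{lemma:deven}, \ref{lemma:deven2}): for separating curves of genus $d$ (when $d$ is odd), respectively of genus $\equiv d+2$ or $d+4 \pmod{2d}$ (when $d$ is even), the twist lies in $\mathcal T_\phi$ \emph{unconditionally}, whether the curve is easy or hard. This is proved by attaching a chain of admissible curves of length $3$, $7$, or $15$ to pass to a nearby separating curve $c'$ whose side has genus shifted by $1$, $3$, or $7$ and Arf invariant shifted by $1$, $0$, $0$; the mod-$4$ bookkeeping of Lemma~\ref{lemma:easy} shows $c'$ is easy whenever $c$ is hard, and the chain relation transfers the twist. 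Only after these unconditional lemmas does the coprimality step run: for $d$ odd one $D_n$ relation suffices since $T_{\Delta_0}\in\mathcal T_\phi$ outright, and for $d$ even the $D_5$ and $D_9$ relations give $T_{\Delta_0}^3, T_{\Delta_0}^7 \in \mathcal T_\phi$ (using $T_{C_2},T_{C_4}\in\mathcal T_\phi$), whence $T_{\Delta_0}$ and then $T_c$.

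A secondary error feeds into your case analysis: the Arf invariant of the spin structure determined by a maximal chain of $2h$ admissible curves on a genus-$h$ subsurface is $h(h+1)/2 \bmod 2$, i.e.\ it depends on $h \bmod 4$ (this is the content of Lemma~\ref{lemma:easy}), not $h \bmod 2$ as you assert. Consequently your additivity argument identifying a single residual ``bad-parity'' case is not correct as stated; the easy/hard dichotomy must be tracked mod $4$, which is precisely why the paper splits the argument according to $d$ odd versus $d$ even and why the auxiliary genera $d$, $d+2$, $d+4$ appear.
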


Before beginning the proof, we explain the difficulties imposed by the assumption that $r = 2d$ is even.

\para{The Arf invariant as obstruction} The mechanism of proof for Lemma \ref{lemma:containsjk} was the chain relation (Proposition \ref{proposition:chain}): if $S\subset \Sigma_g$ has one boundary component, we exploited Corollary \ref{corollary:allvalues} to produce a maximal chain $\{a_i\}$ of curves on $S$ with $\phi(a_i) = 0$, and then used the chain relation to express $T_{\partial S}$ in terms of the admissible twists $\{T_{a_i}\}$. Now suppose $\phi$ is a $\Z/r\Z$-valued spin structure for $r$ even, and let $q = \phi \pmod 2$ denote the mod-$2$ reduction. For any subsurface $S\subset \Sigma_g$ with one boundary component, $q$ restricts to give a $\Z/2\Z$-valued spin structure $q\,\vline_S$ on $S$. The Arf invariant of $q\,\vline_S$, written here as $\epsilon(S)$, provides an obstruction to the existence of a maximal chain $\{a_i\}$ of admissible curves on $S$, since such a chain determines the value $\epsilon(S)$ solely as a function of $g(S)$. 

Suppose $c \subset \Sigma_g$ is a separating curve that divides $\Sigma_g$ into disjoint surfaces $S, S'$. Such a $c$ is called {\em easy} if at least one of $S, S'$ supports a maximal chain of admissible curves, and is {\em hard} otherwise. By Corollary \ref{corollary:allvalueseven}.\ref{item:chaineven} and the chain relation (Proposition \ref{proposition:chain}), if $c$ is easy, then $T_c \in \mathcal T_\phi$. 

\para{Outline of proof of Lemma \ref{lemma:containsjkeven}} We begin with Lemma \ref{lemma:easy}, which characterizes those subsurfaces supporting a maximal chain of admissible curves in terms of the Arf invariant. This in particular shows the relevance of the genus of the subsurface mod $4$, which in turn forces us to treat the cases $r \equiv 0, r \equiv 2 \pmod 4$ separately. We therefore establish Lemma \ref{lemma:containsjkeven} by combining Lemma \ref{lemma:containsjkeven2} and \ref{lemma:containsjkeven0}, which treat the cases of $r \equiv 2 \pmod 4$ and $r \equiv 0 \pmod 4$, respectively. 

These are handled in Substeps 1 and 2, respectively. In each case, we first show that all separating twists of particular genera are elements of $\mathcal T_\phi$. In Substep 1, Lemma \ref{lemma:dodd} shows that all separating twists of genus $d$ lie in $\mathcal T_\phi$. In Substep 2, Lemma \ref{lemma:deven} shows that all separating twists of genus $h \equiv d+2 \pmod{2d}$ lie in $\mathcal T_\phi$, and Lemma \ref{lemma:deven2} establishes the same result for separating twists of genus $h \equiv d+4 \pmod{2d}$. Lemmas \ref{lemma:containsjkeven2} and \ref{lemma:containsjkeven0} then follow from these preliminary results and an application of the $D_n$ relation (Proposition \ref{proposition:dnrel}).

\begin{lemma}\label{lemma:easy}
Let $S \subset \Sigma$ be a subsurface with single boundary component. Assume the genus $g(S) \ge 2$. Then there is a maximal chain of admissible curves on $S$ if and only if one of the following conditions hold:
\begin{itemize}
\item $g(S) \equiv 1\mbox{ or }2 \pmod 4$ and $\epsilon(S) = 1$,
\item $g(S) \equiv 3\mbox{ or }0 \pmod 4$ and $\epsilon(S) = 0$.
\end{itemize}
\end{lemma}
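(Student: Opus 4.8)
The plan is to reduce the geometric existence question to a purely algebraic Arf-invariant computation, exploiting the higher-spin change-of-coordinates results already established.

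Since $S$ has a single boundary component, a chain of curves on $S$ is maximal if and only if it has length $2h$, where $h := g(S) \ge 2$. So I would first apply Corollary~\ref{corollary:allvalueseven}.\ref{item:chaineven} with $S' = S$ and all target values equal to $0$: a maximal chain of admissible curves on $S$ exists if and only if the parity of the $\Z/2d\Z$-valued spin structure $\psi$ on $S$ cut out by the conditions $\psi(a_\ell) = 0$ along a chain $(a_1, \dots, a_{2h})$ agrees with $\epsilon(S)$. It therefore suffices to compute this parity $p(h)$ and to verify that $p(h) = 1$ exactly when $h \equiv 1, 2 \pmod 4$; comparing with $\epsilon(S)$ then produces the two cases in the statement.

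To compute $p(h)$, I would pass to the associated $\Z/2\Z$-quadratic form $q := \bar{\bar\psi} + 1$ on $V := H_1(S; \Z/2\Z)$ of \eqref{equation:classicalspin}; by \eqref{equation:arf} and Lemma~\ref{lemma:arfprops} one has $p(h) = \Arf(q)$, computed from any symplectic basis. Writing $e_\ell := [a_\ell]$, the condition $\psi(a_\ell) = 0$ becomes $q(e_\ell) = 1$, and $e_1, \dots, e_{2h}$ is a basis of $V$ whose intersection form realizes the ``path'' pattern ($e_i$ pairs nontrivially with $e_{i+1}$, and trivially with $e_j$ for $|i - j| \ge 2$). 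Now run a Gram--Schmidt-type reduction: split off the hyperbolic plane on $(x_1, y_1) := (e_1, e_2)$; using the quadratic identity \eqref{equation:q} one checks that $e_3 + e_1, e_4, \dots, e_{2h}$ is a chain basis of its symplectic complement, with $q$-values $(0, 1, 1, \dots, 1)$. Splitting off $(x_2, y_2) := (e_3 + e_1, e_4)$ next, the resulting symplectic complement has chain basis $e_5 + e_3 + e_1, e_6, \dots, e_{2h}$ with $q$-values $(1, 1, \dots, 1)$ again. Since $q(x_1)q(y_1) + q(x_2)q(y_2) = 1 \cdot 1 + 0 \cdot 1 = 1$ and $\Arf$ is additive over orthogonal splittings, this gives a recursion $A_{2m} = 1 + A_{2m - 4}$ in $\Z/2\Z$, where $A_{2m}$ denotes the Arf invariant of a $\Z/2\Z$-quadratic form on $(\Z/2\Z)^{2m}$ admitting a chain basis on which $q \equiv 1$. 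Computing the base cases $A_2 = 1$ (a length-$2$ chain is already a symplectic basis) and $A_4 = 1$ directly, then solving, yields $A_{2m} = 1$ for $m \equiv 1, 2 \pmod 4$ and $A_{2m} = 0$ for $m \equiv 3, 0 \pmod 4$. Taking $m = h$ gives $p(h) = A_{2h}$ with precisely the asserted dependence on $h \bmod 4$, completing the proof.

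The step I expect to require the most care is the ``$+1$'' shift in the passage from $\psi$ to $q$: it is what makes admissible curves correspond to the vectors with $q = 1$ (in parallel with the role of anisotropic transvections in Theorem~\ref{theorem:aniso}), and it is precisely this shift that makes $p(h)$ genuinely a function of $h \bmod 4$ rather than merely $h \bmod 2$ --- dropping it would give the wrong answer for $h \equiv 2$ and $h \equiv 3 \pmod 4$. Everything else is routine: the reduction is a single application of Corollary~\ref{corollary:allvalueseven}, and the Gram--Schmidt recursion is mechanical, though it is worth checking the cases $h = 2, 3, 4, 5$ against the recursion by hand to make sure no index has slipped.
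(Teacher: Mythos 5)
Your proposal is correct and follows essentially the same route as the paper: both directions come down to Corollary \ref{corollary:allvalueseven}.\ref{item:chaineven} together with the observation that a maximal chain of admissible curves determines the spin structure on $S$, whose Arf invariant one then computes. Your Gram--Schmidt recursion $A_{2m} = 1 + A_{2m-4}$ simply makes explicit the parity computation that the paper dismisses with ``one can easily compute $\epsilon(S)$,'' and it checks out.
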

\begin{proof}
Suppose $S$ supports a maximal chain $a_1, \dots, a_{2g(S)}$ of admissible curves. Since the chain determines a basis for $H_1(S; \Z)$, the conditions $\phi(a_i) = 0$ completely determine $\phi$. One can easily compute $\epsilon(S)$ from this and see that the above conditions are necessary. Sufficiency follows from Corollary \ref{corollary:allvalueseven}.\ref{item:chaineven}. 
\end{proof}

\para{Substep 1: \bm{$d$} odd} The objective of Substep 1 is Lemma \ref{lemma:containsjkeven2} below. The first step is to see that all separating twists $T_c$ of genus $d$ are elements of $\mathcal T_\phi$, regardless of whether $c$ is easy. 
\begin{lemma}\label{lemma:dodd}
Let $S \subset \Sigma_g$ be a subsurface of genus $d$ with a single boundary component $c$. If $\phi$ is a $\Z/2d\Z$-valued spin structure with $d$ odd, then $T_c \in \mathcal T_\phi$.
\end{lemma}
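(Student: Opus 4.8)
The plan is to split on the value of the Arf invariant $\epsilon(S) := \Arf(\phi|_S) \in \Z/2\Z$. First note that by homological coherence (Proposition \ref{proposition:HCC}) one has $\phi(c) = \chi(S) = 1 - 2d \equiv 1 \pmod{2d}$, so $T_c$ certainly lies in $\Mod(\Sigma_g)[\phi]$; the content is that it lies in $\mathcal T_\phi$. Recall from Lemma \ref{lemma:easy} that, since $d$ is odd (so $d \equiv 1$ or $3 \pmod 4$), the surface $S$ carries a maximal chain of admissible curves precisely when $\epsilon(S)$ equals a specific value $\epsilon_0 \in \Z/2\Z$ depending only on the residue of $d$ mod $4$, namely $\epsilon_0 = 1$ for $d \equiv 1$ and $\epsilon_0 = 0$ for $d \equiv 3$. (When $d = 1$, Lemma \ref{lemma:easy} does not literally apply, but the corresponding statement for a genus-one surface with a $\Z/2\Z$-valued spin structure is an immediate computation.) We treat the two cases $\epsilon(S) = \epsilon_0$ and $\epsilon(S) = \epsilon_0 + 1$ in turn.

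Suppose first that $\epsilon(S) = \epsilon_0$. By Corollary \ref{corollary:allvalueseven}.\ref{item:chaineven} there is a maximal chain $a_1, \dots, a_{2d}$ of admissible curves on $S$. A regular neighborhood of this chain is all of $S$, whose single boundary component is $c$, so the even case of the chain relation (Proposition \ref{proposition:chain}) gives $(T_{a_1} \cdots T_{a_{2d}})^{4d+2} = T_c$. Each $T_{a_i}$ is an admissible twist, so $T_c \in \mathcal T_\phi$, as desired.

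Now suppose $\epsilon(S) = \epsilon_0 + 1$; this is the main case. The idea is to enlarge $S$ by a genus-one handle carrying an admissible symplectic pair, which flips the ambient Arf invariant. Since $g \ge g(r) = k(d)d + 1 \ge 2d + 1$ (as $k(d) \ge 2$), the complement $\Sigma_g \setminus S$ has genus at least $d + 1 \ge 2$. Inside it, choose a genus-one subsurface $C$ having $c$ as one of its two boundary components, so that $\partial C = c \cup \Delta$ and $\Sigma_g \setminus (S \cup C)$ has genus $g - d - 1 \ge 1$; using the freedom in the Arf invariant of this positive-genus complementary piece (Corollary \ref{corollary:allvalueseven}), we may arrange that $\phi|_C$ has Arf invariant $1$ — equivalently, that $C$ carries a pair of admissible curves $a, c_1$ with $i(a, c_1) = 1$. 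Extend this pair to a length-three chain $(a, c_1, a')$ whose regular neighborhood is $C$: take $a' = a +_{\epsilon} c$ for a suitable arc $\epsilon$ from $a$ to $c$ in $C$, so that $\phi(a') = \phi(a) + \phi(c) + 1 = 0$ by Lemma \ref{lemma:curvearcsum} (here $\phi(c) = -1$ when $c$ is oriented with $C$ to its left). The odd case of the chain relation now gives $(T_a T_{c_1} T_{a'})^4 = T_c T_\Delta$, and the left-hand side lies in $\mathcal T_\phi$ because $a, c_1, a'$ are admissible. Hence it remains only to show $T_\Delta \in \mathcal T_\phi$.

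But $\Delta$ bounds the subsurface $S_1 := S \cup_c C$ of genus $d + 1$, and since the Arf invariant is additive over this decomposition (take a symplectic basis adapted to it, with the genus-one summand contributing $(\bar 0 + 1)(\bar 0 + 1) = 1$), we get $\epsilon(S_1) = \epsilon(S) + 1 = \epsilon_0$. Checking the residues $d \equiv 1, 3 \pmod 4$ shows that $\epsilon_0$ is exactly the value for which a genus-$(d+1)$ surface carries a maximal chain of admissible curves. Thus $S_1$ carries such a chain, and applying the first case to $S_1$ yields $T_\Delta \in \mathcal T_\phi$, completing the proof. The main obstacle is precisely this Arf obstruction: one cannot place an admissible maximal chain directly on $S$ when $\epsilon(S) = \epsilon_0 + 1$, and the device of collaring $c$ by a genus-one handle supporting an admissible symplectic pair is what converts the bad parity into the good one.
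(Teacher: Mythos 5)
Your argument is correct and is essentially the paper's own proof: the paper likewise disposes of the good-Arf case by a maximal admissible chain on $S$, and in the bad case attaches a genus-one piece in the complement carrying an admissible $3$-chain whose outer curve is, in effect, your $a +_\epsilon c$ (admissible there by homological coherence rather than Lemma \ref{lemma:curvearcsum}), so that the odd chain relation gives $T_c T_{c'}$ with $c'$ bounding a genus-$(d+1)$ subsurface of flipped Arf invariant, handled by the even chain relation exactly as in your last paragraph. The only differences are cosmetic: the paper obtains the middle curve via Corollary \ref{corollary:intersections} instead of starting from an admissible symplectic pair, and your loose appeal to the ``Arf invariant of $\phi\vline_C$'' (a two-boundary surface) is more cleanly justified by first producing the admissible pair in the genus-$\ge 2$ complement of $S$ via Corollary \ref{corollary:allvalueseven}.\ref{item:gsbevenrestricted} and then choosing $C$ as a neighborhood of that pair, an arc, and $c$.
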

\begin{proof}
If $c$ is easy then there is nothing to show. Assume therefore that $c$ is hard. If $c$ is oriented so that $S$ lies to the right, then $\phi(c) = -(1-2d) \equiv -1 \pmod {2d}$. The assumption that $r = 2d < g-1$ implies that $\Sigma_g \setminus S$ has genus at least $2$. We claim that there exists a $3$-chain of admissible curves $x,y,z$ on $\Sigma_g \setminus S$ such that $c \cup x \cup z$ forms a pair of pants. To see this, we invoke Corollary \ref{corollary:admissible} to let $x \subset \Sigma_g \setminus S$ be an admissible curve. Let $z \subset \Sigma_g \setminus S$ be any curve such that $c \cup x \cup z$ bounds a pair of pants; admissibility of $z$ follows by the homological coherence property, as $c$ is oriented with $\Sigma_g \setminus S$ to the left. To construct $y$, let $y' \subset \Sigma_g \setminus S$ be any curve such that $x, y', z$ forms a chain. By Corollary \ref{corollary:intersections}, $y'$ can be replaced with an admissible curve $y$ with the same intersection properties. 

 Let $S'$ denote the connected surface of genus $d+1$ containing $S$ and $x \cup y \cup z$.  If $\mathcal B$ is a basis for $H_1(S; \Z)$, then $\mathcal B \cup \{x,y\}$ forms a basis for $H_1(S';\Z)$. Applying the formula \eqref{equation:arf} for the Arf invariant, it follows that $\epsilon(S') = \epsilon(S)+1$. 

Since $c$ is hard and $d=g(S)$ is odd, Lemma \ref{lemma:easy} implies that $\epsilon(S) = 0$ if $g(S) \equiv 1 \pmod 4$ and that $\epsilon(S) = 1$ otherwise. Recalling that $\epsilon(S') = \epsilon(S) + 1$, in the first case, $g(S') \equiv 2 \pmod 4$ and $\epsilon(S') = 1$, and in the second case, $g(S') \equiv 0 \pmod 4$ and $\epsilon(S') = 0$. Lemma \ref{lemma:easy} then implies that $c':= \partial S'$ must be easy, and so $T_{c'} \in \mathcal T_\phi$. Applying the chain relation (Proposition \ref{proposition:chain}) to $x,y,z$ shows that $T_c T_{c'} \in \mathcal T_\phi$; this implies that also $T_c \in \mathcal T_\phi$.
\end{proof}

\begin{lemma}\label{lemma:containsjkeven2}
Let $\phi$ be a $\Z/2d\Z$-valued spin structure on $\Sigma_g$ with $d$ odd. Assume that $g$ satisfies the hypotheses of Proposition \ref{proposition:twistgeneven}. Then $\mathcal T_\phi$ contains the Johnson kernel $\mathcal K_g$.
\end{lemma}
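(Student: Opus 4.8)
The plan is to combine Johnson's Theorem~\ref{theorem:johnson2} with the $D_n$ relation (Proposition~\ref{proposition:dnrel}), using the genus-$d$ separating twists already produced in Lemma~\ref{lemma:dodd} as the ``anchor'' that cancels the unwanted twist power appearing in that relation. By Theorem~\ref{theorem:johnson2}, $\mathcal K_g$ is generated by separating twists (indeed, by those of genus at most two), so it suffices to show that $T_\Delta \in \mathcal T_\phi$ for every separating curve $\Delta$.

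So first I would fix a separating curve $\Delta$, writing $\Sigma_g = S_1 \cup_\Delta S_2$ with $g(S_1) \ge g(S_2)$, so that $g(S_1) \ge \lceil g/2 \rceil$. For $r = 2d$ with $d$ odd, the hypotheses of Proposition~\ref{proposition:twistgeneven} read $g \ge g(2d) = 2d+1$ and $2d < g-1$, hence $g \ge 2d+2$ and therefore $g(S_1) \ge d+1$. This is exactly the input required by Corollary~\ref{corollary:dneven}, which supplies a configuration $\mathscr D_n$ of \emph{admissible} curves $\{a, a', c_1, \dots, c_{n-2}\}$ supported on $S_1$, with $n = 2g(S_1) - 2d + 1$ odd, arranged so that $\Delta$ is the curve $\Delta_2$ of Figure~\ref{figure:dnrel}. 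Writing $n = 2G+1$ with $G = g(S_1) - d \ge 1$, the $D_n$ relation gives
\[
T_{\Delta_0}^{2G-1} T_{\Delta} \in H_n,
\]
and since every curve of $\mathscr D_n$ is admissible, $H_n \le \mathcal T_\phi$.

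The crux is then to identify $T_{\Delta_0}$ as a genus-$d$ separating twist. By the description of the curves $C_k$ preceding Corollary~\ref{corollary:dn}, the curves $\Delta_0$ and $\Delta_2 = \partial S_1$ together bound the subsurface $\nu_G$, a regular neighborhood of all of $\mathscr D_n = \mathscr D_{2G+1}$, which has genus $G$ and two boundary components. Hence $S_1 \setminus \nu_G$ has a single boundary component $\Delta_0$ and genus $g(S_1) - G = d$; since $g \ge 2d$, the twist $T_{\Delta_0}$ is a separating twist of genus $\min(d, g-d) = d$. As $d$ is odd, Lemma~\ref{lemma:dodd} gives $T_{\Delta_0} \in \mathcal T_\phi$, so $T_{\Delta_0}^{2G-1} \in \mathcal T_\phi$, and therefore $T_\Delta \in \mathcal T_\phi$. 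Since $\Delta$ was an arbitrary separating curve, Theorem~\ref{theorem:johnson2} yields $\mathcal K_g \le \mathcal T_\phi$.

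The main point requiring care is precisely this bookkeeping: recognizing that $\Delta_0$ bounds a genus-$d$ subsurface (so that Lemma~\ref{lemma:dodd} applies and removes the $T_{\Delta_0}^{2G-1}$ factor), and checking that all the relevant genus inequalities — that $g(S_1) \ge d+1$, and that the genus-$d$ piece bounded by $\Delta_0$ has complement of genus at least $2$ as needed inside Lemma~\ref{lemma:dodd} (its complement has genus $g - d \ge d+2 \ge 3$) — follow from the standing assumption $g \ge 2d+2$. Beyond this there is no serious obstacle: the Arf-invariant obstruction that complicates the ``hard separating curve'' analysis never surfaces here, because the $D_n$ relation has already replaced $T_\Delta$ by an expression in admissible twists and a single genus-$d$ separating twist, which Lemma~\ref{lemma:dodd} disposed of.
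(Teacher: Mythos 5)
Your proposal is correct and follows essentially the same route as the paper: reduce to separating twists via Johnson's Theorem \ref{theorem:johnson2}, use Corollary \ref{corollary:dneven} to place an admissible $\mathscr D_n$ configuration on the larger side of the separating curve with $\Delta_2$ equal to that curve, apply the $D_n$ relation, and cancel the $T_{\Delta_0}$ power using Lemma \ref{lemma:dodd} since $\Delta_0$ bounds a genus-$d$ subsurface. Your extra bookkeeping (the genus inequalities and the identification of the genus-$d$ piece) just makes explicit what the paper states more briefly.
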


\begin{proof}
By Theorem \ref{theorem:johnson2}, it suffices to show that $T_c \in \mathcal T_\phi$ for all separating curves $c$ of arbitrary genus. To do this, we combine Lemma \ref{lemma:dodd} with the $D_n$ relation (Proposition \ref{proposition:dnrel}). Suppose $c$ is a separating curve on $\Sigma_g$. Since $g = kd+1$ with $k \ge 2$, at least one side of $c$ must be a subsurface $S$ of genus $g(S) \ge d+1$. Set $n := 2g(S) -2d +1$. By Corollary \ref{corollary:dneven}, there is a configuration $\mathscr D_n$ of admissible curves as in the $D_n$ relation for which $\Delta_2 = c$. The other boundary component $\Delta_0$ bounds a subsurface of genus $d$. Applying the $D_n$ relation, we have $T_{\Delta_0}^{n-1} T_c \in \mathcal T_\phi$. But since $\Delta_0$ bounds a surface of genus $d$, also $T_{\Delta_0} \in \mathcal T_\phi$ by Lemma \ref{lemma:dodd}. Thus $\mathcal K_g \leqslant \mathcal T_\phi$ in this case.
\end{proof}

\para{Substep 2: \bm{$d$} even} The objective is to establish Lemma \ref{lemma:containsjkeven0}. The argument here follows a similar outline to that of Substep 1 but now requires the two preliminary Lemmas \ref{lemma:deven} and \ref{lemma:deven2}. 
\begin{lemma}\label{lemma:deven}
Let $S \subset \Sigma_g$ be a subsurface of genus $g(S) \ge 5$ with a single boundary component $c$, such that $g(S) \equiv d + 2 \pmod {2d}$. If $\phi$ is a $\Z/2d\Z$-valued spin structure with $d$ even, then $T_c \in \mathcal T_\phi$.
\end{lemma}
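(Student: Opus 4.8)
The plan is to follow the template of Lemma \ref{lemma:dodd}, but enlarging by genus two rather than genus one so as to account for the ``$+2$'' in $g(S) \equiv d+2 \pmod{2d}$. If $c$ is easy, then $T_c \in \mathcal T_\phi$ by Corollary \ref{corollary:allvalueseven}.\ref{item:chaineven} and the chain relation (Proposition \ref{proposition:chain}), so assume $c$ is hard. Then neither $S$ nor $\Sigma_g \setminus S$ supports a maximal admissible chain, so by Lemma \ref{lemma:easy} the Arf invariant $\epsilon(S)$ is the opposite of the value permitted for genus $g(S)$; write $e(m) \in \Z/2\Z$ for the value such that a genus-$m$ subsurface with one boundary supports a maximal admissible chain precisely when its restricted spin structure has Arf invariant $e(m)$, so that $\epsilon(S) \equiv e(g(S)) + 1 \pmod 2$.

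Next I would attach along $c$, inside whichever of $S$ or $\Sigma_g \setminus S$ has sufficient genus (the hypotheses $g(S) \ge 5$, $r < g-1$, and $g \ge g(r)$ guarantee at least one side does), a copy of $\Sigma_{2,2}$ carrying a maximal chain $c_1, \dots, c_5$ of admissible curves, with $c$ as one of its two boundary components; this configuration is produced curve by curve as with $x,y,z$ in the proof of Lemma \ref{lemma:dodd}, using Corollaries \ref{corollary:admissible}, \ref{corollary:intersections}, \ref{corollary:allvalueseven} and homological coherence to force admissibility of the relevant curves. Let $c'$ be the other boundary component, bounding a subsurface $S'$ with $g(S') = g(S)+2$. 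The chain relation (Proposition \ref{proposition:chain}) applied to $c_1, \dots, c_5$ yields $T_c T_{c'} \in \mathcal T_\phi$, so it suffices to prove $T_{c'} \in \mathcal T_\phi$, for which it is enough that $c'$ be easy. A genus-two admissible enlargement adds two admissible handles, so by \eqref{equation:arf} one has $\epsilon(S') \equiv \epsilon(S) \pmod 2$; combined with $g(S') = g(S)+2$ and the identity $e(m+2) \equiv e(m)+1 \pmod 2$ this gives $\epsilon(S') \equiv e(g(S)) + 1 \equiv e(g(S'))$, so by Lemma \ref{lemma:easy} the surface $S'$ supports a maximal admissible chain. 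Hence $c'$ is easy, $T_{c'} \in \mathcal T_\phi$, and therefore $T_c \in \mathcal T_\phi$.

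The hard part is the middle step: constructing a copy of $\Sigma_{2,2}$ full of admissible curves with $c$ as a prescribed boundary component. The value $\phi(c)$ is completely determined by homological coherence, and one must verify both that this value is compatible with $c$ arising as a boundary component of such an admissible configuration and that there is enough room on one side of $c$ to carry out the construction; this is precisely where the congruence $g(S) \equiv d+2 \pmod{2d}$ (with $d$ even and $r = 2d \mid 2g-2$) and the bound $g(S) \ge 5$ enter. I expect this to be handled by the change-of-coordinates and curve-arc-sum techniques developed in Section \ref{section:action} and deployed in the proof of Lemma \ref{lemma:dodd}.
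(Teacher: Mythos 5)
There is a genuine gap, and it sits exactly at the step you deferred. If $c$ were a boundary component of a subsurface $M \cong \Sigma_{2,2}$ that deformation retracts onto a maximal chain $c_1,\dots,c_5$ of admissible curves, then $c \cup c_1 \cup c_3 \cup c_5$ would bound a four-holed sphere inside $M$, and homological coherence (Proposition \ref{proposition:HCC}) would force $\phi(c) = \chi(\Sigma_{0,4}) = -2$ when $c$ is oriented with that planar piece to its left. But the hypothesis $g(S) \equiv d+2 \pmod{2d}$ gives $\phi(c) = 1-2g(S) \equiv -3 \pmod{2d}$ with $S$ to the left, hence $\phi(c) \equiv +3$ with $M \subset \Sigma_g\setminus S$ to the left; since $d$ is even, $2d \ge 4$, so $3 \not\equiv -2$ (and $-3 \not\equiv -2$ if you instead try to put $M$ inside $S$). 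So the genus-two admissible enlargement you need does not exist. Independently, your Arf bookkeeping is off: even granting such an $M$, attaching a genus-two piece carrying a maximal admissible chain changes the restricted Arf invariant by $1$, not $0$. In $H_1(S';\Z/2\Z)$ one has $[c_1]+[c_3]+[c_5] = [c] = 0$, a symplectic basis for the new block is $(c_1,\, c_2+c_4),\ (c_3,\, c_4)$, and by \eqref{equation:arf} its contribution is $1\cdot 0 + 1\cdot 1 = 1$, so $\epsilon(S') = \epsilon(S)+1$. (Your count of ``two admissible handles'' would be correct for a geometric symplectic basis of admissible curves, but that configuration is not a maximal chain and yields no chain relation.) With the correct value, $\epsilon(S') = \epsilon(S)+1$ and $e(g(S)+2)=e(g(S))+1$ show that $S'$ again fails to support a maximal admissible chain, and the same computation on the other side of $c'$ shows it fails as well; so $c'$ would be hard and the argument stalls.

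The paper's proof is engineered around precisely this obstruction: it changes the genus by three rather than two, and works inside $S$, because $\phi(c)\equiv -3$ is exactly the compatibility condition for $c$ to cobound a five-holed sphere with the odd-index curves of a $7$-chain of admissible curves. Concretely, one takes a chain $a_1,\dots,a_6$ of admissible curves on $S$ (Corollary \ref{corollary:allvalueseven}.\ref{item:chainevenrestricted}) and a seventh curve $a_7$ extending the chain so that $c\cup a_1\cup a_3\cup a_5\cup a_7$ bounds a $\Sigma_{0,5}$; homological coherence together with $\phi(c)\equiv -3$ makes $a_7$ admissible. The complement $S'$ of this configuration in $S$ has genus $g(S)-3$, and the $7$-chain block contributes $0$ to the Arf invariant (the same kind of computation as above), so $\epsilon(S')=\epsilon(S)$ while $g(S')\equiv g(S)+1 \pmod 4$ with $g(S)$ even; Lemma \ref{lemma:easy} then makes $c'=\partial S'$ easy whenever $c$ is hard, and the chain relation applied to $a_1,\dots,a_7$ gives $T_cT_{c'}\in\mathcal T_\phi$, hence $T_c\in\mathcal T_\phi$. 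Your overall template (hard implies a neighboring boundary is easy, then a chain relation) is the right one, but the genus-two version of it is not salvageable.
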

\begin{proof}
Orient $c$ so that $S$ lies to the left. Then 
\[
\phi(c) = 1-2g(S) \equiv 1 - 2(d+2) \equiv -3 \pmod {2d}.
\]
By Corollary \ref{corollary:allvalueseven}.\ref{item:chainevenrestricted}, there exists a chain $a_1, \dots, a_6$ of admissible curves on $S$. Let $a_7$ be any curve on $S$ such that $i(a_7, a_k) = 1$ for $k = 6$ and is zero for $k \le 5$, and such that $c \cup a_1 \cup a_3 \cup a_5 \cup a_7$ bounds a subsurface of $S$ homeomorphic to $\Sigma_{0,5}$. By homological coherence, $a_7$ is admissible. 

Let $S'$ denote the subsurface of $S$ homeomorphic to $\Sigma_{g(S)-3, 1}$ determined by the complement of the chain $a_1, \dots, a_7$. Applying the formula \eqref{equation:arf} for the Arf invariant, one finds that $\epsilon(S') = \epsilon(S)$. On the other hand, $g(S') \equiv g(S)+1 \pmod{4}$. By hypothesis, $g(S)$ is even, and so referring to Lemma \ref{lemma:easy}, if $c$ is hard, then $c':= \partial S'$ must be easy. The arguments given at the conclusion of Lemma \ref{lemma:dodd} now apply to give the result. 
\end{proof}

\begin{lemma}\label{lemma:deven2}
Let $S \subset \Sigma$ be a subsurface of genus $g(S) \ge 9$ with a single boundary component $c$, such that $g(S) \equiv d+4 \pmod{2d}$. If $\phi$ is a $\Z/2d\Z$-valued spin structure with $d$ even, then $T_c \in \mathcal T_\phi$. 
\end{lemma}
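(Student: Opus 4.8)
The plan is to follow verbatim the template of Lemmas \ref{lemma:dodd} and \ref{lemma:deven}: peel an admissible chain off of $S$, use the chain relation (Proposition \ref{proposition:chain}) to express $T_c$ in terms of the admissible twists along that chain together with the twist about the boundary $c'$ of the complementary subsurface $S'$, and then invoke Lemma \ref{lemma:easy} and the Arf-invariant bookkeeping to see that $c'$ is necessarily easy. If $c$ is easy there is nothing to prove (as already noted, $T_c\in\mathcal T_\phi$ by Corollary \ref{corollary:allvalueseven}.\ref{item:chaineven} and the chain relation), so assume $c$ is hard; orienting $c$ so that $S$ lies to the left, the hypothesis $g(S)\equiv d+4\pmod{2d}$ gives
\[
\phi(c)=1-2g(S)\equiv 1-2(d+4)\equiv -7 \pmod{2d}.
\]

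Next I would apply Corollary \ref{corollary:allvalueseven}.\ref{item:chainevenrestricted} to obtain a chain $a_1,\dots,a_{14}$ of admissible curves on $S$, and then choose $a_{15}$ with $i(a_{15},a_{14})=1$, $i(a_{15},a_k)=0$ for $k\le 13$, and with $c\cup a_1\cup a_3\cup\dots\cup a_{15}$ bounding a copy of $\Sigma_{0,9}$ inside $S$; homological coherence then forces $\phi(a_{15})=0$, so that $a_1,\dots,a_{15}$ is a chain of admissible curves whose regular neighborhood is a copy of $\Sigma_{7,2}$ having $c$ as one boundary component. This is the exact analogue of the $7$-chain and the $\Sigma_{0,5}$ appearing in Lemma \ref{lemma:deven}; the genus $7$ (equivalently, the $9$-holed sphere) replaces $3$ (the $5$-holed sphere) precisely because $\phi(c)\equiv -7$ rather than $-3$. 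Let $S'=\Sigma_{g(S)-7,1}$ be the complement of this neighborhood in $S$, with boundary $c'$; note $g(S')\ge 2$ because $g(S)\ge 9$, so Lemma \ref{lemma:easy} applies to $S'$. Two computations then finish the argument exactly as in Lemma \ref{lemma:deven}: applying formula \eqref{equation:arf} and using that a maximal admissible chain on a genus-$7$ surface has vanishing Arf invariant (since $7\equiv 3\pmod 4$; cf.\ Lemma \ref{lemma:easy}), one finds $\epsilon(S')=\epsilon(S)$; and $g(S')=g(S)-7\equiv g(S)+1\pmod 4$ because $-7\equiv 1\pmod 4$. Since $g(S)$ is even and $c$ is hard, Lemma \ref{lemma:easy} determines $\epsilon(S)$ (namely $\epsilon(S)=1$ if $g(S)\equiv 0$ and $\epsilon(S)=0$ if $g(S)\equiv 2\pmod 4$), and feeding $\epsilon(S')$ and $g(S')\bmod 4$ back into Lemma \ref{lemma:easy} shows $c'$ is easy, hence $T_{c'}\in\mathcal T_\phi$. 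Finally the chain relation applied to $a_1,\dots,a_{15}$ writes $T_cT_{c'}$ as a product of the admissible twists $T_{a_i}$, whence $T_cT_{c'}\in\mathcal T_\phi$ and therefore $T_c\in\mathcal T_\phi$.

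I do not anticipate any genuine obstacle: because $7\equiv 3\pmod 4$, every divisibility and parity computation in this argument is formally identical to the corresponding one in the proof of Lemma \ref{lemma:deven}. The only point that deserves a moment's care is the realizability of the longer configuration — a $14$-chain of admissible curves on $S$, extended by $a_{15}$ subject to the $\Sigma_{0,9}$ condition, with a complementary subsurface of genus at least $2$ — and this is exactly what the hypothesis $g(S)\ge 9$ (rather than the $g(S)\ge 5$ of Lemma \ref{lemma:deven}) is present to guarantee.
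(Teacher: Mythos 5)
Your proposal is correct and is essentially the paper's own proof: the paper likewise takes a $15$-chain of admissible curves on $S$ with $c \cup a_1 \cup a_3 \cup \dots \cup a_{15}$ bounding a $\Sigma_{0,9}$, passes to the complementary subsurface $S' \cong \Sigma_{g(S)-7,1}$, and argues via Lemma \ref{lemma:easy} and the Arf-invariant bookkeeping that $c'$ is easy when $c$ is hard, concluding with the chain relation. Your spelled-out parity computations ($\epsilon(S')=\epsilon(S)$, $g(S')\equiv g(S)+1 \pmod 4$) are exactly the ones the paper leaves implicit by saying the argument proceeds as in Lemma \ref{lemma:deven}.
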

\begin{proof}
This is proved along similar lines to Lemma \ref{lemma:deven}. Arguing as in the first paragraph of the proof of Lemma \ref{lemma:deven}, there exists a chain $a_1, \dots, a_{15}$ of admissible curves on $S$ such that $c \cup a_1 \cup a_3 \cup \dots \cup a_{15}$ bounds a subsurface of $S$ homeomorphic to $\Sigma_{0,9}$. Let $S'$ denote the subsurface of $S$ homeomorphic to $\Sigma_{g(S) - 7,1}$ determined by the complement of the chain $a_1, \dots, a_{15}$. The rest of the argument proceeds as in Lemma \ref{lemma:deven}: one shows that if $c$ is hard, necessarily $c' := \partial S'$ must be easy, and the result follows as before by the chain relation (Proposition \ref{proposition:chain}).
\end{proof}

\begin{lemma}\label{lemma:containsjkeven0}
Let $\phi$ be a $\Z/2d\Z$-valued spin structure on $\Sigma_g$ with $d$ even. Assume that $g$ satisfies the hypotheses of Proposition \ref{proposition:twistgeneven}. Then $\mathcal T_\phi$ contains the Johnson kernel $\mathcal K_g$.
\end{lemma}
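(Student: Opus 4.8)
The plan is to follow the template of Lemma~\ref{lemma:containsjkeven2}: use Johnson's Theorem~\ref{theorem:johnson2} to reduce to a statement about separating twists, and then bring in the $D_n$ relation (Proposition~\ref{proposition:dnrel}) --- together with its Corollaries~\ref{corollary:dn} and~\ref{corollary:dneven} --- to express the relevant separating twists in terms of admissible twists and of separating twists about curves bounding subsurfaces whose genus is $\equiv d+2$ or $\equiv d+4 \pmod{2d}$; these last are disposed of by the preliminary Lemmas~\ref{lemma:deven} and~\ref{lemma:deven2}.

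Concretely, by Theorem~\ref{theorem:johnson2} it suffices to prove $T_c \in \mathcal T_\phi$ for every separating curve $c$ (in fact it is enough to treat $c$ of genus at most $2$). Given such a $c$, the hypothesis $g \ge g(r)$ ensures that one side $S$ of $c$ has genus at least $d+1$; applying Corollary~\ref{corollary:dneven} with $\Delta = c$ and $S_1 = S$ produces a configuration $\mathscr D_n$ of admissible curves on $S$ realizing $c$ as the boundary component $\Delta_2$, whose other distinguished boundary component $\Delta_0$ bounds a subsurface of genus $d$. Since the curves of $\mathscr D_n$ are admissible, the group $H_n$ of Proposition~\ref{proposition:dnrel} is contained in $\mathcal T_\phi$, and the $D_n$ relation gives $T_{\Delta_0}^{\,N} T_c \in \mathcal T_\phi$ for the appropriate power $N$. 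In the $r$-odd case one concluded at this point by invoking Lemma~\ref{lemma:dodd}, which says that the twist about a genus-$d$ separating curve is admissible-generated; for $r$ even there is no such clean statement, so instead one must chase the genus-$d$ curve $\Delta_0$ further. Because the boundary of a genus-$d$ subsurface has $\phi$-value $\equiv -1 \pmod{2d}$, chains of admissible curves may be attached to $\Delta_0$ as in the proof of Lemma~\ref{lemma:dodd}; but --- unlike the odd case --- a single $3$-chain does not terminate the process, since by Lemma~\ref{lemma:easy} the boundary of the resulting genus-$(d+1)$ subsurface need not be easy when $d$ is even. Combining further chains with the $D_n$ relation (for which Corollary~\ref{corollary:dn}, giving that suitable powers $T_{C_k}^{\,m}$ lie in $\mathcal T_\phi$, is the relevant tool), one expresses $T_{\Delta_0}$ modulo $\mathcal T_\phi$ in terms of separating twists about curves bounding subsurfaces of genus $\equiv d+2$ or $\equiv d+4 \pmod{2d}$ --- and of size at least $5$, resp.\ $9$ --- and these lie in $\mathcal T_\phi$ by Lemmas~\ref{lemma:deven} and~\ref{lemma:deven2}. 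Hence $T_c \in \mathcal T_\phi$, and since $c$ was arbitrary, $\mathcal K_g \leqslant \mathcal T_\phi$.

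Which of the two families (genus $\equiv d+2$ or $\equiv d+4$) is invoked, and exactly how many chains or $D_n$-relations are needed, depends on the residue of $d$ modulo $4$ --- this is where Lemma~\ref{lemma:easy} forces a split into the cases $d \equiv 0$ and $d \equiv 2 \pmod 4$ --- and on whether $d$ is one of the small exceptional values $2$ or $4$, for which the genus thresholds $5$ and $9$ in Lemmas~\ref{lemma:deven} and~\ref{lemma:deven2} force us to reach into a higher residue class $\pmod{2d}$; this dictates the correction terms $k(2) = 6$ and $k(4) = 5$ in Definition~\ref{definition:gd}.

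The main obstacle is precisely this last step --- organizing the reduction from genus-$d$ separating twists (for $d$ even) down to the families covered by Lemmas~\ref{lemma:deven} and~\ref{lemma:deven2}. One must build configurations of admissible curves together with separating curves of prescribed genus, keep track of the Arf invariant $\epsilon(\cdot)$ at every stage via Lemma~\ref{lemma:easy}, and verify that all the required subsurfaces embed in $\Sigma_g$ under the bound $g \ge g(r)$. The remaining ingredients --- the reduction to separating twists and the bookkeeping with the $D_n$ relation --- transcribe the $r$-odd argument of Lemma~\ref{lemma:containsjkeven2} with only cosmetic changes.
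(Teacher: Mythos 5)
Your setup coincides with the paper's: reduce via Theorem \ref{theorem:johnson2} to separating twists of genus at most $2$, bound the genus $h$ of the other side below (the small cases $d=2,4$ versus $d \ge 6$ being exactly where $k(2)=6$, $k(4)=5$ enter), and apply Corollary \ref{corollary:dneven} to get an admissible configuration $\mathscr D_{2n+1}$ with $\Delta_2 = c$ and $\Delta_0$ bounding a genus-$d$ piece, so that the $D_{2n+1}$ relation yields $T_{\Delta_0}^{2n-1}T_c \in \mathcal T_\phi$. But the heart of the lemma is precisely the step you leave open: showing that the genus-$d$ twist $T_{\Delta_0}$ itself lies in $\mathcal T_\phi$ when $d$ is even. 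You gesture at iterating the chain-attachment trick of Lemma \ref{lemma:dodd} combined with unspecified further $D_n$ relations, assert that which of the two families (genus $\equiv d+2$ or $\equiv d+4 \pmod{2d}$) is used depends on $d \bmod 4$, and then explicitly concede that "organizing the reduction" is the main obstacle. That is the gap: as written, no mechanism is given that actually produces $T_{\Delta_0} \in \mathcal T_\phi$ from Lemmas \ref{lemma:deven} and \ref{lemma:deven2}, and the iterated-chain picture you sketch is not carried out (and is not how the argument goes).

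The paper's device is a coprimality trick using \emph{nested} subconfigurations of the same $\mathscr D_{2n+1}$: applying the $D_5$ and $D_9$ relations to $\mathscr D_5 \subset \mathscr D_9 \subset \mathscr D_{2n+1}$ gives $T_{\Delta_0}^{3}T_{C_2} \in \mathcal T_\phi$ and $T_{\Delta_0}^{7}T_{C_4} \in \mathcal T_\phi$, where $C_2$ and $C_4$ bound (together with the genus-$d$ piece behind $\Delta_0$) subsurfaces of genus $\equiv d+2$ and $\equiv d+4 \pmod{2d}$. Lemmas \ref{lemma:deven} and \ref{lemma:deven2} then give $T_{C_2}, T_{C_4} \in \mathcal T_\phi$, hence $T_{\Delta_0}^{3}$ and $T_{\Delta_0}^{7}$ both lie in $\mathcal T_\phi$; since $\gcd(3,7)=1$ this forces $T_{\Delta_0} \in \mathcal T_\phi$, and then $T_{\Delta_0}^{2n-1}T_c \in \mathcal T_\phi$ gives $T_c \in \mathcal T_\phi$. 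Note in particular that \emph{both} residue classes $d+2$ and $d+4$ are used simultaneously for every even $d$ -- there is no case split according to $d \bmod 4$; the only case analysis is on the size of $d$ ($d=2$, $d=4$, $d\ge 6$), which guarantees the ambient genus is large enough for the configuration and for the hypotheses $g(S) \ge 5$, $g(S) \ge 9$ of Lemmas \ref{lemma:deven} and \ref{lemma:deven2}. To complete your proof you would need to supply this (or an equivalent) elimination of $T_{\Delta_0}$; everything else in your outline is a faithful transcription of the argument for $d$ odd.
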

\begin{proof}
According to Johnson's Theorem \ref{theorem:johnson2}, in order to show that $\mathcal K_g \leqslant \mathcal T_\phi$, it suffices to exhibit all separating twists of genus $1$ and $2$ as elements of $\mathcal T_\phi$. To do this, we again appeal to the $D_n$ relation (Proposition \ref{proposition:dnrel}). Suppose $c$ is a separating curve on $\Sigma_g$ with $g(c) \le 2$. By hypothesis, $g \ge kd+1$ with $d$ even and $k \ge 2$. Since the genus of one side of $c$ is at most $2$, the genus $h$ of the other side of $c$ is at least $kd - 1 \ge 2d-1$. If $d \ge 6$, then $2d-1 \ge 11$. If $d = 4$, then by assumption $k \ge 5$, and so $h \ge 19$. If $d = 2$ then we assume $k \ge 6$, so that $h \ge 11$.

In all three of these cases, Corollary \ref{corollary:dneven} implies that there exists an $n \ge 4$ and a configuration $a, a', c_1, \dots, c_{2n-1}$ of admissible curves in the configuration of the $D_{2n+1}$ relation, with $\Delta_2 = c$ and $C_4$ bounding a subsurface of genus $g(S) \equiv d+4 \pmod{2d}$ disjoint from $S$, such that the hypotheses of Lemmas \ref{lemma:deven} and \ref{lemma:deven2} hold.

By the $D_{k}$ relation (for $k = 2n+1,5,9$ respectively), $T_{\Delta_0}^{2n-1}T_c$ and $T_{\Delta_0}^3 T_{C_2}$ and $T_{\Delta_0}^7 T_{C_4}$ are all elements of $\mathcal T_\phi$. By Lemma \ref{lemma:deven}, $T_{C_2} \in \mathcal T_\phi$ as well, hence $T_{\Delta_0}^3 \in \mathcal T_\phi$. Likewise, Lemma \ref{lemma:deven2} shows that $T_{C_4} \in \mathcal T_\phi$, hence $T_{\Delta_0}^7 \in \mathcal T_\phi$. Combining these last two results shows that $T_{\Delta_0} \in \mathcal T_\phi$, and ultimately that $T_c \in \mathcal T_\phi$ as required.
\end{proof}
This concludes the proof of Proposition \ref{proposition:twistgeneven}. \end{proof}

\section{Connectivity of some complexes}\label{section:trick}
This section is devoted to establishing the connectivity of the simplicial complexes $\mathcal C_{sep,2}(\Sigma_g)$ and $\mathcal C_\phi^1(\Sigma_g)$ to be defined below. The first of these will be an important ingredient in the proof of Proposition \ref{proposition:submakesT}, and the second will feature in the proof of Theorem \ref{theorem:toric}. The mechanism by which these will be seen to be connected is the so-called {\em Putman trick}. The version given below is slightly less general than the full theorem as stated in \cite{putmantrick}, but will suffice for our purposes.

\begin{theorem}[The Putman trick]\label{theorem:putman}
Let $X$ be a simplicial graph, and let $G$ act on $X$ by simplicial automorphisms. Suppose that the action of $G$ on the set of vertices $X^{(0)}$ is transitive. Fix some base vertex $v \in X^{(0)}$. Let $\Sigma = \Sigma^{-1}$ be a symmetric set of generators for $G$, and suppose that for each $s \in \Sigma$, there is a path in $X$ connecting $v$ to $s\cdot v$. Then $X$ is connected.
\end{theorem}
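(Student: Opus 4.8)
The plan is to show that every vertex of $X$ lies in the same connected component as the base vertex $v$; connectivity of $X$ then follows immediately, since $X$ is a graph. The key point is that transitivity lets us write an arbitrary vertex as $g \cdot v$, and the generating hypothesis lets us factor $g$ into pieces, each of which moves $v$ a bounded distance.

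First I would set up the induction. Let $w \in X^{(0)}$ be arbitrary. By transitivity of the $G$-action on $X^{(0)}$, there is some $g \in G$ with $w = g \cdot v$. Since $\Sigma$ generates $G$, write $g = s_1 s_2 \cdots s_n$ with each $s_i \in \Sigma$. I will prove by induction on $n$ that $v$ and $g \cdot v$ lie in the same component of $X$. The base case $n = 0$ is trivial, since then $g$ is the identity and $g \cdot v = v$.

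For the inductive step, set $g' = s_2 \cdots s_n$, so that $g = s_1 g'$ and $g'$ is a product of $n-1$ generators. By the inductive hypothesis there is an edge-path $\gamma$ in $X$ from $v$ to $g' \cdot v$. Because $s_1$ acts on $X$ by a simplicial automorphism, $s_1 \cdot \gamma$ is an edge-path from $s_1 \cdot v$ to $s_1 g' \cdot v = g \cdot v$. By hypothesis (applied to the generator $s_1 \in \Sigma$) there is an edge-path $\delta$ from $v$ to $s_1 \cdot v$. Concatenating $\delta$ with $s_1 \cdot \gamma$ produces an edge-path in $X$ from $v$ to $g \cdot v$, completing the induction. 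Hence every vertex $w = g \cdot v$ is connected by an edge-path to $v$, so $X$ is connected.

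I do not anticipate any real obstacle here: the argument is a clean induction on word length, and the only facts used are that the $G$-action is simplicial (so it carries paths to paths), transitive on vertices (so every vertex is $g \cdot v$), and that $\Sigma$ is a symmetric generating set with the stated path condition. The symmetry of $\Sigma$ is what guarantees that every element of $G$ — not merely every element of the submonoid generated by $\Sigma$ — can be written as a word in $\Sigma$, which is the only place $\Sigma = \Sigma^{-1}$ enters.
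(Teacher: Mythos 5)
Your proof is correct, and it is the standard word-length induction underlying this result; the paper itself states Theorem \ref{theorem:putman} without proof, citing Putman's paper, and the argument there is essentially the one you give (induct on the length of a word in $\Sigma$ expressing $g$, translate a path by a simplicial automorphism, and concatenate with the path from $v$ to $s_1 \cdot v$). One small remark: the symmetry $\Sigma = \Sigma^{-1}$ is convenient but not essential even on your reading, since a path from $v$ to $s \cdot v$ yields, after applying the automorphism $s^{-1}$, a path from $s^{-1}\cdot v$ to $v$, so inverses of generators could be handled directly.
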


\begin{definition}
$\mathcal{C}_{sep, 2}(\Sigma_g)$ is the simplicial graph where vertices correspond to (isotopy classes of) separating curves $c$ bounding a subsurface homeomorphic to $\Sigma_{2,1}$, and where $c$ and $d$ are adjacent in $\mathcal{C}_{sep,2}(\Sigma_g)$ whenever $c$ and $d$ are disjoint in $\Sigma_g$. 
\end{definition}

\begin{figure}
\labellist
\small 
\pinlabel $c$ at 114 80
\pinlabel $d$ at 247 80
\endlabellist
\includegraphics{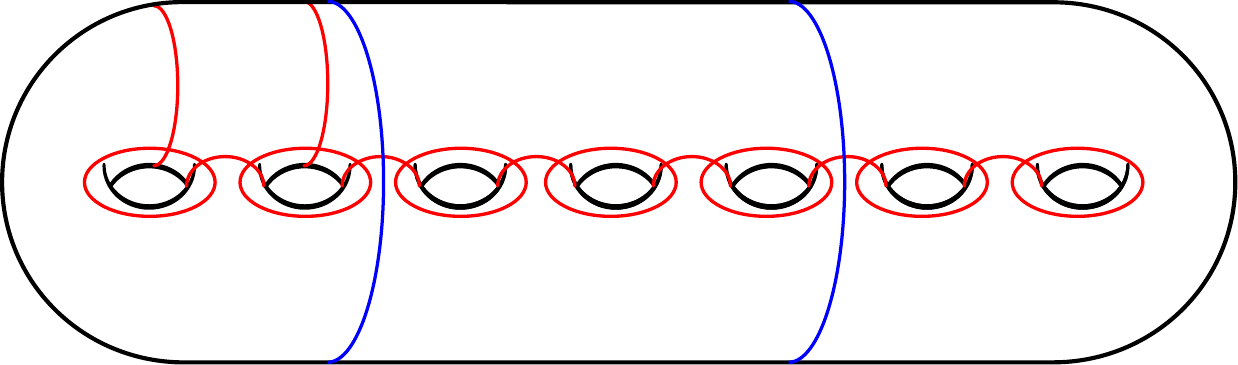}
\caption{The configuration of curves needed for Lemma \ref{lemma:connected1}.}
\label{figure:putmantrick}
\end{figure}

\begin{lemma}\label{lemma:connected1} $\mathcal{C}_{sep,2}(\Sigma_g)$ is connected for $g \ge 5.$
\end{lemma}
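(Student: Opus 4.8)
I will apply the Putman trick (Theorem~\ref{theorem:putman}) with $G = \Mod(\Sigma_g)$ acting on $X = \mathcal C_{sep,2}(\Sigma_g)$. Two things must be checked: that the action on vertices is transitive, and that for a suitable symmetric generating set $\Sigma$ of $\Mod(\Sigma_g)$, the base vertex $v$ can be joined by a path in $X$ to $s \cdot v$ for every $s \in \Sigma$. Transitivity on vertices is immediate from the change-of-coordinates principle, since any two curves each bounding a copy of $\Sigma_{2,1}$ are interchanged by some diffeomorphism (note $g \ge 5$ guarantees the complement, a copy of $\Sigma_{g-2,1}$, also has genus $\ge 2$, so there is no degenerate case). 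For the generating set I will take $\Sigma$ to be the standard Humphries-type generators: Dehn twists $T_{e}$ about a finite collection of simple closed curves $e_1, \dots, e_N$ (together with their inverses).

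**Key steps.** First I would fix a base vertex $c$ realized concretely so that it is disjoint from as many of the generating curves $e_i$ as possible; the standard picture (Figure~\ref{figure:humph}-style) lets one choose $c$ bounding a genus-$2$ subsurface at one end of the surface, disjoint from all but a bounded number of the $e_i$. For a generator $T_{e_i}$ with $e_i$ disjoint from $c$, we have $T_{e_i}(c) = c$, so $c$ and $T_{e_i}(c)$ are literally equal — a trivial path. The real work is the finitely many generators $T_{e_i}$ with $i(e_i, c) \ne 0$. For each such $e_i$, I would exhibit an explicit finite path in $\mathcal C_{sep,2}(\Sigma_g)$ from $c$ to $T_{e_i}(c)$: since $g \ge 5$ there is ``room'' to slide a genus-$2$ bounding curve past $e_i$ through a sequence of pairwise-disjoint intermediate genus-$2$ bounding curves. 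Concretely, one finds a curve $c'$ disjoint from both $c$ and $e_i$ (possible because $\Sigma_g \setminus (c \cup e_i)$ still contains a genus-$2$ subsurface when $g \ge 5$), so $c'$ is disjoint from $c$ and also $T_{e_i}(c') = c'$ is disjoint from $T_{e_i}(c)$; then $c \sim c' \sim T_{e_i}(c)$ is a path of length $2$. Assembling: every generator moves $c$ to a vertex within distance $\le 2$ of $c$, so by the Putman trick $X$ is connected.

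**Main obstacle.** The one genuinely delicate point is producing, for each generator $T_{e_i}$ with $i(e_i, c)\ne 0$, a single vertex $c'$ of $\mathcal C_{sep,2}(\Sigma_g)$ simultaneously disjoint from $c$ and from $e_i$. This is where the hypothesis $g \ge 5$ is used: one must verify that the subsurface of $\Sigma_g$ cut along $c$ and along $e_i$ still contains an embedded $\Sigma_{2,1}$ whose boundary separates $\Sigma_g$ in the required way. For the standard generating curves this is a finite, explicit check that one can read off from the standard picture — each $e_i$ meets the ``end'' genus-$2$ region in a controlled way, leaving at least a genus-$2$ piece elsewhere once $g\ge 5$. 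Everything else (transitivity, the trivial paths, invoking Theorem~\ref{theorem:putman}) is routine, so the proof reduces to this bounded case analysis, which I would organize using the change-of-coordinates principle rather than drawing every case.

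\begin{proof}
We apply the Putman trick (Theorem~\ref{theorem:putman}) with $G = \Mod(\Sigma_g)$ acting on $X = \mathcal C_{sep,2}(\Sigma_g)$ by its natural action. Transitivity of $G$ on the vertex set is an instance of the change-of-coordinates principle: if $c, d$ are separating curves each bounding a copy of $\Sigma_{2,1}$, then (using $g \ge 5$, so that the complementary subsurface $\Sigma_{g-2,1}$ also has positive genus) there is a diffeomorphism of $\Sigma_g$ carrying $c$ to $d$.

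Let $\Sigma$ be the symmetric generating set of $\Mod(\Sigma_g)$ consisting of the standard generators $T_{e_1}, \dots, T_{e_N}$ of Figure~\ref{figure:humph} together with their inverses. Fix a base vertex $c \in X^{(0)}$, realized as the boundary of a genus-$2$ subsurface situated at one end of $\Sigma_g$ in the standard picture, chosen so that $c$ is disjoint from all but finitely many of the curves $e_i$. If $e_i$ is disjoint from $c$, then $T_{e_i}(c) = c$, so $c$ and $T_{e_i}^{\pm 1} \cdot c$ coincide and are trivially connected by a path in $X$.

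It remains to treat the finitely many generators $T_{e_i}$ with $i(e_i, c) \ne 0$. Fix such an $i$. Since $g \ge 5$, the subsurface obtained by cutting $\Sigma_g$ along $c \cup e_i$ still contains an embedded copy of $\Sigma_{2,1}$ whose boundary curve, call it $c'$, separates $\Sigma_g$ with a $\Sigma_{2,1}$ on one side; this is again a consequence of the change-of-coordinates principle, once one checks the numerology (cutting along two curves removes at most genus $2$ worth of complexity, leaving genus $\ge 3 \ge 2$). By construction $c'$ is disjoint from $c$, so $c$ and $c'$ are adjacent in $X$. Moreover $c'$ is disjoint from $e_i$, hence $T_{e_i}(c') = c'$ is disjoint from $T_{e_i}(c)$, so $c'$ and $T_{e_i}(c)$ are adjacent in $X$. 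Thus $c \sim c' \sim T_{e_i}(c)$ is a path in $X$ of length $2$ from $c$ to $T_{e_i}(c)$; replacing $e_i$ by the same $c'$ also yields a path from $c$ to $T_{e_i}^{-1}(c)$.

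In all cases there is a path in $X$ joining $c$ to $s \cdot c$ for every $s \in \Sigma$. By Theorem~\ref{theorem:putman}, $\mathcal C_{sep,2}(\Sigma_g)$ is connected.
\end{proof}
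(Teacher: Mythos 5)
Your proof is correct and follows essentially the same route as the paper: both apply the Putman trick to the standard generating set, note that every generator disjoint from the base vertex $c$ fixes it, and handle the remaining generator(s) by producing a genus-$2$ bounding curve disjoint from both $c$ and the generator curve (the paper's curve $d$, your $c'$), giving a length-$2$ path to $T_{e_i}^{\pm 1}(c)$; the $g \ge 5$ hypothesis enters in exactly the same place. The only difference is cosmetic: the paper simply exhibits $d$ in a figure, while you argue its existence by a genus count via change of coordinates.
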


\begin{proof}
This is a straightforward consequence of Theorem \ref{theorem:putman}. With reference to Figure \ref{figure:putmantrick} and the standard generating set of Figure \ref{figure:humph}, observe that only the generator $T_{c_2}^{\pm}$ does not fix the base vertex $c$. In this case, the genus $2$ subsurface determined by $d$ is disjoint from both $c$ and $T_{c_2}^\pm(c)$, and so there is a path $c,d,T_{c_2}^{\pm}(c)$ in $\mathcal C_{sep, 2}(\Sigma_g)$. 
\end{proof}

\begin{definition}
Let $\phi$ be a $\Z/r \Z$-valued spin structure on a surface $\Sigma_g$. The graph $\mathcal C_\phi(\Sigma_g)$ has vertices consisting of the admissible curves for $\phi$, where $a$ and $b$ are adjacent whenever $i(a,b) = 0$. The graph $\mathcal C_\phi^1(\Sigma_g)$ has the same vertex set as $\mathcal C_\phi(\Sigma_g)$, but vertices $a,b$ are adjacent whenever $i(a,b) = 1$.
\end{definition}

\begin{lemma}\label{lemma:c1connected}
$\mathcal C_\phi^1(\Sigma_g)$ is connected for $g \ge 5$.
\end{lemma}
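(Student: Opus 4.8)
The plan is to deduce the connectivity of $\mathcal C_\phi^1(\Sigma_g)$ from that of $\mathcal C_{sep,2}(\Sigma_g)$ (Lemma \ref{lemma:connected1}). Every admissible curve $a$ is nonseparating inside some embedded subsurface $S \cong \Sigma_{2,1}$, and then $c := \partial S$ is a vertex of $\mathcal C_{sep,2}(\Sigma_g)$; I think of the set of admissible curves supported on $S$ as a ``cluster'' attached to $c$. It then suffices to prove two things: (a) any two admissible curves supported on a single such $S$ lie in one component of $\mathcal C_\phi^1(\Sigma_g)$ (the connecting path being allowed to leave $S$); and (b) if $c$ and $c'$ are disjoint genus-$2$ separating curves, the clusters attached to $c$ and to $c'$ lie in the same component of $\mathcal C_\phi^1(\Sigma_g)$. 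Granting (a) and (b): given any two admissible curves, each belongs to some cluster; since $\mathcal C_{sep,2}(\Sigma_g)$ is connected one may join the corresponding vertices by an edge-path, and then concatenate the cluster-internal paths of (a) with the bridging paths of (b) along that edge-path.

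For (b): when $c \ne c'$ are disjoint genus-$2$ separating curves, the subsurfaces $S, S'$ they bound are disjoint and $\Sigma_g \setminus (S \cup S')$ is connected of genus $g - 4 \ge 1$. Fixing admissible curves $a^* \subset S$ and $a^{**} \subset S'$, I would build an embedded simple closed curve $e$ crossing $\partial S$ twice so that its arc inside $S$ meets $a^*$ once, crossing $\partial S'$ twice so that its arc inside $S'$ meets $a^{**}$ once, and disjoint from $a^*, a^{**}$ elsewhere, so that $i(e, a^*) = i(e, a^{**}) = 1$; using a handle in $\Sigma_g \setminus (S \cup S')$ and curve-arc sums (Lemma \ref{lemma:curvearcsum}) one corrects $\phi(e)$ to $0$ without altering these two intersection numbers (this is the bigon-criterion manipulation used in Corollary \ref{corollary:intersections}). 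Then $a^* - e - a^{**}$ is an edge-path in $\mathcal C_\phi^1(\Sigma_g)$. Part (a) is of the same nature, but easier, and also relies on the ambient spare genus ($g \ge 5$): given admissible $a, a'$ supported on $S$, one produces an admissible curve $e'$ with $i(e', a) = i(e', a') = 1$ using curve-arc sums with curves carried on handles of $\Sigma_g$ disjoint from $a \cup a'$; if $i(a,a')$ is large one first interpolates admissible curves of smaller intersection number, exactly as in the classical proof that the curve complex is connected.

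The delicate point --- and the step I expect to be the main obstacle --- is controlling the $\phi$-values in these constructions. When $r = 2d$ is even, Corollary \ref{corollary:allvalueseven} only lets one prescribe $\phi \equiv 0$ freely on configurations small enough not to pin down the Arf invariant (on $\Sigma_{2,1}$ a maximal admissible chain exists only when the restricted spin structure is odd, by Lemma \ref{lemma:easy}), which is precisely why in (a) one must let the connecting curves spill out of $S$ into the ambient surface, where the extra genus removes the obstruction. One must also take care, in both (a) and (b), that the curve-arc-sum corrections preserve the prescribed geometric intersection numbers; this is the role of Corollary \ref{corollary:intersections}, and the hypothesis $g \ge 5$ is what guarantees enough room to carry it out. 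An alternative route is the Putman trick of Theorem \ref{theorem:putman} applied to the action of $\Mod(\Sigma_g)[\phi]$ on $\mathcal C_\phi^1(\Sigma_g)$ --- transitivity on vertices follows from the spin-structure change-of-coordinates results --- but this founders on the lack of a convenient generating set for the stabilizer $\Mod(\Sigma_g)[\phi]$, which is in a sense the very object the paper is trying to pin down.
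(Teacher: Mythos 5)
Your proposal is correct in essence and rests on the same ingredients as the paper's proof: connectivity of $\mathcal C_{sep,2}(\Sigma_g)$ (Lemma \ref{lemma:connected1}), the existence of an admissible curve on any positive-genus subsurface (Corollary \ref{corollary:admissible}), and the curve-arc-sum correction of $\phi$-values with prescribed intersection numbers (Corollary \ref{corollary:intersections}). The difference is organizational, and it matters precisely for the step you flag as delicate. The paper first proves connectivity of the \emph{disjointness} graph $\mathcal C_\phi(\Sigma_g)$: given admissible $a,b$, choose subsurfaces $S_a \supset a$ and $S_b \supset b$ homeomorphic to $\Sigma_{2,1}$, take a path in $\mathcal C_{sep,2}(\Sigma_g)$ from $\partial S_a$ to $\partial S_b$, and place one admissible curve on each intermediate subsurface via Corollary \ref{corollary:admissible}, so that the endpoints of the resulting path are $a$ and $b$ themselves; then each disjoint consecutive pair is upgraded to an intersection-one pair by inserting a bridging admissible curve via Corollary \ref{corollary:intersections}. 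This factorization makes your step (a) --- connecting two admissible curves supported on the \emph{same} genus-$2$ subsurface --- unnecessary, and with it the Arf-parity worry on $\Sigma_{2,1}$ becomes moot, since only one admissible curve per subsurface is ever needed and Corollary \ref{corollary:admissible} carries no parity hypothesis. As written, (a) is also the weakest point of your sketch: the classical surgery argument for connectivity of the curve complex does not by itself keep the interpolated curves admissible, so each would have to be replaced using Corollary \ref{corollary:intersections} (possible, since the complement of a genus-$2$ subsurface has genus at least $3$ when $g \ge 5$); alternatively, (a) follows from your own step (b) by routing through an admissible curve on a disjoint genus-$2$ subsurface. Your step (b) is exactly the paper's bridging step, and your closing remark matches the paper: the Putman trick is applied only to $\mathcal C_{sep,2}(\Sigma_g)$, not to $\mathcal C_\phi^1(\Sigma_g)$.
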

\begin{proof}
The first step is to establish the connectivity of $\mathcal C_\phi(\Sigma_g)$. Let $a,b$ be vertices. Choose subsurfaces $S_{a}, S_{b}$ containing $a,b$ respectively, each homeomorphic to $\Sigma_{2,1}$. By Lemma \ref{lemma:connected1}, there is a path $S_{a_0}, \dots, S_{a_n}$ in $\mathcal C_{sep,2}(\Sigma_g)$ with $a \subset S_{a_0}$ and $b\subset S_{a_n}$, with each $S_{a_i}$ disjoint from $S_{a_{i+1}}$. By Corollary \ref{corollary:admissible}, on each $S_{a_i}$ there exists some admissible curve $a_i$. By construction, $a = a_0, a_1, \dots, a_n = b$ is a path in $\mathcal C_\phi(\Sigma_g)$ connecting $a$ to $b$. 

The connectivity of $\mathcal C_\phi^1(\Sigma_g)$ now follows readily. Given a path $a = a_0, \dots, a_n = b$ in $\mathcal C_\phi(\Sigma_g)$, Corollary \ref{corollary:intersections} implies that for each $i$, there exists some admissible curve $c_i$ such that $i(a_i, c_i) = i(a_{i+1},c_i) = 1$. The path $a_0, c_0, a_1, c_1, \dots, c_{n-1}, a_n$ connects $a$ to $b$ in $\mathcal C_\phi^1(\Sigma_g)$. 
\end{proof}

\section{Subsurface push subgroups and $\mathcal T_\phi$}\label{section:push}

As discussed in the introduction, the main technical result on the groups $\Mod(\Sigma_g)[\phi]$ and $\mathcal T_\phi$ that we require is a criterion for a collection of Dehn twists to generate $\mathcal T_\phi$, given below as Theorem \ref{theorem:networkgenset}. This is the first of two sections dedicated to proving Theorem \ref{theorem:networkgenset}. Here, we formulate and prove the intermediate result Proposition \ref{proposition:submakesT}, which gives a generating set for $\mathcal T_\phi$ {\em not} consisting entirely of Dehn twists. The results here concern a class of subgroups known as {\em spin subsurface push subgroups}; these are introduced in Sections \ref{subsection:subpush} and \ref{subsection:spinsubpush}.

\subsection{Subsurface push subgroups}\label{subsection:subpush} Recall the classical {\em inclusion map}, as discussed in \cite[Theorem 3.18]{FM}. Let $S' \subset S$ be a subsurface either of genus $g(S') \ge 2$ with $n \ge 1$ boundary components, or else of genus $g(S') = 1$ with $n \ge 2$ boundary components. Assume that no component of $\partial S'$ bounds a closed disk in $S$. Let $a_1, \dots, a_k$ denote the boundary components of $S'$ that bound punctured disks in $S$, let $b_1, b_1', \dots, b_\ell, b_\ell'$ denote the pairs of boundary components of $S'$ that cobound an annulus in $S$, and $c_1, \dots, c_m$ denote the remaining boundary components. Let $i_*: \Mod(S') \to \Mod(S)$ denote the map on mapping class groups arising from the inclusion $i: S' \into S$. Then 
\[
\ker(i_*) = \pair{T_{a_1}, \dots, T_{a_k}, T_{b_1} T_{b_1'}^{-1}, \dots, T_{b_\ell}T_{b_\ell'}^{-1}}.
\]

Let $\Delta$ be a boundary component of $S'$, and suppose that $\Delta$ does not bound a punctured disk in $S$. Let $\overline{S'}$ denote the surface obtained from $S'$ by capping off $\Delta$ with a closed disk. According to (\ref{equation:birman2}), there is a subgroup of $\Mod(S')$ isomorphic to $\pi_1(UT\overline{S'})$. The {\em subsurface push subgroup} for $(S', \Delta)$ is defined to be the image of $\pi_1(UT\overline{S'})$ under the inclusion $i_*: \Mod(S') \to \Mod(S)$. This will be written $\Pi(S',\Delta)$, or simply $\Pi(S')$ if the boundary component does not need to be emphasized.

We remark here that $i_*$ restricts to an {\em injection} $\pi_1(UT\overline{S'}) \into \Mod(S)$, even when there exists some other boundary component $\Delta'$ of $S'$ such that $\Delta \cup \Delta'$ cobounds an annulus on $S$. To see this, observe that $\pi_1(UT\overline{S'})\leqslant \Mod(S')$ is characterized by the property that $f \in \pi_1(UT\overline{S'})$ if and only if $f$ becomes isotopic to the identity when extended to $\overline{S'}$. It is easy to see that no element of $\ker(i_*)$ has this property. 

\subsection{Spin subsurface push subgroups}\label{subsection:spinsubpush} Let $S' \subset S$ be a subsurface with some boundary component $\Delta$ satisfying $\phi(\Delta) = -1$. The following lemma shows that $\Mod(S)[\phi]$ contains a finite-index subgroup of $\Pi(S', \Delta)$. This subgroup, written $\widetilde{\Pi}(S', \Delta)$, is called a {\em spin subsurface push subgroup}. Before proceeding with the rest of the section, the reader may wish to review the notion of a {\em fundamental multitwist} defined in Section \ref{subsection:firstproperties}.

\begin{lemma}\label{lemma:spinpush}
Let $S' \subset S$ be a subsurface with some boundary component $\Delta$ satisfying $\phi(\Delta) = -1$. Then there is a finite-index subgroup $\widetilde \Pi(S', \Delta) \leqslant \Mod(S)[\phi] \cap \Pi(S', \Delta)$ characterized by the diagram given below, whose rows are short exact sequences:
\begin{equation}\label{equation:diagram}
\xymatrix{
1 \ar[r]	& \pair{T_\Delta^r} \ar[r] \ar[d]	& \tilde{\Pi}(S', \Delta) \ar[r] \ar[d]	& \pi_1(\overline{S'}) \ar[r] \ar@{=}[d]	&1\\
1 \ar[r]	& \pair{T_\Delta} \ar[r]			& \Pi(S', \Delta) \ar[r]				& \pi_1(\overline{S'})	\ar[r]		&1	.
}
\end{equation}
The subgroup $\widetilde \Pi(S', \Delta)$ contains all fundamental multitwists for pairs of pants $P \subset S'$ of the form $P = a \cup b \cup \Delta$.
\end{lemma}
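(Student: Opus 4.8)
The plan is to analyze the subgroup $\Pi(S', \Delta) \leqslant \Mod(S)$ via the Birman exact sequence (\ref{equation:birman2}) and identify which elements preserve $\phi$. Recall that $\Pi(S',\Delta)$ is the image of $\pi_1(UT\overline{S'})$, which sits in a short exact sequence $1 \to \pair{T_\Delta} \to \Pi(S',\Delta) \to \pi_1(\overline{S'}) \to 1$ (the kernel $\pair{T_\Delta}$ being the image of the $S^1$-fiber class $\zeta$, since $\zeta \mapsto T_\Delta$ under the disk-pushing map). First I would observe that $T_\Delta \in \Mod(S)[\phi]$ has infinite order, and since $\phi(\Delta) = -1$, Lemma \ref{lemma:twists} gives $T_\Delta^k \in \Mod(S)[\phi]$ if and only if $k\phi(\Delta) = -k \equiv 0 \pmod r$, i.e. if and only if $r \mid k$. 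So the kernel of the restriction of $\phi$-preservation to $\pair{T_\Delta}$ is exactly $\pair{T_\Delta^r}$.

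Next I would show that every geometric generator $\tilde\alpha_i = T_{\alpha_{i,L}}T_{\alpha_{i,R}}^{-1}$ (Johnson lift of a simple closed curve $\alpha_i$ based at $p$ in $\overline{S'}$, as described in Section \ref{subsection:birman}) lies in $\Mod(S)[\phi]$. The key computation: $\alpha_{i,L}$ and $\alpha_{i,R}$ are the two sides of a regular neighborhood of $\alpha_i$, so they are disjoint simple closed curves, both homologous to $\alpha_i$ (with appropriate orientations), and together with $\Delta$ they cobound a pair of pants $P_i = \alpha_{i,L} \cup \alpha_{i,R} \cup \Delta$. Orienting things compatibly, $\phi(\alpha_{i,L}) = \phi(\alpha_{i,R})$ (say both equal $a_i$) — wait, more carefully: the homological coherence criterion (Proposition \ref{proposition:HCC}) applied to $P_i$ with $\chi(P_i) = -1$ gives, with all three boundary curves oriented so $P_i$ is to the left, $\phi(\alpha_{i,L}) + \phi(\alpha_{i,R}) + \phi(\Delta) = -1$; but $\alpha_{i,L}$ and $\alpha_{i,R}$ are isotopic as unoriented curves to $\alpha_i$, and with the orientations making $P_i$ to the left they point in opposite senses, so by Convention \ref{convention:orientation} and Lemma \ref{lemma:smoothing} (case $(m,n)=(-1,0)$) one of them has $\phi$-value $\phi(\alpha_i) =: a$ and the other $-a$; this is consistent with $\phi(\Delta) = -1$. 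Thus $T_{\alpha_{i,L}}T_{\alpha_{i,R}}^{-1}$ is precisely a fundamental multitwist $T_{P_i}(1,-1,0)$ in the sense of Lemma \ref{lemma:fundtwist} (after matching orientation conventions), and hence lies in $\Mod(S)[\phi]$ by that lemma. Actually this observation does double duty: it simultaneously proves the final sentence of the lemma, since an arbitrary pair of pants $P = a \cup b \cup \Delta$ with $a, b$ boundary components of $\Pi(S')$'s subsurface is, up to the change-of-coordinates principle, realized by such a Johnson-lift configuration, so its fundamental multitwist is one of these generators (or a conjugate/product thereof within the group generated).

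With these pieces in hand, I would define $\widetilde\Pi(S',\Delta)$ to be the subgroup of $\Pi(S',\Delta)$ generated by all the Johnson lifts $\tilde\alpha_i$ together with $T_\Delta^r$; equivalently, it is the preimage in $\Pi(S',\Delta)$ of $\pi_1(\overline{S'})$ under the composite $\Pi(S',\Delta) \to \pi_1(UT\overline{S'}) \to \pi_1(\overline{S'})$ intersected with $\Mod(S)[\phi]$ — I need to check these descriptions agree. The cleanest route: the surjection $\pi_1(UT\overline{S'}) \to \pi_1(\overline{S'})$ has kernel $\pair{\zeta} \cong \Z$ mapping to $\pair{T_\Delta}$, and I claim $\Pi(S',\Delta) \cap \Mod(S)[\phi]$ is exactly the preimage of the subgroup of $\pi_1(UT\overline{S'})$ that maps onto all of $\pi_1(\overline{S'})$ with kernel $\pair{\zeta^r}$ — i.e. the unique index-$r$ subgroup of $\pi_1(UT\overline{S'})$ containing the generators $\tilde\alpha_i$. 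This follows because $\phi$, viewed as an element of $H^1(UT\overline{S'}; \Z/r\Z)$ via Theorem \ref{theorem:HJhomological}, restricts to $\pi_1(UT\overline{S'})$ as the homomorphism killing each $\tilde\alpha_i$ (value $\phi(\alpha_i)$ — hmm, not zero in general) — let me instead argue directly: an element $g \in \Pi(S',\Delta)$ preserves $\phi$ iff its image does, and since the $\tilde\alpha_i$ generate modulo $\pair{\zeta}$ and each preserves $\phi$, while $\zeta \mapsto T_\Delta$ preserves $\phi$ iff $r \mid$ (its power), the subgroup of $\phi$-preserving elements is generated by the $\tilde\alpha_i$ and $\zeta^r$, giving exactly the claimed short exact sequence $1 \to \pair{T_\Delta^r} \to \widetilde\Pi(S',\Delta) \to \pi_1(\overline{S'}) \to 1$ and the vertical inclusion into (\ref{equation:birman2})'s sequence with index $r$ on the kernels, hence finite index. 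The main obstacle, and the point requiring the most care, is the orientation bookkeeping in the homological-coherence computation showing each $\tilde\alpha_i = T_{\alpha_{i,L}}T_{\alpha_{i,R}}^{-1}$ is genuinely a fundamental multitwist: one must pin down the orientations of $\alpha_{i,L}, \alpha_{i,R}$ relative to the Johnson-lift framing and check the signs match Definition \ref{definition:multi} and Lemma \ref{lemma:fundtwist}. Once that is settled, everything else is formal manipulation of the Birman exact sequence.
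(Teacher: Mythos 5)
Your argument has a genuine gap at its central step: the claim that each Johnson lift $\tilde\alpha_i = T_{\alpha_{i,L}}T_{\alpha_{i,R}}^{-1}$ already lies in $\Mod(S)[\phi]$ is false in general. Your orientation bookkeeping starts correctly — with all three boundary curves of $P_i = \alpha_{i,L}\cup\alpha_{i,R}\cup\Delta$ oriented with $P_i$ to the left, homological coherence and $\phi(\Delta)=-1$ give $\phi(\alpha_{i,L}) = -\phi(\alpha_{i,R})$ — but the conclusion you draw from Lemma \ref{lemma:fundtwist} is wrong: that lemma says the $\phi$-preserving element is $T_{P_i}(1,-1,b)$ with $b = \phi(\alpha_{i,R})$, i.e.\ $T_{\alpha_{i,L}}T_{\alpha_{i,R}}^{-1}T_\Delta^{\phi(\alpha_{i,R})}$, not $T_{P_i}(1,-1,0)$; the exponent vanishes only when $\phi(\alpha_{i,R}) \equiv 0 \pmod r$, which there is no reason to expect. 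Concretely, twist-linearity shows that $T_{\alpha_{i,L}}T_{\alpha_{i,R}}^{-1}$ changes $\phi(d)$ by $\phi(\alpha_{i,R})\pair{d,\Delta}$: the hidden error is that $\alpha_{i,L}$ and $\alpha_{i,R}$ are homologous in $\overline{S'}$ but \emph{not} in $S$ — their difference is $[\Delta]$ — and in the intended application $\Delta$ is parallel to a nonseparating curve $b$, so one can choose $d$ with $\pair{d,\Delta} = 1$ and the bare Johnson lift fails to preserve $\phi$ whenever $\phi(\alpha_{i,R})\neq 0$. You even noticed this warning sign ("value $\phi(\alpha_i)$ --- hmm, not zero in general") but then reverted to the false premise in your "direct" argument, so the construction of $\widetilde\Pi$ as $\pair{\tilde\alpha_i,\zeta^r}$ does not land in $\Mod(S)[\phi]$.

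The paper's proof fixes exactly this point: it takes as generators the corrected fundamental multitwists $T_{P_i} = T_{\alpha_{i,L}}T_{\alpha_{i,R}}^{-1}T_\Delta^{\phi(\alpha_{i,R})}$, which do lie in $\Mod(S)[\phi]\cap\Pi(S',\Delta)$ and which still project onto a generating set of $\pi_1(\overline{S'})$ because they differ from the $\tilde\alpha_i$ only by powers of $T_\Delta$; then $\widetilde\Pi(S',\Delta) := \pair{T_{P_1},\dots,T_{P_k},T_\Delta^r}$ and the exactness of the top row follows since $T_\Delta^m \in \Mod(S)[\phi]$ iff $r\mid m$. Your treatment of the final assertion is also too thin: invoking change of coordinates to say an arbitrary pair of pants $P = a\cup b\cup\Delta$ "is one of these generators" does not work, since the $\alpha_i$ are a fixed finite collection and the conjugating diffeomorphism need not lie in the group; the paper instead notes that $a$ and $b$ become isotopic in $\overline{S'}$, so $T_aT_b^{-1}$ maps into $\pi_1(\overline{S'})$, lifts through the diagram to some $T_aT_b^{-1}T_\Delta^k \in \widetilde\Pi(S',\Delta)$, and then uses $T_\Delta^r \in \widetilde\Pi(S',\Delta)$ to capture all fundamental multitwists for $P$.
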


\begin{proof}Following the discussion of Section \ref{subsection:birman}, there exists a ``geometric'' generating set for $\pi_1(UT\overline{S'}, \Delta)$ of the following form:
\begin{equation}\label{equation:utgens}
\pi_1(UT\overline{S'})= \pair{\tilde \alpha_1, \dots, \tilde \alpha_k, \zeta}.
\end{equation}
Here $\alpha_i$ is some simple closed curve on $\overline{S'}$ based at $\Delta$, and $\tilde \alpha_i$ denotes the Johnson lift to $\pi_1(UT\overline{S'})$. As before, $\zeta$ denotes the loop around the fiber. As an element of $\Mod(S')$, each $\tilde \alpha_i$ is of the form $T_{\alpha_{i,L}} T_{\alpha_{i,R}}$, where $\alpha_{i,L}$ denotes the curve on $S'$ lying to the left of $\alpha_i$ and $\alpha_{i,R}$ lies to the right. It follows that $P_i = \alpha_{i,L} \cup \alpha_{i,R} \cup \Delta$ forms a pair of pants on $S'$. Following Lemma \ref{lemma:fundtwist}, the fundamental multitwist
\[
T_{P_i} = T_{\alpha_{i,L}} T_{\alpha_{i,R}}^{-1} T_\Delta^{\phi(\alpha_{i,R})}
\]
lies in $\Mod(S)[\phi] \cap \Pi(S', \Delta)$. Embedding $\pi_1(UT\overline{S'})$ into $\Mod(S')$, the generating set of (\ref{equation:utgens}) can be replaced by the following generating set for $\Pi(S', \Delta)$:
\[
\Pi(S', \Delta) = \pair{T_{P_1}, \dots, T_{P_k}, T_\Delta}.
\]
Define
\[
\widetilde{\Pi}(S', \Delta) = \pair{T_{P_1}, \dots, T_{P_k}, T_\Delta^r}.
\]
By construction, $\widetilde{\Pi}(S', \Delta) \leqslant \Mod(S)[\phi]$. Under the projection $\Pi(S', \Delta) \to \pi_1(\overline{S'})$, the set $\{T_{P_i}\}$ maps onto a generating set for $\pi_1(\overline{S'})$. It follows that $\widetilde\Pi(S', \Delta)$ surjects onto $\pi_1(\overline{S'})$. As $T_\Delta^m \in \Mod(S)[\phi]$ if and only if $r \mid m$, it follows that $\widetilde \Pi(S', \Delta)$ is indeed characterized by the diagram (\ref{equation:diagram}) as claimed. 

For the second claim, let $P = a \cup b \cup \Delta$ be a pair of pants on $S'$. The curves $a,b$ are isotopic on $\overline{S'}$ and cobound an annulus containing the basepoint. It follows that $T_a T_b^{-1} \in \pi_1(\overline{S'})$. Via (\ref{equation:diagram}), there is some lift $T_a T_b^{-1} T_\Delta^k \in \widetilde \Pi(S',\Delta)$, and as $T_\Delta^r \in \widetilde \Pi(S', \Delta)$ as well, it follows that all fundamental multitwists for $P$ are elements of $\widetilde \Pi(S', \Delta)$ as claimed.
\end{proof}

For the purposes of this paper, we will most often be concerned with subsurface push subgroups for a special class of subsurfaces. Let $b \subset \Sigma_g$ be a nonseparating closed curve satisfying $\phi(b) = -1$. The boundary component $\Delta$ of $\Sigma_g \setminus\{b\}$ corresponding to the left side of $b$ satisfies $\phi(\Delta) = -1$, and to ease notation, we write $\widetilde{\Pi}(\Sigma_g \setminus\{b\})$ to refer to this spin subsurface push subgroup.

\subsection{Generating admissible twists}
We have arrived at the key result of the section.
\begin{proposition}\label{proposition:submakesT}
Let $\phi$ be a $\Z/r\Z$-valued spin structure on a closed surface $\Sigma_g$ for $g \ge 5$ and any integer $r$. Let $(a_0,a_1,b)$ be an ordered $3$-chain of curves with $\phi(a_0) = \phi(a_1) = 0$ and $\phi(b) = -1$. Let $H \leqslant \Mod(\Sigma_g)$ be a subgroup containing $T_{a_0}, T_{a_1}$ and the spin subsurface push group $\widetilde\Pi(\Sigma_g \setminus\{b\})$. Then $H$ contains $\mathcal T_\phi$.
\end{proposition}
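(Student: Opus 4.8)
The plan is to show that $H$ contains every admissible twist $T_c$ by ``transporting'' a fixed admissible curve with good properties to $c$ using elements already known to lie in $H$. Concretely, I want to produce, for each admissible curve $c$, an element $f \in H$ together with an admissible curve $c_0$ supported near $b$ such that $f(c_0) = c$; since $T_c = f T_{c_0} f^{-1}$, this gives $T_c \in H$ provided $T_{c_0} \in H$. The curve $c_0$ will be chosen inside $\Sigma_g \setminus \{b\}$, and $T_{c_0}$ will be shown to lie in $\widetilde\Pi(\Sigma_g \setminus \{b\})$ (or be produced from the hypotheses $T_{a_0}, T_{a_1}$) — this is where the spin subsurface push group and Lemma~\ref{lemma:spinpush} enter, since that lemma tells us exactly which fundamental multitwists and hence which genuine Dehn twists we can find inside $\widetilde\Pi$.

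\textbf{Key steps.} First I would record that $H$ contains a rich supply of admissible twists for curves ``disjoint from $b$'': using Lemma~\ref{lemma:spinpush}, for any pair of pants $P = \alpha \cup \beta \cup \Delta \subset \Sigma_g \setminus \{b\}$ with $\Delta$ the relevant boundary component, the fundamental multitwist $T_\alpha T_\beta^{-1} T_\Delta^{\phi(\beta)}$ lies in $\widetilde\Pi(\Sigma_g\setminus\{b\}) \leqslant H$; combined with $T_{a_0}, T_{a_1} \in H$ and the braid relation $T_x T_y(x) = y$, this lets me move $a_0$ and $a_1$ around within $\Sigma_g \setminus \{b\}$ and thereby show $T_c \in H$ for a large class of admissible $c$ disjoint from $b$ (in particular, by an argument analogous to those in Section~\ref{section:dodd}, for enough admissible curves to generate the relevant stabilizer on the subsurface). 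Second, I would invoke the connectivity of $\mathcal C_\phi^1(\Sigma_g)$ (Lemma~\ref{lemma:c1connected}): given an arbitrary admissible curve $c$, there is a path of admissible curves $a_1 = e_0, e_1, \dots, e_n = c$ with consecutive curves meeting once. The braid relation shows $T_{e_i} T_{e_{i+1}} (e_i) = e_{i+1}$, so $T_{e_{i+1}} = (T_{e_i}T_{e_{i+1}}) T_{e_i} (T_{e_i}T_{e_{i+1}})^{-1}$; thus inductively, if I can show at each step that the mapping class $g_i := T_{e_i} T_{e_{i+1}}$ lies in $H$, then $T_{e_{i+1}} \in H$ follows from $T_{e_i} \in H$. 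Third — and this is the crux — I must verify that each such $g_i \in H$. The idea is that $g_i = T_{e_i}T_{e_{i+1}}$ is supported on a one-holed torus $\nu_i$ containing the chain $(e_i, e_{i+1})$; its boundary $\partial \nu_i$ has $\phi$-value $1$ when oriented appropriately, so $\phi(\partial\nu_i') = -1$ for the reversed orientation, and one can arrange (using Corollary~\ref{corollary:intersections} and the change-of-coordinates principle) that this configuration is conjugate, by an element already shown to be in $H$, into the standard position near $b$ where Lemma~\ref{lemma:spinpush} applies — specifically $g_i$ becomes a product of a fundamental multitwist and the known twists $T_{a_0}, T_{a_1}$.

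\textbf{The main obstacle.} The delicate point is the bookkeeping in the inductive step: to conjugate $g_i$ into ``standard position'' near $b$ I need an element of $H$ doing the conjugation, and producing that element requires knowing enough of $H$ already — so the induction has to be set up carefully, probably by first establishing a ``base camp'' consisting of all admissible twists for curves contained in a fixed subsurface disjoint from $b$ of large genus (which is where $\widetilde\Pi$ together with $T_{a_0}, T_{a_1}$ gives everything, via the Section~\ref{section:dodd}-style arguments or directly via Lemma~\ref{lemma:spinpush}), and then bootstrapping outward one curve at a time along the path in $\mathcal C_\phi^1$, at each stage using the already-constructed twists to perform the needed conjugations. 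A secondary technical point is ensuring that the genus hypothesis $g \ge 5$ is enough for all the auxiliary subsurfaces (one-holed tori, pairs of pants, and the ``large'' subsurface disjoint from $b$) to coexist disjointly; this should follow from the same Euler-characteristic counts used in Lemma~\ref{item:allcurves}, but it needs to be checked. Once the base camp and the bootstrapping are in place, the conclusion $\mathcal T_\phi \leqslant H$ is immediate since the $T_c$ over all admissible $c$ generate $\mathcal T_\phi$ by definition.
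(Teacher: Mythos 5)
The proposal has a genuine gap at precisely the point you flag as the crux. In your induction along a path in $\mathcal C_\phi^1(\Sigma_g)$ you carry forward only the single datum $T_{e_i}\in H$, and to get $T_{e_{i+1}}\in H$ you propose to conjugate the one-holed torus supporting $T_{e_i}T_{e_{i+1}}$ into ``standard position'' near $b$ \emph{by an element of $H$}. The change-of-coordinates principle produces such a conjugator in $\Mod(\Sigma_g)$, not in $H$; asserting that it can be chosen in $H$ is a transitivity statement about $H$ on configurations of curves that is essentially equivalent to the proposition being proved, and nothing in your base camp or in $T_{e_0},\dots,T_{e_i}$ supplies it. A single admissible twist has no power to manufacture new twists; the engine here is the spin subsurface push subgroup, and your induction never explains how that engine travels along the path. (The appeal to ``Section \ref{section:dodd}-style arguments'' for the base camp is also misplaced: those arguments characterize $\mathcal T_\phi$ inside $\Mod(\Sigma_g)[\phi]$, they do not show that a given subgroup contains prescribed twists; the base camp instead comes from fundamental multitwists as in Lemma \ref{lemma:spinpush}.)

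This is exactly what the paper's proof propagates. It never conjugates into a standard position by an abstractly produced element; instead it proves a local lemma (Lemma \ref{lemma:creep}) directly from fundamental multitwists in $\widetilde\Pi(\Sigma_g\setminus\{b\})$ and the braid relation: (i) a curve $b'$ with $\phi(b')=-1$, disjoint from $b$ and meeting $a_1$ once, is $H$-equivalent to $b$, so the conjugated push subgroup $\widetilde\Pi(\Sigma_g\setminus\{b'\})$ lies in $H$; (ii) every admissible curve disjoint from $a_0\cup a_1\cup b$ is an $H$-curve; (iii) every $\phi=-1$ curve disjoint from that configuration is $H$-equivalent to $b$. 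Because these moves require disjointness from the current $3$-chain, the global step uses connectivity of $\mathcal C_{sep,2}(\Sigma_g)$ (Lemma \ref{lemma:connected1}) --- a path of genus-$2$ subsurfaces, consecutive ones disjoint, from one containing $a_0,a_1,b$ to one containing the target admissible curve --- and at each step transports the entire kit: two admissible twists $a_{2i},a_{2i+1}$ and a $\phi=-1$ curve $b_i$ with $\widetilde\Pi(\Sigma_g\setminus\{b_i\})\leqslant H$. Your intersection-one adjacency in $\mathcal C_\phi^1$ matches no such local lemma, so the induction does not close as written; in the paper $\mathcal C_\phi^1$-connectivity is used only later, to upgrade admissible twists to vanishing cycles. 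If you reorganize the bootstrap so that what moves along the path is the push subgroup of the $b_i$'s --- proving $H$-equivalence of the $\phi=-1$ curves by explicit fundamental-multitwist and braid-relation identities, and switching to the disjointness complex $\mathcal C_{sep,2}$ --- you recover the paper's argument.
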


The proof will require the preliminary Lemma \ref{lemma:creep}, for which we introduce some terminology. For a subgroup $H \leqslant \Mod(\Sigma_g)$, we say that a simple closed curve $a$ is an {\em $H$-curve} if $T_a \in H$. We also say that curves $a,b$ are {\em $H$-equivalent} if there exists some $f \in H$ with $f(a) = b$. If $a$ and $b = f(a)$ are $H$-equivalent and $\widetilde \Pi(\Sigma_g \setminus\{a\}) \leqslant H$, then also $\widetilde\Pi(\Sigma_g\setminus\{b\}) = f\widetilde \Pi(\Sigma_g\setminus\{a\}) f^{-1}$ is a subgroup of $H$.

The following lemma establishes some sufficient conditions for $H$-equivalence of curves. 
\begin{lemma}\label{lemma:creep}
Let $\Sigma_g$ be a surface of genus $g \ge 5$. Let $a_0,a_1,b$ be an ordered $3$-chain of curves with $\phi(a_0) = \phi(a_1) = 0$ and $\phi(b) = -1$. Let $H \leqslant \Mod(\Sigma_g)$ be a subgroup containing $T_{a_0}, T_{a_1}$ and $\widetilde\Pi(\Sigma_g \setminus \{b\})$. 
\begin{enumerate}
\item\label{item:b} Let $b'$ be an oriented curve satisfying $\phi(b') = -1$ such that $i(b, b') = 0$ and $i(a_1, b') = 1$. Then $b$ and $b'$ are $H$-equivalent. It follows that $\widetilde\Pi(\Sigma_g \setminus \{b'\}) \leqslant H$.
\item \label{item:a} Let $a'$ be any nonseparating curve satisfying $\phi(a')= 0$ such that $a'$ is disjoint from the configuration $a_0 \cup a_1 \cup b$. Then $a'$ is an $H$-curve.
\item \label{item:b2} Let $b'$ be any nonseparating curve satisfying $\phi(b') = -1$ such that $b'$ is disjoint from the configuration $a_0 \cup a_1 \cup b$. Then $b$ and $b'$ are $H$-equivalent, and hence $\widetilde\Pi(\Sigma_g \setminus \{b'\}) \leqslant H$.
\end{enumerate}
\end{lemma}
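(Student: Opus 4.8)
The proof of Lemma \ref{lemma:creep} will be organized around the principle that $H$-equivalence is an equivalence relation that can be ``spread'' across the surface using the subsurface push subgroup together with the two Dehn twists $T_{a_0}, T_{a_1}$. I will establish the three claims in the stated order, using (1) to bootstrap (2) and (3).

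\emph{Step 1 (claim \eqref{item:b}).} The curve $b'$ satisfies $i(b,b')=0$, so after capping off the boundary component $\Delta$ of $\Sigma_g\setminus\{b\}$ coming from the left of $b$, the curve $b'$ descends to an essential simple closed curve on $\overline{\Sigma_g\setminus\{b\}}$, which I'll denote $\bar b'$. The key point is to exhibit an element of $\widetilde\Pi(\Sigma_g\setminus\{b\})$ that realizes a point-push of $\bar b'$; more precisely, I want $f$ in the subsurface push subgroup with $f(b)$ equal to (the isotopy class in $\Sigma_g$ of a curve carried by a neighborhood of $\bar b'$). Concretely, choosing the based loop $\alpha$ in $\pi_1(\overline{\Sigma_g\setminus\{b\}})$ to be a loop freely homotopic to $b'$ that hits $b$ exactly once, the associated generator $T_{\alpha_L}T_{\alpha_R}^{-1}T_\Delta^k$ of $\widetilde\Pi(\Sigma_g\setminus\{b\})$ — which is one of the fundamental multitwists $T_{P}$ of Lemma \ref{lemma:spinpush} — pushes $b$ to a curve disjoint from $b$ and intersecting $a_1$ once, and an appropriate such push lands on $b'$. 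Here I will need the change-of-coordinates principle (in the $\phi$-decorated form of Corollary \ref{corollary:intersections}) to see that any two curves $b'$ with the stated intersection and $\phi$-data are interchangeable by an element fixing $b$ and lying in the group generated by $T_{a_1}$ and $\widetilde\Pi(\Sigma_g\setminus\{b\})$; the condition $\phi(b')=-1$ is exactly what makes the push (which preserves $\phi$) able to reach $b'$ rather than a curve with a shifted $\phi$-value. Once $f(b)=b'$ for some $f\in H$, conjugation gives $\widetilde\Pi(\Sigma_g\setminus\{b'\}) = f\,\widetilde\Pi(\Sigma_g\setminus\{b\})\,f^{-1}\leqslant H$, and $T_{b'}^r = f T_b^r f^{-1}\in H$ as well.

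\emph{Step 2 (claims \eqref{item:a} and \eqref{item:b2}).} For \eqref{item:a}, let $a'$ be disjoint from $a_0\cup a_1\cup b$ with $\phi(a')=0$. Since $g\ge 5$, the complement of $a_0\cup a_1\cup b$ has enough genus that I can find, via Corollary \ref{corollary:intersections} and the connectivity results of Section \ref{section:trick} (in particular the argument pattern of Lemma \ref{lemma:c1connected}), a finite sequence of curves interpolating between $a_1$ and $a'$ in which consecutive curves intersect once and all are disjoint from $b$; applying \eqref{item:b} repeatedly (with the roles of the chain shifted) lets me ``walk'' the twist along this sequence, concluding $T_{a'}\in H$. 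Alternatively and more directly: using \eqref{item:b} I can move $b$ to a curve $b''$ with $i(a',b'')=0$ and $i(a_1,b'')=1$, after which $a'$ becomes an essential curve with $\phi$-value $0$ in the capped-off surface $\overline{\Sigma_g\setminus\{b''\}}$, hence a separating-plus-handle argument shows $T_{a'}$ is expressible via the push subgroup $\widetilde\Pi(\Sigma_g\setminus\{b''\})\leqslant H$ together with $T_{a_1}$; the point is that a fundamental multitwist $T_P = T_{a'}T_{c}^{-1}T_\Delta^{0}$ for a suitable pants $P = a'\cup c\cup\Delta$ (choosing $c$ also admissible, possible by Corollary \ref{corollary:admissible}) lies in the push subgroup, and $T_c\in H$ by the same mechanism, whence $T_{a'}\in H$. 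For \eqref{item:b2}, given $b'$ disjoint from $a_0\cup a_1\cup b$ with $\phi(b')=-1$, I first use \eqref{item:b} / \eqref{item:a} to produce inside $H$ a new $3$-chain $(a_0',a_1',b)$ of the same type but with $a_1'$ chosen so that $i(a_1',b')=1$ and $i(b,b')=0$; then \eqref{item:b} applied to this new chain yields $f\in H$ with $f(b)=b'$, giving $H$-equivalence and the inclusion of $\widetilde\Pi(\Sigma_g\setminus\{b'\})$.

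\emph{Main obstacle.} The delicate point throughout is \eqref{item:b}: I must produce the pushing element inside $\widetilde\Pi(\Sigma_g\setminus\{b\})$ \emph{together with} the ambient twist $T_{a_1}$ rather than inside the larger group $\Mod(\Sigma_g)[\phi]$, and I must verify that the $\phi$-data ($\phi(b')=-1$, $\phi(a_1)=0$) is consistent with what these generators can achieve — a push along a loop $\alpha$ changes a curve's $\phi$-value in a controlled way (governed by twist-linearity and the value $\phi(\alpha)$), so I need to check that the curve $b'$ is in the orbit. The cleanest route is to invoke Corollary \ref{corollary:intersections} to reduce to a \emph{normal form} for $b'$ relative to the fixed data $b, a_1$, and then exhibit one explicit push realizing that normal form; the bigon-criterion portion of the proof of Corollary \ref{corollary:intersections} is exactly what guarantees the intersection numbers with $a_0, a_1$ are preserved. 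Everything else is formal manipulation of the relation $fT_df^{-1}=T_{f(d)}$ and the conjugation behavior of $\widetilde\Pi$.
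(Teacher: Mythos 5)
Your overall architecture matches the paper's (fundamental multitwists in the push subgroup, combined with $T_{a_1}$, with part \eqref{item:b} bootstrapping the other two), but there are genuine gaps at each step, and the first is an error rather than an omission. Every element of $\widetilde\Pi(\Sigma_g\setminus\{b\})$ is induced from a mapping class of the complement of $b$ and hence fixes the isotopy class of $b$; so there is no $f$ in the push subgroup with $f(b)=b'$, and the assertion that a generator $T_{\alpha_L}T_{\alpha_R}^{-1}T_\Delta^k$ ``pushes $b$ to a curve disjoint from $b$'' is false. The appeal to change of coordinates and Corollary \ref{corollary:intersections} for transitivity of $\pair{T_{a_1},\widetilde\Pi(\Sigma_g\setminus\{b\})}$ on curves with the given intersection and $\phi$-data is also unsupported: those results produce curves, or mapping classes in all of $\Mod(\Sigma_g)$, not elements of $H$. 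What is actually needed, and what the paper supplies, is an explicit element of $H$ carrying $b$ to $b'$: form $b'':=b+_\epsilon b'$ along the subarc $\epsilon$ of $a_1$, observe that $b\cup b'\cup b''$ bounds a pair of pants with one cuff parallel to $b$, so by Lemma \ref{lemma:spinpush} a fundamental multitwist such as $T_bT_{b'}T_{b''}^{-1}$ lies in $\widetilde\Pi(\Sigma_g\setminus\{b\})\leqslant H$, and then a braid-relation computation gives $T_{a_1}(T_bT_{b'}T_{b''}^{-1})T_{a_1}(b)=b'$ (or the analogous identity for the other orientation). Nothing in your Step 1 produces such an element.

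The later steps have smaller but still real gaps. In \eqref{item:a}, your pants $P=a'\cup c\cup\Delta$ needs $T_c\in H$, and ``by the same mechanism'' is circular: certifying that an arbitrary admissible curve is an $H$-curve is exactly what is being proved. The paper sidesteps this by building the pants from the hypothesis curve $a_0$: set $b':=a_0+_\epsilon a'$ with $\epsilon$ disjoint from $a_1\cup b$; then $\phi(b')=-1$, $i(b,b')=0$, $i(a_1,b')=1$, so part \eqref{item:b} gives $\widetilde\Pi(\Sigma_g\setminus\{b'\})\leqslant H$, and $T_{a_0}T_{a'}^{-1}$ is a fundamental multitwist in it, whence $T_{a'}\in H$ because $T_{a_0}\in H$. (Your first route for \eqref{item:a}, ``walking the twist along'' a once-intersecting sequence, is likewise circular: moving a twist across an intersection-one pair already requires the target twist or an element of $H$ taking one curve to the other.) In \eqref{item:b2}, the one-step plan fails on its own terms: to apply \eqref{item:b} to a new chain $(a_0',a_1',b)$ you need $i(a_1',b)=1$ and $T_{a_1'}\in H$, but your only certification tool, part \eqref{item:a}, applies to curves disjoint from $b$. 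The paper instead relays through an auxiliary curve $b''$ (disjoint from $b$, $b'$ and $a_0$, meeting $a_1$ and a new $H$-curve $a'$ once each, with $\phi(b'')=-1$, supplied by Corollary \ref{corollary:intersections}), applying \eqref{item:b} twice: once to see that $b$ and $b''$ are $H$-equivalent, and once more, with $a'$ as the middle curve, to see that $b''$ and $b'$ are.
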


\begin{proof}
(\ref{item:b}): If $b = b'$ there is nothing to prove. Otherwise, given $a_1, b, b'$, we define a curve $b''$ as follows. Let $\epsilon$ be the portion of $a_1$ connecting the left side of $b$ to one of the sides of $b'$; then $b''$ is defined as the curve-arc sum $b'' := b +_\epsilon b'$. By construction $b\cup b' \cup b''$ bounds a pair of pants $P$ lying to the left of $b$, and $i(a_1, b'') = 0$. By Lemma \ref{lemma:curvearcsum}, there exists an orientation of $b''$ such that $\phi(b'') = -1$. This can be determined as follows: $b''$ is oriented with $P$ lying to the right if and only if $P$ lies to the left of $b'$. If $P$ lies to the left of $b'$, then the element $T_b T_{b'} T_{b''}^{-1}$ is a fundamental multitwist and hence an element of $\widetilde\Pi(\Sigma_g \setminus\{b\}) \leqslant H$. Otherwise, $T_b T_{b'}^{-1} T_{b''}$ is a fundamental multitwist.
In the first case, the braid relation implies that
\[
T_{a_1}(T_b T_{b'} T_{b''}^{-1})T_{a_1}(b) = b',
\]
while in the second case,
\[
T_{a_1}^{-1}(T_b T_{b'}^{-1} T_{b''})T_{a_1}(b) = b'.
\]
In either case, the indicated element lies in $H$, showing the $H$-equivalence between $b, b'$. \\

(\ref{item:a}): Let $\epsilon$ be an arc connecting $a_0$ to $a'$ that is disjoint from $a_1 \cup b$, and define $b' := a_0+_\epsilon a'$. It is possible that $b' = b$, but this will not pose any difficulty. Then $a_0 \cup a' \cup b'$ forms a pair of pants and $b'$ satisfies the intersection conditions $i(b,b') = 0$ and $i(a_1, b') = 1$. By the homological coherence property, $\phi(b') = -1$. By the second assertion of (\ref{item:b}), $\widetilde\Pi(\Sigma_g \setminus \{b'\}) \leqslant H$. As $a_0 \cup a' \cup b'$ forms a pair of pants, it follows that $T_{a_0} T_{a'}^{-1}$ is a fundamental multitwist, and so $T_{a_0}T_{a'}^{-1} \in \widetilde\Pi(\Sigma_g \setminus \{b'\}) \leqslant H$. As $T_{a_0} \in H$ by hypothesis, this shows that $T_{a'} \in H$ as desired.\\

(\ref{item:b2}): Given $b'$, Corollary \ref{corollary:intersections} implies that there exists an admissible curve $a'$ that is disjoint from $a_0 \cup a_1 \cup b$ and for which $i(a',b') = 1$. Corollary \ref{corollary:intersections} also establishes the existence of a curve $b''$, satsifying $\phi(b'') = -1$, with the following intersection properties:
\[
i(b, b'') = i(b', b'') = i(a_0, b'') = 0; \qquad i(a', b'') = i(a_1, b'') = 1.
\]
By (\ref{item:b}), $b$ and $b''$ are $H$-equivalent. By (\ref{item:a}), $a'$ is an $H$-curve, so that by (\ref{item:b}) again, $b''$ and $b'$ are $H$-equivalent, showing the result. 
\end{proof}

\begin{proof}{\em (of Proposition \ref{proposition:submakesT})} Let $a$ be any admissible curve. There is some genus $2$ subsurface $S' \cong \Sigma_{2,1}$ containing $a$, and there is also some genus $2$ subsurface $S \cong \Sigma_{2,1}$ that contains the curves $a_0,a_1, b$. By Lemma \ref{lemma:connected1}, there is a path $S_0 = S - S_1 - \dots - S_n = S'$ of subsurfaces homeomorphic to $\Sigma_{2,1}$ with boundary components $\partial S_i$ and $\partial S_{i+1}$ disjoint for $i = 1, \dots, n-1$, hence $S_i \cap S_{i+1} = \emptyset$ for $i = 1, \dots, n-1$.

For $i = 1, \dots, n$, let $a_{2i}$ be an admissible curve contained in $S_i$; we take $a_{2n} = a$. We claim that there exist curves $a_{2i+1}$ and $b_i$ on $S_i$ such that $a_{2i}, a_{2i+1}, b_i$ forms a chain, and $\phi(a_{2i+1}) = 0, \phi(b_i) = -1$. To see this, let $T \subset S_i$ be a subsurface of genus $1$ that does not contain $a_{2i}$. By Corollary \ref{corollary:admissible}, there is an admissible curve $a'$ contained in $T$. Let $\epsilon$ be an arc connecting $a_{2i}$ and $a'$; then $b_i:= a_{2i} +_\epsilon a'$ satisfies $\phi(b) = -1$ for a suitable choice of orientation. Let $c$ be any curve on $S_i$ such that $i(c,a_{2i}) = i(c, b_i) = 1$. Then $a_{2i+1} := T_{b_i}^{\phi(c)}(c)$ is admissible, and $a_{2i}, a_{2i+1}, b_i$ forms a chain as required.

We assume for the sake of induction that $a_{2i}, a_{2i+1}$ are $H$-curves and that $\widetilde\Pi(\Sigma_{g} \setminus \{b_i\}) \leqslant H$. Then by Lemma \ref{lemma:creep}.\ref{item:a}, also $a_{2i+2}, a_{2i+3}$ are $H$-curves, and $\widetilde\Pi(\Sigma_{g} \setminus \{b_{i+1}\}) \leqslant H$. The base case $i = 0$ holds by hypothesis, taking $b_0 = b$. The claim now follows by induction.
\end{proof}

\section{Networks}\label{section:network}
In this section we deduce Theorem \ref{theorem:networkgenset} from Proposition \ref{proposition:submakesT}. The key notion is that of a {\em network} of curves. In Section \ref{subsection:networkbasics}, we establish the basic theory of networks, and in Section \ref{subsection:networkproof}, we state and prove Theorem \ref{theorem:networkgenset}.  Departing from our conventions elsewhere in the paper, in this section we work with individual curves and {\em not} merely their isotopy classes.

\subsection{Networks and their basic theory}\label{subsection:networkbasics}

\begin{definition}\label{definition:network}
Let $S = \Sigma_{g,b}^n$ be a surface, viewed as a compact surface with marked points. A {\em network} on $S$ is any collection $\mathcal N = \{a_1,\dots, a_n\}$ of simple closed curves on $S$, disjoint from any marked points, such that $\#(a_i \cap a_j) \le 1$ for all pairs of curves $a_i, a_j \in \mathcal N$, and such that there are no triple intersections. A network $\mathcal N$ has an associated {\em intersection graph} $\Gamma_{\mathcal N}$, whose vertices correspond to curves $x \in \mathcal N$, with vertices $x,y$ adjacent if and only if $\#(x\cap y) = 1$.  A network is said to be {\em connected} if $\Gamma_{\mathcal N}$ is connected, and {\em arboreal} if $\Gamma_{\mathcal N}$ is a tree. A network is {\em filling} if 
\[
S \setminus \bigcup_{a \in \mathcal N} a
\]
is a disjoint union of disks and boundary-parallel annuli; each component is allowed to contain at most one marked point of $S$. 
\end{definition}

The data of a network encodes both an abstract finite set of curves as well as a topological subspace of the surface $S$. To avoid confusing these, let the symbol $\mathcal N$ denote this finite set, and let $\widehat{\mathcal N}$ denote the space. When $\mathcal N$ is arboreal, there is a simple generating set for $\pi_1(\widehat{\mathcal N})$. To describe it, endow $\widehat{\mathcal N}$ with the structure of a $CW$ complex, and let $\mathcal T$ be a spanning tree for this $CW$ complex.

\begin{lemma}\label{lemma:graphs}
Let $\mathcal N$ be an arboreal network. Then there is a 1--1 correspondence between the set of edges $\widehat{\mathcal N}\setminus \mathcal T$, and the set $\mathcal N$. 
\end{lemma}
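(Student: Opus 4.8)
The plan is to exhibit the bijection by means of an Euler-characteristic count, using the fact that an arboreal network is built from a tree.

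First I would recall that $\widehat{\mathcal N}$ carries a natural $CW$ structure: the $0$-cells are the intersection points of the curves in $\mathcal N$ together with, for any curve $a \in \mathcal N$ meeting no other curve, a single chosen point on $a$; the $1$-cells are the arcs of the curves cut out by these $0$-cells. Since the intersection graph $\Gamma_{\mathcal N}$ is a tree on $n = \#\mathcal N$ vertices, it has exactly $n-1$ edges, i.e.\ there are exactly $n-1$ intersection points among the curves of $\mathcal N$. Each such intersection point is a transverse crossing of two curves and is therefore a $4$-valent vertex of $\widehat{\mathcal N}$; the isolated curves contribute one $2$-valent vertex each. A short bookkeeping of half-edges then gives the number of $1$-cells: each curve $a_i$ is subdivided into as many arcs as it has marked points on it, where "marked points on $a_i$" means the intersection points lying on $a_i$ (plus the one auxiliary point if $a_i$ is isolated). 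Summing over all curves, every crossing point is counted twice, so the total number of $1$-cells is $(\#0\text{-cells}) + (\#\text{crossings}) = (\#0\text{-cells}) + (n-1)$, using that a circle subdivided at $k \ge 1$ points has $k$ arcs.

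Next I would compute the first Betti number of $\widehat{\mathcal N}$. Because $\Gamma_{\mathcal N}$ is a tree, $\widehat{\mathcal N}$ is connected, so $b_0(\widehat{\mathcal N}) = 1$, whence $b_1(\widehat{\mathcal N}) = 1 - \chi(\widehat{\mathcal N})$. From the cell count, $\chi(\widehat{\mathcal N}) = (\#0\text{-cells}) - (\#1\text{-cells}) = -(n-1)$, so $b_1(\widehat{\mathcal N}) = n$. On the other hand, for any $CW$ complex that is a graph with spanning tree $\mathcal T$, the edges of $\widehat{\mathcal N}\setminus\mathcal T$ freely generate $\pi_1(\widehat{\mathcal N})$, and in particular their number equals $b_1(\widehat{\mathcal N})$. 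Combining the two computations, $\#(\widehat{\mathcal N}\setminus\mathcal T) = n = \#\mathcal N$, which gives a bijection between the two sets of the same (finite) cardinality. To make this a canonical $1$--$1$ correspondence rather than a mere equality of cardinalities, I would make the correspondence explicit: collapsing $\mathcal T$ gives a homotopy equivalence $\widehat{\mathcal N} \to \bigvee_{e} S^1$ with wedge summands indexed by $\widehat{\mathcal N}\setminus\mathcal T$; under this, each non-tree edge $e$ together with the unique path in $\mathcal T$ joining its endpoints traces out a loop $\gamma_e$, and one checks that $\gamma_e$ is freely homotopic into a single curve $a_{i(e)} \in \mathcal N$ — indeed the spanning tree $\mathcal T$ can and will be chosen so that for each $a_i \in \mathcal N$ exactly one of its constituent arcs is omitted from $\mathcal T$, and $e \mapsto a_{i(e)}$ is the asserted bijection.

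The main obstacle I anticipate is purely organizational: one must choose the spanning tree $\mathcal T$ compatibly with the curves (so that exactly one arc of each curve is a non-tree edge) and verify that such a choice is always possible. This follows from the arboreality of $\Gamma_{\mathcal N}$ by induction on $n$: a leaf curve $a_i$ of $\Gamma_{\mathcal N}$ meets the rest of $\widehat{\mathcal N}$ in a single point, so all but one of its arcs can be absorbed into a spanning tree of $\widehat{\mathcal N \setminus \{a_i\}}$, and the single remaining arc of $a_i$ is the non-tree edge associated to $a_i$; the base case $n=1$ is a circle, with one vertex, one non-tree edge, and one curve. Everything else is a routine Euler-characteristic and van Kampen computation.
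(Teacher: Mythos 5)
Your argument is correct in substance, but it takes a genuinely different route from the paper. The paper fixes an \emph{arbitrary} spanning tree $\mathcal T$ and shows directly that each curve $a \in \mathcal N$ meets $\mathcal T$ in a connected set, using arboreality of $\Gamma_{\mathcal N}$ to control geodesic paths in $\mathcal T$; hence every curve contains exactly one non-tree edge, and $a \mapsto (\text{its non-tree edge})$ is the correspondence. You instead count: your Euler-characteristic computation is correct (for $n = \#\mathcal N \ge 2$ one has $V = n-1$, $E = 2(n-1)$, so $b_1 = n$ and there are exactly $n$ non-tree edges for \emph{any} $\mathcal T$), and then you recover the curve-by-curve correspondence only after constructing a special spanning tree by leaf induction, with exactly one omitted arc per curve. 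Since the spanning tree is at the authors' disposal, your version suffices for what the lemma feeds into (the loops $P(a)$ and Lemma \ref{lemma:pi1genset}), so there is no gap in the application; the trade-off is that the paper's statement holds for every spanning tree while yours is verified only for the tree you build. You should note, though, that your own count gives the stronger statement almost for free and makes the special-tree induction unnecessary: the edges lying on a fixed curve form a cycle, so every curve contains at least one non-tree edge regardless of $\mathcal T$; since each edge lies on a unique curve and the total number of non-tree edges is exactly $n = \#\mathcal N$, the pigeonhole principle forces exactly one per curve. This also sidesteps the bookkeeping you gloss over in the induction (after deleting a leaf curve, its crossing point is no longer an intersection point, so the induced cell structure on the smaller network is a subdivision of the intrinsic one; moreover, with your cell structure a leaf curve carries a single loop edge, so ``all but one of its arcs'' is vacuous there). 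Finally, be aware that the first half of your write-up --- equal cardinalities imply \emph{some} bijection --- is not the operative content of the lemma; you correctly recognized this and supplied the explicit, geometric correspondence, which is what the subsequent arguments actually use.
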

\begin{proof}
Each edge of $\widehat{\mathcal N}$ is contained in a unique element of $\mathcal N$. For a given $a \in \mathcal N$, let $a_1, \dots, a_{n(a)}$ denote these edges, ordered so that adjacent edges are numbered consecutively. For each $a \in \mathcal N$, the sequence $a_1, \dots, a_{n(a)}$ forms a cycle in $\widehat{\mathcal N}$. Thus for each $a \in \mathcal N$, there is at least one edge $a_1$ (without loss of generality) that is not contained in $\mathcal T$. 

It remains to show that for each $a \in \mathcal N$, there is {\em exactly} one edge not contained in $\mathcal T$. Equivalently, we must show that the intersection $a \cap \mathcal T$ is connected as a topological space. The assumption that $\mathcal N$ is arboreal implies that $\Gamma_{\mathcal N}$ has the following property: let $\Gamma_{\mathcal N}(a)$ be the graph obtained from $\Gamma_{\mathcal N}$ by removing all edges incident to $a$. Then each vertex $b$ adjacent to $a$ in $\Gamma_{\mathcal N}$ determines a {\em distinct} component of $\Gamma_{\mathcal N}(a)$.

Let $v,w \in a$ be vertices of $\widehat{\mathcal N}$, and let $e_1, \dots, e_n$ be the unique geodesic path in $\mathcal T$ connecting $v$ to $w$. It suffices to show that this path is contained in $a$. If this is not the case, let $k_1$ (resp. $k_2$) be the minimal (resp. maximal) integer such that $e_{k_1}$ (resp. $e_{k_2}$) is not contained in $a$. Then exactly one vertex $v_1$ of $e_{k_1}$ (resp. $v_2$ of $e_{k_2}$) lies on $a$, and the other lies on some adjacent curve $b_1$ (resp. $b_2$). As $i(a,b_1) = i(a,b_2) =1$ and the path $e_1, \dots, e_n$ visits each vertex in $\widehat{\mathcal N}$ at most once, it follows that $b_1$ and $b_2$ are distinct elements of $\mathcal N$. As explained in the above paragraph, the arboreality assumption implies that every path in $\widehat{\mathcal N}$ connecting a point in $b_1$ to a point in $b_2$ must pass through $a$. Any such path must pass through $v_1$ and $v_2$: this shows that if the path $e_1, \dots, e_n$ enters $b_1$, it must pass through $v_1$ at least twice, contrary to assumption.
\end{proof}
Via Lemma \ref{lemma:graphs}, each $a \in \mathcal N$ determines a {\em unique} based loop $P(a)$ by following the unique path in $\mathcal T$ from the basepoint to $a$. Lemma \ref{lemma:pi1genset} below now follows by a standard application of the Seifert-Van Kampen theorem.

\begin{lemma}\label{lemma:pi1genset}
Let $\mathcal N$ be an arboreal network. Then $\pi_1(\widehat{\mathcal N})$ is generated by the set of loops $\{P(a) \mid a \in \mathcal N\}$. If $\mathcal N$ is moreover filling, then the map $\pi_1(\widehat{\mathcal N}) \to \pi_1(S)$ is a surjection, and so $\pi_1(S)$ is also generated by this collection of loops. 
\end{lemma}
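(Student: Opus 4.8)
~\textbf{Proof idea for Lemma \ref{lemma:pi1genset}.}
The plan is to realize $\widehat{\mathcal N}$, up to homotopy equivalence, as a wedge of circles indexed by $\mathcal N$ and to identify, under this equivalence, the $i$-th circle with the loop $P(a_i)$. First I would recall that $\widehat{\mathcal N}$ is a finite connected CW-complex of dimension $1$ (it deformation retracts onto its $1$-skeleton, the union of the curves), so that $\pi_1(\widehat{\mathcal N})$ is free of rank equal to the first Betti number $b_1(\widehat{\mathcal N}) = \#(\text{edges}) - \#(\text{vertices}) + 1$. Collapsing a spanning tree $\mathcal T$ is a homotopy equivalence, and the quotient $\widehat{\mathcal N}/\mathcal T$ is a wedge of circles, one for each edge of $\widehat{\mathcal N}$ not lying in $\mathcal T$. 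By Lemma \ref{lemma:graphs}, these edges are in bijection with the elements of $\mathcal N$, and this is exactly the standard Seifert--Van Kampen computation: a free generating set for $\pi_1(\widehat{\mathcal N})$ is obtained by taking, for each edge $e \notin \mathcal T$, the loop that runs from the basepoint through $\mathcal T$ to one endpoint of $e$, across $e$, and back through $\mathcal T$ — which is precisely the loop $P(a)$ associated to the curve $a$ containing $e$. Hence $\{P(a) \mid a \in \mathcal N\}$ generates (indeed freely generates) $\pi_1(\widehat{\mathcal N})$.

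For the second assertion, suppose $\mathcal N$ is filling, so that $S \setminus \bigcup_{a \in \mathcal N} a$ is a disjoint union of disks and boundary-parallel annuli, each containing at most one marked point. I would build $S$ from $\widehat{\mathcal N}$ (viewed as a subspace, after thickening to a regular neighborhood $\nu$ that deformation retracts onto $\widehat{\mathcal N}$) by gluing in $2$-cells along the boundary circles of the complementary regions: each disk contributes a $2$-cell, and each boundary-parallel annulus contributes nothing new to $\pi_1$ since it is absorbed into the corresponding boundary component of $S$. Attaching $2$-cells can only add relations, never generators, so the inclusion-induced map $\pi_1(\widehat{\mathcal N}) \to \pi_1(S)$ is surjective. (A marked point inside a complementary disk causes no trouble: on the level of $\pi_1$ of a compact surface with marked points, such a point just means we do not fill that disk with a $2$-cell, and the loop around it is a product of loops already present — or one may simply note that $\pi_1(S)$ is generated by loops avoiding the marked points, and those are supported in $\nu$ up to homotopy.) Composing with the generation statement from the first paragraph shows $\pi_1(S)$ is generated by $\{P(a) \mid a \in \mathcal N\}$.

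The one point requiring a little care — the main obstacle — is the bookkeeping that identifies the Van Kampen generators with the specific loops $P(a)$ defined just before the lemma via "following the unique path in $\mathcal T$ from the basepoint to $a$." One must check that this path is well-defined (each $a$ meets $\mathcal T$ in a connected set, which is exactly the content of Lemma \ref{lemma:graphs}), and that traversing it, crossing the unique off-tree edge of $a$, and returning gives a loop homotopic to the standard Van Kampen generator for that edge. This is routine once Lemma \ref{lemma:graphs} is in hand, and I would keep the argument at the level of "this is the standard spanning-tree description of $\pi_1$ of a graph, transported along the homotopy equivalence $\widehat{\mathcal N} \simeq S$ (when filling)," rather than spelling out an explicit homotopy.
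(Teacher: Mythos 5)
Your argument is correct and is essentially the paper's own: the paper simply cites Lemma \ref{lemma:graphs} and a ``standard application of the Seifert--Van Kampen theorem,'' which is exactly the spanning-tree collapse you spell out, together with the observation that filling means the complementary regions are disks and boundary-parallel annuli, so attaching them adds no generators and the inclusion-induced map $\pi_1(\widehat{\mathcal N}) \to \pi_1(S)$ is onto. Your extra care in identifying the Van Kampen generators with the loops $P(a)$ (via the connectivity of $a \cap \mathcal T$ from Lemma \ref{lemma:graphs}) is precisely the intended bookkeeping, so there is nothing to correct.
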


$\pi_1(S)$ is a normal subgroup of $\Mod(S)$: if $\alpha \in \pi_1(S)$ is a mapping class corresponding to a based loop and $f \in \Mod(S)$ is arbitrary, then conjugation by $f$ takes $\alpha$ to the mapping class corresponding to the based loop $f(\alpha)$. In the context of the ``network presentation'' of $\pi_1(S)$ arising from the surjection $\pi_1(\widehat{\mathcal N}) \to \pi_1(S)$, this means that $\pi_1(S)$ has a very simple {\em normal} generating set as a subgroup of $\Mod(S)$, as the following makes precise.

\begin{lemma}\label{lemma:makepushes}
Let $\widehat{\mathcal N} \subset S$ be an arboreal filling network. Let $H \leqslant \Mod(S)$ be a subgroup containing $T_a$ for each $a \in \mathcal N$. If $H$ also contains $P(a_1) \in \pi_1(S)$ for some $a_1 \in \mathcal N$, then $H$ contains the entire point-pushing subgroup $\pi_1(S)$.  
\end{lemma}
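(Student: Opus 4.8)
The plan is to reduce the statement to showing $P(a)\in H$ for every $a\in\mathcal N$, and then to propagate membership along the tree $\Gamma_{\mathcal N}$ starting from $a_1$. First I would invoke Lemma~\ref{lemma:pi1genset}: since $\mathcal N$ is arboreal and filling, the point-pushes $\{P(a):a\in\mathcal N\}$ generate the whole point-pushing subgroup $\pi_1(S)$, so it suffices to get each one into $H$. To organize the argument I would introduce $\Lambda:=\{\gamma\in\pi_1(S):\Push(\gamma)\in H\}$. Because $\Push$ is a homomorphism and $H$ is a subgroup, $\Lambda$ is a subgroup of $\pi_1(S)$; and because each $T_c$ ($c\in\mathcal N$) lies in $H$, fixes all marked points, and satisfies $T_c\Push(\gamma)T_c^{-1}=\Push((T_c)_*\gamma)$, the subgroup $\Lambda$ is invariant under every automorphism $(T_c)_*$. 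The hypothesis says $[P(a_1)]\in\Lambda$, and the goal becomes: $\Lambda$ contains $[P(a)]$ for all $a\in\mathcal N$.

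I would prove this by induction on the distance $d(a,a_1)$ in the tree $\Gamma_{\mathcal N}$, the base case being the hypothesis. For the inductive step, let $b$ be the neighbor of $a$ closer to $a_1$, so $[P(b)]\in\Lambda$ and $\#(a\cap b)=1$ (hence $i(a,b)=1$). I would use the braid relation (Section~\ref{subsection:relations}), which in the form $T_aT_b(a)=b$ gives $a=(T_b^{-1}T_a^{-1})(b)$ as isotopy classes. The mapping class $f:=T_b^{-1}T_a^{-1}$ lies in $H$, so $\Push(f_*[P(b)])=f\,\Push([P(b)])\,f^{-1}\in H$, i.e. $f_*[P(b)]\in\Lambda$. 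The loop $f_*[P(b)]$ is represented by applying $f$ to the lollipop defining $P(b)$, and since $f(b)=a$ it is freely homotopic to the curve $a$; hence $f_*[P(b)]=w\,[P(a)]^{\pm1}\,w^{-1}$ for some $w\in\pi_1(S)$. If I can show $w\in\Lambda$, then $[P(a)]=\bigl(w^{-1}(f_*[P(b)])w\bigr)^{\pm1}\in\Lambda$ and the induction closes.

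The hard part will be controlling this basepoint discrepancy $w$. I expect to need to choose the spanning tree $\mathcal T$ of $\widehat{\mathcal N}$ in harmony with the rooting of $\Gamma_{\mathcal N}$ at $a_1$ — for instance so that the unique non-tree edge of each curve provided by Lemma~\ref{lemma:graphs} abuts its intersection point with its parent — so that $f$ may be taken supported in a regular neighborhood of $a\cup b$, and consequently $w$ decomposes as a product of the loops $P(c)$ for $c$ ranging over curves on the $\Gamma_{\mathcal N}$-geodesic from $a_1$ to $b$, all of which already lie in $\Lambda$ by the inductive hypothesis. Everything else — the reduction via Lemma~\ref{lemma:pi1genset}, the formalism of the subgroup $\Lambda$, and the application of the braid relation — should be routine; the crux is this bookkeeping, matching the tree structure of the space $\widehat{\mathcal N}$ with that of the graph $\Gamma_{\mathcal N}$.
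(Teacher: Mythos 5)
Your proposal is correct and follows essentially the same route as the paper: reduce via Lemma \ref{lemma:pi1genset} to the generators $P(a)$, induct on distance from $a_1$ in the tree $\Gamma_{\mathcal N}$, and use the braid relation $T_{a'}T_a(a')=a$ together with the conjugation formula for point-pushes to transfer membership in $H$ from $P(a')$ to $P(a)$. The only divergence is one of bookkeeping: the paper simply asserts the identity $P(a)=(T_{a'}T_a)P(a')(T_{a'}T_a)^{-1}$ with no discussion of the basepoint discrepancy $w$ that you isolate as the crux, so your $\Lambda$-formalism and proposed taming of $w$ constitute extra care about a point the paper's own argument treats as immediate (by choosing the based representatives compatibly), rather than a different method.
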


\begin{proof}
As recorded in Lemma \ref{lemma:pi1genset}, $\pi_1(\widehat{\mathcal N})$, and hence also $\pi_1(S)$, is generated by the collection of elements $P(a)$ for $a \in \mathcal N$. We will proceed by induction. Define connected subnetworks 
\[\mathcal N_0 \subset \mathcal N_1 \subset \dots \subset \mathcal N_n = \mathcal N\] as follows: $\mathcal N_k$ consists of all those curves $a$ at a distance of at most $k$ from the base vertex $a_1 \in \Gamma_{\mathcal N}$ (viewing $\Gamma_{\mathcal N}$ as a metric space for which each edge has length $1$). We suppose that $\pi_1(\widehat{\mathcal N}_k) \leqslant H$; the base case $k = 0$ holds by hypothesis.

Let $a \in \mathcal N_{k+1}\setminus \mathcal N_k$ be arbitrary. Let $a' \in \mathcal N_k$ be adjacent to $a$. By the braid relation, 
\[
T_{a'} T_{a} (a') = a,
\]
and hence $P(a) = (T_{a'} T_{a}) P(a') (T_{a'} T_{a}) ^{-1} \in H$. This completes the inductive step.
\end{proof}

\subsection{Network generating sets for \boldmath$\mathcal T_\phi$}\label{subsection:networkproof} Having established some of the basic theory of networks, we can now formulate and prove the key technical result of the paper. For hypotheses \eqref{item:config} and \eqref{item:intersect}, the reader may wish to consult Figure \ref{figure:dnrel} and the surrounding discussion of the $D_n$ relation (Proposition \ref{proposition:dnrel}) and the configuration $\mathscr D_n$. For an example of a network satisfying the hypotheses of Theorem \ref{theorem:networkgenset}, see Figure \ref{figure:example}.
\begin{theorem}\label{theorem:networkgenset}
Let $\phi$ be a $\Z/r\Z$-valued spin structure on a closed surface $\Sigma_g$, with $1 \le r < g - 1$. Let $\mathcal N = \{a_n\}$ be a connected filling network of curves on $\Sigma_g$ with the following properties:
\begin{enumerate}
\item \label{item:zero} Every element $a_n$ is admissible,
\item \label{item:config} There is a collection $a_1, \dots, a_{2r + 4}$ of elements of $\mathcal N$ such that $a_1, \dots, a_{2r + 3}$ are arranged in the configuration of the curves $\mathscr D_{2r+3}$ of the $D_{2 r + 3}$ relation, and $a_{2r + 4}$ corresponds to the boundary component $\Delta_1$ associated to the subconfiguration $\mathscr D_{2 r + 2}$. 
\item \label{item:intersect} Let $b \subset \Sigma_g$ correspond to the curve $\Delta_0$ of the $D_{2 r + 3}$ relation, as appearing in Figure \ref{figure:dnrel}. Then $\mathcal N$ must contain some curve $d$ with $i(d,b) = 1$. 
\item \label{item:restrict} Let $\mathcal N' \subset \mathcal N$ be the subnetwork consisting of all curves in $\mathcal N$ disjoint from $b$. Then $\mathcal N'$ must be an arboreal filling network for $\Sigma_g \setminus \{b\}$.
\end{enumerate}
If $g \ge 5$, then $\pair{T_{a_i},\ a_i \in \mathcal N}$ contains the admissible subgroup $\mathcal T_\phi$. 

Moreover, if $r$ is odd, then
\[
 \pair{T_{a_i},\ a_i \in \mathcal N} = \Mod(\Sigma_g)[\phi].
\]
If $r=2d$ is even and $g \ge g(r)$ for the function $g(r)$ of Definition \ref{definition:gd}, then $\pair{T_{a_i},\ a_i \in \mathcal N}$ is a subgroup of finite index in $\Mod(\Sigma_g)[\phi]$.
\end{theorem}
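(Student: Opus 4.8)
The plan is to assemble Theorem \ref{theorem:networkgenset} from the machinery already in place: Proposition \ref{proposition:submakesT} (the subsurface-push criterion), the $D_n$ relation via Corollary \ref{corollary:dn}, Lemma \ref{lemma:makepushes} (network generation of point-pushing subgroups), and finally Propositions \ref{proposition:twistgen} and \ref{proposition:twistgeneven} for the last two claims. Write $H := \pair{T_{a_i},\ a_i \in \mathcal N}$. The first goal is to show $\mathcal T_\phi \leqslant H$, and by Proposition \ref{proposition:submakesT} it suffices to exhibit inside $H$ an ordered $3$-chain $(a_0, a_1, b)$ with $\phi(a_0) = \phi(a_1) = 0$, $\phi(b) = -1$, the two twists $T_{a_0}, T_{a_1}$, and the spin subsurface push subgroup $\widetilde\Pi(\Sigma_g \setminus \{b\})$.

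First I would produce the curve $b$ of slope $-1$ and the twists about it. Using hypothesis \eqref{item:config}, the curves $a_1, \dots, a_{2r+3}$ sit in the $\mathscr D_{2r+3}$ configuration, with $a_{2r+4}$ the boundary curve $\Delta_1$ of the $\mathscr D_{2r+2}$ subconfiguration. Since all $a_i$ are admissible (hypothesis \eqref{item:zero}), the group generated by the corresponding twists together with $T_{a_{2r+4}} = T_{\Delta_1}$ is precisely the group $H_{2r+3}^+$ of Corollary \ref{corollary:dn}, and it lies in $H$. By homological coherence the curve $b = \Delta_0$ of the $D_{2r+3}$ relation bounds a subsurface of genus $r+1$ in the configuration, so $\phi(b) = -1$ for the appropriate orientation; the actual statement I need from Corollary \ref{corollary:dn}(2) is that $T_{\Delta_0}^{m} \in H_{2r+3}^+ \leqslant H$ for suitable $m$, which (taking $k = 1$, using that $(2\cdot 1 - 1)m = m$ must divide $g$, and in fact tracking the full statement as in Lemma \ref{item:allcurves}) gives $T_b^r \in H$. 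Combined with the fact that $\phi(b)$ has order dividing $r$, this is exactly the generator of the kernel $\pair{T_b^r}$ in the diagram \eqref{equation:diagram} defining $\widetilde\Pi(\Sigma_g \setminus \{b\})$. To get the rest of that push subgroup: by hypothesis \eqref{item:restrict}, the subnetwork $\mathcal N'$ of curves disjoint from $b$ is an arboreal filling network for $\Sigma_g \setminus \{b\}$, and all its curves are admissible, so $H$ contains $T_a$ for every $a \in \mathcal N'$. Hypothesis \eqref{item:intersect} provides $d \in \mathcal N$ with $i(d,b) = 1$; the braid relation in the form $T_d T_{a'}(d)$ (for $a' \in \mathcal N'$ adjacent to $d$ in $\Gamma_{\mathcal N}$, which exists since $\mathcal N$ is connected and filling) lets me realize at least one point-pushing generator $P(a)$ of $\pi_1(UT(\overline{\Sigma_g \setminus \{b\}}))$ inside $H$ — more carefully, the Johnson-lift generators $\tilde\alpha_i$ of \eqref{equation:utgens} are each a product $T_{\alpha_{i,L}}T_{\alpha_{i,R}}^{-1}$ times a power of $T_b$, and one arranges via the braid relation, exactly as in Lemma \ref{lemma:makepushes}, to conjugate a known push into all of them. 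With $T_b^r$ and all the $T_{P_i}$ in hand, $\widetilde\Pi(\Sigma_g \setminus \{b\}) \leqslant H$. Finally the $3$-chain $(a_0, a_1, b)$: take $a_1$ to be a curve of $\mathcal N'$ with $i(a_1, b) = 1$ and $a_0$ a further admissible curve disjoint from $b$ with $i(a_0, a_1) = 1$; such curves are furnished either directly among the $\mathscr D_n$-curves or by Corollary \ref{corollary:intersections} applied inside $\Sigma_g \setminus \{b\}$ (which has genus $\ge g - 1 > r \ge 1$, so has positive-genus room), and the twists $T_{a_0}, T_{a_1}$ lie in $H$ since these are admissible curves in the network or obtained by the usual braid/conjugation trick. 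Proposition \ref{proposition:submakesT} now yields $\mathcal T_\phi \leqslant H$.

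For the two refined conclusions: when $r$ is odd, Proposition \ref{proposition:twistgen} gives $\mathcal T_\phi = \Mod(\Sigma_g)[\phi]$ under the hypothesis $r < g-1$ (and $g \ge 3$, which is implied by $g \ge 5$); but we also need the reverse containment $H \leqslant \Mod(\Sigma_g)[\phi]$, which is immediate from Lemma \ref{lemma:twists} since every $a_i$ is admissible, hence $T_{a_i}$ preserves $\phi$. Therefore $H = \Mod(\Sigma_g)[\phi]$. When $r = 2d$ is even and $g \ge g(r)$, Proposition \ref{proposition:twistgeneven} gives that $\mathcal T_\phi$ has finite index in $\Mod(\Sigma_g)$, hence a fortiori finite index in $\Mod(\Sigma_g)[\phi]$; since $\mathcal T_\phi \leqslant H \leqslant \Mod(\Sigma_g)[\phi]$, the group $H$ has finite index in $\Mod(\Sigma_g)[\phi]$ as well.

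The step I expect to be the main obstacle is the careful extraction of the \emph{full} spin subsurface push subgroup $\widetilde\Pi(\Sigma_g \setminus \{b\})$ from the network data — i.e.\ verifying that hypotheses \eqref{item:intersect} and \eqref{item:restrict} really do supply all the fundamental multitwists $T_{P_i}$ (equivalently, all Johnson-lift generators of $\pi_1(UT(\overline{\Sigma_g\setminus\{b\}}))$), not merely the twists about curves disjoint from $b$. This requires matching the abstract geometric generating set \eqref{equation:utgens} against the concrete loops $P(a)$ coming from the arboreal structure of $\mathcal N'$ (Lemma \ref{lemma:pi1genset}), and then using the single transverse curve $d$ together with the braid relation to upgrade ``twists are in $H$'' to ``the pushes $P(a)$ are in $H$'', which is precisely the content of the inductive argument in Lemma \ref{lemma:makepushes} transported to the surface-with-boundary $\Sigma_g \setminus \{b\}$. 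Once that bookkeeping is done, the rest is a direct appeal to the cited propositions.
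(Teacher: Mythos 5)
Your overall architecture coincides with the paper's: reduce to Proposition \ref{proposition:submakesT}, extract $T_b^r$ from the $D_n$ machinery of Corollary \ref{corollary:dn}, use hypothesis (\ref{item:restrict}) together with Lemma \ref{lemma:makepushes} to capture the point-pushing subgroup of $\overline{\Sigma_g\setminus\{b\}}$, and finish with Lemma \ref{lemma:twists} and Propositions \ref{proposition:twistgen} and \ref{proposition:twistgeneven}. However, there is a genuine gap at exactly the step you flag as the main obstacle: Lemma \ref{lemma:makepushes} requires a \emph{seed} push $P(a_1)$ already known to lie in $H$, and you never produce one. Your proposed mechanism (``the braid relation in the form $T_d T_{a'}(d)$'') does not yield a point-push, and the later phrase ``conjugate a known push into all of them'' presupposes precisely the element that is missing. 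The paper's resolution is concrete and uses only network twists: the two fork curves $a, a'$ of the $\mathscr D_{2r+3}$ configuration cobound a pair of pants with $b = \Delta_0$, so after capping off the left side of $b$ they become isotopic and the product $T_a T_{a'}^{-1}$ of two twists from $\mathcal N'$ maps to the point-push $P(a)$ in $\Mod(\overline{\Sigma_g\setminus\{b\}})$; with this seed, Lemma \ref{lemma:makepushes} propagates pushes through the arboreal filling network $\mathcal N'$, and together with $T_b^r$ one obtains $\widetilde\Pi(\Sigma_g\setminus\{b\})\leqslant H$ via Lemma \ref{lemma:spinpush}.

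Two smaller points. Your description of the $3$-chain is inconsistent: a curve of $\mathcal N'$ is by definition disjoint from $b$, so it cannot satisfy $i(a_1,b)=1$; the intended middle curve is the transverse curve $d \in \mathcal N\setminus\mathcal N'$ of hypothesis (\ref{item:intersect}), and the paper checks (using $\#(d\cap a)\le 1$, $\#(d\cap a')\le 1$ and $i(d,b)=1$) that $d$ meets exactly one of $a, a'$, so that, say, $(a, d, b)$ is the required chain with both outer twists already in $H$ and $\phi(b)=-1$ by homological coherence. Also, your fallback of producing $a_0, a_1$ by Corollary \ref{corollary:intersections} and then asserting their twists lie in $H$ ``by the usual braid/conjugation trick'' is circular: twists about admissible curves not in the network are exactly what the theorem is trying to capture, so the chain must be built from network curves as above. (Your citation of Corollary \ref{corollary:dn}(2) with $k=1$ is also garbled --- $C_1$ is not $\Delta_0$ --- but the fact you need, $T_{\Delta_0}^{r}\in H^+_{2r+3}$, is indeed contained in that corollary's proof, so the conclusion $T_b^r\in H$ stands.)
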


\begin{proof}
Define
\[
H = \pair{T_{a_i},\ a_i \in \mathcal N}.
\]
By hypothesis (\ref{item:zero}), $H \leqslant \mathcal T_\phi$. We establish the opposite containment $\mathcal T_\phi \leqslant H$. The remaining assertions in the statement of Theorem \ref{theorem:networkgenset} follow by an appeal to Proposition \ref{proposition:twistgen} or Proposition \ref{proposition:twistgeneven} as appropriate. The containment $\mathcal T_\phi \leqslant H$ will follow from Proposition \ref{proposition:submakesT}. To see that the hypotheses of Proposition \ref{proposition:submakesT} are satisfied by $H$, it is necessary to establish a containment $\widetilde \Pi(\Sigma_g \setminus\{b\})\leqslant H$, and to find suitable curves corresponding to $a_0, a_1$ in the statement of Proposition \ref{proposition:submakesT}.

Consider the curves $\{a_1, \dots, a_{2r + 4}\} \subset \mathcal N$ corresponding to $\mathscr D_{2 r + 3} \cup \{\Delta_1\}$ as in Corollary \ref{corollary:dn}, as posited by hypothesis (\ref{item:config}). Without loss of generality, assume that $a_1, a_2, a_3 \in \mathcal N$ correspond to the curves $a, c_1, a'$ of $\mathscr D_{2 r +3}$, so that $b \subset \Sigma_g$ is one of the boundary components of the chain $a_1, a_2, a_3$. Let $d$ be the curve with $i(d,b) = 1$ posited by hypothesis (\ref{item:intersect}), and let $P$ be the pair of pants bounded by $a_1, a_3, b$.  The intersection $d \cap P$ must be a single arc, since $d \in \mathcal N$ and so $\#(d\cap a_1) \le 1$ and $\#(d \cap a_3) \le 1$. Without loss of generality, assume $\#(d\cap a_1) = 1$ and $\#(d \cap a_3) = 0$. Then the $3$-chain $a_1, d, b$ on $\Sigma_g$ corresponds to the $3$-chain $a_0, a_1, b$ of Proposition \ref{proposition:submakesT}, since $\phi(b) = -1$ by the homological coherence property. By assumption, $T_d, T_{a_1} \in H$, so it remains only to establish $\widetilde \Pi(\Sigma_g \setminus\{b\})\leqslant H$.

By hypothesis (\ref{item:restrict}), the restriction $\mathcal N'$ determines an arboreal filling network on $\Sigma_g\setminus\{b\}$. The same is therefore true on the surface $\overline{\Sigma_g \setminus \{b\}}$ obtained by filling in the boundary component corresponding to the left side of $b$ (where $b$ is oriented so that $a_1, a_3$ lie to the left). The surface $\overline{\Sigma_g \setminus \{b\}}$ is connected, since the hypothesis $i(d,b) =1$ implies that $d$ is nonseparating. We treat $\overline{\Sigma_g \setminus \{b\}}$ as a surface $\Sigma_{g-1, 1}^1$, with the marked point corresponding to the filled-in left side of $b$. 

We will show that $\widetilde \Pi(\Sigma_g \setminus \{b\}) \leqslant H$ by appealing to Lemma \ref{lemma:spinpush}. We must therefore show that  $T_b^r$ is an element of $H$, and show that the image of the map 
\[
\pair{T_a, a \in \mathcal N'} \to \Mod(\overline{\Sigma_g \setminus \{b\}})
\]
contains the point-pushing subgroup $\pi_1(\overline{\Sigma_g \setminus \{b\}})$. Applying Corollary \ref{corollary:dn}.2, we obtain $T_b^r \in H$. To exhibit $\pi_1(\overline{\Sigma_g \setminus \{b\}})$, we will appeal to Lemma \ref{lemma:makepushes}.  The element $T_{a_1} T_{a_3}^{-1}$ corresponds to an element $P(a_1) \in \pi_1(\overline{\Sigma_g \setminus \{b\}})$. By Lemma \ref{lemma:makepushes}, it follows that the entire point-pushing subgroup $\pi_1(\overline{\Sigma_g \setminus \{b\}})$ is contained in the subgroup $\pair{T_a, a \in \mathcal N'} \leqslant H$. 
\end{proof}

\section{Linear systems in toric surfaces}\label{section:CL}
The purpose of this section is to give a minimal account of the work of  Cr\'etois--Lang in \cite{CL}. We do not attempt to give a detailed summary of the theory of toric surfaces; the interested reader is referred to \cite{CL} and the references therein. 

Consider the integer lattice $\Z^2 \subset \R^2$. A {\em lattice polygon} $\Delta$ is the convex hull of a finite collection $\{v_1, \dots, v_n\}$ of $n \ge 3$ elements $v_i \in \Z^2$, not all collinear. Given a polygon $\Delta$ which contains at least one lattice point in the interior $\operatorname{int}(\Delta)$, the {\em adjoint polygon} $\Delta_a$ is defined to be the convex hull of $\operatorname{int}(\Delta) \cap \Z^2$. 

The following proposition is a concise summary of the correspondence between line bundles on toric surfaces and polygons. For details, see \cite[Section 3]{CL}. In item (\ref{item:defined}), a {\em unimodular transformation} of $\R^2$ is an affine map $A: \R^2 \to \R^2$ (necessarily invertible) such that $A \Z^2 = \Z^2$.

\begin{proposition}\label{proposition:polygonfacts}
Let $X$ be a smooth toric surface. 
\begin{enumerate}
\item\label{item:defined} Associated to any nef line bundle $\mathcal L$ on $X$ is a convex lattice polygon $\Delta_{\mathcal L}$, well-defined up to unimodular transformations.
\item\label{item:dilates} If $\mathcal L$ is nef, then the roots of $\mathcal L$ (i.e. the line bundles $\mathcal S$ for which $n\mathcal S = \mathcal L$ for some integer $n$) are in correspondence with the dilates $\frac{1}{n} \Delta_{\mathcal L}$ for which $\frac{1}{n} \Delta_{\mathcal L}$ is a {\em lattice} polygon. 
\item Suppose that $\mathcal L$ is ample and that $\operatorname{int}(\Delta_{\mathcal L}) \cap \Z^2$ is nonempty. Then the adjoint line bundle $\mathcal L \otimes K_X$ is nef, and $\Delta_{\mathcal L \otimes K_X} = (\Delta_{\mathcal L})_a$.
\item Let $\mathcal L$ be ample. The genus $g(\mathcal L)$ of a smooth $C \in \abs{\mathcal L}$ is given by the formula
\[
g(\mathcal L) = \# (\operatorname{int}(\Delta_{\mathcal L}) \cap \Z^2) = \#((\Delta_{\mathcal L})_a \cap \Z^2).
\]
\item \label{item:hyperelliptic} Let $\mathcal L$ be ample. A generic fiber $C \in \abs{\mathcal L}$ is hyperelliptic if and only if $(\Delta_{\mathcal{L}})_a$ is a line segment.
\end{enumerate}
\end{proposition}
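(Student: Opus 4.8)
This proposition is a compendium of standard facts from the dictionary between line bundles on smooth toric surfaces and lattice polygons, and the plan is essentially to invoke that dictionary — as assembled in \cite[Section 3]{CL}, or in standard references (Fulton's \emph{Introduction to Toric Varieties}, or Cox--Little--Schenck) — recording the key ingredients. Let $\Sigma$ denote the smooth complete fan of $X$ in $\R^2$, with primitive ray generators $u_\rho$ and associated prime torus-invariant divisors $D_\rho$. Every class in $\Pic(X)$ is represented by a $T$-Cartier divisor $D = \sum_\rho a_\rho D_\rho$, and one sets
\[
\Delta_D = \{ m \in M_{\R} \mid \pair{m, u_\rho} \ge -a_\rho \text{ for all } \rho \},
\]
a lattice polygon whose lattice points index a basis of $H^0(X, \mathcal O_X(D))$; $D$ is nef precisely when its support function (piecewise-linear, linear on each cone of $\Sigma$) is convex. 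For (1), the only ambiguities are the choice of divisor within a linear equivalence class (a translation by $M$) and the choice of identification $M \cong \Z^2$ (an element of $\GL_2(\Z)$); together these are exactly the unimodular affine transformations. For (2), if $n\mathcal S = \mathcal L$ then the support function of $n\mathcal S$ is $n$ times that of $\mathcal S$, so $\Delta_{\mathcal S} = \tfrac1n \Delta_{\mathcal L}$, forcing $\tfrac1n\Delta_{\mathcal L}$ to be a lattice polygon; conversely every lattice polygon of this form is the polygon of a nef line bundle $\mathcal S$ with $n\mathcal S = \mathcal L$, unique because $\Pic(X)$ is torsion-free for $X$ smooth toric.

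For (3) and (4): since $K_X = -\sum_\rho D_\rho$, one gets $\Delta_{\mathcal L \otimes K_X} = \{ m \mid \pair{m, u_\rho} \ge -a_\rho + 1 \text{ for all } \rho\}$. Because the $u_\rho$ are primitive and the $a_\rho$ integral, for $m \in M$ the inequality $\pair{m,u_\rho} \ge -a_\rho + 1$ is equivalent to $\pair{m,u_\rho} > -a_\rho$, so $\Delta_{\mathcal L \otimes K_X} \cap M = \operatorname{int}(\Delta_{\mathcal L}) \cap M$; and for smooth toric surfaces this ``inner parallel'' polygon equals the convex hull of its lattice points, whence $\Delta_{\mathcal L \otimes K_X} = \operatorname{conv}(\operatorname{int}(\Delta_{\mathcal L})\cap M) = (\Delta_{\mathcal L})_a$. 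It is nonempty exactly when $\operatorname{int}(\Delta_{\mathcal L})\cap M \ne \emptyset$, and in that case the support function of $\mathcal L\otimes K_X$ is convex (using ampleness of $\mathcal L$), so $\mathcal L\otimes K_X$ is nef; this is (3). For (4), adjunction gives $\omega_C \cong (K_X + C)|_C$ for smooth $C \in \abs{\mathcal L}$; twisting $0 \to \mathcal O_X(-\mathcal L)\to\mathcal O_X\to\mathcal O_C\to 0$ by $K_X\otimes\mathcal L$ and using $H^0(X,K_X) = H^1(X,K_X) = 0$ (valid on a smooth projective toric surface) yields $H^0(C,\omega_C)\cong H^0(X,K_X\otimes\mathcal L)$, which has dimension $\#(\Delta_{\mathcal L\otimes K_X}\cap M) = \#(\operatorname{int}(\Delta_{\mathcal L})\cap M)$. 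Smoothness of $C$ gives $g(\mathcal L) = h^0(C,\omega_C)$, hence the first equality in (4); the second holds because $\operatorname{conv}(\operatorname{int}(\Delta_{\mathcal L})\cap M)\subseteq\operatorname{int}(\Delta_{\mathcal L})$, so the two polygons contain the same lattice points.

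For (5), the canonical map of $C$ is induced by restricting the linear system $\abs{K_X\otimes\mathcal L}$, whose sections correspond to the lattice points of $(\Delta_{\mathcal L})_a$. If $(\Delta_{\mathcal L})_a$ is a segment, these are the successive powers $1, t, \dots, t^{g-1}$ of a single character $t = \chi^v|_C$, so the canonical map is the composite of a morphism $C \to \P^1$ with a rational normal embedding $\P^1 \hookrightarrow \P^{g-1}$; a degree count (pull back $\mathcal O(1)$: $2g-2 = (g-1)\deg(C\to\P^1)$) shows $C\to\P^1$ is $2$-to-$1$, so $C$ is hyperelliptic. The converse is the step I expect to be the main obstacle: from the geometric fact that the canonical image of $C$ is a rational normal curve in $\P^{g-1}$ one must deduce that the interior lattice points of $\Delta_{\mathcal L}$ are collinear, i.e.\ $(\Delta_{\mathcal L})_a$ has dimension $\le 1$. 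My plan is to use that the generic $C \in \abs{\mathcal L}$ is nondegenerate with respect to $\Delta_{\mathcal L}$ and to invoke the classification of hyperelliptic nondegenerate curves in toric surfaces — carried out in \cite{CL2}, and, in the Newton-polygon formulation, in work of Koelman and of Castryck and collaborators — or, failing that, to argue directly that the hyperelliptic pencil on $C$ must be pulled back from a lattice direction, forcing $(\Delta_{\mathcal L})_a$ to collapse.
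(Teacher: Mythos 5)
The paper does not actually prove this proposition: it is presented as a ``concise summary'' of the toric dictionary, with all details deferred to \cite[Section 3]{CL} (and, for the hyperelliptic case, \cite{CL2}). So there is no argument in the paper to compare yours against step by step. Your reconstruction via the standard divisor--polytope dictionary (support functions, $\Delta_D=\{m\mid \pair{m,u_\rho}\ge -a_\rho\}$, torsion-freeness of $\Pic$ for (2), adjunction plus $H^0(X,K_X)=H^1(X,K_X)=0$ for (4)) is the standard route and is correct in outline; it supplies more detail than the paper itself does.

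Two points deserve flagging. First, in (3) the identity $\Delta_{\mathcal L\otimes K_X}=\operatorname{conv}(\operatorname{int}(\Delta_{\mathcal L})\cap \Z^2)$ -- i.e.\ that the inward parallel polygon is a lattice polygon equal to the hull of the interior lattice points -- is a genuinely two-dimensional fact (it fails for higher-dimensional polytopes), so it needs its own citation or a short argument rather than a ``whence''; this is exactly the sort of statement \cite[Section 3]{CL} records. Second, you correctly identify the only substantive mathematical content beyond the dictionary: the ``only if'' direction of (5), i.e.\ that a two-dimensional adjoint polygon forces the generic member of $\abs{\mathcal L}$ to be non-hyperelliptic. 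Your plan to invoke the Koelman/Castryck-style classification (or \cite{CL2}) is appropriate and is precisely where the paper's citation ultimately points; an honest write-up should make clear that this direction is being quoted, not proved. With those caveats, your proposal is sound and consistent with how the paper treats the statement.
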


The following proposition indicates the connection between the divisibility properties of $\mathcal L \otimes K_X$ as an element of $\Pic(X)$ (or after Proposition \ref{proposition:polygonfacts}.\ref{item:dilates}, the divisibility of $(\Delta_\mathcal L)_a$), and the presence of invariant higher spin structures. It is a folklore theorem; see \cite[Theorem 1.1]{saltermonodromy} and \cite[Proposition 2.7]{CL} for written accounts. 

\begin{proposition}\label{proposition:invtspin}
Let $\mathcal L$ be an ample line bundle on a smooth toric surface $X$. For any $r$ such that the adjoint line bundle $\mathcal L \otimes K_X$ admits a $r^{th}$ root in $\Pic(X)$, there exists a (unique) $\Z/r \Z$-valued spin structure $\phi$ preserved by the monodromy $\mu_\mathcal L$:
\[
\Gamma_\mathcal L \leqslant \Mod(\Sigma_{g(\mathcal L)})[\phi].
\]
\end{proposition}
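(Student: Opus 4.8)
The plan is to construct $\phi$ by hand from an $r$-th root of $\mathcal L \otimes K_X$, transport it to the fibres of the tautological family $\pi \colon \mathcal E(\mathcal L) \to \mathcal M(\mathcal L)$ via adjunction, and then argue that the resulting family of spin structures on the fibres is locally constant over the connected base, which forces the monodromy to preserve it.

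First I would fix an $r$-th root $\mathcal S \in \Pic(X)$ of $\mathcal L \otimes K_X$, i.e.\ an isomorphism $\mathcal S^{\otimes r} \cong \mathcal L \otimes K_X$, and observe that for every smooth $C \in \mathcal M(\mathcal L)$ the adjunction isomorphism $K_C \cong (\mathcal L \otimes K_X)|_C$ (using $\mathcal O_X(C) \cong \mathcal L$) turns $\mathcal S|_C$ into an $r$-th root of $K_C$, equivalently $\mathcal S|_C^{-1}$ into an $r$-th root of the tangent bundle $T_C = K_C^{-1}$. By the correspondence recalled in Remark \ref{remark:roots} (which rests on Theorem \ref{theorem:HJhomological}), the unit circle bundle of such a root is a cyclic $r$-fold cover of $UT C$ that restricts to a connected cover of the $S^1$ fibre, and this is precisely the data of a $\Z/r\Z$-valued spin structure $\phi_C$ on $C$. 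The necessary divisibility $r \mid 2g(\mathcal L) - 2$ comes for free, since $\deg(\mathcal S|_C) = (2g(\mathcal L)-2)/r$, compatibly with Remark \ref{remark:admits} and Proposition \ref{proposition:polygonfacts}.

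Next I would globalise. Realising $\mathcal E(\mathcal L) = \{(x,C) : x \in C\} \subset X \times \mathcal M(\mathcal L)$ as the incidence variety, with its two projections $p \colon \mathcal E(\mathcal L) \to X$ and $\pi \colon \mathcal E(\mathcal L) \to \mathcal M(\mathcal L)$, the line bundle $p^* \mathcal S$ restricts on each fibre to $\mathcal S|_C$, and relative adjunction identifies $(p^*\mathcal S)^{\otimes r}$ with the relative canonical bundle $\omega_\pi$ up to a twist by a line bundle pulled back from $\mathcal M(\mathcal L)$, hence up to a twist that is trivial on every fibre. Over a contractible $U \subset \mathcal M(\mathcal L)$ one can trivialise $\pi^{-1}(U) \cong U \times \Sigma_{g(\mathcal L)}$, trivialise the offending $\pi$-pullback (homotopically uniquely, since $U$ is contractible), and conclude that $p^*\mathcal S$, $T_\pi$ and the comparison isomorphism are all pulled back from $\Sigma_{g(\mathcal L)}$ over $U$; therefore $\phi_C$ does not vary with $C \in U$ under the identification $C \cong \Sigma_{g(\mathcal L)}$. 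Since the set of $\Z/r\Z$-valued spin structures on $\Sigma_{g(\mathcal L)}$ is finite by Theorem \ref{theorem:HJhomological} and $\mathcal M(\mathcal L)$ is connected, $C \mapsto \phi_C$ is locally constant, hence pulls back to a constant spin structure $\phi$ on the universal cover of $\mathcal M(\mathcal L)$. Because the roots $\mathcal S|_C$ are intrinsic to $C$ (they are restrictions of the single bundle $p^*\mathcal S$), the monodromy action of $\pi_1(\mathcal M(\mathcal L),C_0)$ permutes this constant family, so each $\mu_\mathcal L(\gamma)$ fixes $\phi$; thus $\Gamma_\mathcal L = \im(\mu_\mathcal L) \leqslant \Mod(\Sigma_{g(\mathcal L)})[\phi]$. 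Uniqueness of $\phi$ follows because $\Pic(X)$ is torsion-free for $X$ a smooth toric surface, so the $r$-th root $\mathcal S$ is unique, and the two auxiliary isomorphisms (the one defining $\mathcal S$ and the adjunction isomorphism) affect nothing up to homotopy.

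The step I expect to be the main obstacle is making the local-constancy claim airtight: one must correctly bookkeep the discrepancy between $(p^*\mathcal S)^{\otimes r}$ and $\omega_\pi$ — the base-pulled-back line bundle produced by relative adjunction — and then invoke Theorem \ref{theorem:HJhomological} to know that a continuously varying family of $\Z/r\Z$-valued spin structures over a connected base is genuinely constant. Everything else reduces to the adjunction formula and the standard dictionary between surface bundles, monodromy, and reductions of structure group.
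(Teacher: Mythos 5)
The paper does not actually prove this proposition: it is treated as folklore, with the proof deferred to \cite[Theorem 1.1]{saltermonodromy} and \cite[Proposition 2.7]{CL}. Your existence-and-invariance argument follows exactly the standard route taken in those accounts: adjunction identifies $\mathcal S|_C$ as an $r$-th root of $K_C$, the dictionary of Theorem \ref{theorem:HJhomological} and Remark \ref{remark:roots} converts this into a $\Z/r\Z$-valued spin structure $\phi_C$, and the fact that these roots fit together over $\mathcal M(\mathcal L)$ (your $p^*\mathcal S$ on the incidence variety, or equivalently a locally constant section of the finite covering of spin structures over the connected base) forces $\mu_{\mathcal L}(\gamma)^*\phi = \phi$ for every $\gamma$. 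Modulo the usual bookkeeping with the base twist in relative adjunction, which you flag yourself, that half is sound.

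The genuine gap is in the uniqueness clause. What you prove is that the \emph{construction} is canonical: since $\Pic(X)$ of a smooth complete toric surface is torsion-free, the $r$-th root $\mathcal S$ is unique, so no choices enter into $\phi$. But the parenthetical ``(unique)'' in the statement asserts something stronger: $\phi$ is the \emph{only} $\Z/r\Z$-valued spin structure with $\Gamma_{\mathcal L} \leqslant \Mod(\Sigma_{g(\mathcal L)})[\phi]$. Canonicity of one invariant spin structure does not rule out the monodromy preserving others (a small monodromy group would preserve many). To get uniqueness one must use elements of $\Gamma_{\mathcal L}$ itself: by Lemma \ref{lemma:twists}, if $T_c \in \Gamma_{\mathcal L}$ for a nonseparating vanishing cycle $c$, then any invariant spin structure $\phi'$ satisfies $\phi'(c) = 0$; one then needs a supply of such vanishing cycles rich enough that these conditions (together with Proposition \ref{proposition:HCC} and Theorem \ref{theorem:gsb}, i.e.\ values on curves representing a homology basis) pin $\phi'$ down completely. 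In this paper that supply comes from the Cr\'etois--Lang results (Theorems \ref{theorem:Acurves} and \ref{theorem:piseg}), and in the cited references the uniqueness is established by exactly this kind of argument. Your appeal to torsion-freeness of $\Pic(X)$ does not substitute for it.
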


Proposition \ref{proposition:polygonfacts} suggests that it might be profitable to ``model'' a smooth $C \in \abs{\mathcal L}$ on the lattice polygon $\Delta_\mathcal L$. 
\begin{construction}[Inflation procedure]\label{construction:inflation}
Let $\Delta$ be a lattice polygon. Let $B(r,x)$ denote the open ball of radius $r$ centered at $x \in \R^2$. Define the surface with boundary
\[
\Delta^\circ := \Delta \setminus \bigcup_{v \in \operatorname{int}(\Delta) \cap \Z^2} B(v, 1/4).
\]
The {\em inflation} of $\Delta$ is the surface $C_{\Delta}$ obtained as the double of $\Delta^\circ$ along its boundary. It is a closed oriented surface of genus $g =  \# (\operatorname{int}(\Delta) \cap \Z^2)$. In particular, for $\Delta = \Delta_{\mathcal L}$ for some ample $\mathcal L$, the inflation $C_\Delta$ has genus $g(\mathcal L)$. See Figure \ref{figure:example} for the example of $\mathcal O(6)$ on $\CP^2$.
\end{construction}

The first indication of the utility of the inflation procedure is provided by the following theorem of  Cr\'etois--Lang. For an inflation $C_\Delta$, define an {\em $A$-curve} to be any simple closed curve on $C_\Delta$ that corresponds to the circle of radius $1/4$ centered at an interior lattice point of $\Delta$.

\begin{theorem}[\cite{CL}, Theorem 3]\label{theorem:Acurves}
Let $\mathcal L$ be an ample line bundle on a smooth toric surface $X$. There is a homeomorphism $f: C_0 \to C_{\Delta_{\mathcal L}}$ identifying a smooth $C_0 \in \abs{\mathcal L}$ with $C_{\Delta_{\mathcal L}}$, such that every $A$-curve $a \subset C_{\Delta_{\mathcal L}}$ is a vanishing cycle, and
\[
T_a \in \Gamma_{\mathcal L}.
\]
\end{theorem}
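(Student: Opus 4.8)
The statement is \cite[Theorem 3]{CL}, so the proof proper in this paper is essentially a citation; the plan here is to indicate the geometric mechanism one would reconstruct. Work over the dense torus $(\C^\ast)^2 \subset X$, where a generic $C_0 \in \abs{\mathcal L}$ is cut out by a Laurent polynomial $f = \sum_{v \in \Delta_{\mathcal L} \cap \Z^2} c_v z^v$ with Newton polygon $\Delta_{\mathcal L}$. First I would produce the homeomorphism $f \colon C_0 \to C_{\Delta_{\mathcal L}}$ using the amoeba map $\operatorname{Log} \colon (\C^\ast)^2 \to \R^2$: for a suitable choice of coefficients $c_v$ (for instance, coefficients making $C_0$ a simple Harnack curve in the sense of Mikhalkin--Rullg\r{a}rd, or coefficients arising from Viro's patchworking), the complement $\R^2 \setminus \operatorname{Log}(C_0)$ has exactly $g(\mathcal L)$ bounded components, in natural bijection with $\operatorname{int}(\Delta_{\mathcal L}) \cap \Z^2$. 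The preimage under $\operatorname{Log}$ of each bounded component meets $C_0$ in an open annulus whose core circle is, by definition, the $A$-curve attached to the corresponding interior lattice point, while over the unbounded region and the tentacles of the amoeba $C_0$ restricts to the doubled-collar picture of $\Delta^\circ$. Assembling these pieces produces the identification $C_0 \cong C_{\Delta_{\mathcal L}}$ of Construction \ref{construction:inflation}.

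Next, fix an interior lattice point $w$ and its $A$-curve $a = a_w$. I would exhibit a one-parameter family $\{C_t\}_{t \in \disk}$ in $\abs{\mathcal L}$ with $C_t$ smooth for $t \ne 0$, with $C_{t_0} = C_0$ for some $t_0 \ne 0$, and with $C_0$ acquiring exactly one node whose vanishing cycle is $a$. Geometrically, one deforms the coefficients of $f$ so that the bounded complementary region of the amoeba indexed by $w$ shrinks to a point as $t \to 0$ while the other bounded regions remain nondegenerate: the pinching annulus is precisely $\operatorname{Log}^{-1}$ of that vanishing region, so the vanishing cycle is its core, namely $a$. Existence of such a family can be arranged inside the space of simple Harnack curves, or alternatively by a local argument: the discriminant $\mathcal D \subset \abs{\mathcal L}$ is, near a curve with a single node, a smooth hypersurface, so it suffices to construct one such one-nodal curve sharing the amoeba combinatorics of $C_0$ outside a neighborhood of $w$ and then take a generic transverse disk through it in $\abs{\mathcal L}$.

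Finally, Picard--Lefschetz theory identifies the monodromy of $\mu_{\mathcal L}$ along a small loop in $\mathcal M(\mathcal L)$ encircling the one-nodal curve of the family above with the Dehn twist $T_a$; hence $T_a \in \Gamma_{\mathcal L}$, and $a$ is a vanishing cycle essentially by definition. The main obstacle is the construction of the degenerating family realizing $a$ as the vanishing cycle while controlling that precisely one node appears and that it occurs in the chart indexed by $w$; this is where the toric input of Cr\'etois--Lang (via Harnack curves and patchworking) does the real work, and reproducing it from scratch would require a genuine excursion into the combinatorics of amoebas and toric degenerations rather than any mapping class group argument.
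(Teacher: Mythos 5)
The paper offers no proof of this statement---it is imported verbatim as Theorem 3 of \cite{CL}---and your sketch correctly reconstructs the mechanism of that source: identifying $C_0$ with the inflation $C_{\Delta_{\mathcal L}}$ via the amoeba of a simple Harnack curve (bounded complementary components in bijection with interior lattice points, $A$-curves as cores of the annuli above them), then contracting the oval indexed by a given lattice point to produce a one-nodal degeneration whose vanishing cycle is the corresponding $A$-curve, with Picard--Lefschetz giving $T_a \in \Gamma_{\mathcal L}$. So your proposal follows essentially the same route as the cited proof, and you are right that the substantive work lies in the Harnack/patchworking construction of the degenerating family rather than in any mapping class group argument.
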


 Cr\'etois--Lang also determine a second family of elements of $\Gamma_{\mathcal L}$ arising from the combinatorics of $\Delta$. A {\em primitive integer segment} is a line segment $\sigma \subset \R^2$ whose endpoints lie on $\Z^2$ and whose interior is disjoint from $\Z^2$. A primitive integer segment determines a line in $\R^2$ in the obvious way. When a lattice polygon $\Delta$ is fixed, it will be understood that a primitive integer segment connects lattice points $v,w \in \Delta \cap \Z^2$, and such that $v$ and $w$ do not lie along the same edge of $\Delta$. Under the inflation procedure, a primitive integer segment corresponds to a simple closed curve. For a primitive integer segment $\sigma$, we write $T_\sigma$ for the corresponding Dehn twist.

Suppose that $\Delta$ is a lattice polygon, let $d \ge 1$ be an integer. We say that $\Delta$ is {\em divisible by $d$} if the dilate $\frac{1}{d} \Delta$ is again a lattice polygon. If $\Delta$ is divisible by $d$, then after translating $\Delta$ so that one vertex lies in the sublattice $d\Z^2$, the remaining vertices do also. We write
\[
\Delta(d) := \Delta \cap d \Z^2,
\]
relative to any such embedding. The following is a combination of Propositions 7.13 and 7.16 of \cite{CL}.

\begin{theorem}[Cr\'etois--Lang]\label{theorem:piseg}
Let $\mathcal L$ be an ample line bundle on a smooth toric surface $X$. Suppose that the adjoint polygon $(\Delta_\mathcal L)_a$ is divisible by $d$. Suppose that $\sigma$ is a primitive integer segment such that the line it generates intersects $(\Delta_\mathcal L)_a(d)$. Then, with respect to the identification $f: C_0 \to C_{\Delta_\mathcal L}$ of Theorem \ref{theorem:Acurves}, we have that $\sigma$ is a vanishing cycle and $T_\sigma \in \Gamma_\mathcal L$. 
\end{theorem}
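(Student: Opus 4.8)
\textbf{Proof proposal for Theorem \ref{theorem:piseg}.}

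The plan is to reduce Theorem \ref{theorem:piseg} to the combinatorial input of Cr\'etois--Lang together with Theorem \ref{theorem:networkgenset}. Recall that Theorem \ref{theorem:piseg} is cited as a known result from \cite{CL}, so the work on our side is really to confirm how it fits together with our framework; in fact the statement as displayed is quoted verbatim from \cite{CL} and the ``proof'' amounts to translating between the polygon picture and the mapping class group picture. First I would recall from Proposition \ref{proposition:polygonfacts} and the inflation procedure (Construction \ref{construction:inflation}) that the smooth curve $C_0$ is homeomorphic to $C_{\Delta_\mathcal L}$ via the identification $f$ of Theorem \ref{theorem:Acurves}, and that under this identification a primitive integer segment $\sigma$ joining lattice points of $(\Delta_\mathcal L)_a$ becomes an embedded simple closed curve on $C_{\Delta_\mathcal L}$. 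The hypothesis that the line generated by $\sigma$ meets $(\Delta_\mathcal L)_a(d)$ is precisely the combinatorial condition under which Cr\'etois--Lang exhibit an explicit degeneration of $C_0$ to a nodal curve with $\sigma$ as vanishing cycle.

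The key steps, in order, are: (1) invoke Proposition \ref{proposition:polygonfacts}.\ref{item:dilates} to pass between divisibility of the adjoint polygon by $d$ and the existence of a $d$-th root of $\mathcal L \otimes K_X$ in $\Pic(X)$; (2) apply Proposition \ref{proposition:invtspin} to obtain the associated $\Z/d\Z$-valued spin structure $\phi$ with $\Gamma_\mathcal L \leqslant \Mod(\Sigma_{g(\mathcal L)})[\phi]$; (3) cite Cr\'etois--Lang's construction directly to produce the nodal degeneration realizing $\sigma$, which immediately gives both that $\sigma$ is a vanishing cycle and that $T_\sigma \in \Gamma_\mathcal L$ (the Dehn twist arising from the loop in $\mathcal M(\mathcal L)$ encircling the nodal curve, as described in the introduction). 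Since this theorem is being used downstream in Section \ref{section:proof} to build the network required by Theorem \ref{theorem:networkgenset}, I would not re-derive the toric combinatorics here but instead state clearly the correspondence between $(\Delta_\mathcal L)_a(d)$-hitting segments and curves $c$ with $\phi(c) = 0$, so that the segments of Theorem \ref{theorem:piseg} (and the $A$-curves of Theorem \ref{theorem:Acurves}) are admissible for $\phi$.

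The main obstacle, insofar as there is one, is purely bookkeeping: matching the normalization of the spin structure coming from the $d$-th root of the adjoint bundle (Proposition \ref{proposition:invtspin}) with the combinatorial ``$\Z/d\Z$-labeling'' of lattice directions implicit in the condition ``the line generated by $\sigma$ meets $(\Delta_\mathcal L)_a(d)$''. Concretely one must check that a primitive integer segment whose line passes through a point of $(\Delta_\mathcal L)_a(d)$ corresponds under $f$ to a curve $c$ with $\phi(c) \equiv 0 \pmod d$; this is where the homological coherence criterion (Proposition \ref{proposition:HCC}) and the explicit description of $\phi$ on inflated polygons do the work. Granting that dictionary, the remainder is a direct citation of \cite[Propositions 7.13, 7.16]{CL}, and the conclusions $\sigma$ is a vanishing cycle and $T_\sigma \in \Gamma_\mathcal L$ follow at once.
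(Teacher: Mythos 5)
Your proposal takes essentially the same route as the paper: Theorem \ref{theorem:piseg} is not proved in the paper at all but is imported verbatim as a combination of Propositions 7.13 and 7.16 of \cite{CL}, which is exactly the citation your argument ultimately rests on. The additional scaffolding in your steps (1)--(2) and the ``main obstacle'' paragraph (the $d$-th root, Proposition \ref{proposition:invtspin}, and checking $\phi(\sigma)=0$ via Proposition \ref{proposition:HCC}) is harmless but not part of this statement --- admissibility of vanishing cycles is handled separately (Lemma \ref{lemma:twists}) and only matters downstream in Section \ref{section:proof}.
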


Taken together, Theorems \ref{theorem:Acurves} and \ref{theorem:piseg} produce a large family of Dehn twists in $\Gamma_\mathcal L$. In the next section, we will see that they provide sufficiently many elements to satisfy the hypotheses of Theorem \ref{theorem:networkgenset}, which will lead to a proof of Theorem \ref{theorem:toric}.

\section{Proof of Theorem \ref{theorem:toric}}\label{section:proof}
Fix a toric surface $X$ and an ample line bundle $\mathcal L$. This determines the polygons $\Delta_\mathcal L$ and $(\Delta_\mathcal L)_a$, as well as the monodromy group $\Gamma_\mathcal L$. For convenience, we will drop reference to $\mathcal L$ from the notation, and speak of $\Delta, \Delta_a, \Gamma$, etc. We also shorten notation for the inflation curve $C_{\Delta_\mathcal L}$, and refer simply to $C$ instead.

By hypothesis, $r$ is the highest root of the line bundle $\mathcal L \otimes K_X$. Proposition \ref{proposition:polygonfacts}.\ref{item:dilates} implies that $\Delta_a$ is $r$-divisible. Our first objective is to find a network $\mathcal N$ satisfying the hypotheses of Theorem \ref{theorem:networkgenset}. This will show all but the last assertion of Theorem \ref{theorem:toric}. Once this has been accomplished, we will see that the answer to Question \ref{question:donaldson} readily follows. 

\para{Genus hypotheses} We first address the genus assumptions of Theorem \ref{theorem:networkgenset}. Recalling that $\Delta_a$ is assumed to be $r$-divisible, a calculation using Pick's formula implies that for $r > 1$,
\[
g \ge \frac{(r+1)(r+2)}{2}.
\]
This shows that $g \ge 5$ for all $r>1$ and that $g \ge r + 1$ for $r = 2d$ even. For $r = 2d = 4$, this gives $g \ge 15$, and for $r = 2d = 8$ this gives $g \ge 45$. In all cases, the hypothesis $g \ge g(r)$ of Theorem \ref{theorem:networkgenset} holds.

The remaining assumption to be addressed is the requirement that $r < g -1$. As noted in Remark \ref{remark:admits}, $r$ must divide $2g -2$, so we must only show that the cases $r = 2g-2$ and $r = g-1$ do not occur in the study of linear systems on toric surfaces. Suppose first that $r = 2g-2$. This implies that the adjoint polygon $\Delta_a$ contains precisely $g$ lattice points, but is also $2g-2$-divisible. This is an absurdity: let $e$ be an edge of the lattice polygon $\frac{1}{2g-2} \Delta_a$; then the dilate $(2g-2)e$ contains at least $2g-1 > g$ lattice points. In the case $r = g -1$, a similar analysis shows that in fact $\Delta_a$ must {\em equal} the $g-1$-fold dilation of a primitive integer segment. By Proposition \ref{proposition:polygonfacts}.\ref{item:hyperelliptic}, this implies that the general fiber of the linear system is hyperelliptic, which we have excluded from consideration.

\para{Constructing the network $\bm{\mathcal N}$} Recall that according to Theorem \ref{theorem:Acurves}, each integer point $v \in \Delta_a \cap \Z^2$ determines a vanishing cycle in $\Gamma$; we introduce the notation $A(v)$ to refer to the curve associated to $v$. When we have a specific identification of $\Delta$ with a lattice polygon, we will use the notation $A(x,y)$ to refer to the $A$-curve at the integer point $(x,y)$. Similarly, given a primitive integer segment $\sigma$, we let $B(\sigma)$ denote the associated simple closed curve on $C$. When $\Delta$ is identified with a lattice polygon, we write $B((x,y), (z,w))$ for the $B$-curve associated to the primitive integer segment connecting $(x,y)$ and $(z,w)$. We refer to these as $A$-curves and $B$-curves, respectively. 

To define the network $\mathcal N$, it will be useful to introduce some terminology. Let $\sigma$ be a primitive integer segment, and let $L(\sigma)$ be the line determined by $\sigma$. For an integer point $v$, we say that $\sigma$ {\em points towards $v$} if $v \in L(\sigma)$. We also introduce the notion of a {\em $\kappa$-standard embedding}. Let $\kappa$ be a vertex of $\Delta_a$. A $\kappa$-standard embedding is an embedding of $\Delta$ into $\R^2$ such that $\kappa$ corresponds to $(0,0)$ and such that the edges of $\Delta_a$ incident to $\kappa$ lie along the $x$ and $y$ axes. Any embedding $\Delta \subset \R^2$ can be made $\kappa$-standard by applying a suitable unimodular transformation. Following Proposition \ref{proposition:polygonfacts}.\ref{item:defined}, we are free to apply unimodular transformations as needed.

Let $\kappa$ be a vertex of $\Delta_a$. Let $\mathcal N$ be the network consisting of the following curves:
\begin{enumerate}
\item All $A$-curves.
\item The curve $B(\sigma)$, where $\sigma$ is defined as follows. Let $\kappa'$ be a vertex of $\Delta_a$ adjacent to $\kappa$. Let $e'$ be the edge of $\Delta_a$ containing $\kappa'$ and not containing $\kappa$, and let $w \in \partial \Delta_a$ be the integer point lying on $e'$ that is connected to $\kappa'$ by a primitive integer segment $\sigma$. 
\item The curve $B(\tau)$ defined as follows. Under a $\kappa$-standard embedding of $\Delta$, necessarily $(0,-1) \in \partial \Delta$. Since $\Delta_a$ is assumed to be $d$-divisible, the edge of $\Delta_a$ lying along the $x$-axis extends at least as far as $(d,0)$. We take $\tau$ to be the primitive integer segment identified with $B((d,0),(0,-1))$ in this embedding of $\Delta$.
\item All $B$-curves associated to primitive integer segments pointing towards $\kappa$, but such that the associated line does not pass through the interior of the segments $\sigma$ or $\tau$ or the primitive integer segment connecting $(-1,1)$ to $(0,1)$.
\end{enumerate}
See Figure \ref{figure:example} for a picture of $\mathcal N$ in the case of the line bundle $\mathcal O(6)$ on $\CP^2$.
\begin{remark}
As can be seen in Figure \ref{figure:example}, certain elements of $\mathcal N$ are mutually isotopic. This harmless excess is introduced only to make the definition of $\mathcal N$ more tidy.
\end{remark}

In anticipation of an appeal to Theorem \ref{theorem:networkgenset}, we also define the subnetwork
\begin{equation}\label{equation:nprime}
\mathcal N' := \mathcal N \setminus \{A(0,1)\}.
\end{equation}

\begin{figure}
\labellist
\small
\pinlabel $\Delta$ [bl] at 150 10
\pinlabel $\Delta_a$ [br] at 94.4 41.6
\pinlabel $\kappa$ [tr] at 34.4 32
\pinlabel $\kappa'$ [br] at 34.4 131.4
\pinlabel $w$ [bl] at 67.4 97.8
\pinlabel $B(\sigma)$ [bl] at 319.4 109.8
\pinlabel $B(\tau)$ [tl] at 360.6 19.8
\pinlabel $b$ [b] at 286.4 68
\endlabellist
\includegraphics[scale = 0.9]{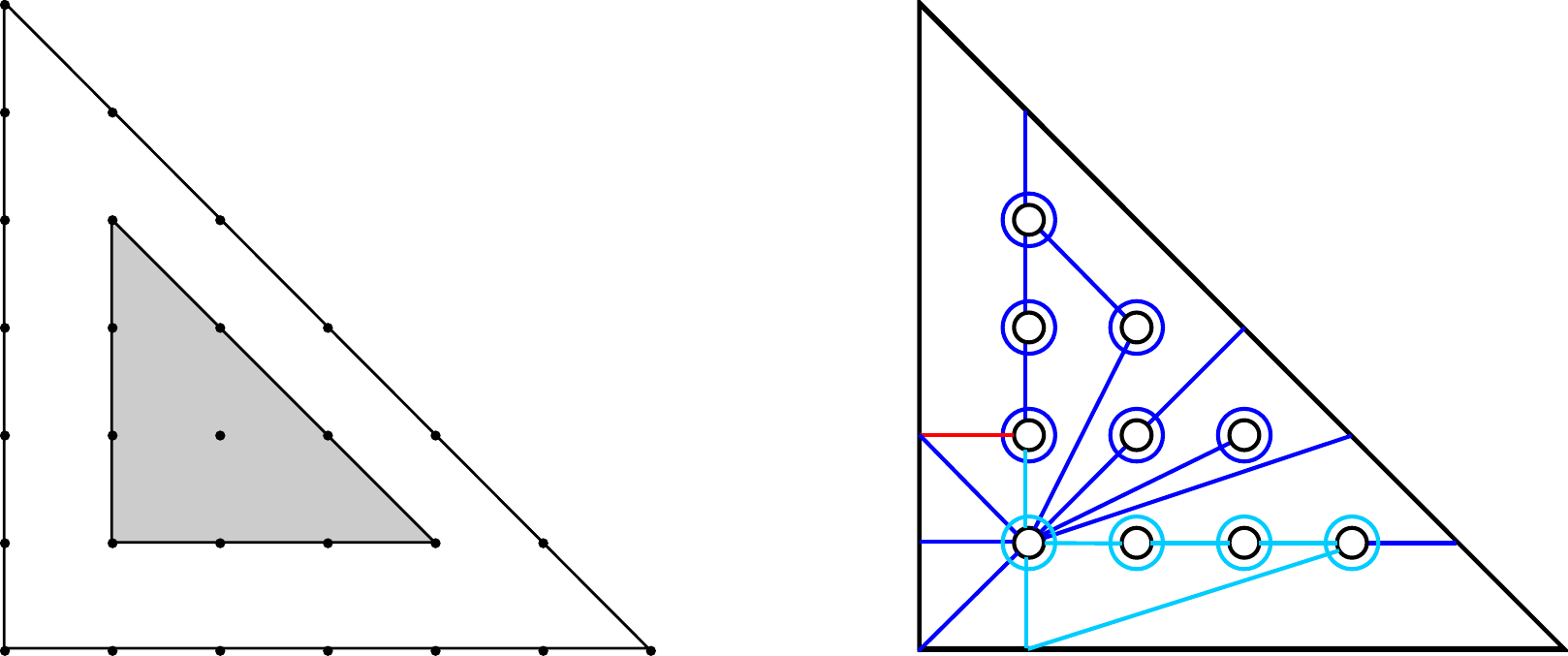}
\caption{Example: $(X, \mathcal L) = (\CP^2, \mathcal O(6))$; here $r = 3$. Left: the lattice polygons $\Delta$ and $\Delta_a$ (shaded). Right: the inflation construction, and the network $\mathcal N$, depicted in both shades of blue. The curves $a_1, \dots, a_{10}$ of Theorem \ref{theorem:networkgenset}.\ref{item:config} and \ref{theorem:networkgenset}.\ref{item:intersect} are shown in the light shade of blue. Note the curve $b$ (in red) is not part of the network, but does correspond to the curve $b$ of Theorem \ref{theorem:networkgenset}.\ref{item:intersect}. }
\label{figure:example}
\end{figure}

\para{First properties of $\bm{\mathcal N}$} We first claim that $\mathcal N$ is a network. Indeed, all $A$-curves are mutually disjoint. The set of primitive integer segments under consideration meet only at integer points in $\Delta_a$, and hence the associated $B$-curves are also mutually disjoint. Suppose $\sigma$ has endpoints $v,w$. Then $i(A(v), B(\sigma)) = i(A(w), B(\sigma)) = 1$, and $i(A(u), B(\sigma)) = 0$ for any other integer point $u$. Thus $\mathcal N$ is a network.

Indeed, $\mathcal N$ is a connected network, as follows from the description of $\Gamma_{\mathcal N}$ and $\Gamma_{\mathcal N'}$ given below.
\begin{lemma}\label{lemma:Nprops}
The graph $\Gamma_{\mathcal N}$ has the homotopy type of $S^1$, and $\Gamma_{\mathcal N'}$ is a tree, i.e. $\mathcal N'$ is arboreal.
\end{lemma}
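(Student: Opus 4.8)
The plan is to analyze the intersection graph $\Gamma_{\mathcal N}$ by decomposing $\mathcal N$ into its $A$-curves and $B$-curves and tracking adjacencies via the combinatorics of the lattice polygon $\Delta_a$. Recall that, by the discussion of the inflation procedure, an $A$-curve $A(v)$ is the boundary circle around the interior lattice point $v \in \Delta_a \cap \Z^2$, a $B$-curve $B(\sigma)$ is the simple closed curve associated to a primitive integer segment $\sigma$, and the only intersections in $\mathcal N$ are of the form $i(A(v),B(\sigma)) = 1$ when $v$ is an endpoint of $\sigma$. Thus $\Gamma_{\mathcal N}$ is \emph{bipartite}: one part is the set of $A$-curves (indexed by $\Delta_a \cap \Z^2$), the other is the set of $B$-curves in $\mathcal N$, and each $B$-vertex has degree exactly two, joined to the two $A$-vertices corresponding to its endpoints. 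A graph all of whose vertices on one side of a bipartition have degree $2$ is precisely the subdivision of the multigraph $G$ on vertex set $\Delta_a \cap \Z^2$ whose edges are the primitive integer segments appearing in $\mathcal N$. Since subdivision preserves homotopy type, it suffices to prove that $G$ has the homotopy type of $S^1$, and that deleting the vertex $A(0,1)$ (equivalently, deleting the lattice point $(0,1)$ from $G$, together with the two $B$-curves incident to it) yields a tree.

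The first key step is to show that $G$ is \emph{connected} and has exactly one independent cycle, i.e. $\#\text{edges} = \#\text{vertices}$. Connectivity should follow by tracing the construction: clauses (2)--(4) were chosen precisely so that every interior lattice point $v$ of $\Delta_a$ is reachable from $\kappa$ by a chain of primitive integer segments pointing towards $\kappa$. Concretely, working in a $\kappa$-standard embedding, one argues that the segments pointing towards $\kappa$ (those included in clause (4)) connect each lattice point in $\Delta_a$ to a point closer to $\kappa$ along the relevant line, together with the ``seam'' segments $\sigma$ and $\tau$ handling the edges of $\Delta_a$ incident to $\kappa$ and the exceptional boundary point $(0,-1)$; a downward induction on lattice distance from $\kappa$ then gives connectivity of $G$. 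Counting edges: the exclusions in clause (4) (segments whose line passes through the interiors of $\sigma$, $\tau$, or the segment from $(-1,1)$ to $(0,1)$) are engineered so that exactly one ``redundant'' segment is removed beyond a spanning tree's worth, leaving $b_1(G) = 1$. The delicate bookkeeping is to verify that precisely one cycle survives — this is where I expect the real work to lie, since it requires carefully matching the exclusion rules in the definition of $\mathcal N$ against the number of lattice points and segments, presumably again via Pick's formula or a direct combinatorial count over the edges of $\Delta_a$.

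The second key step is the statement about $\mathcal N'$. Removing $A(0,1)$ from $\mathcal N$ removes, in $G$, the vertex $(0,1)$ and the two segments incident to it in $\mathcal N$ — namely $\sigma$ (which connects $\kappa'$ to $w$; one must check the indexing so that $(0,1)$ is indeed an endpoint) and the segment from $(-1,1)$ to $(0,1)$, or whichever two $B$-curves of $\mathcal N$ touch $A(0,1)$. The point is that the unique cycle of $G$ passes through the vertex $(0,1)$, so deleting it breaks the cycle while leaving the rest connected; hence $\Gamma_{\mathcal N'}$ becomes a tree. I would make this precise by identifying the cycle explicitly in the $\kappa$-standard embedding and observing that the segment connecting $(-1,1)$ to $(0,1)$ is exactly the ``redundant'' edge whose removal, together with its endpoint $A(0,1)$, kills $b_1$. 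To be safe one should also check that no vertex of $G$ becomes isolated after the deletion (i.e. no lattice point other than $(0,1)$ has all of its incident $\mathcal N$-segments touching $(0,1)$), which is immediate since $g = \#(\Delta_a \cap \Z^2) \ge 5$ and every interior point carries an $A$-curve linked by clause (4) to the rest.

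In summary: reduce to a graph $G$ on lattice points with edges the chosen primitive segments; show $G$ connected with $b_1(G) = 1$ by a distance induction plus an edge count; identify the unique cycle and show it runs through the deleted vertex $A(0,1)$, so that $\Gamma_{\mathcal N'}$ is a tree; then invoke invariance of homotopy type under the bipartite subdivision $G \rightsquigarrow \Gamma_{\mathcal N}$. The main obstacle is the precise edge-count establishing $b_1(G) = 1$ — i.e. confirming that the three interiors excluded in clause (4) remove exactly the right number of segments — and verifying that the surviving cycle is the one through $(0,1)$; everything else is a routine unwinding of the inflation dictionary between lattice combinatorics and curves on $C$.
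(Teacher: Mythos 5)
Your overall strategy (reduce $\Gamma_{\mathcal N}$ to a graph on lattice points, show it is connected with first Betti number one, and check the unique circuit passes through $A(0,1)$) is reasonable, but the proposal leaves the decisive step unproved. First, a repairable inaccuracy: not every $B$-vertex has degree two, since $B(\tau)$ and every clause-(4) segment that reaches $\partial\Delta$ have an endpoint on $\partial\Delta$, where there is no $A$-curve; such $B$-curves are leaves of $\Gamma_{\mathcal N}$, so your ``bipartite subdivision'' description is false as stated (though harmless, as leaves do not change the homotopy type). The genuine gap is that you explicitly defer the heart of the lemma --- that exactly one independent cycle survives --- to an unexecuted edge count ``presumably via Pick's formula.'' No count is needed, and carrying one out would be awkward since the number of clause-(4) segments depends on the polygon. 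The intended argument is structural: every clause-(4) segment lies on a line through $\kappa$ (that is what ``pointing towards $\kappa$'' means), and two distinct lines through $\kappa$ meet only at $\kappa$, so the $A$-curves together with the clause-(4) $B$-curves form a star of paths radiating from $A(\kappa)$, hence a tree, with no bookkeeping at all. Adding $B(\tau)$ attaches a leaf at $A(d,0)$ (its other endpoint $(0,-1)$ lies on $\partial\Delta$), so the graph is still a tree; adding $B(\sigma)$, whose only neighbors are $A(\kappa')$ and $A(w)$, then creates exactly one circuit, giving $\Gamma_{\mathcal N}\simeq S^1$. Without this observation (or a completed count), the main assertion is not established.

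Your treatment of $\mathcal N'$ also misidentifies the local picture at $A(0,1)$. The segment from $(-1,1)$ to $(0,1)$ is not in $\mathcal N$ at all: its line does not pass through $\kappa$, and it is precisely the curve $b$ that the construction deliberately omits; likewise $\sigma$ ends at $\kappa'$ and $w$, not at $(0,1)$ in general. The curves of $\mathcal N$ meeting $A(0,1)$ are the $y$-axis segments $B((0,0),(0,1))$ and $B((0,1),(0,2))$. The reason the unique circuit passes through $A(0,1)$ is that the circuit consists of $B(\sigma)$ together with the tree path from $A(\kappa')$ to $A(w)$, and that path runs down the $y$-axis through $(0,1)$ to $\kappa$ and then out along the line toward $w$. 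Finally, ``no vertex becomes isolated'' is not the right connectivity check after deleting $A(0,1)$: a cut vertex on the cycle with other branches attached would disconnect the graph without isolating anything. What one actually needs is that every neighbor of $A(0,1)$ remains joined to the rest, which holds here because its neighbors lie on the $y$-axis path inside the circuit. These are exactly the points your outline flags as ``to be checked,'' and they constitute the content of the lemma.
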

\begin{proof}
We first establish that $\Gamma_{\mathcal N}$ is connected. It suffices to show that every $c \in \mathcal N$ is connected to $A(\kappa)$. We first consider the case of an $A$-curve $A(v)$. If $v \in \Delta_a$ is some other integer point, there is a line segment $L$ connecting $v$ to $\kappa$. This decomposes as a union of primitive integer segments $\sigma_i$ based at the integer points $v_j$ lying on $L$. Each such segment determines a $B$-curve in $\mathcal N$, and it is clear that there is a path from $A(v)$ to $A(\kappa)$ alternating between $B(\sigma_i)$ and $A(v_j)$. The argument for a $B$-curve (including the exceptional elements $B(\sigma)$ and $B(\tau)$) is similarly straightforward.

We next claim that the subnetwork 
\[
\mathcal N'' := \mathcal N \setminus{B(\sigma)}
\]
is arboreal. The curves $B(\sigma)$ and $B(\tau)$ are the only $B$-curves in $\mathcal N$ that do not lie on a line passing through $\kappa$. Thus the network consisting only of curves of type (1) and (4) is arboreal by construction. As $B(\tau)$ intersects only $A(d,0)$, this shows that the network consisting of curves of type (1),(3), and (4) is also arboreal, but this network is $\mathcal N ''$ by definition. 

The curve $B(\sigma)$ intersects only the $A$-curves $A(\kappa')$ and $A(w)$. Thus $\Gamma_{\mathcal N}$ is obtained from the tree $\Gamma_{\mathcal N''}$ by adding one new vertex that is connected to two edges, so that $\Gamma_{\mathcal N} \simeq S^1$ as claimed. 

It will follow from this that $\mathcal N'$ is also arboreal. The path in $\Gamma_{\mathcal N''}$ connecting $\kappa'$ to $w$ follows the $y$-axis down to $\kappa$, then proceeds out along the line connecting $\kappa$ to $w$; in particular, it passes through the vertex $(0,1)$. Thus, removing $A(0,1)$ to create the network $\mathcal N'$ removes the single circuit in $\Gamma_{\mathcal N}$, so that $\Gamma_{\mathcal N'}$ is a tree as claimed.
\end{proof}

We claim that $\mathcal N$ is filling. This will be established in the next two lemmas. Recall the definition of $\Delta^\circ$ from the definition of the inflation procedure in Construction \ref{construction:inflation}. 

\begin{lemma}\label{lemma:simpcon}
Let $S \subset \Delta$ denote the union of all primitive integer segments associated to $B$-curves in $\mathcal N$. Then
\begin{enumerate}
\item Each component of $\Delta^\circ \setminus S$ is simply-connected.
\item For each component $D$ of $\Delta^\circ \setminus S$, the intersection $D \cap \partial \Delta$ has at most one component. 
\end{enumerate}
\end{lemma}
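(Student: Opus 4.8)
The plan is to realize $\Delta^{\circ}$ as the planar surface obtained from the disk $\Delta$ by deleting the $g$ open balls $B(v,1/4)$ about the interior lattice points $v$ (whose boundary circles are the $A$-curves), and then to cut along $S$ and show that each resulting component is a topological disk meeting $\partial\Delta$ in at most one arc. The structural observation driving the argument is that almost all of $S$ is a \emph{fan} based at $A(\kappa)$: since $\Delta_a$ is convex and $\kappa$ is a vertex, $\Delta_a$ is star-shaped with respect to $\kappa$, and every lattice point $v\in\Delta_a$ lies on a unique lattice line through $\kappa$; the type-(4) curves are, up to the stated exclusions, exactly the primitive segments carried by these lines. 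The only deviations from a pure fan are the deletion of the (finitely many) fan-spokes whose line meets $\operatorname{int}(\sigma)$, $\operatorname{int}(\tau)$, or the interior of the segment $[(-1,1),(0,1)]$, and the adjunction of the two non-fan curves $B(\sigma)$ and $B(\tau)$.

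First I would analyze the full fan $F$ (all primitive segments on all lattice lines through $\kappa$, intersected with $\Delta^{\circ}$). Ordering the lattice directions (rays) at $\kappa$ cyclically by angle, cutting $\Delta^{\circ}$ along $F$ produces one region $W$ for each pair of consecutive directions. No lattice point of $\Delta_a$ lies strictly between consecutive directions, so $W$ contains no $A$-curve in its interior; moreover $\partial W$ is a single circle, assembled from an arc of $\partial A(\kappa)$, alternating sub-segments of the two bounding spokes and arcs of the $A$-curves those spokes meet, together with exactly one arc of $\partial\Delta$ at the outer end (here using convexity of $\Delta$ and $\kappa\in\operatorname{int}(\Delta)$). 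Thus each $W$ is a disk meeting $\partial\Delta$ in a single arc, which is both conclusions with $F$ in place of $S$.

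Next I would pass from $F$ to $S$. Deleting the spokes meeting $\operatorname{int}(\tau)$ and $\operatorname{int}([(-1,1),(0,1)])$ is harmless: in a $\kappa$-standard embedding $\Delta_a$ lies in the closed first quadrant while $\tau$ lies in $\{y<0\}$ and $[(-1,1),(0,1)]$ lies in $\{x<0\}$, so these deleted spokes contain no lattice point of $\Delta_a$ other than $\kappa$ itself; deleting them merely amalgamates a contiguous block of hole-free wedges into a single hole-free disk whose trace on $\partial\Delta$ stays connected, since the deleted spokes exit $\Delta$ between the two $\partial\Delta$-arcs being joined. For the spokes meeting $\operatorname{int}(\sigma)$, note that $\sigma$ runs along the edge $e'$ of $\Delta_a$ at $\kappa'$ and that the two bounding spokes here (through $\kappa'$ and through $w$) are \emph{not} deleted; the region cut off between $\kappa$, these two spokes, and $\sigma$ is the triangle with vertices $\kappa,\kappa',w$, and the key point is that this triangle has no interior lattice point. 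This is where smoothness of $X$ enters: the smooth-polygon condition on $\Delta=\Delta_{\mathcal L}$ forces the primitive direction of $e'$ (which is parallel to an edge of $\Delta$) to have a unit coordinate in the $\kappa$-standard frame, and a Pick's-formula computation then yields $\#(\operatorname{int}(\kappa\kappa'w)\cap\Z^2)=0$. Finally, adjoining the arcs $B(\sigma)$ and $B(\tau)$ cuts disks into disks and splits a single $\partial\Delta$-arc into at most two, one per piece, so (1) and (2) survive; a region meeting $\partial\Delta$ in zero arcs is of course also permitted.

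I expect the main obstacle to be precisely this last analysis near $\sigma$: verifying that the triangle cut off by $B(\sigma)$ is lattice-point-free and, more generally, that none of the finitely many lattice points whose fan-spoke is deleted becomes an ``unfenced'' hole. Carrying this out cleanly requires unwinding the relationship between the edges of the adjoint polygon $\Delta_a$ and those of the smooth polygon $\Delta$ near the vertices $\kappa$ and $\kappa'$ (in particular controlling which edges of $\Delta$ persist under adjunction there), together with the $r$-divisibility of $\Delta_a$. The computations are of the same elementary lattice-geometry type as the genus estimates already performed in this section, so the difficulty is organizational rather than deep.
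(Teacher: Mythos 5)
Your route is genuinely different from the paper's. The paper proves (1) globally: it first argues that every lattice point of $\Delta_a$ lies on $S$ (so the punctures can be absorbed into the complement), then collapses $\partial\Delta$ to a point and applies Alexander duality together with the connectedness of $S$ to conclude $H_1(\Delta\setminus S;\Z)=0$; it proves (2) by comparing $S$ with the coarser radial configuration $S'$ of segments lying on lines from $\kappa$ to boundary lattice points, noting that the $S$-subdivision refines the $S'$-subdivision without adding intersections with $\partial\Delta$. Your wedge-by-wedge analysis of the radial fan, followed by surgery for the deleted spokes and the chords $B(\sigma)$, $B(\tau)$, would give both conclusions at once and is workable in outline. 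One small repair: a spoke consists only of primitive integer segments, so it ends at the last lattice point of $\Delta$ on its line and need not reach $\partial\Delta$; where it stops short, adjacent wedges merge around its tip, so ``one region per pair of consecutive directions, each meeting $\partial\Delta$ in exactly one arc'' should be weakened (the merged regions still meet $\partial\Delta$ in a single arc, so this only costs bookkeeping).

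The genuine gap is the one you flag yourself, and it is not optional: you need that no lattice point of $\Delta_a$ other than $\kappa,\kappa',w$ lies in the closed cone at $\kappa$ over $\sigma$, equivalently that the triangle $\kappa\kappa'w$ contains no further lattice points; otherwise an excluded spoke leaves an $A$-curve unfenced and part (1) fails outright. This is not a formal consequence of convexity of a lattice polygon: for the hull of $(0,0),(0,2),(3,1),(5,0)$ with $\kappa=(0,0)$, $\kappa'=(0,2)$, $w=(3,1)$, the open triangle contains $(1,1)$. So the claim genuinely uses the special nature of the polygons at hand (the paper disposes of the same point in one line, asserting that the segment from $\kappa$ to any lattice point of $\Delta_a$ misses $B(\sigma)$ and $B(\tau)$). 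Your proposed derivation from smoothness rests on the assertion that the primitive direction of $e'$ has a unit coordinate in the $\kappa$-standard frame; smoothness of $\Delta$ gives this only if the edge of $\Delta$ parallel to $e'$ is adjacent \emph{in $\Delta$} to the edge parallel to the edge $\kappa\kappa'$ of $\Delta_a$, and edge adjacency is exactly what can change when passing to the adjoint (edges of $\Delta$ may contract away), as you concede. Until that adjacency (or some substitute, e.g.\ exploiting the freedom in choosing the vertex $\kappa'$, or the $r$-divisibility of $\Delta_a$) is actually controlled, the Pick computation has nothing to apply to, and the proof of both (1) and (2) is incomplete at its one substantive lattice-geometric step.
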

\begin{proof}
We begin by observing that there are homotopy equivalences $\Delta^\circ \simeq \Delta \setminus (\Delta_a \cap \Z^2)$ and $\Delta^\circ \setminus S \simeq \Delta \setminus (S \cup (\Delta_a \cap \Z^2))$. It will be tidier to work with this latter space, and so we formulate our arguments in this setting. 

Embed $\Delta$ into $\R^2$ and consider $S$ as a planar graph contained in $\Delta$. Basic properties of convexity imply that for any integer point $v \in \Z^2 \cap \Delta_a$, the line connecting $v$ and $\kappa$ does not intersect either of $B(\sigma)$ or $B(\tau)$. Hence this line determines a union of primitive integer segments in $\mathcal N$, and upon the removal of these segments over all $v$, there is an equality 
\[
\Delta \setminus (S \cup (\Z^2 \cap \Delta_a)) = \Delta \setminus S.
\]
To prove (1), it therefore suffices to show that $H_1(\Delta \setminus S; \Z) = 0$. There is a map of pairs $f: (\Delta, \partial \Delta) \to (S^2, *)$, where $*\in S^2$ is an arbitrary basepoint. $f$ induces a homeomorphism 
\[
f: \Delta \setminus \partial \Delta \to S^2 \setminus \{*\}.
\]
Since the segment $B(\tau)$ (among many others) intersects $\partial \Delta$, it follows that $f$ induces a homotopy equivalence 
\[
f: \Delta \setminus S \to S^2 \setminus f(S), 
\]
and hence there is an isomorphism
\[
f_*: \tilde H_1(\Delta\setminus S; \Z) \to \tilde H_1(S^2 \setminus f(S); \Z).
\]
 By Alexander duality, $\tilde H_1(S^2 \setminus f(S);\Z) \cong \tilde H^0(f(S);\Z) = 0$, the latter holding because $S$ is connected by construction. This proves (1). 
 
 For (2), consider the subconfiguration $S' \subset S$ consisting of all primitive integer segments lying along a line connecting $\kappa$ to any integer point $v \in \partial \Delta$. This provides a subdivision of $\Delta$ into convex sets, each of which has a vertex at $\kappa$. Convexity then implies that each component $D_i'$ of $\Delta \setminus S'$ intersects $\partial \Delta$ in at most one component. The subdivision of $\Delta$ induced by $S$ is a refinement of that induced by $S'$. Since all segments in $S$ that intersect $\partial \Delta$ are elements of $S'$, there is an equality
 \[
 \partial \Delta \setminus (S \cap \partial \Delta) = \partial \Delta \setminus (S' \cap \partial \Delta).
 \]
 Thus each component of $\partial \Delta \setminus (S \cap \partial \Delta)$ corresponds to a {\em distinct} component of $\Delta \setminus S$, and (2) follows. 
\end{proof}

\begin{lemma}\label{lemma:simpcon2}
Each component of $C \setminus \mathcal N$ is simply-connected, i.e. $\mathcal N$ is filling.
\end{lemma}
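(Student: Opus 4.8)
The goal is to upgrade Lemma~\ref{lemma:simpcon} from a statement about the planar subset $\Delta^\circ \setminus S$ of the polygon to a statement about the doubled surface $C \setminus \widehat{\mathcal N}$. The plan is to analyze $C \setminus \widehat{\mathcal N}$ by passing through the doubling construction of Construction~\ref{construction:inflation}. Recall $C$ is the double of $\Delta^\circ$ along $\partial \Delta^\circ$. The set $\widehat{\mathcal N}$ is the union of all $A$-curves together with all $B$-curves. An $A$-curve is, by definition, the circle of radius $1/4$ about an interior lattice point, which is precisely a component of $\partial \Delta^\circ$, i.e. one of the circles along which the double is glued. A $B$-curve, associated to a primitive integer segment $\sigma$ joining two points $v,w \in \Delta \cap \Z^2$, is the double of the arc $\sigma \cap \Delta^\circ$; so it meets the gluing locus $\partial \Delta^\circ$ exactly in the endpoints-side circles, and otherwise consists of two copies (one in each sheet) of the planar arc $\sigma \cap \Delta^\circ$. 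Thus $\widehat{\mathcal N}$ is itself the double of the planar set $(S \cup \partial \Delta^\circ) \subset \Delta^\circ$ (where $S$ is as in Lemma~\ref{lemma:simpcon}), and $C \setminus \widehat{\mathcal N}$ is the double of $\Delta^\circ \setminus S$ along the part of its boundary \emph{not} containing the $A$-curve circles — that is, along $\partial \Delta \setminus S$.

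The key step is then: each component of $C \setminus \widehat{\mathcal N}$ is a double of a component $D$ of $\Delta^\circ \setminus S$ along the subset $D \cap \partial \Delta$ of its boundary (the rest of $\partial D$, consisting of arcs of $S$ and of $A$-curves, is not glued and survives as boundary; but we only care about the homotopy type of the interior, so ``boundary-parallel annuli'' are acceptable in the definition of filling). By Lemma~\ref{lemma:simpcon}(1) each such $D$ is simply-connected, hence contractible (it is an open subset of the plane, so it is a disk), and by Lemma~\ref{lemma:simpcon}(2) the gluing locus $D \cap \partial \Delta$ has at most one component. Doubling a disk along at most one boundary arc again yields a disk (gluing two disks along a single arc in each boundary gives a disk); if $D \cap \partial\Delta$ is empty, the ``double'' is just $D$ itself appearing once in $C$, still a disk. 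In either case the component of $C \setminus \widehat{\mathcal N}$ is simply-connected. I would spell this out using van Kampen: if $D_1, D_2$ are the two sheet-copies of $D$ and $A$ is the (connected or empty) gluing arc, then $\pi_1(D_1 \cup_A D_2)$ is a pushout of trivial groups over a connected (or empty) space, hence trivial. One must also note that a component of $\Delta^\circ \setminus S$ meeting $\partial\Delta$ in a nonempty arc contributes a \emph{single} component of $C\setminus\widehat{\mathcal N}$ (the two sheets are joined), whereas a component disjoint from $\partial\Delta$ contributes two.

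I expect the main subtlety to be purely bookkeeping about the gluing locus: one must be careful that ``at most one component of $D \cap \partial \Delta$'' from Lemma~\ref{lemma:simpcon}(2) is exactly what is needed, and that the arcs of $S$ and the $A$-curve arcs bounding $D$ do \emph{not} get doubled into $C\setminus\widehat{\mathcal N}$ (they are part of $\widehat{\mathcal N}$, hence removed). There is no hard geometry here — the work was already done in Lemma~\ref{lemma:simpcon} — so the proof is short.

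\begin{proof}
By construction, $C$ is the double of $\Delta^\circ$ along its boundary $\partial\Delta^\circ$, which is the disjoint union of $\partial\Delta$ with the radius-$1/4$ circles $\zeta_v$ about interior lattice points $v$. Each $A$-curve is one of the circles $\zeta_v$, and each $B$-curve is the double of an arc $\sigma\cap\Delta^\circ$ for a primitive integer segment $\sigma$; hence, writing $S$ for the union of all these planar arcs as in Lemma~\ref{lemma:simpcon}, the subspace $\widehat{\mathcal N}\subset C$ is precisely the double of $S\cup\bigcup_v\zeta_v$ inside $\Delta^\circ$. Consequently $C\setminus\widehat{\mathcal N}$ is obtained from $\Delta^\circ\setminus S$ by doubling along the portion of its boundary contained in $\partial\Delta$, namely along $\partial\Delta\setminus S$; the remaining boundary arcs (arcs of $S$ and of the circles $\zeta_v$) are deleted and do not appear.

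Let $E$ be a component of $C\setminus\widehat{\mathcal N}$. It is either a single component $D$ of $\Delta^\circ\setminus S$ whose closure is disjoint from $\partial\Delta$ (appearing in one sheet of the double), or else it is the union $D_1\cup_A D_2$ of the two sheet-copies of a component $D$ of $\Delta^\circ\setminus S$ glued along the arc $A:=D\cap\partial\Delta$. In the first case $E\cong D$ is an open subset of $\R^2$, simply-connected by Lemma~\ref{lemma:simpcon}(1), hence a disk. In the second case, by Lemma~\ref{lemma:simpcon}(2) the gluing locus $A$ has at most one component; since $A$ is nonempty here, $A$ is connected. Each $D_i$ is simply-connected by Lemma~\ref{lemma:simpcon}(1), and $A$ is connected, so by the Seifert--van Kampen theorem $\pi_1(E)=\pi_1(D_1\cup_A D_2)$ is the pushout of the trivial groups $\pi_1(D_1)$ and $\pi_1(D_2)$ over $\pi_1(A)=1$, hence trivial. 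In either case $E$ is simply-connected, and (being a surface, possibly with boundary) is therefore a disk. Thus every component of $C\setminus\widehat{\mathcal N}$ is a disk, so $\mathcal N$ is filling.
\end{proof}
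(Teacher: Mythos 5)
Your proof is correct and follows essentially the same route as the paper: both identify each component of $C \setminus \widehat{\mathcal N}$ as either a single copy of a component $D$ of $\Delta^\circ \setminus S$ or the double of such a $D$ glued along $D \cap \partial\Delta$, and then invoke Lemma \ref{lemma:simpcon}(1) for simple connectivity of $D$ and Lemma \ref{lemma:simpcon}(2) for connectivity of the gluing locus. The paper phrases this via the 2-to-1 behavior of the deflation map $p: C \to \Delta$ on components rather than your explicit van Kampen pushout, but the substance is identical.
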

\begin{proof}
By construction, the ``deflation'' map $p: C \to \Delta$ takes components of $C\setminus \mathcal N$ to components of $\Delta \setminus S$, where $S$ continues to denote the union of all primitive integer segments associated to $B$-curves in $\mathcal N$. This map on components is at most 2-to-1, and is {\em exactly} 2-to-1 in the case where the component $D$ of $\Delta \setminus S$ does not contain a lattice point in its interior and does not intersect $\partial \Delta$. In this 2-to-1 case, each component $\tilde D_1, \tilde D_2$ of $C \setminus \mathcal N$ is mapped homeomorphically by $p$ onto the component of $\Delta^\circ \setminus S$ corresponding to $D$. Since $D$ is assumed to contain no lattice points in the interior, it follows that the corresponding component of $\Delta^\circ \setminus S$ is simply-connected, and hence $\tilde D_1, \tilde D_2$ are as well.

Lemma \ref{lemma:simpcon}.1 implies that no component of $\Delta \setminus S$ contains an interior lattice point, and so it remains only to be seen that every component of $C \setminus \mathcal N$ that corresponds to a component of $\Delta \setminus S$ intersecting $\partial \Delta$ is simply-connected. Let $\tilde D\subset C\setminus \mathcal N$ be such a component, and let $D$ be the corresponding component of $\Delta^\circ \setminus S$. Observe that $\tilde D$ is constructed by attaching two copies of $D$ along $D \cap \partial \Delta$. It follows that $\tilde D$ is simply-connected if and only if $D \cap \partial \Delta$ is connected. The result now follows by Lemma \ref{lemma:simpcon}.2.
\end{proof}

\para{Applicability of Theorem \ref{theorem:networkgenset}} It remains to verify the properties $(1)-(4)$ of Theorem \ref{theorem:networkgenset}. By Theorem \ref{theorem:Acurves}, for an $A$-curve $A(v)$, the associated Dehn twist $T_{A(v)} \in \Gamma$. By Theorem \ref{theorem:piseg}, any curve $B(\xi) \in \mathcal N$ arising from a primitive integer segment $\xi$ also satisfies $T_{B(\xi)} \in \Gamma$. It follows from the definitions that any curve $c$ in any connected network is necessarily non-separating. For a nonseparating curve $c \subset C$, the Dehn twist $T_c \in \Gamma$ only if the associated spin structure satisfies $\phi(c) = 0$. Hence $(1)$ holds. 

For $(2)$, we take a $\kappa$-standard embedding of $\Delta$. It is now easy to find a collection of curves $\mathscr{S}_{2r +4}$ determining the configuration $\mathscr D_{2 r + 3} \cup \{\Delta_1\}$ of Corollary \ref{corollary:dn}. We take $a= B((0,0),(0,-1))$ and $a' = B((0,0),(0,1))$. Since $\Delta_a$ is assumed to be $r$-divisible, the edge of $\Delta_a$ lying along the $x$-axis extends at least as far as $(r,0)$. For $1 \le k \le r+1$, we can therefore take $c_{2k-1}$ to be $A(0,k-1)$, and for $1 \le k \le r$, we take $c_{2k}$ to be $B((k-1,0),(k,0))$. We take $a_{r+1}= B(\tau) = B((r,0),(0,-1))$. The segments connecting $(0,-1),(0,0), (1,0), \dots, (d,0), (0,-1)$ separate $\Delta$ into two components, hence under the inflation procedure, the associated $B$-curves separate $C$. From the construction it is clear that the curves bound a subsurface of genus $0$ with $r+2$ boundary components, as required for the configuration $\mathscr D_{2r+3} \cup \{\Delta_1\}$ of Corollary \ref{corollary:dn}.

For (3), we observe that from the construction, the $\Delta_0$ curve of the configuration $\mathscr D_{2r+3}$ corresponds to $b:= B((-1,1),(0,1))$ on $\Delta$. One sees that $A(0,1)$ intersects this curve, and is an element of $\mathcal N$ as needed. 

For (4), we begin by observing that {\em only} the element $A(0,1) \in \mathcal N$ intersects $B((-1,1),(0,1))$. Enumerate the components of $C \setminus \mathcal N$ as $\{D_i\}$. We claim that there are exactly three disks $D_1, D_2, D_3$ in $C \setminus \mathcal N$ with boundary lying on $A(0,1)$, and that $b \subset \overline{D_1} \subset C$. Indeed, using the notation of item (2) in the definition of $\mathcal N$, the disks $D_2$ and $D_3$ arise via inflation from the component of $\Delta \setminus S$ bounded by the triangle formed by $e', \sigma$, and the primitive integer segment(s) connecting $\kappa$ to $w$. Neither $D_2$ nor $D_3$ intersects $b$, and the only curve in $\mathcal N$ intersecting $b$ is $A(0,1)$; this implies that $b \subset \overline{D_1}$ as claimed. 
 
Thus, in $(C \setminus \mathcal N) \cup A(0,1)$, the disks $D_2$ and $D_3$ are joined into a single disk $D^+$, while $D_1$ has two portions of its boundary joined to create an annulus with core curve $b$. Upon passing to $(C\setminus \{b\}) \setminus \mathcal N'$, this annulus is cut open to create two annular regions bounded by $b$, while the disks $D^+$ and $D_i$ for $i \ge 4$ are unaffected. Thus $\mathcal N'$ does determine a filling network on $C \setminus \{b\}$ as required. Arboreality of $\mathcal N'$ was established in Lemma \ref{lemma:Nprops}.\\
 
\para{From admissible twists to vanishing cycles} In order to address Question \ref{question:donaldson}, it is necessary to better understand the relationship between admissible twists and vanishing cycles. A first remark is that any vanishing cycle is necessarily an admissible curve, so it remains only to show the converse. We observe that if $\alpha$ is a loop in $\mathcal M(\mathcal L)$ based at $C_0$ that determines a vanishing cycle, then any conjugate $\beta \alpha \beta^{-1}$ also determines a vanishing cycle. To complete the argument, it therefore suffices to establish the following claim.
 
\begin{lemma}
Let $a$ be any admissible curve on $C_0$. Then $T_a$ is conjugate in $\Gamma$ to some twist $T_c$ for $c$ a vanishing cycle.
\end{lemma}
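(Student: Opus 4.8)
The plan is to combine the containment $\mathcal{T}_\phi \leqslant \Gamma$ with the connectivity of the complex $\mathcal{C}_\phi^1(\Sigma_g)$. The containment $\mathcal{T}_\phi \leqslant \Gamma$ is a consequence of what has just been verified: the network $\mathcal{N}$ satisfies hypotheses $(1)$–$(4)$ of Theorem \ref{theorem:networkgenset}, so $\langle T_{a_i} : a_i \in \mathcal{N}\rangle$ contains $\mathcal{T}_\phi$; since every $A$-curve and $B$-curve occurring in $\mathcal{N}$ is a vanishing cycle (Theorems \ref{theorem:Acurves} and \ref{theorem:piseg}), each such Dehn twist lies in $\Gamma$, and therefore $\mathcal{T}_\phi \leqslant \Gamma$. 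In particular $T_c \in \Gamma$ for every admissible curve $c$.

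First I would fix an $A$-curve $c_0$ on $C$. By Theorem \ref{theorem:Acurves} it is a vanishing cycle, and as a curve belonging to the connected network $\mathcal{N}$ it is nonseparating; since $T_{c_0} \in \Gamma \leqslant \Mod(\Sigma_g)[\phi]$ this forces $\phi(c_0) = 0$, so $c_0$ is admissible, i.e.\ a vertex of $\mathcal{C}_\phi^1(\Sigma_g)$. Given an arbitrary admissible curve $a$ — which by definition is nonseparating with $\phi(a) = 0$, hence also a vertex of $\mathcal{C}_\phi^1(\Sigma_g)$ — Lemma \ref{lemma:c1connected} (applicable since $g \geq 5$) supplies a path $a = v_0, v_1, \dots, v_n = c_0$ of admissible curves with $i(v_j, v_{j+1}) = 1$ for each $j$.

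Along each edge of this path I would apply the braid relation in the form $T_{v_j} T_{v_{j+1}}(v_j) = v_{j+1}$, which together with the identity $f T_d f^{-1} = T_{f(d)}$ yields $g_j T_{v_j} g_j^{-1} = T_{v_{j+1}}$, where $g_j := T_{v_j} T_{v_{j+1}}$. Because $v_j$ and $v_{j+1}$ are admissible, $g_j \in \mathcal{T}_\phi \leqslant \Gamma$. Composing these relations, the element $h := g_{n-1} \cdots g_0 \in \Gamma$ satisfies $h T_a h^{-1} = T_{c_0}$, which exhibits $T_a$ as conjugate in $\Gamma$ to the twist about the vanishing cycle $c_0$, proving the lemma. (Combined with the preceding remark in this section, it follows that $a$ itself is a vanishing cycle, completing the answer to Question \ref{question:donaldson} and hence the proof of Theorem \ref{theorem:toric}.) There is no serious obstacle remaining at this stage: the substantive input is Theorem \ref{theorem:networkgenset} and Lemma \ref{lemma:c1connected}, and the only points requiring care are confirming that $a$ and $c_0$ are genuinely vertices of $\mathcal{C}_\phi^1(\Sigma_g)$ and bookkeeping the composition of conjugating elements so that the result lands in $\Gamma$.
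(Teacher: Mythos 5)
Your proposal is correct and follows essentially the same route as the paper: both rest on the containment $\mathcal T_\phi \leqslant \Gamma$, the connectivity of $\mathcal C_\phi^1$ from Lemma \ref{lemma:c1connected}, and the braid-relation conjugation $T_{v_j}T_{v_{j+1}} \cdot T_{v_j} \cdot (T_{v_j}T_{v_{j+1}})^{-1} = T_{v_{j+1}}$ applied along a path of admissible curves to a vanishing cycle. The only (cosmetic) difference is that you compose the conjugating elements into a single $h \in \Gamma$ carrying $T_a$ to $T_{c_0}$, whereas the paper inducts along the path, upgrading each intermediate vertex to a vanishing cycle as it goes.
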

\begin{proof}
An admissible curve $a$ determines a vertex in the graph $\mathcal C_\phi^1(C_0)$ of Section \ref{section:trick}. Theorems \ref{theorem:Acurves} and \ref{theorem:piseg} together imply that $\Gamma$ has a generating set consisting entirely of vanishing cycles. Thus the set of vertices in $\mathcal C_\phi^1(C_0)$ corresponding to vanishing cycles is nonempty. 

We claim that if $a \in \mathcal C_\phi^1(C_0)$ is adjacent to some vanishing cycle $c$, then $a$ is also a vanishing cycle. Indeed, the condition that $a$ and $c$ are adjacent in $\mathcal C_\phi^1(C_0)$ is equivalent to $i(a,c) = 1$, and hence by the braid relation, 
\[
T_a = (T_c T_a) T_c (T_c T_a)^{-1}.
\]
As $T_c, T_a \in \Gamma$ by the first part of Theorem \ref{theorem:toric}, the above observation implies that $T_a$ is a vanishing cycle. The claim now follows from the connectivity of $\mathcal C_\phi^1(C_0)$ established in Lemma \ref{lemma:c1connected}.
\end{proof}

This concludes the proof of Theorem \ref{theorem:toric}.\qed

\bibliography{planecurvemonodromy}{}

\begin{thebibliography}{Joh85b}

\bibitem[Chi72a]{chillingworth}
D.~Chillingworth.
\newblock Winding numbers on surfaces. {I}.
\newblock {\em Math. Ann.}, 196:218--249, 1972.

\bibitem[Chi72b]{chillingworth2}
D.~Chillingworth.
\newblock Winding numbers on surfaces. {II}.
\newblock {\em Math. Ann.}, 199:131--153, 1972.

\bibitem[CL17a]{CL}
R.~Cr\'etois and L.~Lang.
\newblock The vanishing cycles of curves in toric surfaces {I}.
\newblock {P}reprint, https://arxiv.org/pdf/1701.00608v2.pdf, 2017.

\bibitem[CL17b]{CL2}
R.~Cr\'etois and L.~Lang.
\newblock The vanishing cycles of curves in toric surfaces {II}.
\newblock {P}reprint, https://arxiv.org/pdf/1706.07252.pdf, 2017.

\bibitem[Die73]{dieudonne}
J.~Dieudonn\'e.
\newblock {\em Sur les groupes classiques}.
\newblock Hermann, Paris, 1973.
\newblock Troisi\`eme \'edition revue et corrig\'ee, Publications de l'Institut
  de Math\'ematique de l'Universit\'e de Strasbourg, VI, Actualit\'es
  Scientifiques et Industrielles, No. 1040.

\bibitem[Don00]{donaldson}
S.~Donaldson.
\newblock Polynomials, vanishing cycles and {F}loer homology.
\newblock In {\em Mathematics: frontiers and perspectives}, pages 55--64. Amer.
  Math. Soc., Providence, RI, 2000.

\bibitem[FM12]{FM}
B.~Farb and D.~Margalit.
\newblock {\em A primer on mapping class groups}, volume~49 of {\em Princeton
  Mathematical Series}.
\newblock Princeton University Press, Princeton, NJ, 2012.

\bibitem[Gro02]{grove}
L.C. Grove.
\newblock {\em Classical groups and geometric algebra}, volume~39 of {\em
  Graduate Studies in Mathematics}.
\newblock American Mathematical Society, Providence, RI, 2002.

\bibitem[HJ89]{HJ}
S.~Humphries and D.~Johnson.
\newblock A generalization of winding number functions on surfaces.
\newblock {\em Proc. London Math. Soc. (3)}, 58(2):366--386, 1989.

\bibitem[Joh80a]{johnsonhom}
D.~Johnson.
\newblock An abelian quotient of the mapping class group {${\mathcal I}_{g}$}.
\newblock {\em Math. Ann.}, 249(3):225--242, 1980.

\bibitem[Joh80b]{johnsonspin}
D.~Johnson.
\newblock Spin structures and quadratic forms on surfaces.
\newblock {\em J. London Math. Soc. (2)}, 22(2):365--373, 1980.

\bibitem[Joh83]{johnsonsurvey}
D.~Johnson.
\newblock A survey of the {T}orelli group.
\newblock In {\em Low-dimensional topology ({S}an {F}rancisco, {C}alif.,
  1981)}, volume~20 of {\em Contemp. Math.}, pages 165--179. Amer. Math. Soc.,
  Providence, RI, 1983.

\bibitem[Joh85a]{johnson2}
D.~Johnson.
\newblock The structure of the {T}orelli group. {II}. {A} characterization of
  the group generated by twists on bounding curves.
\newblock {\em Topology}, 24(2):113--126, 1985.

\bibitem[Joh85b]{johnsoniii}
D.~Johnson.
\newblock The structure of the {T}orelli group. {III}. {T}he abelianization of
  {$\mathscr T$}.
\newblock {\em Topology}, 24(2):127--144, 1985.

\bibitem[Mat00]{matsumoto}
M.~Matsumoto.
\newblock A presentation of mapping class groups in terms of {A}rtin groups and
  geometric monodromy of singularities.
\newblock {\em Math. Ann.}, 316(3):401--418, 2000.

\bibitem[Put08]{putmantrick}
A.~Putman.
\newblock A note on the connectivity of certain complexes associated to
  surfaces.
\newblock {\em Enseign. Math. (2)}, 54(3-4):287--301, 2008.

\bibitem[Sal16]{saltermonodromy}
N.~Salter.
\newblock On the monodromy group of the family of smooth plane curves.
\newblock {P}reprint, https://arxiv.org/pdf/1610.04920.pdf, 2016.

\end{thebibliography}
\bibliographystyle{alpha}

\end{document}